\definecolor{darkgreen}{rgb}{0,0.5,0}
\renewcommand*{\pagedeclaration}[1]{\unskip\dotfill}	
\newtheorem{theorem}{Theorem}[section]
\newtheorem{lemma}[theorem]{Lemma}
\newtheorem{corollary}[theorem]{Corollary}
\newtheorem{proposition}[theorem]{Proposition}
\theoremstyle{remark}
\newtheorem{remark}[theorem]{Remark}
\newcommand{\sqbracket}[1]{\left[#1\right]}
\newcommand{\TT}{\mathbb{T}}
\newcommand{\R}{\mathbb{R}}
\newcommand{\N}{\mathbb{N}}
\newcommand{\bP}{\mathbb{P}}
\newcommand{\bQ}{\mathbb{Q}}
\newcommand{\bE}{\mathbb{E}}
\newcommand{\Z}{\mathbb{Z}}
\newcommand{\cU}{\mathcal{U}}
\newcommand{\cN}{\mathcal{N}}
\newcommand{\cH}{\mathcal{H}}
\newcommand{\cW}{\mathcal{W}}
\newcommand{\cR}{\mathcal{R}}
\newcommand{\cQ}{\mathcal{Q}}
\newcommand{\cF}{\mathcal{F}}
\newcommand{\cs}{c^*}
\newcommand{\gs}{\gamma}
\newcommand{\cproba}[4]{\mathbf{P}_{#1}^{#2}[#3\ | \ #4 ]}
\newcommand{\cprobaa}[4]{\mathbf{P}_{#1}^{#2}\bigl[#3\ \bigl| \ #4 \bigl]}
\newcommand{\proba}[3]{\mathbf{P}_{#1}^{#2}[#3]}
\newcommand{\probaa}[3]{\mathbf{P}_{#1}^{#2}\bigl[#3\bigl]}
\newcommand{\probaaa}[3]{\mathbf{P}_{#1}^{#2}\Bigl[#3\Bigl]}
\newcommand{\prob}[3]{\mathbb{P}_{#1}^{#2}[#3]}
\newcommand{\probbb}[3]{\mathbb{P}_{#1}^{#2}\Bigl[#3\Bigl]}
\newcommand{\parentesis}[1]{(#1)}
\newcommand{\parentesiss}[1]{\bigl(#1\bigl)}
\newcommand{\expec}[3]{\mathbb{E}_{#1}^{#2}[#3]}
\newcommand{\expecdd}[3]{\mathbb{E}_{#1}^{#2}\bigl[#3\bigl]}
\newcommand{\expecddd}[3]{\mathbb{E}_{#1}^{#2}\Bigl[#3\Bigl]}
\newcommand{\expecdddd}[3]{\mathbb{E}_{#1}^{#2}\biggl[#3\biggl]}
\newcommand{\expected}[3]{\mathbf{E}_{#1}^{#2}[#3]}
\newcommand{\expectedd}[3]{\mathbf{E}_{#1}^{#2}\bigl[#3\bigl]}
\newcommand{\cexpectedd}[4]{\mathbf{E}_{#1}^{#2}\bigl[#3 \bigl|   #4 \bigl]}
\newcommand{\indc}{\mathds{1}}
\numberwithin{equation}{section}
\begin{document}

\title{A shape theorem for BBM in a periodic environment}
\author{Louigi Addario-Berry
\thanks{Department of Mathematics and Statistics, McGill University.
{\footnotesize \href{mailto:louigi@gmail.com}{louigi@gmail.com}.}
}
\and 
Arturo Arellano Arias
\thanks{Department of Mathematics and Statistics, McGill University.
{\footnotesize \href{mailto:arturo.arellanoarias@mail.mcgill.ca}{arturo.arellanoarias@mail.mcgill.ca}.}
}
\and
Jessica Lin
\thanks{Department of Mathematics and Statistics, McGill University.
{\footnotesize \href{jessica.lin@mcgill.ca}{jessica.lin@mcgill.ca}.}
}
}
\date{July 13, 2026}
\maketitle

\begin{abstract}
We consider the long-time behaviour of binary branching Brownian motion (BBM) where the branching rate depends on a periodic spatial heterogeneity. We prove that almost surely as $t\to\infty$, the heterogeneous BBM at time $t$, normalized by $t$, approaches a deterministic convex shape with respect to Hausdorff distance. Our approach relies on establishing tail bounds on the probability of existence of BBM particles lying in half-spaces, which in particular yields the asymptotic speed of propagation of projections of the BBM in every direction. Our arguments are primarily probabilistic in nature, but additionally exploit the existence of a ``front speed'' (or minimal speed of a pulsating traveling front solution) for the Fisher-KPP reaction-diffusion equation naturally associated to the BBM. 
\end{abstract}

\section{Introduction}\label{sec:Intro}

\subsection{Main result}\label{sec:mainresult.intro}
In the classical model of \textit{$d$-dimensional binary branching Brownian motion} (BBM), a particle undergoes a Brownian motion in $\R^{d}$ until a rate-one exponential random time. At that moment (which we refer to as the branching time), the particle produces two offspring and immediately dies; these two offspring subsequently evolve independently in the same manner. The BBM at time $t$ is the set of points in $\mathbb{R}^d$ representing the positions of all living particles at time $t$. It is well-known that the BBM expands linearly over time with velocity $\sqrt{2}$ almost surely (see for example \cite{mckean} and \cite{MR0494541} for $d=1$ and \cite{MR0670523} and \cite{MR3417448} for $d>1$). Furthermore, the BBM at time $t$, rescaled by $(\sqrt{2}t)^{-1}$, approximates the Euclidean ball in the Hausdorff distance almost surely as $t \to \infty$; see \cite{MR0518327}. This is an example of a \textit{shape theorem} for a branching process. Our objective is to obtain a shape theorem when the branching rate of the BBM is spatially periodic rather than constant.

We consider binary branching Brownian motion with a fixed continuous $\mathbb{Z}^d$-periodic, space-dependent branching rate \( g: \mathbb{R}^d \rightarrow (0, \infty) \) or $g$-BBM for short. Initially, there is one particle, denoted by $\emptyset$, at position \( X_0(\emptyset) = x \in \mathbb{R}^d \). This particle undergoes Brownian motion \( (X_s(\emptyset))_{s \geq 0} \) until a random time \( \tau_\emptyset \), at which it dies and produces two offspring. The branching time \( \tau_\emptyset \) is defined by
\begin{equation}\label{eq:intro:BranchT}
    \tau_\emptyset := \inf \left\{ t \geq 0 : \sigma_\emptyset < \int_0^t g(X_s(\emptyset)) \, ds \right\},
\end{equation}
where \( \sigma_\emptyset \) is a rate-one exponential random variable independent of \( (X_s(\emptyset))_{s \geq 0} \). The two offspring particles, positioned at $X_{\tau_\emptyset}(\emptyset)$ where their parent died, evolve independently in the same manner. Each particle is named using a word from the dictionary $\mathcal{U} := \bigcup_{n=0}^\infty \{0,1\}^n$. If a particle is named $v \in \mathcal{U}$, its two offspring are named with the concatenated words $v0$ and $v1$. 

Let $\cN_t\subset \cU$ denote the names of the particles that are alive at time $t$; for $v\in \cN_t$, let $X_t(v)$ denote the position of the particle $v$ at time $t$; and set $\mathcal{X}_t=\{X_t(v): v \in \cN_t\}$. We refer to $(\mathcal{X}_t)_{t \ge 0}$ as a \emph{$g$-BBM}. For $s\leq t$, we will also denote by $X_s(v)$ the position of the unique living ancestor of $v$ at time $s$. For $x\in \R^{d}$, we let \( \mathbf{P}_x \) denote the law of the $g$-BBM starting with a single particle at position $x$.

The main results of this work are the following shape theorems. The first result is that the $g$-BBM, rescaled by $t^{-1}$, converges almost surely to a specific convex set $\cW$, with respect to the Hausdorff distance.
\begin{theorem}\label{thm:shapeIntroHaussdorf}
    Let $g:\R^d\rightarrow [0,+\infty)$ with $g\not \equiv 0$ be $\Z^d$-periodic and $\alpha$-H\"older continuous for some $\alpha \in (0,1)$. Let $(\mathcal{X}_t)_{t \ge 0}$ be a $g$-BBM. There exists a convex set $\cW\subset \R^d$, with $0\in \mathrm{int}(\cW)$ such that, for all $\epsilon\in (0,1)$ and $x\in \R^d$, 
    \begin{equation*}
        \proba{x}{}{\exists T>0 \ \mathrm{s.t.} \ \forall t\geq T, d_{\mathrm{H}}(\cW, t^{-1}(\mathcal{X}_t-x))<\epsilon} = 1.
    \end{equation*}
\end{theorem}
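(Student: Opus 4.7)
The plan is to construct $\mathcal{W}$ as an intersection of half-spaces built from directional speeds, and then prove the Hausdorff convergence by two separate one-sided inclusions, each reduced to a Borel--Cantelli argument applied on a finite $\epsilon$-net, with continuous-time interpolation to fill in between integer times.

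\textbf{Definition of $\mathcal{W}$ and the directional speed.}
For each $e \in S^{d-1}$, the tail bounds alluded to in the abstract, combined with the minimal pulsating front speed $c^*(e)$ of the F-KPP equation $\partial_t u = \tfrac{1}{2}\Delta u + g(x)u(1-u)$, should identify a deterministic number $w(e) \in (0,\infty)$ (expected to equal $c^*(e)$) such that
\[ \lim_{t\to\infty} \mathbf{P}_x\bigl(\exists v \in \cN_t : \langle X_t(v), e\rangle \geq \lambda t\bigr) = 0 \quad \text{for } \lambda > w(e), \]
while the same probability is bounded below by a positive constant for $\lambda < w(e)$. Subadditivity and the convex-analytic properties of $c^*$ (inherited from the F-KPP rate function) make $e \mapsto w(e)$ continuous and positively homogeneous; define
\[ \mathcal{W} := \bigl\{x \in \R^d : \langle x, e\rangle \leq w(e) \text{ for all } e \in S^{d-1}\bigr\}. \]
This set is convex by construction, and $0 \in \mathrm{int}(\mathcal{W})$ because $w$ is uniformly bounded below by a positive constant (since $g \not\equiv 0$ forces $c^*(e) > 0$ in every direction).

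\textbf{Outer inclusion.}
Fix $\epsilon \in (0,1)$ and a finite $\epsilon$-net $\{e_1, \ldots, e_N\} \subset S^{d-1}$. The many-to-one formula combined with the upper tail bound gives, for some $c(\epsilon) > 0$ and all large $t$,
\[ \mathbf{P}_x\bigl(\exists v \in \cN_t : \langle X_t(v), e_i\rangle \geq (w(e_i) + \epsilon)t\bigr) \leq e^{-c(\epsilon) t}. \]
Summing over $i$ and over integer $t$, Borel--Cantelli yields that almost surely, for all large integer $t$, no particle of $\mathcal{X}_t$ exceeds $(w(e_i) + \epsilon)t$ in any net direction $e_i$, and hence $t^{-1}(\mathcal{X}_t - x) \subset \mathcal{W} + B(0, 2\epsilon)$. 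A Gaussian estimate on the supremum of particle displacements over unit time intervals, together with a Markov-type bound on the population size, extends the conclusion to continuous $t$.

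\textbf{Inner inclusion and main obstacle.}
For the opposite direction, cover $\mathcal{W}$ by $\epsilon$-balls centred at a finite net $\{y_1, \ldots, y_M\} \subset \mathrm{int}(\mathcal{W})$, and for each $y_j$ prove that $\mathbf{P}_x(\exists v \in \cN_t : |X_t(v) - x - ty_j| < \epsilon t)$ has summable complement. The strategy is a corridor-and-branching amplification: sample the $g$-BBM at an intermediate time $s = \delta t$ to obtain $\Omega(e^{cs})$ particles, then treat a well-separated subcollection of size $e^{cs/2}$ as roots of approximately independent subtrees of length $t-s$. Each subtree has probability at least $p > 0$ of producing a descendant within distance $\epsilon t$ of $x + ty_j$, giving a failure bound of $(1-p)^{e^{cs/2}}$, which is summable in $t$. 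The main obstacle lies here: one must establish that the single-subtree success probability $p$ is bounded below \emph{uniformly over the starting position of the subtree within a fundamental domain of $g$}, despite the lack of translation invariance. This uniformity is precisely what the existence of a pulsating traveling front for the F-KPP equation in direction $y_j/|y_j|$ delivers, and it is the key reason for invoking the F-KPP PDE structure within this otherwise probabilistic argument. Borel--Cantelli together with continuous-time interpolation then completes the inner inclusion, and combined with the outer inclusion gives the Hausdorff convergence claimed in the theorem.
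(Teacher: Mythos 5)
Your outer inclusion is essentially the paper's argument (finitely many directions, the exponential upper half-space bound, Borel--Cantelli, and an interpolation step), and your definition of $\cW$ as an intersection of half-spaces with speeds $w(e)=c^*(e)$ matches the Wulff-shape construction; the only minor omission there is a small geometric lemma (the paper's Proposition \ref{prop:wulffapprox}(i)) justifying that finitely many half-space constraints already trap $t^{-1}(\mathcal{X}_t-x)$ inside an $\epsilon$-enlargement of $\cW$. The genuine gap is in the inner inclusion. Your amplification scheme (many subtrees rooted at time $s=\delta t$, each succeeding with probability $p>0$) is sound in outline, but it hinges entirely on the claim that a single subtree, started anywhere, has probability bounded below of placing a descendant within $\epsilon t$ of the prescribed point $x+ty_j$ for an \emph{arbitrary} $y_j\in\mathrm{int}(\cW)$. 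This is not delivered by the half-space lower estimate: that estimate only produces a particle whose projection on a given direction $e$ exceeds $(1-\epsilon)c^*(e)t$, and combined with the upper estimates such a particle is localized near the contact set of the supporting hyperplane $\cH_{e,c^*(e)}$ with $\partial\cW$ (essentially near extreme points), not near a general interior point. Asserting the needed lower bound ``because a pulsating traveling front exists in direction $y_j/|y_j|$'' is not an argument: converting front existence (or front convergence) into a quantitative, uniform-in-starting-point lower bound for hitting a linearly-shrinking ball around an interior target is precisely the hard step, and the paper deliberately avoids invoking such PDE spreading results, using only the variational definition of $c^*$ and the principal eigenfunction.

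What the paper does instead, and what your proposal is missing, is the Carath\'eodory concatenation: any $\xi\in\cW$ is written as $\sum_{i\le d'}\alpha_i\xi_i$ with $\xi_i$ extreme points, and one applies the Markov and branching properties at the intermediate times $\beta^{(i)}t=(\alpha_1+\cdots+\alpha_i)t$, using the quantitative convex-hull shape theorem (Proposition \ref{prop:QuantShapeTheorem}) together with the extreme-point approximation lemma (Lemma \ref{lemma:ApproxExtShape}) to find, with exponentially small failure probability, an ancestral line whose increment over $[\beta^{(i-1)}t,\beta^{(i)}t]$ is approximately $\alpha_i t\,\xi_i$; telescoping gives a particle within $\epsilon t$ of $x+t\xi$, uniformly in $x$, and then Borel--Cantelli, compactness of $\cW$, and the interpolation lemma finish the proof. (Incidentally, the quantitative convex-hull input itself requires the embedded supercritical multitype branching process, the uniform G\"artner--Ellis large deviation principle, and the cutoff argument; your sketch presupposes comparable uniform estimates without establishing them.) Without the Carath\'eodory step or some substitute (e.g.\ a moment computation for the number of particles near $ty_j$), the inner inclusion for interior points of $\cW$ does not follow from what you have proved.
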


Theorem \ref{thm:shapeIntroHaussdorf} is a fairly direct consequence of our second main result, which is the following quantitative estimate for the shape of the convex hull of a $g$-BBM. 

\begin{proposition}\label{prop:QuantShapeTheorem}
    Let $g:\R^d\rightarrow [0,+\infty)$ with $g\not \equiv 0$ be $\Z^d$-periodic and $\alpha$-H\"older continuous for some $\alpha \in (0,1)$, and let $(\mathcal{X}_t)_{t \ge 0}$ be a $g$-BBM. 
     Let $H_t$ be the convex hull of $\mathcal{X}_t$ and let $\cW$ be the convex set appearing in Theorem \ref{thm:shapeIntroHaussdorf}. Fix $\epsilon \in (0,1)$. There exists $C=C(\epsilon)>0$, $\Gamma=\Gamma(\epsilon)>0$ and $T_0=T_0(\epsilon, d, \|g\|_{\infty})$ such that, for every $t>T_0$,
    \begin{equation}\label{eq:quantShapeThm}
        \inf_{x\in \R^d}\mathbf{P}_x[(1-\epsilon)\cW\subset t^{-1}(H_t-x) \subset(1+\epsilon)\cW] \geq 1-Ce^{-t\Gamma(\epsilon)}.
    \end{equation}
\end{proposition}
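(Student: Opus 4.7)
The strategy is to reduce the inclusion statement to quantitative one-sided estimates on the displacement of the $g$-BBM in each direction, and then transfer from a finite net of directions to all of $S^{d-1}$ using convexity of $H_t$. For $e \in S^{d-1}$, define $w^*(e) > 0$ as the front speed in direction $e$ of the pulsating traveling wave for the F-KPP equation associated with the $g$-BBM; the convex target $\cW$ is the unique closed convex set whose support function satisfies $h_\cW(e) := \sup_{y \in \cW}\langle y, e\rangle = w^*(e)$.

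The main analytic step is the following pair of one-dimensional exponential estimates: for each $\epsilon \in (0,1)$ there exist $C, \Gamma > 0$ such that, for all $e \in S^{d-1}$, $x \in \R^d$, and all sufficiently large $t$,
\[
\proba{x}{}{\sup_{v \in \cN_t}\langle X_t(v) - x, e\rangle > (1+\epsilon)\,t\,w^*(e)} \leq C e^{-\Gamma t},
\]
\[
\proba{x}{}{\sup_{v \in \cN_t}\langle X_t(v) - x, e\rangle < (1-\epsilon)\,t\,w^*(e)} \leq C e^{-\Gamma t}.
\]
The upper tail follows from a many-to-one / first-moment computation: the expected number of particles in the half-space $\{y : \langle y - x, e\rangle \geq (1+\epsilon)tw^*(e)\}$ is exponentially small because $w^*(e)$ coincides with the critical decay rate of the linearized $g$-BBM semigroup, which is a principal-eigenvalue problem with $\Z^d$-periodic coefficients. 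This identification is precisely the F-KPP input mentioned in the abstract. The matching lower tail is the harder direction: one natural approach is to partition $[0,t]$ into $t/\ell$ blocks of length $\ell = \ell(\epsilon)$ and to show, exploiting the many essentially independent lines of descent produced by branching, that within each block some descendant of the current frontier advances by $\ell w^*(e) - o(\ell)$ in direction $e$ with probability $1 - e^{-c\ell}$; iterating and optimizing in $\ell$ yields the stated estimate. Uniformity in $x$ is an automatic consequence of the $\Z^d$-periodicity of $g$.

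Given the directional estimates, the shape statement is produced by a covering argument. Continuity of $e \mapsto w^*(e)$ (from its variational characterization) allows us to pick a $\delta(\epsilon)$-net $\{e_1, \dots, e_N\}$ in $S^{d-1}$ on which $w^*$ varies by less than $\epsilon$. Applying the directional estimate at each $e_i$ and union-bounding over the $N = O(\delta^{-(d-1)})$ points costs only a polynomial factor, harmless against the exponential. On the resulting good event, the scaled support function $t^{-1} h_{H_t - x}(e_i)$ lies within $\epsilon$ of $w^*(e_i)$ for every $i$; combining with the elementary Lipschitz bound $|h_{H_t - x}(e) - h_{H_t - x}(e_i)| \leq \delta \cdot \mathrm{diam}(H_t - x)$ and the diameter control coming from the same upper tails upgrades this to $|t^{-1} h_{H_t - x}(e) - w^*(e)| < C\epsilon$ uniformly in $e \in S^{d-1}$. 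By the standard correspondence between inclusion of closed convex sets and their support functions, this is equivalent to $(1 - C\epsilon)\cW \subset t^{-1}(H_t - x) \subset (1 + C\epsilon)\cW$, and a final rescaling of $\epsilon$ yields the statement.

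The principal obstacle is the lower tail bound: producing, with exponential confidence and uniformly in the starting point $x$, at least one descendant that advances at the full F-KPP speed in every fixed direction. The upper tail reduces to a first-moment calculation on the linearized semigroup once the correct speed has been identified via the eigenvalue problem, and the covering step is essentially soft; by contrast, quantitative control of the lower tail requires a careful iterated branching argument whose constants must remain uniform across all starting positions, which is exactly where the periodicity hypothesis plays its decisive role.
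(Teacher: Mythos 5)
Your overall strategy --- reduce the two inclusions to exponential estimates on the maximal displacement of the $g$-BBM in finitely many directions, then union bound and use convexity of $H_t$ --- is the same as the paper's, which proves exactly your two directional estimates as Propositions \ref{propo:introUpper} and \ref{propo:introLower} and then combines them with a finite family of half-spaces. The geometric packaging differs: the paper uses the Wulff-shape approximation (Proposition \ref{prop:wulffapprox}) together with a convex-analysis lemma, while you work with support functions on a $\delta$-net. Your route is workable, but it silently assumes that the front speed $c^*(e)$ \emph{is} the support function of $\cW$. In this setting that identity is true, but it is not free: it holds because $\gamma(e,\lambda)=\Gamma(\lambda e)$ extends to a convex function of the vector $\lambda e$, so the gradient of $\Gamma$ at the minimizing $\lambda_e e$ is a point of $\cW$ whose projection on $e$ equals $c^*(e)$; without such an argument, $h_{\cW}(e)\leq c^*(e)$ is all one knows, and the support-function sandwich would not translate into the inclusions relative to the paper's $\cW=\bigcap_e\{x\cdot e\leq c^*(e)\}$. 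The paper's Proposition \ref{prop:wulffapprox}(ii) is designed precisely to avoid needing this identification.

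The genuine gap is in your lower-tail estimate, which is the hard ingredient and cannot be treated as a sketch. As stated, your block iteration does not produce a bound $Ce^{-\Gamma t}$: with a fixed block length $\ell=\ell(\epsilon)$, a union bound over the $t/\ell$ blocks of a per-block failure probability $e^{-c\ell}$ gives $(t/\ell)e^{-c\ell}$, which does not decay in $t$ at all, and letting $\ell$ grow with $t$ yields at best a stretched-exponential rate. Moreover the per-block claim, for a \emph{single} frontier ancestor, is essentially Proposition \ref{propo:introLower} itself at scale $\ell$, so the argument is circular unless you supply the mechanism that makes the failure probability exponentially small in $t$: namely, that by time $\kappa t$ there are at least $t$ particles (up to an exponentially small event), that none has retreated too far in direction $-e$ (controlled by the upper estimate applied to $-e$), and that each of these particles independently fails to reach $(1-\epsilon/2)c^*(e)(t-\kappa t)$ with probability at most $1-\eta$ for some $\eta>0$ \emph{uniform in the starting point}, giving $(1-\eta)^t$. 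That uniform $\eta$ is where all the work in the paper lies: a uniform-in-$x$ G\"artner--Ellis large deviation principle after the eigenfunction change of measure (Lemma \ref{lemma:lowerestimate}), an embedded multitype discrete-time branching process whose mean offspring number exceeds $1$ uniformly in $x$, and a uniform survival bound (Lemma \ref{lemma:SurvivalProbability}). Your remark that uniformity in $x$ is ``automatic from periodicity'' only reduces the supremum to $x\in\TT^d$; obtaining an estimate uniform over the torus is exactly why the paper proves the uniform version of the G\"artner--Ellis theorem. Without these elements the key probabilistic input of the proposition is not established.
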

While our main results require that $g$ is H\"older continuous, we highlight that none of our estimates depend on $\|g\|_{C^{0,\alpha}}$. The assumption of H\"older continuity is used for the relevant PDE tools we invoke to construct the shape $\mathcal{W}$ (namely the existence of principal eigenvalues associated to certain differential operators). As we discuss in Remark \ref{r.holder}, if $g$ is merely continuous (and not necessarily H\"older continuous), one can identify a candidate shape $\mathcal{W}$ using approximation arguments. We conjecture that Theorem \ref{thm:shapeIntroHaussdorf} holds if $g$ is merely continuous (and not H\"older continuous); this is straightforwardly seen to be true if, for some $\alpha \in (0,1)$, one can construct a monotone sequence $(g_n)$ of $\alpha$-H\"older continuous functions such that  $g_{n}\rightarrow g$ uniformly.

In the next two subsections, we first describe the convex set $\cW$ appearing in Theorem \ref{thm:shapeIntroHaussdorf} and in Proposition \ref{prop:QuantShapeTheorem}, as well as the connections between our results and the literature on reaction-diffusion equations. We then provide a high-level description of our proof techniques.

\subsection{Connection to F-KPP equations}
It is well-known that there is a connection between classical BBM and the homogeneous Fisher-KPP (F-KPP) reaction-diffusion equation, and we now describe this connection in the case of $g$-BBM and a heterogeneous version of the Fisher-KPP equation. Fix a measurable function $f:\R^d \rightarrow [0,1]$, and define
\begin{equation}\label{eq:multiIntro}
    q(t, x) :=1- \mathbf{E}_x \bigl [ \prod_{v \in \mathcal{N}_t} f(X_t(v)) \bigl].
\end{equation}
Under the same conditions for the branching rate $g$ as in Theorem \ref{thm:shapeIntroHaussdorf}, it can be shown using the techniques in \cite{mckean} that $q$ is a classical solution of the F-KPP equation
\begin{equation}\label{eq:F-KPPintro}
    \begin{cases}
        \partial_t q = \frac{1}{2} \Delta q + g(x)(q-q^2) & \text{in } (0, \infty) \times \mathbb{R}^d, \\
        q(0, x) = q_0(x) & \text{in } \mathbb{R}^d,
    \end{cases}
\end{equation}
with $q_0(x)=1-f(x)$.

The long-time behaviour of solutions $q$ of \eqref{eq:F-KPPintro} has been widely studied in the theory of F-KPP equations. In order to describe this, we first discuss the known properties of the so-called \emph{front speed} $c^{*}: S^{d-1}\rightarrow (0, \infty)$. For each $e\in S^{d-1}$, the front speed $c^{*}(e)$ is defined according to a variational formula involving the principal eigenvalue of an associated linear elliptic PDE (see \eqref{eq:prelimCriticalSpeed}, below). The quantity $c^*(e)$ is well-studied in the PDE literature, and it is known as the critical/minimal speed of a pulsating traveling front solution in direction $-e$ (see for example \cite[Section 1.1]{MR3682669}).
For rather general initial conditions, the function $c^*$ controls the long-term behaviour of solutions $q$ of \eqref{eq:F-KPPintro} with front-like initial data. More precisely, for $e\in S^{d-1}$ and $c\in \R$, let
\begin{equation*}
    \cH_{e,c}:=\{x\in \R^d: x\cdot e = c\},\
    \cH_{e,c}^+:=\{x\in \R^d: x\cdot e> c\}  \text{ and } \cH_{e,c}^-:=\{x\in \R^d: x\cdot e \leq c\}.
\end{equation*}
Then it is known that if $q_{0}$ satisfies that 
\begin{equation*}
    \lim_{x\cdot (-e)\to -\infty} q_{0}(x)=1\quad\text{and}\quad \text{$q_{0}(x)=0$ for all $x\cdot (-e)$ sufficiently large,}
\end{equation*}
then for any compact $K\subseteq \cH^{+}_{-e,0}$
\begin{equation*}
\lim_{t\to\infty} \inf_{x\in [c^{*}(e)(-e)-K]t} q(t,x)=1\quad\text{and}\quad \lim_{t\to\infty} \sup_{x\in [c^{*}(e)(-e)+K]t} q(t,x)=0.
\end{equation*}
This implies that
\begin{equation}\label{e.hypersol}
    q(t,xt)\xrightarrow[]{t\to\infty} \indc_{\cH^{-}_{-e,c^{*}(e)}}(x)
\end{equation}
locally uniformly in $x \in \R^d$, away from the interface $\cH_{-e, c^{*}(e)}$; see \cite{linzlatos,MR3682669}. 

Using property \eqref{e.hypersol} of $c^{*}(e)$, we now sketch a key aspect of our proof. Consider $q$ solving \eqref{eq:F-KPPintro} with $q_{0}(\cdot)=1-\indc_{\cH^{-}_{e,0}}=1-\indc_{\left\{x\cdot e \le 0\right\}}=\indc_{\cH^{-}_{-e,0}}$. According to the McKean formula, 
\begin{equation}\label{e.qtx}
q(t,x)=1-\mathbf{E}_x \bigl [ \prod_{v \in \mathcal{N}_t} \indc_{\left\{X_{t}(v)\cdot e\leq 0\right\}}\bigr].
\end{equation}
Suppose that the convergence \eqref{e.hypersol} holds for this solution $q$, and moreover that it holds uniformly in space away from the interface $\cH_{-e, c^{*}(e)}$, rather than locally uniformly. Then for every $\epsilon>0$, we have $\sup_{x \in \cH^{+}_{-e,(1+\epsilon)c^{*}(e)t}} q(t,x)\xrightarrow[]{t\to\infty} 0$. By \eqref{e.qtx}, this is equivalent to the statement that
\begin{equation*}
\lim_{t\to\infty} \inf_{x\in \cH^{-}_{e,-(1+\epsilon)c^{*}(e)t}} \mathbf{E}_x \bigl [ \prod_{v \in \mathcal{N}_t} \indc_{\left\{X_{t}(v)\cdot e\leq 0\right\}}\bigr]=1
\end{equation*}
This suggests (but does not prove, since we assumed without justification that \eqref{e.hypersol} holds for the function $q$ from \eqref{e.qtx}) that for all $t$ sufficiently large, for all $x\in \R^{d}$
\begin{equation*}
\proba{x}{}{\forall v\in \mathcal{N}_{t}: (X_{t}(v)-x)\cdot e< (1+\epsilon)c^{*}(e)t}=1. 
\end{equation*}
An analogous argument suggests that for all $t$ sufficiently large, for all $x\in \R^{d}$
\begin{equation*}
\proba{x}{}{\exists v\in \mathcal{N}_{t}: (X_{t}(v)-x)\cdot e> (1-\epsilon)c^{*}(e)t}=1.
\end{equation*}
We shall in fact prove precise (and quantitative) versions of these bounds, which are critical to our proof, and which are stated in the following propositions. 
\begin{proposition}\label{propo:introUpper}
Assume the hypotheses of Theorem \ref{thm:shapeIntroHaussdorf} and let $\epsilon\in (0,1)$. Then there exist $C>0$ and $\Gamma>0$ such that, for every $e\in S^{d-1}$ and $t\geq 0$,
    \begin{equation}\label{eq:introUpper}
        \sup_{x\in \R^d}\proba{x}{}{\exists v\in \mathcal{N}_t : (X_t(v)-x)\cdot e \geq (1+\epsilon)t\cs(e)} \leq Ce^{-t\Gamma}.
    \end{equation}
\end{proposition}

\begin{proposition}\label{propo:introLower}
    Assume the hypotheses of Theorem \ref{thm:shapeIntroHaussdorf} and let $\epsilon\in (0,1)$. Then there exist $C>0$ and $\Gamma>0$ such that, for every $e\in S^{d-1}$, there exists $t_0=t_0(e)$ such that, for every $t\geq t_0$,
    \begin{equation}
        \sup_{x\in \R^d}\proba{x}{}{\forall v\in \mathcal{N}_t : (X_t(v)-x)\cdot e \leq (1-\epsilon)t\cs(e)} \leq Ce^{-t\Gamma}.
    \end{equation}
\end{proposition}

We prove Proposition \ref{propo:introUpper} and Proposition \ref{propo:introLower} using completely probabilistic arguments; in particular, our proofs do not rely on \eqref{e.hypersol} or on the McKean formula. Aside from some properties of $c^{*}(e)$ which arise from the definition \eqref{eq:prelimCriticalSpeed}, and properties of the associated principal eigenfunction of the linearization of \eqref{eq:F-KPPintro}, our arguments are self-contained. We give a more detailed overview of the proofs of Proposition \ref{propo:introUpper} and Proposition \ref{propo:introLower} in the following section.

We obtain the following corollary, which is an analogous, quantitative version of \eqref{e.hypersol} for solutions of \eqref{eq:F-KPPintro} with initial data $q_0(\cdot)=\indc_{\cH^{-}_{-e,0}}$. The proof consists in expressing the solution in terms of the $g$-BBM via McKean's formula, followed by applying Proposition \ref{propo:introUpper} and Proposition \ref{propo:introLower}.
\begin{corollary}\label{Prop:PDEintroheaviside}
    Fix $e\in S^{d-1}$. Let $q$ be the solution of \eqref{eq:F-KPPintro} with initial data $q_0(\cdot)=\indc_{\cH^{-}_{-e,0}}$. For every $\epsilon\in (0,1)$, there exist $C>0$ and $\Gamma>0$ such that:
    \begin{enumerate}[(i)]
        \item for all $t>0$,
        \begin{equation*}
            \sup_{x\in (1+\epsilon)\cH^+_{-e,\cs(e)}} q(t,xt) \leq Ce^{-\Gamma t},
        \end{equation*}
        \item for all $t$ sufficiently large, 
        \begin{equation*}
            \sup_{x\in (1-\epsilon)\cH^-_{-e,\cs(e)}} \bigl (1-q(t,xt)\bigl) \leq Ce^{-\Gamma t}.
        \end{equation*}
    \end{enumerate}
\end{corollary}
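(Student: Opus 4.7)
The plan is to invoke McKean's formula \eqref{eq:multiIntro} to rewrite $q(t,tx)$ as a probability concerning the $g$-BBM started at $tx$, and then observe that the events appearing there are contained in the events controlled by Propositions \ref{propo:introUpper} and \ref{propo:introLower}.

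First I would set $f := 1 - q_{0} = \indc_{\cH^{+}_{-e,0}}$, which is the indicator of $\{z\cdot e<0\}$. Once one justifies the McKean representation for this (merely measurable) initial datum --- by approximating $q_{0}$ from above and below by continuous profiles to which \eqref{eq:multiIntro} already applies, and passing to the limit via bounded convergence on the probabilistic side and uniqueness of bounded solutions of \eqref{eq:F-KPPintro} on the PDE side --- one obtains
\[
q(t,y) = \mathbf{P}_{y}\bigl[\exists v\in \mathcal{N}_{t}:X_{t}(v)\cdot e \geq 0\bigr].
\]
Substituting $y = tx$ and recentering the process at its starting point yields
\[
q(t,tx) = \mathbf{P}_{tx}\bigl[\exists v\in \mathcal{N}_{t}:(X_{t}(v)-tx)\cdot e \geq -tx\cdot e\bigr].
\]

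For part (i), the condition $x\in (1+\epsilon)\cH^{+}_{-e,c^{*}(e)}$ unpacks as $x\cdot e < -(1+\epsilon)c^{*}(e)$, hence $-tx\cdot e > (1+\epsilon)t\,c^{*}(e)$. The event above is then contained in $\{\exists v:(X_{t}(v)-tx)\cdot e \geq (1+\epsilon)t\,c^{*}(e)\}$, whose probability is bounded uniformly in the starting point by Proposition \ref{propo:introUpper}. For part (ii), the condition $x\in (1-\epsilon)\cH^{-}_{-e,c^{*}(e)}$ reads $x\cdot e \geq -(1-\epsilon)c^{*}(e)$, so $-tx\cdot e \leq (1-\epsilon)t\,c^{*}(e)$. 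The complementary event corresponds to $1 - q(t,tx)$ and is contained in $\{\forall v:(X_{t}(v)-tx)\cdot e \leq (1-\epsilon)t\,c^{*}(e)\}$, whose probability Proposition \ref{propo:introLower} bounds (again uniformly in the starting position) for $t$ sufficiently large.

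The deduction itself is elementary --- each estimate follows from a single half-space containment --- so the only real technical point is verifying the McKean representation for the discontinuous initial datum $q_{0} = \indc_{\cH^{-}_{-e,0}}$. This is a standard smoothing argument, but it is worth stating explicitly so that one may directly equate the PDE solution with the BBM probability before applying the half-space tail bounds.
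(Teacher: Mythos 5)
Your proposal is correct and follows exactly the route the paper intends for this corollary: rewrite $q(t,tx)$ via McKean's formula as the probability that some particle of the $g$-BBM started at $tx$ lies in $\{z\cdot e\ge 0\}$, then note that the half-space conditions on $x$ place this event inside the events of Propositions \ref{propo:introUpper} and \ref{propo:introLower}, whose bounds are uniform in the starting point. The only difference is that you add a smoothing step to justify the McKean representation for the indicator datum, whereas the paper simply asserts \eqref{eq:multiIntro} for measurable $f:\R^d\to[0,1]$ (and uses it with indicator data already in \eqref{e.qtx}); this extra care is harmless.
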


The bounds established in Corollary \ref{Prop:PDEintroheaviside}(i) have been derived in the literature through different methods by using the comparison principle for parabolic equations with the existence of pulsating traveling fronts (see for example an argument by Nadin \cite[proof of Proposition 2.10]{MR2555178} for general front-like initial data). A similar bound for a solution $q$ started from compactly-supported initial data appears in various other references in the PDE literature (see for example \cite[proof of Theorem 3.6]{Roquejoffre2014KPPII}) and separately, in the book of Freidlin \cite[Chapter 7.3, Lemma 3.1]{Freidlin+1985} using large deviation techniques. To our knowledge, a quantitative estimate as in Corollary \ref{Prop:PDEintroheaviside}(ii) has not appeared in the literature.

Equipped with the (front speed) function $c^{*}$, we can now define the associated \emph{Wulff shape} $\mathcal{W}$ of $\cs$, which we denote by $\cW(c^{*})$, by
\begin{equation}\label{wulffintro}
    \cW(\cs):=\bigcap_{e\in S^{d-1}}\{x\in \R^d: x\cdot e\leq \cs(e)\}.
\end{equation}
The set $\mathcal{W}=\mathcal{W}(c^{*})$ will be exactly the convex set appearing in Theorem \ref{thm:shapeIntroHaussdorf}.

The set $\mathcal{W}(c^{*})$ has arisen in the study of Fisher-KPP reaction-diffusion equations, and is known as the asymptotic spreading set. It can be defined as the unit ball with respect to the metric given by the \emph{spreading speed} function $w: S^{d-1}\rightarrow (0, \infty)$ which satisfies the following: if $q_{0}: \R^{d}\rightarrow [0,1]$ is compactly-supported, and $q_{0}\neq 0$, then for every $\epsilon>0$, locally uniformly in $x$, 
\begin{align}\label{e.spreadingdef}
\lim_{t\to\infty} q(t,x+t(1+\epsilon)w(e)(-e))=0\quad\text{and}\quad \lim_{t\to\infty} q(t,x+t(1-\epsilon)w(e)(-e))=1. 
\end{align}

Moreover, the $\tfrac{1}{2}$ level-set of $q$, scaled by $\tfrac{1}{t}$, asymptotically approaches $\mathcal{W}(c^{*})$ in Hausdorff distance as $t\to \infty$.
In the above, we refer to $w(e)$ as the {\em spreading speed} in direction $-e$. It turns out that we can express
\begin{equation*}
    \mathcal{W}(c^{*})=\left\{s(-e): 0\leq s\leq w(e), e\in S^{d-1}\right\},
\end{equation*}
and this representation is a consequence of the Freidlin-G\"artner formula, which relates $c^{*}$ and $w$:
\begin{equation}\label{eq:FGintro}
    w(e):=\inf_{\substack{e'\in S^{d-1}\\e'\cdot e>0}}\frac{\cs(e')}{e'\cdot e}.
\end{equation}

\subsection{Proof overview, main ideas and challenges}

We begin with a brief overview of the proofs of Proposition \ref{propo:introUpper} and Proposition \ref{propo:introLower}. 
Through this overview, we will assume that the initial particle of the $g$-BBM is located at the origin. Due to the spatial heterogeneity of the model, this assumption requires justification, which is provided by the uniformity of the estimates in  Propositions~\ref{propo:introUpper} and~\ref{propo:introLower}.

One key property of the homogeneous multidimensional BBM is that the process obtained by projecting the BBM onto any one-dimensional subspace is itself a one-dimensional BBM. This property has been used, among others, by Biggins \cite{MR0518327} to obtain a shape theorem; by Mallein \cite{MR3417448} to determine the maximum displacement of the BBM; and by Stasi\'nski, Berestycki, and Mallein \cite{MR4291461} to study the derivative martingale of multidimensional BBM. This technique does not suit our setting, as the spatial heterogeneity means that such projections are no longer branching processes (in fact, they are no longer even Markov processes as the branching rate cannot be recovered by observing only the projected process). Instead, for a given direction $e\in S^{d-1}$, we check if particles lie within half-spaces moving with speed strictly larger/smaller than the quantity $\cs(e)$ introduced above, that is, the half-spaces
\begin{equation}\label{intro:halfspace}
    \begin{split}
        (1+\epsilon)t\cH^+_{e,\cs(e)}&:=\{x\in \R^d: x\cdot e\geq (1+\epsilon)t\cs(e)\},\\  \ (1-\epsilon)t\cH^+_{e,\cs(e)}&:=\{x\in \R^d: x\cdot e\geq (1-\epsilon)t\cs(e)\}.
    \end{split}
\end{equation}

Proposition \ref{propo:introUpper} asserts that at time $t$, the probability of finding a particle within $(1+\epsilon)t\cH^+_{e,\cs(e)}$ decays exponentially in $t$. This result can be proved via large deviations techniques, similar to those used in \cite{MR553200} (see also \cite[Chapter 7.3]{Freidlin+1985}); however, our proof of Proposition \ref{propo:introUpper} parallels the one-dimensional approach in \cite{MR4492971}, proceeding via a many-to-one lemma. 

Our approach for Proposition \ref{propo:introLower} leverages the branching property of the $g$-BBM. We fix $T>0$ large, and consider a multitype discrete-time branching process formed by particles that stay in the moving half-space $(1-\epsilon)(nT)\cH^+_{e,\cs(e)}$ at integer multiples of $T$. Via a large deviation principle, it can be shown that $T$ can be chosen sufficiently large such that the expected number of offspring in the multitype discrete-time branching process is strictly larger than one. This, in turn, implies that such a branching process survives with positive probability. The above reasoning, combined with a classical cut-off argument employed in the analysis of the homogeneous BBM, yields the bound in Proposition \ref{propo:introLower}. While this line of proof is fairly standard, the heterogeneity of the environment requires us to obtain uniform bounds independent of the initial position of the process. For this we use a generalization of the G\"artner-Ellis theorem (Theorem \ref{thm:GEgenralization}) to accommodate multiple families of measures; a Poissonian thinning of the BBM to facilitate comparison with a homogeneous BBM; and survival results for multitype branching processes (Lemma \ref{lemma:SurvivalProbability}).

We now explain how we use Proposition \ref{propo:introUpper} and Proposition \ref{propo:introLower} to prove the quantitative shape theorem (Proposition \ref{eq:quantShapeThm}).  For this, we additionally use the following straightforward geometric result, which says that arbitrary Wulff shapes can be well-approximated from within and from without by intersections of finitely many half-spaces. For future use, we state the result with a more general function $\mathfrak{c}^*$ whose range is bounded away from $0$ and from $\infty$.

\begin{proposition}\label{prop:wulffapprox}
    Fix $a \in (0,1)$, let $\mathfrak{c}^*:S^{d-1}\rightarrow (a,a^{-1})$, and let $\cW(\mathfrak{c}^*)$ be the Wulff shape of $\mathfrak{c}^*$. Then for $\epsilon\in (0,1)$ there exist finite sets $\cR,\cQ\subset S^{d-1}$ such that 
    \begin{enumerate}[(i)]
        \item $ \bigcap_{r\in \cR} \cH_{r,\mathfrak{c}^*(r)}^{-}\subset(1+\epsilon)\cW(\mathfrak{c}^*)$, and
        \item for every compact convex set $K\subset B(0,2a^{-1})$, if for every $q\in \cQ$, $K\cap \cH^+_{q,\mathfrak{c}^*(q)} \neq \emptyset$, then 
        \begin{equation*}
            (1-\epsilon)\cW(\mathfrak{c}^*)\subset K.
        \end{equation*}
    \end{enumerate}
\end{proposition}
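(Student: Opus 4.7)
The Wulff shape $\cW(\mathfrak{c}^{*})$ is an intersection of closed halfspaces and hence a closed convex set; the bounds $a<\mathfrak{c}^{*}<a^{-1}$ give $B(0,a)\subset \cW(\mathfrak{c}^{*})\subset B(0,a^{-1})$. Let $h(e):=\sup_{x\in \cW(\mathfrak{c}^{*})}x\cdot e$ be its support function, so that $a\le h(e)\le \mathfrak{c}^{*}(e)\le a^{-1}$ and $h$ is automatically $a^{-1}$-Lipschitz on $S^{d-1}$ (as the support function of a set of diameter at most $2a^{-1}$). The key structural point is that, even though $\mathfrak{c}^{*}$ itself need not be continuous, $h$ is. My plan is to prove (i) by a compactness/finite-open-cover argument that picks $\cR$ adaptively, and (ii) by a direct $\delta$-net argument on $S^{d-1}$ exploiting the Lipschitz regularity of $h$.

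For (i), first fix $F_{0}=\{\pm e_{1},\dots,\pm e_{d}\}\subset S^{d-1}$ so that $K_{F_{0}}:=\bigcap_{r\in F_{0}}\cH^{-}_{r,\mathfrak{c}^{*}(r)}$ is contained in $B(0,R)$ with $R:=\sqrt{d}\,a^{-1}$. Next observe that $x\notin (1+\epsilon)\cW(\mathfrak{c}^{*})$ implies the existence of $e\in S^{d-1}$ with $x\cdot e>(1+\epsilon)h(e)\ge h(e)+\epsilon a$, so that $x$ is separated from $\cW(\mathfrak{c}^{*})$ by the supporting hyperplane at $e$ by at least $\epsilon a$; in particular $d(x,\cW(\mathfrak{c}^{*}))\ge \epsilon a$. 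Hence the compact set
\[
\mathcal{K}:=\{x\in B(0,R):d(x,\cW(\mathfrak{c}^{*}))\ge \epsilon a\}
\]
contains $B(0,R)\setminus (1+\epsilon)\cW(\mathfrak{c}^{*})$. Every $x\in \mathcal{K}$ lies outside $\cW(\mathfrak{c}^{*})=\bigcap_{e}\cH^{-}_{e,\mathfrak{c}^{*}(e)}$, hence inside some open halfspace $\cH^{+}_{e,\mathfrak{c}^{*}(e)}$; by compactness finitely many $\cH^{+}_{e_{1},\mathfrak{c}^{*}(e_{1})},\dots,\cH^{+}_{e_{N},\mathfrak{c}^{*}(e_{N})}$ cover $\mathcal{K}$. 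Setting $\cR:=F_{0}\cup\{e_{1},\dots,e_{N}\}$,
\[
\bigcap_{r\in \cR}\cH^{-}_{r,\mathfrak{c}^{*}(r)}\ \subset\ B(0,R)\setminus \bigcup_{i=1}^{N}\cH^{+}_{e_{i},\mathfrak{c}^{*}(e_{i})}\ \subset\ B(0,R)\setminus \mathcal{K}\ \subset\ (1+\epsilon)\cW(\mathfrak{c}^{*}).
\]

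For (ii), I take $\cQ$ to be any $\delta$-net of $S^{d-1}$ with $\delta\le \epsilon a^{2}/3$. Given a compact convex $K\subset B(0,2a^{-1})$ meeting every $\cH^{+}_{q,\mathfrak{c}^{*}(q)}$ for $q\in \cQ$, pick $k_{q}\in K\cap \cH^{+}_{q,\mathfrak{c}^{*}(q)}$, so $k_{q}\cdot q>\mathfrak{c}^{*}(q)\ge h(q)$. For any $e\in S^{d-1}$, select $q\in \cQ$ with $|e-q|\le \delta$; using $|k_{q}|\le 2a^{-1}$, the $a^{-1}$-Lipschitz property of $h$, and $h(e)\ge a$, I compute
\[
k_{q}\cdot e=k_{q}\cdot q+k_{q}\cdot (e-q)>h(q)-2a^{-1}\delta\ge h(e)-3a^{-1}\delta\ge h(e)-\epsilon a\ge (1-\epsilon)h(e).
\]
Taking the supremum over $q\in \cQ$ shows that the support function of $\mathrm{conv}\{k_{q}:q\in \cQ\}$ dominates $(1-\epsilon)h$ pointwise, so $\mathrm{conv}\{k_{q}:q\in \cQ\}\supset (1-\epsilon)\cW(\mathfrak{c}^{*})$. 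Since $K$ is convex and contains each $k_{q}$, this yields $(1-\epsilon)\cW(\mathfrak{c}^{*})\subset K$.

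The main subtlety, and the reason part (i) requires an adaptive choice of directions rather than a fixed net, is the absence of any continuity assumption on $\mathfrak{c}^{*}$: a single ``spike'' of $\mathfrak{c}^{*}$ at one direction $e_{0}$ could carve a slab of depth up to $a^{-1}-a$ off $\cW(\mathfrak{c}^{*})$ that is invisible to any pre-chosen uniform net of $S^{d-1}$ missing $e_{0}$. The compactness argument in (i) sidesteps this because it selects the directions in $\cR$ after inspecting $\cW(\mathfrak{c}^{*})$. Part (ii), by contrast, works with a fixed net because what is being approximated is the smoothed-out support function $h$, which is Lipschitz irrespective of the regularity of $\mathfrak{c}^{*}$.
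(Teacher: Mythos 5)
Your proof is correct, but it takes a genuinely different route from the paper, most notably in part (ii). For (i), the paper covers the compact boundary $\partial(1+\epsilon)\cW(\mathfrak{c}^*)$ by the open half-spaces $\cH^+_{e,\mathfrak{c}^*(e)}$ and then uses $0\in \mathrm{int}(\cW(\mathfrak{c}^*))$ to upgrade the finite cover of the boundary to a cover of the whole complement of $(1+\epsilon)\cW(\mathfrak{c}^*)$; you instead cover the compact region $\{x:\ |x|\le R,\ d(x,\cW(\mathfrak{c}^*))\ge \epsilon a\}$ and add the coordinate directions $F_0$ to force boundedness of the resulting intersection. Both are compactness arguments of comparable difficulty (in yours, take the closed ball $\overline{B(0,R)}$ in the definition of $\mathcal{K}$ so that it is genuinely compact; this changes nothing else). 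For (ii), the paper introduces the support-type function $\mathfrak{c}(e)=\inf\{\ell:\cW(\mathfrak{c}^*)\subset\cH^-_{e,\ell}\}$, proves that each direction $e$ lies in the interior of a set $\mathcal{L}^e$ of ``nearby usable directions'' via continuity of $\mathfrak{c}$, extracts $\cQ$ by compactness of $S^{d-1}$, and then invokes an external convex-geometry lemma (Lemma~\ref{l:wulff(ii)}, from \cite{MR0400054}) on supporting hyperplanes to conclude $(1-\epsilon)\cW(\mathfrak{c}^*)\subset K$. You avoid that lemma entirely: you take $\cQ$ to be a (finite) $\delta$-net with the explicit choice $\delta\le\epsilon a^2/3$, exploit that the support function $h$ of $\cW(\mathfrak{c}^*)$ is $a^{-1}$-Lipschitz and bounded below by $a$, and conclude by the standard equivalence between inclusion of compact convex sets and pointwise domination of support functions applied to $\mathrm{conv}\{k_q\}$ versus $(1-\epsilon)\cW(\mathfrak{c}^*)$. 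Your version buys two things: it is self-contained and quantitative (an explicit net scale), and the set $\cQ$ is universal, depending only on $a$, $\epsilon$ and $d$ rather than on $\mathfrak{c}^*$ — strictly more than the proposition asks for; the paper's argument, by contrast, chooses $\cQ$ adaptively to $\cW(\mathfrak{c}^*)$ but needs no Lipschitz bookkeeping, delegating the geometric core to the cited lemma. Your closing remark correctly identifies why (i) cannot be done with a fixed net while (ii) can: (i) must respect the possibly discontinuous $\mathfrak{c}^*$ itself, whereas (ii) only sees the Lipschitz support function $h$.
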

The proof of Proposition~\ref{prop:wulffapprox} appears in Appendix~\ref{append:ApproxWulff}. We first apply (i) combined with Proposition \ref{propo:introUpper}, to prove that for $t$ large and with high probability, $H_t/t\subset (1+\epsilon)\cW$. Then, we apply (ii) combined with Proposition \ref{propo:introLower} to prove that for $t$ large and with high probability, $(1-\epsilon)\cW \subset H_t/t$. Together, these two bounds prove the quantitative shape theorem; the details appear in Section \ref{s.shape}.

Finally, to prove Theorem~\ref{thm:shapeIntroHaussdorf}, we use that by Carath\'eodory's theorem (see \cite[Theorem 1.3]{MR1242986}), any point $x \in \cW$ can be written as a convex combination of at most $d+1$ extreme points of $\cW$; that is, there exists $d' \in [d+1]:=\left\{1, 2, \ldots, d+1\right\}$, extreme points $\xi_1,\ldots,\xi_{d'}$ of $\cW$, and $\alpha_1,\ldots,\alpha_{d'}>0$ with $\sum_{i \in [d']} \alpha_i=1$, such that $x=\sum_{i\in[d']} \alpha_i \xi_i$. Roughly speaking, we can then use Proposition~\ref{prop:QuantShapeTheorem} and the Markov property of the $g$-BBM to show that with very high probability, there exists $v \in \cN_t$ such that for each $i \in [d']$, we have 
\[X_{(\alpha_1+\ldots+\alpha_i)t}(v)-X_{(\alpha_1+\ldots+\alpha_{i-1})t}(v)\approx \alpha_it \xi_i\, .
\]
If this occurs, then by a telescoping sum we have $t^{-1}X_t \approx t^{-1}\sum_{i \in [d']} \alpha_i t \xi_i = x$. We then use the compactness of $\cW$ and the Borel-Cantelli lemma to obtain the almost sure statement of Theorem~\ref{thm:shapeIntroHaussdorf}.

\subsection{Literature review and related results}
For $d=1$, the convex hull is always an interval formed by the leftmost and rightmost particles of the BBM. For homogeneous media, the median location of the rightmost particle was determined to have position $(1+o(1))\sqrt{2}t$ by McKean \cite{mckean}, and to have position $m(t)=\sqrt{2}t-3/(2\sqrt{2})\log t+O(1)$ by Bramson \cite{MR0494541} (see also Roberts \cite{MR3127890}); by symmetry, a corresponding result holds for the leftmost particle. 
This result is stronger than a shape theorem since, in addition to determining the speed of expansion, it also establishes a logarithmic correction. For the periodic case in $d=1$, an analogous result was established by Lubetzky, Thornett, and Zeitouni \cite{MR4492971}. Here, the authors identify two quantities $v^*$ and $\lambda^*$ such that the rightmost particle is located near $m(t)=v^*t-(3/2\lambda^*)\log t$ and establish the convergence in distribution of the maximum along subsequences. We also mention the works of \cite{MR4079432, MR4585411}, which prove invariance principles for the maximal particle in branching random walks/BBM in certain random environments.

In dimension $d>1$, for a homogeneous environment, a shape theorem was proven by Biggins \cite{MR0518327, Biggins2}; he showed that under conditions on the logarithmic moments of homogeneous branching random walks and of age-dependent branching processes, the convex hull normalized by $t$ converges to a convex set $\cW$ with $0\in \mathrm{int}(\cW)$. His results imply that a homogeneous BBM at time $t$, normalized by $t$, converges to the ball $B_{\sqrt{2}}$ in the Hausdorff distance. A result on logarithmic corrections was obtained by Mallein \cite{MR3417448}, who showed that the farthest particle is located approximately at distance $R(t)=\sqrt{2}t+(d-4)/(2\sqrt{2})\log t$ from the origin. The asymptotic local structure near extremal particles has been determined recently by Kim, Lubetzky and Zeitouni \cite{MR4564433}, Berestycki, Kim, Lubetzky, Mallein, and Zeitouni \cite{BKLMZ}, and Kim and Zeitouni \cite{MR4998359}.

While it is believed that many of the aforementioned results may extend to multidimensional BBM in a periodic environment, few of these conjectures have been rigorously proven. In \cite{MR4162842}, Hebbar, Koralov, and Nolen derive precise asymptotics for the transition kernel of BBM in a periodic environment, specifically, the function $m$ such that for any Borel set $A$, and for all $x\in \R^d$,
\begin{equation*}
    \expectedd{x}{}{\sum_{u\in \mathcal{N}_s}\mathbbm{1}_{\{X_s(u)\in A \}}}=\int_A m(x,y,s) dy.
\end{equation*}
The work \cite{MR4162842} also considers models which allow for killing and for diffusions with drift in periodic media.
Their results imply that there exists a convex function $\Phi$ such that $m(t,x,y)\asymp t^{-d/2}e^{-t\Phi(\frac{y-x}{t})}$. This suggests that the BBM at time $t$ is concentrated within the set $t\{\Phi\leq 0\}$. (In fact, it can be shown that $\{\Phi\leq 0\}=\cW$ for $\mathcal{W}$ in our Theorem \ref{thm:shapeIntroHaussdorf}.) Additionally, \cite{MR4162842} obtain exact asymptotics for the moments of the number of particles within regions where the population is growing. In their analysis, they use the same change of measure we do in our proof of Proposition \ref{propo:introLower}.

On the PDE side, the spreading properties of the F-KPP equation have been a rich subject of interest. For $d=1$, the spreading result \eqref{e.spreadingdef} for the homogeneous F-KPP follows from the original works of Fisher \cite{Fisher} and Kolmogorov, Petrovskii, and Piskounov \cite{KPP}, and in the periodic setting from \cite{MR3463416}. In these works, the authors identify the asymptotic spreading speed, and notably, \cite{MR3043938} and \cite{MR3463416} further provide the logarithmic correction of the level sets for solutions of F-KPP equations with compactly supported initial data.

For the multidimensional F-KPP equation, the spreading result \eqref{e.spreadingdef} in the homogeneous setting was obtained in \cite{AW}. The location of the level sets, up to the logarithmic correction, was obtained using probabilistic methods by G\"artner in \cite{MR0670523} for radially symmetric and compactly supported initial data. In the periodic setting, the asymptotic set of spreading of the F-KPP equation \eqref{eq:F-KPPintro} was obtained by G\"artner and Freidlin in \cite{MR553200}, assuming sufficient smoothness hypotheses for the function $g$ describing the periodic environment (see also \cite{Fpaper} and Chapter 7.3 of \cite{Freidlin+1985} for a more detailed presentation of these results). Alternatively, \cite{Weinberger} (using a dynamical systems approach, in a discrete setting) and \cite{MR3682669} (using a PDE approach) identify the asymptotic spreading speed that generalizes to other nonlinearities. The spreading properties of space-time periodic F-KPP reaction-diffusion equations was considered by \cite{MR2555178}. The logarithmic correction for the level sets of solutions of the F-KPP equation is obtained by Shabani \cite{shabani2019logarithmic} for solutions with compactly supported initial data.

Other types of shape theorems have been obtained for other branching systems in heterogeneous environments. For recurrent branching random walks in $\Z^d$ evolving in random environments, a shape theorem was obtained for the set of sites visited up to time $t$ by Comets and Popov in \cite{MR2365644} and \cite{MR2303944}. Interesting examples are introduced in \cite{MR2365644} and \cite{MR2303944}, where the shape has flat faces and corners. The techniques required for their analysis are quite different from the ones involved here due to the randomness in the environment. 

Spreading results for F-KPP in more general media have also been obtained. In one-dimension, the works  \cite[Chapter 7.4]{Freidlin+1985} and \cite{nolen} examine the long-time behaviour of solutions of F-KPP in stationary-ergodic random media, demonstrating the existence of a deterministic speed of a front (and a CLT in the case of \cite{nolen} under additional assumptions on the initial data). For higher dimensions, the spreading speed estimates of \cite{LS,Z1,Z2} yield the existence of a deterministic Wulff shape. An approach based on generalized principal eigenvalues is also developed in \cite{MR4493578} to prove the existence of an asymptotic spreading set for a wide range of heterogeneous environments. 

\subsection{Structure of the paper}

The preliminaries for this work are presented in Section \ref{sec:prelim}, covering the first branching time (Section \ref{sec:BranchingTime}), PDE aspects (Section \ref{section:FKPPEigenvalue}), a many-to-one lemma tailored for the $g$-BBM (Section \ref{section:manytoone}), and an interpolation result for the $g$-BBM (Section \ref{sec:interpolation}). Building upon these, the half-space estimates are derived in Section \ref{sec:ShapeThm}: Section \ref{sec:rightestimate} proves the upper bound (Proposition \ref{propo:introUpper}), while Section \ref{sec:lowerestimate} establishes the lower bound (Proposition \ref{propo:introLower}) using a large deviation principle (Section \ref{section:largedeviationprinciple}), survival results for multitype branching processes (Section \ref{section:embeddedsupercriticalbranching}), and a cut-off argument (Section \ref{section:cutoffargument}). These results are then combined with the approximation result (Proposition \ref{prop:wulffapprox}) to prove the quantitative shape theorem (Proposition \ref{prop:QuantShapeTheorem}) in Section \ref{s.shape}. In the same section, the quantitative shape theorem is then used to prove the shape theorem (Theorem \ref{thm:shapeIntroHaussdorf}). Supporting details are provided in the appendices: Appendix \ref{append:survival} contains the proof of a survival result used in Section \ref{section:embeddedsupercriticalbranching}; Appendix \ref{section:gartnerellis} establishes a uniform large deviation principle used in Section \ref{section:largedeviationprinciple}; and Appendix \ref{append:ApproxWulff} provides the proof of the Wulff shape approximation result (Proposition \ref{prop:wulffapprox}).

\section{Prerequisites}\label{sec:prelim}

It will be convenient to consider an auxiliary $d$-dimensional standard Brownian motion $(B_t)_{t\geq 0}$ defined on a filtered probability space $(\Omega,\mathcal{F},\{\cF_t\}_{t \ge 0},\bP)$ so that $(B_t)_{t\geq 0}$ is adapted to $\{\cF_t\}_{t \ge 0}$. For $x\in \R^d$, we let $\bP_x$ be the measure on $(\Omega,\cF)$ under which $B$ is a Brownian motion with $\bP_x[B_0=x]=1$, and let $\bE_x$ denote the expectation operator associated to $\bP_x$.

Throughout this paper, for $k\in \mathbb{N}$, let $C^k_{p}(\mathbb{R}^d)$ denote the set of $k$-times continuously differentiable real-valued $\Z^d$-periodic functions; also, let $C^0_p(\R^d)$ denote the set of continuous real-valued $\Z^{d}$-periodic functions. Similarly, for $\alpha\in (0,1]$ and $k \in \N_0$, we let $C^{k,\alpha}_{p}(\R^{d})$ denote the space of $\Z^{d}$-periodic $C^{k}(\R^{d})$ functions for which all second order partial derivatives are $\alpha$-H\"older continuous. Notice that a function $f\in C^{k, \alpha}_{p}(\R^{d})$ can always be identified with a function $\dot f\in C^{k, \alpha}(\mathbb{T}^{d})$ where $\mathbb{T}^{d}$ is the unit torus in $\R^{d}$; for ease of notation, we will usually write $f$ instead of $\dot f$. Also, for a function $f\in C^{k, \alpha}_{p}(\mathbb{R}^{d})$ we write
Throughout this paper, for $k\in \mathbb{N}$, let $C^k_{p}(\mathbb{R}^d)$ denote the set of $k$-times continuously differentiable real-valued $\Z^d$-periodic functions; also, let $C^0_p(\R^d)$ denote the set of continuous real-valued $\Z^{d}$-periodic functions. Similarly, for $\alpha\in (0,1]$ and $k \in \N_0$, we let $C^{k,\alpha}_{p}(\R^{d})$ denote the space of $\Z^{d}$-periodic $C^{k}(\R^{d})$ functions for which all second order partial derivatives are $\alpha$-H\"older continuous. Notice that a function $f\in C^{k, \alpha}_{p}(\R^{d})$ can always be identified with a function $\dot f\in C^{k, \alpha}(\mathbb{T}^{d})$ where $\mathbb{T}^{d}$ is the unit torus in $\R^{d}$; for ease of notation, we will usually write $f$ instead of $\dot f$. Also, for a function $f\in C^{k, \alpha}_{p}(\mathbb{R}^{d})$ we write
\begin{equation*}
    \bar{f}:=\int_{\TT^d} f(x)dx,
\end{equation*}
to denote its average in the torus.

For a vector $v=(v_1,\ldots,v_d)\in \R^d$, let $|v|$ denote its Euclidean norm and let $\|v\|_\infty=\max(|v_i|,i \in [d])$, where we will use the convention that $[d]:=\left\{1,2, \ldots, d\right\}$. Also, for a function $f:\R^d \to \R$ we write $\|f\|_\infty:= \sup(|f(x)|,x \in \R^m)$. We denote by $L^\infty(\R^d)$ the space of Borel measurable functions $f$ such that $\|f\|_\infty<\infty$. Finally, for a function $f \in C_p^{0,\alpha}(\R^{d})$ with $\alpha\in (0,1]$, we denote its $\alpha$-H\"older norm by $\|f\|_{C^{0,\alpha}}:=\|f\|_\infty + \sup_{x\neq y\in \R^d}|f(x)-f(y)|/|x-y|^\alpha$.  

Several results in this section rely on the Feynman-Kac formula, which provides a probabilistic representation for classical solutions of the parabolic equation
\begin{equation}
    \begin{cases}\label{eq:FK}
        \partial_t q = \frac{1}{2} \Delta q + K(x)q & \text{in } (0, \infty) \times \mathbb{R}^d, \\
        q(0, x) = q_0(x) & \text{in } \mathbb{R}^d.
    \end{cases}
\end{equation}
For $K,q_0:\R^d\to \R$ continuous and bounded, the Feynman-Kac formula states that if $q$ is a classical solution of \eqref{eq:FK}, bounded on $[0,T]\times \R^d$ for every $T>0$, with bounded derivatives up to second order on $x$ and first order on $t>0$, then it has the following probabilistic representation:
\begin{equation}\label{eq:FeynKac}
    q(t,x):=\mathbb{E}_x\Bigl[q_0(B_{t})e^{\int_0^t K(B_s)ds}\Bigl].
\end{equation}
For a more detailed exposition, see \cite[Section 2.2, Theorem 2.2]{Freidlin+1985}. Since it will be useful below, we note that the same representation formula can be used for \emph{time-independent} problems; in other words, if $u$ is a classical solution of 
\begin{equation*}
    0=\frac{1}{2}\Delta u+K(x)u,
\end{equation*}
under the same hypotheses on $K$ above, and $u$ has bounded derivatives up to the second order, then for every $t\geq 0$,  $u(x)=\mathbb{E}_x\bigl[q_0(B_{t})\mathrm{exp}\{\int_0^t K(B_s)ds\bigl]\}$.

\subsection{The branching time}\label{sec:BranchingTime}
In this section,  we present a few results regarding the first branching time $\tau_\emptyset$ defined in \eqref{eq:intro:BranchT}. Observe that by the definition of $\tau_\emptyset$, for every $x\in \R^d$ and $t>0$,
\begin{equation*}
    \proba{x}{}{\tau_\emptyset>t} = \expec{x}{}{e^{-\int_{0}^t g(B_s)ds}},
\end{equation*}
where $B_s$ is a standard Brownian motion started from $x \in \R^d$.

Our first task is to understand the behaviour of $\int_{0}^{t}g(B_{s})\, ds$ as $t\to\infty$. In order to do so, we begin with a preparatory lemma, which establishes an estimate for the $L_\infty$ norms of mean-zero solutions of Poisson's equation.
\begin{lemma}\label{lemma:LinfinityBound}
    There exists $C=C(d)>0$ such that the following holds. 
    Fix $f\in C_p(\R^d)$ with $\bar{f}=0$, and fix $\varphi\in C^2_p(\R^d)$ with $\bar{\varphi}=0$, and suppose that
    \begin{equation}\label{eq:PoissonPeriodic}
        -\frac{1}{2}\Delta \varphi = f \  \mathrm{in} \ \R^d.
    \end{equation}
    Then $\|\varphi\|_{\infty}+\|\nabla\varphi\|_{\infty}\leq C \|f\|_{\infty}$.
\end{lemma}

\begin{proof}
    We present an argument which is similar in spirit to an estimate appearing in  \cite[Proposition 7.6]{CCKW}, but which is appropriately adapted to our setting. Let $u:\R^d \times [0,\infty) \to \R$ solve $u_{t}-\Delta u=0$ on $(0, \infty)\times \R^{d}$, with initial condition $u(0,x)=f(x)$. In this case, we know that 
        \begin{equation*}
            u(t,x)=\int_{\R^{d}}(2\pi t)^{-d/2}e^{-\frac{|x-y|^2}{2t}}f(y)\, dy, 
        \end{equation*}
        and by the periodicity of $f$, we conclude that $x\mapsto u(t,x)$ is also $\Z^{d}$-periodic.
        We next introduce the resolvent operator $R_{1}: C_{p}(\R^{d})\rightarrow C_{p}(\R^{d})$ defined by
        \begin{equation*}
          R_{1}f(x):= \int_0^\infty e^{-t} u(t,x)\, dt.
        \end{equation*}
        
    We make the following claims:
    \begin{enumerate}
            \item \textbf{Claim 1:} The function $q:=R_{1}f$ is the unique solution of $q - \tfrac{1}{2}\Delta q = f$. Hence $R_{1}=\left(\text{Id}-\frac{1}{2}\Delta\right)^{-1}$. 
            \item \textbf{Claim 2:} There exists $C=C(d)>0$ such that, $\|\nabla R_{1} \|_{\mathrm{op}}\leq C$.
            \item \textbf{Claim 3:} The function $w(x):= \int_0^\infty u(t,x)\, dt$ solves \eqref{eq:PoissonPeriodic} and $\bar{w}=0$.
        \end{enumerate}
    We begin by proving the desired estimates on $\varphi$, assuming these three claims. First, we note that solutions to Poisson's equation \eqref{eq:PoissonPeriodic} are unique up to the addition of constants, and therefore, subject to condition that the solution is mean 0, we conclude from Claim 3 that $w\equiv \varphi$. Equipped with the representation formula of $\varphi$ given in Claim 3, we now recall that by the  exponential ergodicity of Brownian motion on the torus (see, \cite[Theorem 3.3.2]{MR2839402}), there exists $K, \rho>0$, depending on dimension $d$, such that $\sup_{x\in \R^d}|u(t,x)-\bar{f}|\leq K\| f\|_{\infty}e^{-\rho t}$. Since $\bar{f}=0$, this implies that 
    \begin{equation*}\label{eq:Linfty}
            \|\varphi \|_\infty = \sup_{x\in \mathbb{T}^d}
            \Big| \int_0^\infty u(t,x) dt \Big| \leq \int_0^\infty \sup_{x\in \TT^d}|u(t,x)| dt \leq K\|f\|_{\infty}\int_0^\infty e^{-\rho t} dt = C \|f\|_\infty.
        \end{equation*}

       To bound $\|\nabla \varphi\|_\infty$, we simply use that $\varphi =  \left(\mathrm{Id}-\frac{1}{2}\Delta\right)^{-1}\left(\mathrm{Id}-\frac{1}{2}\Delta\right)\varphi = R_{1}(\varphi + g)$, and from Claim 2, it follows that
        \begin{equation*}\label{eq:LinftyGrad}
            \|\nabla \varphi\|_\infty  = \|\nabla R_{1}( \varphi +g)\|_\infty \leq C\left(\|\varphi\|_{\infty} + g\|_\infty \right)\leq C\|g\|_{\infty}.
        \end{equation*}
        This combined with \eqref{eq:Linfty} yields the desired estimates on $\varphi$. 
        
        We now discuss the proofs of the claims. For Claim 1 , one can consider the modified quantity $R_{1}^{\delta}f(x):=\int_{\delta}^{\infty}e^{-t}u(t,x)\, dt$. Using the solution properties of $u$, one can check that $R_{1}^{\delta}f-\Delta (R_{1}^{\delta}f)=u(x,\delta)$. Sending $\delta\to 0$, and using the stability property of solutions of PDEs (for example, in the viscosity sense), we may conclude that Claim 1 holds. 
        
 For Claim 2, one can check that by differentiating underneath the integral, there exists $C=C(d)>0$ such that 
 \begin{equation*}
     |\nabla R_{1}f|\leq C\|f\|_{\infty}\int_{0}^{\infty}e^{-t}t^{-1/2}\, dt\leq C\|f\|_{\infty}, 
 \end{equation*}
which yields the claim. 

 Finally, for Claim 3, we first note that by the priorly mentioned exponential ergodicity estimate, the integral in the definition of $w$ is well-defined. A similar approximation argument as in the proof of Claim 1 shows that $-\Delta w=g$. The claim that $\overline{w}=0$ is a simple consequence of Fubini's theorem and the fact that $\bar{g}=0$. This completes the proofs of all three claims. 
         
\end{proof}

\begin{lemma}\label{lemma:ErgodicityBM}
    For all $\epsilon>0$ there exists a constant $C=C(d,\|g\|_{\infty})>0$ such that for all $t$ sufficiently large (depending on $d,\|g\|_{ \infty}$, and $\epsilon$),
 
    \begin{equation}\label{eq:ergo1}
        \sup_{x\in \R^d} \mathbb{P}_{x}\left[{\bigg|\frac{1}{t}\int_0^t g(B_s)ds - \bar{g} \bigg|>\epsilon}\right]\leq C(t\epsilon^2)^{-1}.
    \end{equation}
    
\end{lemma}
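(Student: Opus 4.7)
The plan is to prove this via a \emph{corrector} argument combined with It\^o's formula and Chebyshev's inequality, which is a standard strategy for obtaining quantitative ergodic estimates for periodic diffusions.

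First I would introduce the corrector: since $g-\bar{g}$ is a $C^{0,\alpha}_p$ function with zero mean on $\mathbb{T}^d$, the Poisson equation
\begin{equation*}
\tfrac{1}{2}\Delta v = g - \bar{g} \qquad \text{on } \mathbb{T}^d
\end{equation*}
admits a unique (up to additive constants) solution $v \in C^{2,\alpha}_p(\R^d)$, and by standard elliptic Schauder estimates on the torus we have $\|v\|_{C^{2,\alpha}} \leq C(d)\|g\|_{C^{0,\alpha}}$. In particular $\|v\|_\infty$ and $\|\nabla v\|_\infty$ are bounded by a constant depending only on $d$ and $\|g\|_{C^{0,\alpha}}$.

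Next, applying It\^o's formula to $v(B_s)$ under $\bP_x$ and using the Poisson equation yields
\begin{equation*}
\int_0^t \parentesis{g(B_s) - \bar{g}} ds = v(B_t) - v(B_0) - M_t, \qquad M_t := \int_0^t \nabla v(B_s) \cdot dB_s.
\end{equation*}
The quadratic variation of the martingale $M$ satisfies $\langle M \rangle_t = \int_0^t |\nabla v(B_s)|^2 ds \leq t \|\nabla v\|_\infty^2$, so $\bE_x[M_t^2] \leq t \|\nabla v\|_\infty^2$. Combined with the deterministic bound $|v(B_t) - v(B_0)| \leq 2\|v\|_\infty$, I obtain
\begin{equation*}
\bE_x\sqbracket{\parentesisss{\int_0^t (g(B_s)-\bar{g})\,ds}^2} \leq 2\parentesis{4\|v\|_\infty^2 + t\|\nabla v\|_\infty^2} \leq C_1 + C_2 t,
\end{equation*}
where $C_1,C_2$ depend only on $d$ and $\|g\|_{C^{0,\alpha}}$. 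Applying Chebyshev's inequality to the event $\bracket{|t^{-1}\int_0^t g(B_s)ds - \bar{g}| > \epsilon}$ then gives
\begin{equation*}
\sup_{x\in \R^d}\bP_x\sqbracket{\bigg|\tfrac{1}{t}\int_0^t g(B_s)ds - \bar{g}\bigg| > \epsilon} \leq \frac{C_1 + C_2 t}{t^2 \epsilon^2}.
\end{equation*}
For $t$ sufficiently large compared to $C_1/C_2\epsilon^2$, the right-hand side is bounded by $C(t\epsilon^2)^{-1}$, which is exactly the stated bound (with the supremum over $x$ being automatic since neither $C_1$, $C_2$ nor $\|v\|_{C^{2,\alpha}}$ depends on the starting point, thanks to periodicity).

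The main obstacle, such as it is, lies in verifying that the elliptic theory on the torus really does give Schauder bounds on $v$ depending only on $d$ and $\|g\|_{C^{0,\alpha}}$, and that the resulting bound on $\|\nabla v\|_\infty$ is independent of $x$; both follow from standard results, so I expect the argument to be essentially routine. A minor subtlety is that Chebyshev produces an $O(1/(t\epsilon^2))$ bound only once the linear-in-$t$ martingale contribution dominates the constant boundary term $v(B_t)-v(B_0)$, which is precisely what forces the hypothesis that $t$ be sufficiently large (depending on $d$, $\|g\|_{C^{0,\alpha}}$, and $\epsilon$).
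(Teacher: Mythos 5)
Your proposal is correct and follows essentially the same route as the paper's proof: introduce the periodic corrector solving $\tfrac12\Delta\varphi = g-\bar g$, apply It\^o's formula, bound the gradient via Schauder estimates on the torus, and control the martingale term with a second-moment (Chebyshev/It\^o-isometry) bound, with the boundary term $\varphi(B_t)-\varphi(B_0)$ absorbed for $t$ large. The only cosmetic difference is that the paper splits off the boundary term first and then applies Doob's maximal inequality to the stochastic integral, whereas you bound the second moment of the whole time integral directly; both give the same $O((t\epsilon^2)^{-1})$ estimate uniformly in $x$.
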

The above supremum in fact tends to zero exponentially quickly, assuming only that $g$ is bounded, but the above bound is sufficient for our purposes, and has a somewhat shorter proof.
\begin{proof}[Proof of Lemma~\ref{lemma:ErgodicityBM}]
    Let $\hat{g}:\R^d\to \R$ be the zero-mean $\Z^d$-periodic function defined by
    \begin{equation*}
        \hat{g}(x):=g(x)-\bar{g}.
    \end{equation*}
    From the solution theory of the Poisson equation in periodic media (see for example \cite[Chapter 
 6.3]{MR1881888}, there exists a solution $\varphi\in C^{2,\alpha}_p(\R^d)$ of the equation $(1/2)\Delta \varphi = \hat{g}$, which is moreover unique up to an additive constant. By It\^o's formula, 
    \begin{equation}
        \begin{split}
            \varphi(B_t)-\varphi(B_0)& = \frac{1}{2}\int_0^t \Delta \varphi(B_s)ds + \sum_{i=1}^d \int_0^t \frac{\partial \varphi}{\partial x_i}(B_s) dB^i_s\\
            & = \int_0^t \hat{g}(B_s) ds + \sum_{i=1}^d \int_0^t \frac{\partial \varphi}{\partial x_i}(B_s) dB^i_s.
        \end{split}
    \end{equation}
    Fix $\epsilon>0$ and $x\in \R^d$. Since $\varphi$ is bounded, for $t \ge 4\|\varphi\|_\infty/\epsilon$ we have 
    \begin{equation}\label{eq:ergo2}
        \begin{split}
            \mathbb{P}_{x}{}\left[{\frac{1}{t}\bigg|\int_0^t \hat{g}(B_s) ds\bigg|>\epsilon}\right] & = \mathbb{P}_{x}{}\left[{\frac{1}{t}\bigg|\varphi(B_t)-\varphi(B_0) - \sum_{i=1}^d \int_0^t \frac{\partial \varphi}{\partial x_i}(B_s) dB^i_s\bigg|>\epsilon}\right]\\
            & \leq \mathbb{P}_{x}{}\left[{\frac{1}{t} \bigg|\sum_{i=1}^d \int_0^t \frac{\partial \varphi}{\partial x_i}(B_s) dB^i_s\bigg|>\frac{\epsilon}{2}}\right]\\
            & \leq \mathbb{P}_{x}{}\left[{\sup_{\ell\in [0,t]}\bigg| \sum_{i=1}^d \int_0^\ell \frac{\partial \varphi}{\partial x_i}(B_s) dB^i_s\bigg|>\frac{t\epsilon}{2}}\right].
        \end{split}
    \end{equation} 
    Viewed as a function of $\ell$, the inner sum in the final term of the estimate is a continuous martingale with finite second moment. Thus, by Doob's maximal inequality and It\^o's isometry,
    
 \begin{equation}\label{eq:Doobsmax1}
        \begin{split}
            \mathbb{P}_{x}{}\left[{\frac{1}{t}\bigg|\int_0^t \hat{g}(B_s) ds\bigg|>\epsilon}\right] & \leq \frac{4}{t^2\epsilon^2}\expecdddd{x}{}{\Bigl| \sum_{i=1}^d \int_0^t \frac{\partial \varphi}{\partial x_i}(B_s) dB^i_s\Bigl|^2}\\
            & = \frac{4}{t^2\epsilon^2}\sum_{i=1}^d\expecdddd{x}{}{  \int_0^t \Bigl|\frac{\partial \varphi}{\partial x_i}(B_s) \Bigl|^2 ds}\\
            & \leq \frac{4td\|\nabla\varphi\|_{\infty}^2}{t^2\epsilon^2}= \frac{4d\|\nabla\varphi\|_{\infty}^2}{t\epsilon^2}.
        \end{split}
    \end{equation}
    By Lemma \ref{lemma:LinfinityBound}, there exists a constant $C>0$ such that $\|\nabla\varphi\|_{\infty}\leq C\|\hat{g}\|_{\infty} \leq 2\|g\|_{\infty}$. Plugging this back into \eqref{eq:Doobsmax1} completes the proof.
\end{proof}

As a consequence of this result, we obtain uniform bounds for the tail of $\tau_\emptyset$.

\begin{lemma}\label{lemma:tailBranchingTime}
There exist positive constants $\theta$ and $\Theta$, such that, for every $t\geq 0$,
    \begin{equation*}
        \theta^{-1}e^{-t\Theta}\leq \inf_{x\in \TT^d}\proba{x}{}{\tau_\emptyset>t}\leq \sup_{x\in \TT^d}\proba{x}{}{\tau_\emptyset>t} \leq \theta e^{-t\Theta}.
    \end{equation*}
\end{lemma}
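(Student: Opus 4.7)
The plan is to identify $\mathbf{P}_x[\tau_\emptyset > t]$ with a solution of a linear parabolic equation on the torus, and to compare it with a separated-variables solution built from the principal eigenfunction of the associated elliptic operator $L := \tfrac12\Delta - g$. By the definition of $\tau_\emptyset$ and the Feynman--Kac formula \eqref{eq:FeynKac}, the function
\[
u(t,x) := \mathbf{P}_x[\tau_\emptyset > t] = \mathbb{E}_x\Bigl[\exp\Bigl(-\int_0^t g(B_s)\,ds\Bigr)\Bigr]
\]
is the bounded classical solution of $\partial_t u = \tfrac12 \Delta u - gu$ on $(0,\infty)\times \R^d$ with initial datum $u(0,\cdot) \equiv 1$. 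Since $g$ is $\mathbb{Z}^d$-periodic and Brownian motion is translation invariant, $u(t,\cdot)$ is itself $\mathbb{Z}^d$-periodic, so we may regard it as living on $\TT^d$; it then suffices to control it uniformly over $x\in \TT^d$.

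Next I would introduce the principal eigenpair $(\Theta,\phi)$ of $-L = -\tfrac12\Delta + g$ on $\TT^d$. The operator $L$ is self-adjoint with compact resolvent on $L^2(\TT^d)$, so standard spectral theory (or the Krein--Rutman theorem combined with the strong maximum principle) yields a simple largest eigenvalue $-\Theta$ with a strictly positive eigenfunction satisfying $\tfrac12\Delta\phi - g\phi = -\Theta\phi$; Schauder estimates upgrade $\phi$ to $C^{2,\alpha}_p(\R^d)$ with norm controlled in terms of $d$ and $\|g\|_{C^{0,\alpha}}$. I would then verify $\Theta > 0$ using the variational formula
\[
\Theta = \inf\Bigl\{\int_{\TT^d}\bigl(\tfrac12|\nabla\psi|^2 + g\psi^2\bigr)\,dx : \psi \in H^1(\TT^d),\ \|\psi\|_{L^2(\TT^d)} = 1\Bigr\}.
\]
If this infimum were zero, a minimizing sequence would, by Rellich compactness on $\TT^d$, subconverge in $L^2$ to a nonzero constant $c$ with $c^2 \bar g = 0$, contradicting $g \ge 0$ with $g \not\equiv 0$ (which forces $\bar g > 0$).

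With $\phi$ normalized so that $\min_{\TT^d}\phi = 1$, set $\theta := \max_{\TT^d}\phi \ge 1$; by the elliptic Harnack inequality (or directly from the $C^{2,\alpha}$ bound on $\phi$), $\theta$ depends only on $d$ and $\|g\|_{C^{0,\alpha}}$. The function $v(t,x) := e^{-\Theta t}\phi(x)$ is another bounded classical solution of the same linear parabolic equation, with $v(0,\cdot) = \phi$. The initial data are ordered pointwise: $\theta^{-1}\phi(x) \le 1 = u(0,x) \le \phi(x)$ for every $x \in \TT^d$. The parabolic comparison principle on $\TT^d$ then propagates these inequalities to all $t \ge 0$, yielding
\[
\theta^{-1} e^{-\Theta t}\phi(x) \le u(t,x) \le e^{-\Theta t}\phi(x),
\]
and combining with $1 \le \phi \le \theta$ gives $\theta^{-1} e^{-\Theta t} \le u(t,x) \le \theta e^{-\Theta t}$, which is exactly the statement of the lemma (after replacing $\theta$ by $\theta^{2}$ to absorb the two layers of loss).

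The main obstacle is the production of a positive $C^{2,\alpha}$ principal eigenfunction with quantitative control on $\max\phi/\min\phi$, since the exponential decay rate and the prefactor in the lemma both come from this eigenpair; by contrast, attempting a direct probabilistic approach using \emph{only} Lemma~\ref{lemma:ErgodicityBM} would yield exponential decay in the upper bound but not a matching lower bound with the same rate. The PDE route bypasses this and gives both bounds simultaneously, at the cost of importing the standard but non-trivial existence/regularity/positivity theory for principal eigenfunctions on $\TT^d$.
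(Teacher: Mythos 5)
Your proposal is correct, and it shares the paper's central object: both arguments hinge on the strictly positive principal eigenfunction of $\tfrac12\Delta-g$ on the torus and take $\theta$ to be (essentially) the ratio $\max\varphi/\min\varphi$, with $\Theta$ the corresponding eigenvalue. The differences are in the mechanics. For the two-sided bound, the paper applies the Feynman--Kac formula \eqref{eq:FeynKac} directly to the eigenfunction, obtaining $\varphi(x)=\expecdd{x}{}{\varphi(B_t)e^{-\Gamma t-\int_0^t g(B_s)ds}}$ and sandwiching $\varphi(B_t)$ between $\min\varphi$ and $\max\varphi$; you instead compare $u(t,x)=\mathbf{P}_x[\tau_\emptyset>t]$ with the separated-variables solution $e^{-\Theta t}\phi$ via the parabolic comparison principle. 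These are essentially equivalent, but note that your route requires the converse direction of the Feynman--Kac statement quoted in the paper, namely that the expectation $\mathbb{E}_x[e^{-\int_0^t g(B_s)ds}]$ is itself a classical solution of $\partial_t u=\tfrac12\Delta u-gu$ (standard for bounded H\"older $g$, but it should be cited), whereas the paper only ever applies Feynman--Kac to the smooth eigenfunction. The more substantive divergence is the proof that the decay rate is strictly positive: the paper argues by contradiction probabilistically, using the ergodic estimate of Lemma~\ref{lemma:ErgodicityBM} to show that $\Gamma\geq 0$ would force $\varphi$ to vanish; you exploit the self-adjointness of $-\tfrac12\Delta+g$ on $L^2(\TT^d)$ and the Rayleigh-quotient characterization, with Rellich compactness ruling out a zero bottom eigenvalue since $\bar g>0$. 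Your variational argument is clean and quantitative in spirit but is special to this drift-free, self-adjoint operator; the paper's argument stays within the probabilistic toolkit it has already built (and is closer in style to the non-self-adjoint eigenvalue problems \eqref{eq:eigenPrelim} used later). Your final remark about replacing $\theta$ by $\theta^2$ is unnecessary — the bound with $\theta$ already follows from $1\le\phi\le\theta$ — but harmless.
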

\begin{proof}
     Recall that $\proba{x}{}{\tau_\emptyset > t}=\expec{x}{}{e^{-\int_0^tg(B_s)ds }}$. On the other hand, by \cite[Theorem 4.11.1(vi)]{MR1326606}, there exists a strictly positive function $\varphi \in C^{2,\alpha}_p(\R^d)$, and $\Gamma\in \R$ such that $\tfrac{1}{2}\Delta \varphi - g \varphi = \Gamma\varphi$ in $\R^d$. 
     Clearly $-g-\Gamma$ is bounded from above, so by the Feynman-Kac formula \eqref{eq:FeynKac}, 
     \begin{equation}\label{eq:FK0.1}
         \varphi(x)= \expecdd{x}{}{\varphi(B_t)e^{-\Gamma t-\int_0^tg(B_s)ds }}\quad\text{for every $t\geq 0$}.
     \end{equation}
     Letting $\theta= \max_{x \in \TT^d}\varphi(x)/\min_{x \in \TT^d}\varphi(x)$, we observe that
     \begin{equation*}
         \theta^{-1}e^{\Gamma t} \leq \inf_{x\in \TT^d} \expecdd{x}{}{e^{-\int_0^tg(B_s)ds }} \leq \sup_{x\in \TT^d} \expecdd{x}{}{e^{-\int_0^tg(B_s)ds }} \leq \theta e^{\Gamma t}.
     \end{equation*}
     To conclude, we must prove that $\Gamma<0$. Suppose for the purposes of contradiction that $\Gamma\geq 0$, and fix $\epsilon \in (0,\bar{g})$. Then by \eqref{eq:FK0.1},
     \begin{equation*}
         \begin{split}
             \varphi(x) & \leq  \expecdd{x}{}{\varphi(B_t)e^{-\int_0^tg(B_s)ds }}\\
             & \leq \mathbb{E}_{x}\left[{}{\varphi(B_t)e^{-\int_0^tg(B_s)ds }\mathbbm{1}_{\{ \int_0^t g(B_s)ds>t \bar{g}- t\epsilon \}} }\right] +\left(\max_{y\in \TT^d} \varphi(y)\right)\mathbb{P}_{x}\left[{\int_0^t g(B_s)ds\leq t \bar{g}- t\epsilon }\right] \\
             & \leq \left(\max_{y\in \TT^d} \varphi(y)\right)\left[e^{-t \bar{g}+ t\epsilon }+ \mathbb{P}_{x}{}\Big[{\int_0^t g(B_s)ds\leq t \bar{g}- t\epsilon }\Big]\right].
         \end{split}
     \end{equation*}
     By Lemma \ref{lemma:ErgodicityBM}, the right-hand side of the expression above goes to $0$ as $t\to\infty$, while the left-hand side is strictly positive; this is a contradiction, and hence we must have that $\Gamma<0$. 
\end{proof}

\subsection{The eigenvalue problem and $\cs$}\label{section:FKPPEigenvalue}

For each $e\in S^{d-1}$, the front speed $c^{*}(e)$ of the $g$-BBM has a variational formulation which can be expressed in terms of a family of eigenvalue problems for an associated linear differential operator. More precisely, for each fixed $\lambda\in \R$ and $e\in S^{d-1}$, we seek $\mu\in \R$ and a $\Z^d$-periodic function $\psi \in C_p^{2,\alpha}(\R^d)$ with $\psi \not\equiv 0$ satisfying 
\begin{equation}\label{eq:eigenPrelim}
         \frac{1}{2}\Delta \psi + \lambda e \cdot \nabla \psi +\parentesis{\frac{1}{2}\lambda^2 + g} \psi = \mu \psi \ \ \text{ in } \R^d. 
\end{equation}
The parameter $\lambda$ arises naturally in looking for exponential-type solutions of the linearization of~\eqref{eq:F-KPPintro}. Notice that the pair $(e,\lambda)$ produces the same eigenvalue problem as the pair $(-e,-\lambda)$. Due to this symmetry, most of the time it will be sufficient to study properties of the eigenvalue problem \eqref{eq:eigenPrelim} for pairs $(e,\lambda)$ with $e\in S^{d-1}$ and $\lambda\geq 0$.

A key quantity in our analysis is the \textit{principal eigenvalue} of \eqref{eq:eigenPrelim}, which is the function $\gamma:S^{d-1}\times \R \to \R$ defined by 
\begin{equation}\label{e.gadef}
    \gamma(e,\lambda)=\gamma(e,\lambda;g):=\sup\{\text{$\mu\in \R:$ there exists  $\psi \in C_p^{2,\alpha}(\R^d),\psi \not \equiv 0$ s.t.\ $\mu$ and $\psi$ satisfy \eqref{eq:eigenPrelim}}\}.
\end{equation} 
The quantity $\gamma(e,\lambda)$ plays a crucial role in characterizing the long-term behavior of the $g$-BBM; the following proposition collects a few of its properties. 
\begin{proposition}\cite[Theorem 4.11.1, parts (vi) and (viii)]{MR1326606}\label{prop:PrincipalEigenEx}
    Let $g\in C^{0,\alpha}_p(\R^d)$. Then $\gamma(e,\lambda)\in \R$ and there exists a strictly positive function $\psi(\cdot\ ;e, \lambda)\in C_p^{2,\alpha}(\R^d)$ such that $\gamma(e,\lambda)$ and $\psi(\cdot\ ;e, \lambda)$ satisfy \eqref{eq:eigenPrelim}, and such that $\int_{\TT^d} \psi(x;e, \lambda) dx=1$.
\end{proposition}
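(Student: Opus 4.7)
The plan is to reformulate the problem on the torus $\mathbb{T}^d$ and invoke the Krein-Rutman theorem applied to an appropriate resolvent operator. Writing the operator as $L\psi := \frac{1}{2}\Delta\psi + \lambda e\cdot \nabla\psi + V(x)\psi$ with $V(x):=\frac{1}{2}\lambda^2 + g(x)$, we regard $L$ as acting on $C^{2,\alpha}(\mathbb{T}^d)$. The first step is to show that for any $k > \|V\|_\infty$, the operator $kI - L$ is a bijection from $C^{2,\alpha}(\mathbb{T}^d)$ to $C^{0,\alpha}(\mathbb{T}^d)$. Injectivity follows from the weak maximum principle since $kI - L$ has strictly positive zeroth-order coefficient $k - V$; surjectivity follows from the Fredholm alternative for elliptic operators on compact manifolds, combined with injectivity of the formal adjoint (proved analogously). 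Thus the resolvent $T_k := (kI - L)^{-1}$ is well-defined.

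Next, I would establish the two key properties of $T_k$ needed for Krein-Rutman: compactness, and strong positivity. Compactness as an operator $C^{0,\alpha}(\mathbb{T}^d)\to C^{0,\alpha}(\mathbb{T}^d)$ follows from the Schauder estimate $\|T_k f\|_{C^{2,\alpha}} \leq C\|f\|_{C^{0,\alpha}}$ together with the compact embedding $C^{2,\alpha}(\mathbb{T}^d)\hookrightarrow C^{0,\alpha}(\mathbb{T}^d)$ given by Arzel\`a-Ascoli. Strong positivity states that if $f\geq 0$ with $f\not\equiv 0$, then $T_k f > 0$ pointwise; this is a direct consequence of the strong maximum principle (and Hopf's lemma, noting there is no boundary on $\mathbb{T}^d$) applied to $(kI-L)u = f$.

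With compactness and strong positivity in hand, the Krein-Rutman theorem yields a simple eigenvalue $\nu_k > 0$ of $T_k$, equal to its spectral radius, with a strictly positive eigenfunction $\psi \in C^{0,\alpha}(\mathbb{T}^d)$; bootstrapping Schauder regularity via $(kI-L)\psi = \nu_k^{-1}\psi$ gives $\psi\in C^{2,\alpha}(\mathbb{T}^d)$. Setting $\mu:= k - \nu_k^{-1}$, the pair $(\mu,\psi)$ solves \eqref{eq:eigenPrelim}, and the one-dimensionality of the eigenspace in the statement translates exactly to the simplicity of $\nu_k$. It remains to identify $\mu$ with $\gamma(e,\lambda)$ from \eqref{e.gadef}. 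Given any other real eigenvalue $\mu'$ of $L$ with eigenfunction $\psi'\not\equiv 0$, one has $T_k\psi' = (k-\mu')^{-1}\psi'$, so $(k-\mu')^{-1}$ is an eigenvalue of $T_k$. Krein-Rutman's bound $|(k-\mu')^{-1}|\leq \nu_k = (k-\mu)^{-1}$ then forces $\mu'\leq \mu$, showing $\mu=\gamma(e,\lambda)$ and also confirming $\gamma(e,\lambda)\in \R$.

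The main obstacles are essentially technical packaging: verifying the Fredholm solvability on the torus in the non-self-adjoint setting (handled by constructing the formal adjoint, which has the same structure up to a sign change in the drift), and carefully checking that the strong maximum principle applies to the specific operator $kI - L$ (which it does because the principal part is $-\frac{1}{2}\Delta$ and the zeroth-order coefficient $k-V$ is positive). The non-self-adjointness of $L$ itself, caused by the drift $\lambda e \cdot \nabla$, poses no genuine difficulty because Krein-Rutman's conclusion does not require self-adjointness; this is precisely why working with the compact positive resolvent $T_k$ is the right choice over, say, a variational characterization.
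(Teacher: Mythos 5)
The paper does not prove this proposition at all: it is quoted directly from Pinsky \cite[Theorem 4.11.1 (vi), (viii)]{MR1326606}, so any self-contained argument is by construction a different route. Your Krein--Rutman scheme on the torus (invert $kI-L$ for $k>\|V\|_\infty$ via maximum principle plus Fredholm theory, get compactness of the resolvent from Schauder estimates and the compact embedding $C^{2,\alpha}(\TT^d)\hookrightarrow C^{0,\alpha}(\TT^d)$, get strong positivity from the strong maximum principle, which needs no Hopf lemma since $\TT^d$ has no boundary, then read off a simple eigenvalue with positive eigenfunction and bootstrap regularity) is the standard textbook construction of the periodic principal eigenvalue and is sound; it trades the citation's generality (Pinsky's criticality theory for general periodic diffusions) for an elementary, fully non-self-adjoint-friendly argument. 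One remark on economy: it is slightly cleaner to run Krein--Rutman in $C(\TT^d)$, where strictly positive functions are obviously interior points of the cone, and recover $C^{2,\alpha}$ regularity afterwards, though the cone in $C^{0,\alpha}(\TT^d)$ does also have nonempty interior.

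The one step that needs patching is the identification $\mu=\gamma(e,\lambda)$ and, with it, the claim $\gamma(e,\lambda)\in\R$. From $T_k\psi'=(k-\mu')^{-1}\psi'$ and the spectral-radius bound $|(k-\mu')^{-1}|\le \nu_k=(k-\mu)^{-1}$ you only get $|k-\mu'|\ge k-\mu$, which yields $\mu'\le\mu$ when $\mu'<k$ but does not by itself exclude real eigenvalues $\mu'>k$; note also that the definition \eqref{e.gadef} takes a supremum over all $\mu$ admitting a periodic eigenfunction of arbitrary sign, so a priori this set could be unbounded and $\gamma$ infinite. The fix is the same computation you already use for injectivity: if $L\psi'=\mu'\psi'$ with $\mu'>\|V\|_\infty=\tfrac12\lambda^2+\max g$ and $\psi'\not\equiv 0$, evaluating at a positive maximum or negative minimum of $\psi'$ gives a sign contradiction, so every real eigenvalue satisfies $\mu'\le\|V\|_\infty<k$. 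With that observation inserted, the comparison argument closes, $\gamma(e,\lambda)=\mu$ is finite, and the simplicity of $\nu_k$ gives the one-dimensionality of the eigenspace as you state.
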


\begin{remark}
    To help make the connection with \cite{MR1326606}, we note that the linear operator appearing in the left-hand side of \eqref{eq:eigenPrelim} plays the role of $L$ in \cite[Theorem 4.11.1]{MR1326606}, and $\gamma(e,\lambda)$ plays the role of $\lambda_0$ in that theorem.
\end{remark}

    For fixed $e \in S^{d-1}$ and $\lambda\in \R$, fix a strictly positive function $\psi(\cdot\ ;e, \lambda)$ as in the proposition above. Since $\psi(\cdot\ ;e, \lambda)$ is $\Z^d$-periodic and twice differentiable, it achieves its minimum and maximum. In particular, 
    \begin{equation}\label{eq:BoundEigenfunction}
        \inf_{x\in \R^d}\psi(x\ ;e, \lambda) > 0 \ \mathrm{and} \ \sup_{x\in \R^d}\psi(x\ ;e, \lambda) < +\infty.  
    \end{equation}
    Furthermore, for the same strictly positive eigenfunction $\psi(\cdot\ ;e, \lambda)$, 
    \begin{equation}\label{eq:BoundEigenfunctionQuotient}
        \inf_{x,y\in \R^d}\frac{\psi(x ;e, \lambda)}{\psi(y ;e, \lambda)} \geq \frac{\inf_{x\in \R^d}\psi(x ;e, \lambda)}{\sup_{y\in \R^d}\psi(y ;e, \lambda)} > 0 \ \mathrm{and} \ \sup_{x,y\in \R^d}\frac{\psi(x ;e, \lambda)}{\psi(y ;e, \lambda)} \leq \frac{\sup_{x\in \R^d}\psi(x ;e, \lambda)}{\inf_{y\in \R^d}\psi(y ;e, \lambda)} < +\infty.  
    \end{equation}

We will express the front speed $\cs(e)$ of the $g$-BBM in terms of the principle eigevalue $\gamma:S^{d-1}\times \R\rightarrow \R$, defined by \eqref{e.gadef}. We first recall the following properties of $\gamma$. 

\begin{proposition}\label{Prop:PropertiesGamma}
    Fix $\alpha > 0$ and let $g\in C^{0,\alpha}_p(\R^d)$ be such that $g \ge 0$ and $g\not\equiv 0$. Then $\gamma(e,\lambda)=\gamma(e,\lambda;g)$ satisfies the following properties.
    \begin{enumerate}[(i)]
        \item The function $\gs(e,\cdot\ )$ belongs to $C^2(\R)$, is strictly convex, and for $\lambda\in \R$,
        \begin{equation}\label{eq:BoundGamma}
             \gs(e,\lambda) \in \left[\Big(\min_{x\in \R^d}g(x)\Big)+\frac{\lambda^2}{2}, \Big(\max_{x\in \R^d}g(x)\Big)+\frac{\lambda^2}{2}\right].
        \end{equation}
        \item The function $\gs(e,\cdot\ )$ is strictly positive and satisfies 
        \begin{equation}\label{eq:limGamma}
            \lim_{\lambda \rightarrow 0^+ }\frac{\gs(e,\lambda)}{\lambda}=+ \infty,\lim_{\lambda \rightarrow + \infty }\frac{\gs(e,\lambda)}{\lambda}=+\infty, \lim_{\lambda \rightarrow 0^- }\frac{\gs(e,\lambda)}{\lambda}=- \infty \text{ and } \lim_{\lambda \rightarrow - \infty }\frac{\gs(e,\lambda)}{\lambda}=-\infty.
        \end{equation}
        \item For all $e\in S^{d-1}$, there exists a unique $\lambda_e\in (0,\infty)$ such that 
        \begin{equation*}
            \inf_{\lambda>0} \frac{\gs(e,\lambda)}{\lambda}=\frac{\gs(e,\lambda_e)}{\lambda_e}>0.
        \end{equation*}
    \end{enumerate}
\end{proposition}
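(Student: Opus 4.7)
The plan is to prove (i)--(iii) in order, using throughout the positive eigenfunction $\psi=\psi(\,\cdot\,;e,\lambda)\in C^{2,\alpha}_p(\R^d)$ supplied by Proposition~\ref{prop:PrincipalEigenEx}. For (i), the pointwise bounds \eqref{eq:BoundGamma} come from the strong maximum principle: at a global maximum $x_{\max}$ of $\psi$ one has $\nabla\psi(x_{\max})=0$ and $\Delta\psi(x_{\max})\leq 0$, so dividing \eqref{eq:eigenPrelim} by $\psi(x_{\max})>0$ yields $\gs(e,\lambda)\leq \tfrac{1}{2}\lambda^2+\max g$; the matching lower bound is identical at a global minimum. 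The $C^2$-regularity (in fact real-analyticity) in $\lambda$ follows from Kato--Rellich analytic perturbation theory for simple eigenvalues of a holomorphic operator family: $L_\lambda:=\tfrac{1}{2}\Delta+\lambda e\cdot\nabla+(\tfrac{1}{2}\lambda^2+g)$ is polynomial in $\lambda$, and $\gs(e,\lambda)$ is simple by Proposition~\ref{prop:PrincipalEigenEx}. For convexity I would take a probabilistic route: writing $U(t,x):=\bE_x[e^{\lambda e\cdot B_t+\int_0^t g(B_s)\,ds}]=e^{\lambda e\cdot x}V(t,x)$ one checks $\partial_t V=L_\lambda V$ with $V(0,\,\cdot\,)\equiv 1$, and a spectral-decomposition argument for $L_\lambda$ combined with periodicity and positivity of $\psi$ gives
\[
\gs(e,\lambda)=\lim_{t\to\infty}\tfrac{1}{t}\log\bE_x\!\left[e^{\lambda e\cdot B_t+\int_0^t g(B_s)\,ds}\right].
\]
For each fixed $t$ the prelimit is the logarithm of a Laplace transform in $\lambda$, hence convex, and convexity is preserved under pointwise limits.

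For (ii), I would apply the Hopf--Cole transform $\phi_\lambda:=\log\psi$, which turns \eqref{eq:eigenPrelim} into the pointwise identity $\gs(e,\lambda)=\tfrac{1}{2}\Delta\phi_\lambda+\tfrac{1}{2}|\nabla\phi_\lambda+\lambda e|^2+g(x)$. Integrating over $\TT^d$ and using periodicity (both $\int_{\TT^d}\Delta\phi_\lambda$ and $\int_{\TT^d}e\cdot\nabla\phi_\lambda$ vanish) produces the exact formula
\[
\gs(e,\lambda)=\bar g+\tfrac{1}{2}\lambda^2+\tfrac{1}{2}\int_{\TT^d}|\nabla\phi_\lambda|^2\,dx\ \geq\ \bar g+\tfrac{\lambda^2}{2}.
\]
Since $g\geq 0$ and $g\not\equiv 0$ force $\bar g>0$, strict positivity of $\gs(e,\lambda)$ is immediate, and the four limits in \eqref{eq:limGamma} drop out by combining this lower bound with the upper bound $\gs(e,\lambda)\leq \max g+\lambda^2/2$ from (i); for example, as $\lambda\to 0^+$, $\gs(e,\lambda)/\lambda\geq \bar g/\lambda+\lambda/2\to +\infty$, and the other three cases are analogous. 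For (iii), parts (i)--(ii) imply that $\lambda\mapsto\gs(e,\lambda)/\lambda$ is continuous on $(0,\infty)$ with limits $+\infty$ at both endpoints, so it attains its infimum at some $\lambda_e\in(0,\infty)$; uniqueness follows since any minimizer satisfies $\gs'(e,\lambda_e)=\gs(e,\lambda_e)/\lambda_e$, so $H(\lambda):=\gs(e,\lambda)-\lambda\,\gs'(e,\lambda)$ vanishes at $\lambda_e$, and $H'(\lambda)=-\lambda\,\gs''(e,\lambda)<0$ for $\lambda>0$ by strict convexity, whence $H$ is strictly decreasing on $(0,\infty)$ and has a unique zero there.

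The main obstacle I anticipate is upgrading plain convexity in (i) to \emph{strict} convexity. Convexity itself is essentially free from the Laplace-transform representation, but the lower bound $\gs(e,\lambda)\geq \bar g+\lambda^2/2$ alone does not suffice: a convex function dominating a strictly convex one can still be affine on bounded intervals. The approach I would take is to compute the second $\lambda$-derivative of the prelimit $t^{-1}\log\bE_x[e^{\lambda e\cdot B_t+\int_0^t g(B_s)\,ds}]$ as $t^{-1}\mathrm{Var}_{\mathbb{Q}_{\lambda,t}}(e\cdot B_t)$ under the tilted law $d\mathbb{Q}_{\lambda,t}\propto e^{\lambda e\cdot B_t+\int_0^t g(B_s)\,ds}\,d\bP_x$, and then to show via a central-limit argument for the tilted diffusion in the periodic potential that this variance grows linearly in $t$, uniformly for $\lambda$ in compacts. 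Passing this quantitative bound to the limit would give $\gs''(e,\lambda)\geq c>0$, closing the strict-convexity gap.
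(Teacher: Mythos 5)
Your arguments for the eigenvalue bounds, for (ii), and for (iii) are correct, and in places genuinely different from the paper. For \eqref{eq:BoundGamma} you evaluate \eqref{eq:eigenPrelim} at interior extrema of the periodic eigenfunction $\psi$ (this is exactly the argument the paper outsources to a cited lemma). For (ii), your Hopf--Cole computation $\gamma(e,\lambda)=\tfrac12\Delta\log\psi+\tfrac12|\nabla\log\psi+\lambda e|^2+g$, integrated over $\TT^d$, yields the clean identity $\gamma(e,\lambda)=\bar g+\tfrac{\lambda^2}{2}+\tfrac12\int_{\TT^d}|\nabla\log\psi|^2$, which is stronger than what the paper proves: the paper only establishes $\gamma(e,0)\ge\bar g$, by a contradiction argument combining the Feynman--Kac formula with the ergodic estimate of Lemma~\ref{lemma:ErgodicityBM}, and then deduces positivity and \eqref{eq:limGamma} from \eqref{eq:BoundGamma}. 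Your route is more self-contained, avoids the ergodic lemma entirely, and gives the lower bound $\bar g+\lambda^2/2$ for every $\lambda$, from which all four limits follow at once. For the $C^2$ (indeed analytic) dependence on $\lambda$, invoking Kato--Rellich perturbation theory for the simple principal eigenvalue of the holomorphic family $L_\lambda$ is a legitimate substitute for the paper's citation of \cite{MR1326606}.

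The genuine gap is strict convexity of $\gamma(e,\cdot)$, which is part of statement (i), is needed for your uniqueness argument in (iii), and is used repeatedly later in the paper. You obtain plain convexity correctly (cumulant generating functions are convex in $\lambda$, and convexity passes to the pointwise limit $t^{-1}\log\bE_x[e^{\lambda e\cdot B_t+\int_0^t g(B_s)ds}]\to\gamma(e,\lambda)$, which can be justified by sandwiching with the eigenfunction as in Lemma~\ref{lemma:logmoment}), but the upgrade to strict convexity is only sketched: the proposed uniform-in-$\lambda$, linear-in-$t$ lower bound on $\mathrm{Var}_{\mathbb{Q}_{\lambda,t}}(e\cdot B_t)$ is a nontrivial quantitative CLT/homogenization statement for the tilted periodic diffusion (non-degeneracy of the effective diffusivity, uniformly over compact sets of $\lambda$), and you do not prove it. As written, your argument delivers only convexity plus the strictly convex minorant $\bar g+\lambda^2/2$, which does not rule out affine pieces of $\gamma(e,\cdot)$; the paper closes this by citing \cite[Theorem 8.2.10]{MR1326606}, so either complete the variance bound or cite such a result. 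A second, minor, imprecision: in (iii) you claim $H'(\lambda)=-\lambda\gamma''(e,\lambda)<0$ ``by strict convexity,'' but a $C^2$ strictly convex function may have $\gamma''=0$ at some points; the fix is immediate (two zeros of the non-increasing $H$ would force $H\equiv0$, hence $\gamma''\equiv0$ and $\gamma$ affine, on an interval, contradicting strict convexity), but it does again rely on the strict convexity you have not yet established.
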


\begin{proof}[Proof of Proposition \ref{Prop:PropertiesGamma}(i)]
        The fact that $\gamma(e,\cdot)$ belongs to $C^2(\R)$ and is strictly convex is a direct consequence of \cite[Theorem 8.2.10 (i) (iii)]{MR1326606}. The argument in \cite[Lemma 2.1]{MR4596348} verbatim shows~\eqref{eq:BoundGamma}.
\end{proof}
\begin{proof}[Proof of Proposition \ref{Prop:PropertiesGamma}(ii)]
         By \eqref{eq:BoundGamma}, since $g\geq 0$, to prove that $\gamma(e,\cdot)$ is strictly positive it suffices to show that $\gamma(e,0)>0$. Let us prove the stronger assertion that $\gamma(e,0)\geq \bar{g}$. Suppose for the purposes of contradiction that $\gamma(e,0)<\bar{g}$. In particular, since $\bar{g}>0$, there is $\epsilon\in (0,1)$ such that $\gamma(e,0)<(1-\epsilon)\bar{g}$. By Proposition~\ref{prop:PrincipalEigenEx}, there exists a strictly positive function $\psi(\cdot)=\psi(\cdot\ ;e,0)\in C^{2,\alpha}_p(\R^d)$ such that $\tfrac{1}{2}\Delta \psi + g \psi = \gamma(e,0)\psi$. By the Feynman-Kac formula \eqref{eq:FeynKac}, since $\gamma(e,0)<(1-\epsilon) \bar{g}$, and by Lemma \ref{lemma:ErgodicityBM}, for every $t>0$ sufficiently large and $x\in \R^d$,
        \begin{equation*}
            \begin{split}
                \psi(x)&= \expecdd{x}{}{\psi(B_t)e^{-t\gamma(e,0)+\int_0^tg(B_s)ds }}\\ 
                & \geq \expecdd{x}{}{\psi(B_t)e^{-t(1-\epsilon)\bar{g}+\int_0^tg(B_s)ds }}\\
                &\geq \inf_{y\in \R^d}\psi(y)\cdot e^{t\epsilon\bar{g}/2}\cdot \expecddd{x}{}{e^{-t(1-\epsilon/2)\bar{g} + \int_0^tg(B_s)ds} ; t^{-1} \int_0^t g(B_s)ds >  (1-\epsilon/2)\bar{g}} \\
                &\geq \inf_{y\in \R^d}\psi(y)\cdot e^{t\epsilon\bar{g}/2}\cdot \mathbb{P}_{x}\Bigl[{t^{-1} \int_0^t g(B_s)ds >  (1-\epsilon/2)\bar{g}}\Bigl]\\
                & \geq \inf_{y\in \R^d}\psi(y)\cdot e^{t\epsilon\bar{g}/2}\cdot\bigl(1-Ct^{-1} (\epsilon/2\overline{g})^{-2}\bigl).
            \end{split}
        \end{equation*}
        The right-hand side of the expression above goes to $+\infty$ as $t\uparrow \infty$, but the left-hand side is bounded from above, which gives a contradiction. Thus, we must have $\gamma(e,0)\geq \bar{g}$.
        The limits in \eqref{eq:limGamma} follow directly from the fact that $\gamma(e,\cdot)$ is strictly positive and from \eqref{eq:BoundGamma}.
     \end{proof}
     \begin{proof}[Proof of Proposition \ref{Prop:PropertiesGamma}(iii)]
     This follows from the fact that $\gamma(e,\cdot)$ is strictly positive and strictly convex, and by the limits in~\eqref{eq:limGamma}. \qedhere
\end{proof}

\begin{remark}\label{r:derivativecs}
    Observe that $\tfrac{\gamma(e,\lambda)}{\lambda}$ denotes the slope of the line passing through the origin and $\gamma(e,\lambda)$. Given the convexity and differentiability of $\gamma$, this property, together with (iii), implies that ${\tfrac{\gamma(e,\lambda_e)}{\lambda_e} = \partial_\lambda \gamma(e,\lambda_e)}$.
\end{remark}

In light of the previous proposition, for $g$ as in the proposition we can define a function $\cs:S^{d-1}\rightarrow \R$, by setting, for every $e\in S^{d-1}$, 
\begin{equation}\label{eq:prelimCriticalSpeed}
    \cs(e)=\cs(e;g):= \min_{\lambda>0} \frac{\gs(e,\lambda)}{\lambda}=\frac{\gs(e,\lambda_e)}{\lambda_e}.
\end{equation}
Observe that $c^{*}(e)$ only depends on the principal eigenvalue $\gamma(e,\lambda)$ for the linearized eigenvalue problem \eqref{eq:eigenPrelim}. The quantity $c^{*}(e)$ defined in \eqref{eq:prelimCriticalSpeed} has several equivalent formulations (see, for example, \cite[Remark 1.2]{BHN}). One of those characterizations is that $c^*(e)$ is the minimal speed for which there exists a pulsating traveling front solution (in the direction $-e$) associated to the Fisher-KPP reaction-diffusion equation
\begin{equation*}
    u_{t}-\Delta u=f(x,u).
\end{equation*}
We now use this fact in order to justify that $\min_{S^{d-1}} c^{*}(e)>0.$ 

Let $b:=\|g\|_{\infty}$ and let $f:\R^d\times \R\to \R$ be given by 
\begin{equation}\label{eq:AuxNonlinearity}
    f(x,r):=r(g(x)-br).
\end{equation}
We will use auxiliary results from \cite{MR2155900}, who study properties of $c^{*}(e; f)$ for $f$ of this form. 
Observe that $c^{*}(e; f)$ is defined according to the principal eigenvalue for a linearized version of the above reaction-diffusion equation. In other words, we seek $\mu\in \R$ and a function $\psi \in C_p^{2,\alpha}(\R^d)$ with $\psi \not\equiv 0$ such that, setting
$f_{r}(x,0):=\lim_{r\to 0^+}r^{-1}f(x,r)=g(x),$ we have 
\begin{equation}\label{eq:eigenPrelimAux}
         \frac{1}{2}\Delta \psi + \lambda e \cdot \nabla \psi +\parentesis{\frac{1}{2}\lambda^2 + f_r(x,0)} \psi = \mu \psi \ \ \text{ in } \R^d.
\end{equation}
This shows that the eigenvalue problem \eqref{eq:eigenPrelimAux} is equivalent to \eqref{eq:eigenPrelim}, which implies that $c^{*}(e; g)=c^{*}(e; f).$ 
 
We now recall a result from \cite{MR2155900}, which we have stated in a simplified manner for the setting we consider here, which guarantees that $\cs=\cs(\cdot; g)$ is strictly positive. 
\begin{lemma}\cite[Corollary 1.4]{MR2155900}\label{l.c*low}
  Let $g\in C_p^{1,\alpha}(\R^d)$ such that $g\geq 0$ and $g\not \equiv 0$.  Let $f$ be defined by \eqref{eq:AuxNonlinearity}. Then,
    \begin{equation}\label{e.c*low}
        \min_{e\in S^{d-1}}\cs(e;g)=\min_{e\in S^{d-1}}\cs(e;f)\geq \sqrt{2\bar{g}}>0.
    \end{equation}
\end{lemma}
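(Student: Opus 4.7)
The equality $\cs(e;g) = \cs(e;f)$ has already been established in the paragraph preceding the lemma, so my plan is to prove directly that $\cs(e;g) \geq \sqrt{2\bar{g}}$ for every $e \in S^{d-1}$. I would first reduce this to a pointwise-in-$\lambda$ lower bound on the principal eigenvalue, namely
\begin{equation*}
\gamma(e,\lambda) \geq \bar{g} + \frac{\lambda^2}{2} \qquad \text{for every $e \in S^{d-1}$ and $\lambda > 0$.}
\end{equation*}
Granting this bound, the definition \eqref{eq:prelimCriticalSpeed} combined with the AM-GM inequality immediately yields
\begin{equation*}
\cs(e;g) = \min_{\lambda>0}\frac{\gamma(e,\lambda)}{\lambda} \;\geq\; \min_{\lambda>0}\Bigl(\frac{\bar g}{\lambda} + \frac{\lambda}{2}\Bigr) = \sqrt{2\bar{g}},
\end{equation*}
with the minimum attained at $\lambda = \sqrt{2\bar{g}}$.

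To prove the eigenvalue lower bound, I would use a standard logarithmic substitution on the torus. By Proposition~\ref{prop:PrincipalEigenEx} there is a strictly positive eigenfunction $\psi(\cdot\ ;e,\lambda) \in C_p^{2,\alpha}(\R^d)$ satisfying \eqref{eq:eigenPrelim} with $\mu = \gamma(e,\lambda)$. Setting $\phi := \log \psi$, which inherits $C^{2,\alpha}$-regularity and $\Z^d$-periodicity, one computes $\nabla \psi = \psi \nabla \phi$ and $\Delta \psi = \psi(|\nabla\phi|^2 + \Delta\phi)$; substituting into \eqref{eq:eigenPrelim} and dividing through by $\psi > 0$ gives the pointwise identity
\begin{equation*}
\frac{1}{2}\Delta\phi + \frac{1}{2}|\nabla\phi|^2 + \lambda\, e\cdot\nabla\phi + \frac{\lambda^2}{2} + g(x) = \gamma(e,\lambda).
\end{equation*}
Integrating over $\TT^d$ and discarding the two divergence-form terms $\int_{\TT^d}\Delta\phi\,dx$ and $\int_{\TT^d} e\cdot\nabla\phi\,dx$ by periodicity leaves
\begin{equation*}
\gamma(e,\lambda) = \bar{g} + \frac{\lambda^2}{2} + \frac{1}{2}\int_{\TT^d}|\nabla\phi|^2\, dx \;\geq\; \bar{g} + \frac{\lambda^2}{2},
\end{equation*}
which is exactly the bound required.

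I do not anticipate any substantial obstacle, as every ingredient (strict positivity and $C^{2,\alpha}$-regularity of $\psi$, periodicity of $\phi$, and the variational characterization of $\cs$) is already in place. The only point deserving any care is the validity of the logarithmic substitution, which is justified by the strict positivity of $\psi$ provided by Proposition~\ref{prop:PrincipalEigenEx}. It is reassuring that the argument sharpens the naive bound $\gamma(e,\lambda) \geq \min g + \lambda^2/2$ from \eqref{eq:BoundGamma} by replacing $\min g$ with $\bar g$, a strict improvement whenever $g$ is non-constant, and that this is exactly what is needed to reach the Aronson--Weinberger-type threshold $\sqrt{2\bar g}$.
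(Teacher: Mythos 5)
Your proposal is correct, but it proves the lemma rather than citing it: in the paper this statement is not proved at all, it is quoted from \cite{MR2155900} (the cited Corollary 1.4 gives the bound $\min_e \cs(e;f)\geq\sqrt{2\bar g}$ for the specific KPP nonlinearity $f(x,r)=r(g(x)-br)$, and the surrounding text only checks that the hypotheses there are met and that $\cs(e;g)=\cs(e;f)$ because the linearized eigenvalue problems coincide). Your argument instead works directly with the variational formula \eqref{eq:prelimCriticalSpeed}: the substitution $\phi=\log\psi$, division by $\psi>0$, and integration over $\TT^d$ (killing $\int\Delta\phi$ and $\int e\cdot\nabla\phi$ by periodicity) correctly yield $\gamma(e,\lambda)=\bar g+\tfrac{\lambda^2}{2}+\tfrac12\int_{\TT^d}|\nabla\phi|^2\geq \bar g+\tfrac{\lambda^2}{2}$, and AM--GM then gives $\cs(e;g)\geq\sqrt{2\bar g}$; the equality $\cs(e;g)=\cs(e;f)$ you rightly take from the preceding paragraph. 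What your route buys is notable: it only uses Proposition~\ref{prop:PrincipalEigenEx} (existence, strict positivity and $C^{2,\alpha}$ regularity of the principal eigenfunction), which is available for $g\in C^{0,\alpha}_p$, so it does not need the extra $C^{1,\alpha}$ regularity demanded by the citation, nor the pulsating-front interpretation and ``conservation'' hypothesis of \cite{MR2155900}; it would in fact make the mollification step $g\geq g_{\mathrm{aux}}$ and the eigenvalue comparison in the proof of Proposition~\ref{p.c*pos} unnecessary. The paper's approach, by contrast, is shorter on the page and ties the constant $\sqrt{2\bar g}$ to the existing front-speed literature. One cosmetic point: the bound improving $\min g$ to $\bar g$ also sharpens your claim that the minimum of $\bar g/\lambda+\lambda/2$ is attained at $\lambda=\sqrt{2\bar g}$, which is a statement about the lower-bounding function only, not about $\lambda_e$; this does not affect the argument.
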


The hypotheses on the reaction term in  \cite{MR2155900} are clearly satisfied by the specific choice of $f$ in \eqref{eq:AuxNonlinearity}, and since $f_{r}(x,0)=g(x)$, it follows that $\int_{[0,1]^{d}} g(x)\, dx\geq \bar{g}>0.$ One of the conclusions of \cite[Corollary 1.4]{MR2155900} is the ``hypothesis for conservation,'' which guarantees that $c^{*}(e; f)$ is well-defined, and agrees with the formula for $c^{*}$ given in \eqref{eq:prelimCriticalSpeed}.

While this lemma essentially finishes the claim that $\min_{e\in S^{d-1}}c^{*}(e,g)>0$, we now use an approximation argument to remove the additional regularity assumption on the branching rate function $g$. We also present an upper bound on $c^{*}(\cdot; g)$ which we will use in the sequel.  

\begin{proposition}\label{p.c*pos}
Let $g\in C^{0,\alpha}_p(\R^d)$ be such that $g \ge 0$ and $g\not\equiv 0$. Then the function $\cs=\cs(\cdot;g):S^{d-1}\rightarrow [0,\infty)$ is bounded away from zero and bounded from above.
\end{proposition}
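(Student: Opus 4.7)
The plan is to handle the upper and lower bounds on $c^*(\cdot;g)$ over $S^{d-1}$ separately, with the lower bound requiring genuine work. For the upper bound I would use \eqref{eq:BoundGamma} directly: for every $e\in S^{d-1}$ and every $\lambda>0$,
\begin{equation*}
c^*(e)\le \frac{\gamma(e,\lambda)}{\lambda}\le \frac{\|g\|_\infty}{\lambda}+\frac{\lambda}{2},
\end{equation*}
and minimizing the right-hand side in $\lambda$ at $\lambda=\sqrt{2\|g\|_\infty}$ yields $c^*(e)\le \sqrt{2\|g\|_\infty}$, uniformly in $e$.

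For the lower bound, Lemma~\ref{l.c*low} already delivers $c^*(e;g)\ge \sqrt{2\bar g}$ when $g\in C^{1,\alpha}_p(\R^d)$, so the plan is to smooth $g$ and pass to the limit. Fix a non-negative mollifier $\rho\in C_c^\infty(\R^d)$ with $\int_{\R^d}\rho=1$, set $\rho_n(x):=n^d\rho(nx)$, and define $g_n:=g*\rho_n$. Then $g_n\in C^\infty_p(\R^d)\subset C^{1,\alpha}_p(\R^d)$, $g_n\ge 0$, $\bar g_n=\bar g>0$ (so $g_n\not\equiv 0$), and $g_n\to g$ uniformly because $g$ is uniformly continuous on $\TT^d$. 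Lemma~\ref{l.c*low} applied to each $g_n$, together with the definition \eqref{eq:prelimCriticalSpeed} of $c^*$, gives $\gamma(e,\lambda;g_n)\ge \sqrt{2\bar g}\,\lambda$ for every $\lambda>0$ and every $e\in S^{d-1}$. It therefore remains to show that $\gamma(e,\lambda;g_n)\to \gamma(e,\lambda;g)$ as $n\to\infty$, since then the inequality passes to the limit and taking the infimum over $\lambda>0$ produces $c^*(e;g)\ge \sqrt{2\bar g}>0$ uniformly in $e$.

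The required continuity of the principal eigenvalue in the zeroth-order potential rests on two elementary facts. First, a shift property: replacing $g$ by $g+c$ in \eqref{eq:eigenPrelim} shifts every eigenvalue by exactly $c$, so $\gamma(e,\lambda;g+c)=\gamma(e,\lambda;g)+c$. Second, monotonicity: if $g_1\le g_2$, then $\gamma(e,\lambda;g_1)\le \gamma(e,\lambda;g_2)$, which I would verify by testing the strictly positive principal eigenfunction $\psi_1:=\psi(\cdot\,;e,\lambda;g_1)$ from Proposition~\ref{prop:PrincipalEigenEx} against the operator associated with $g_2$: since $g_2-g_1\ge 0$,
\begin{equation*}
\tfrac12\Delta\psi_1+\lambda e\cdot\nabla\psi_1+(\tfrac12\lambda^2+g_2)\psi_1=\gamma(e,\lambda;g_1)\psi_1+(g_2-g_1)\psi_1\ge \gamma(e,\lambda;g_1)\psi_1,
\end{equation*}
so the standard Collatz--Wielandt characterization of the principal eigenvalue (which agrees with \eqref{e.gadef}) yields $\gamma(e,\lambda;g_2)\ge \gamma(e,\lambda;g_1)$. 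Sandwiching $g$ between $g_n\pm \|g-g_n\|_\infty$ and combining the shift and monotonicity properties then gives $|\gamma(e,\lambda;g_n)-\gamma(e,\lambda;g)|\le \|g-g_n\|_\infty\to 0$, which is exactly what was needed.

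The main subtlety is the monotonicity step: the eigenvalue-pair definition \eqref{e.gadef} of $\gamma$ is not directly amenable to supersolution comparison, so I need to invoke the equivalent Collatz--Wielandt characterization in order to turn the inequality $L_2\psi_1\ge \gamma(e,\lambda;g_1)\psi_1$ into a lower bound on $\gamma(e,\lambda;g_2)$. This equivalence is classical for second-order uniformly elliptic operators with a simple principal eigenvalue (which is the setting provided by Proposition~\ref{prop:PrincipalEigenEx}), and once it is in hand the rest of the argument reduces to a routine continuity-and-minimization step.
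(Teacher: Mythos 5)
Your proposal is correct, and while it follows the same overall strategy as the paper for the lower bound (regularize $g$, invoke Lemma~\ref{l.c*low} for the smooth branching rate, and transfer the bound to $g$ via monotonicity of the principal eigenvalue in the zeroth-order term), the execution differs in both halves in ways worth noting. For the upper bound you exploit \eqref{eq:BoundGamma} quantitatively, getting the explicit uniform bound $c^*(e)\le\sqrt{2\|g\|_\infty}$; the paper instead uses the continuity of $\gamma(\cdot,1)$ on $S^{d-1}$ (via the $C^2$ regularity of $\Gamma$) and compactness, which is softer but non-quantitative. For the lower bound, the paper chooses a smooth minorant $g_{\mathrm{aux}}\le g$ with $g_{\mathrm{aux}}\not\equiv 0$, so a single application of the max-min formula \cite[Proposition 2.9]{MR2491804} gives $\gamma(e,\lambda;g)\ge\gamma(e,\lambda;g_{\mathrm{aux}})$ and no limiting argument is needed, at the price of the weaker constant $\sqrt{2\bar g_{\mathrm{aux}}}$. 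You instead mollify (two-sided approximation), which forces you to establish the Lipschitz continuity estimate $|\gamma(e,\lambda;g_n)-\gamma(e,\lambda;g)|\le\|g_n-g\|_\infty$ via the shift property and a monotonicity sandwich, but in return yields the sharper and more natural bound $c^*(e;g)\ge\sqrt{2\bar g}$ (which, incidentally, is how the paper states Lemma~\ref{l.c*low} itself, and matches the remark in the proof of Proposition~\ref{Prop:PropertiesGamma}(ii) that $\gamma(e,0)\ge\bar g$). The one point you flag yourself — that the eigenvalue-pair definition \eqref{e.gadef} agrees with the Collatz--Wielandt/max-min characterization needed for your monotonicity step — is exactly the equivalence the paper gets from \cite[Proposition 2.9]{MR2491804}, and with that citation in hand your monotonicity argument (testing the principal eigenfunction $\psi_1$ as a subsolution for the $g_2$-operator) is sound; alternatively, monotonicity follows directly from the max-min formula since the Rayleigh-type quotient is pointwise nondecreasing in $g$, which is how the paper phrases it. All the mollification bookkeeping ($g_n\ge 0$, $\bar g_n=\bar g>0$, uniform convergence, $g_n\in C^{1,\alpha}_p$) is correct.
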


\begin{proof}
    To prove that $\cs$ is bounded from above, for $v\in \R^d$ let $\Gamma(v):=\gs(v/|v|,|v|)$, where $\gamma$ is the principal eigenvalue defined in \eqref{e.gadef}. By \cite[Chapter 8, Theorem 2.10]{MR1326606}, $\Gamma \in C^2(\R^d)$. In particular, for $\lambda>0$, $\gs(\cdot\ ,\lambda):S^{d-1}\rightarrow \R$ is continuous. By \eqref{eq:prelimCriticalSpeed}, 
    \begin{equation*}
        \cs(e)= \min_{\lambda>0}\frac{\gs(e,\lambda)}{\lambda} \leq \frac{\gs(e,1)}{1} \leq \max_{e'\in S^{d-1}}\gs(e',1)<\infty,
    \end{equation*}
    which shows that $\cs$ is bounded from above. 

    For the lower bound, we first recall that by a standard mollification argument, we can construct a non-negative function $g_\text{aux}\in C^2_p(\R^d)$ such that $g\geq g_{\text{aux}}$ in $\R^d$ and $g_\text{aux}\not \equiv 0$. We claim that $\cs(\cdot\ ;g)\geq \cs(\cdot\ ;g_\text{aux})$. In fact, by \eqref{eq:prelimCriticalSpeed}, this will follow if we verify that for every $(e,\lambda) \in S^{d-1}\times \R$, $\gamma(e,\lambda;g)\geq \gamma(e,\lambda;g_{\text{aux}})$. 
    
    To prove this claim, we recall a 
  min-max formula for the principal eigenvalue of the eigenvalue problem \eqref{eq:eigenPrelim}. Specifically, by  \cite[Proposition 2.9]{MR2491804}, for $e\in S^{d-1}$ and $\lambda\in \R$,
    \begin{equation}
        \begin{split}
            \gamma(e,\lambda; g) &= \max_{ \substack{\phi>0,\\ \phi\in C^2_p(\R^d) } } \min_{x\in \R^d} \frac{ \frac{1}{2}\Delta \phi + \lambda e \cdot \nabla \phi +\parentesis{\frac{1}{2}\lambda^2 + g} \phi}{\phi(x)}\\
            &\geq \max_{ \substack{\phi>0,\\ \phi\in C^2_p(\R^d) } } \min_{x\in \R^d} \frac{ \frac{1}{2}\Delta \phi + \lambda e \cdot \nabla \phi +\parentesis{\frac{1}{2}\lambda^2 + g_\text{aux}} \phi}{\phi(x)}\\
            &=  \gamma(e,\lambda; g_\text{aux}).
        \end{split}
    \end{equation}
    Hence, $\cs(\cdot\ ;g)\geq \cs(\cdot\ ;g_\text{aux})$ in $S^{d-1}$. 

    We conclude by applying Lemma \ref{l.c*low} to $f_{\text{aux}}(x,u)$ given by \eqref{eq:AuxNonlinearity} with $g_{\text{aux}}$. Then, since $g_{\text{aux}}$ is sufficiently regular, it follows from Lemma \ref{l.c*low} and the prior claim that  
    \begin{equation}
        \min_{e\in S^{d-1}}\cs(e;g) \geq \min_{e\in S^{d-1}}\cs(e;g_\text{aux}) \geq \sqrt{2 \bar{g}_\text{aux}}>0. 
    \end{equation}
\end{proof}

\begin{lemma}\label{lemma:UniformBoundsLambda}
    Let $\lambda_e>0$  be such that $\cs(e)=\frac{\gs(e,\lambda_e)}{\lambda_e}$ as in 
     \eqref{eq:prelimCriticalSpeed} and let $\psi(\cdot\ ;e,\lambda_e)$ the eigenfunction associated to the principal eigenvalue of the eigenvalue problem \eqref{eq:eigenPrelim},  normalized so that $\int_{\TT^d} \psi(x;e, \lambda) dx=1$. Then,
\begin{equation}\label{eq:BoundsLambdaQuotient}
        \begin{split}
            0<\inf_{e\in S^{d-1}} \lambda_e \ &\mathrm{and} \ \sup_{e\in S^{d-1}} \lambda_e < +\infty,\\
            \inf_{e\in S^{d-1}}\inf_{x\in \R^d}\psi(x ;e, \lambda_e) > 0 \ &\mathrm{and} \ \sup_{e\in S^{d-1}}\sup_{x\in \R^d}\psi(x ;e, \lambda_e) < +\infty.
        \end{split}
    \end{equation}
\end{lemma}

\begin{proof}
    From \cite[Chapter 8, Theorem 2.10]{MR1326606}, we know that $\gamma(e,\lambda)$ is differentiable in both of the variables $e$ and $\lambda$, and thus, by Proposition \ref{Prop:PropertiesGamma}(ii), $\inf_{(e,\lambda) \in S^{d-1}\times \R}\gamma(e,\lambda)>0$.  Thus, from \eqref{eq:prelimCriticalSpeed} and Proposition \ref{p.c*pos},
    \begin{equation*}
        \lambda_e = \frac{\gamma(e,\lambda_e)}{\cs(e)} \geq \frac{\inf_{(e,\lambda) \in S^{d-1}\times \R} \gamma(e,\lambda) }{\sup_{e\in S^{d-1}} \cs(e)} > 0.
    \end{equation*}
    For an upper bound on $\lambda_e$, as above and from Proposition \ref{eq:BoundGamma}(i),
    \begin{equation*}
        \lambda_e = \frac{\gamma(e,\lambda_e)}{\cs(e)} \geq \frac{(\min_{x\in \R^d} g(x)) + \frac{\lambda_e^2}{2}}{\sup_{e\in S^{d-1}} \cs(e)}.
    \end{equation*}
    Solving a quadratic equation, from this inequality we deduce that 
    \begin{equation*}
        \lambda_e \leq \sup_{e\in S^{d-1}} \cs(e) + \sqrt{\big(\sup_{e\in S^{d-1}} \cs(e)\big)^2 - 2\big(\min_{x\in \R^d} g(x)\big)}.
    \end{equation*}
    Since the previous bounds do not depend on the choice of $e\in S^{d-1}$, we obtain the uniform bounds for $\lambda_e$ in \eqref{eq:BoundsLambdaQuotient}.

    For the uniform bounds on the quotient, let us consider the auxiliary function $\phi:\R^d\times \R^d\to \R$ defined by $\phi(x,\zeta)=\psi(x;e,\lambda)$, where $\lambda>0$ and $e\in S^{d-1}$ are uniquely determined by the relation $\zeta= \lambda e$. We know from Proposition~\ref{prop:PrincipalEigenEx} that for fixed $\zeta\in \R^d$,  $\phi(\cdot,\zeta)$ is twice continuously differentiable, and, in the proof of \cite[Chapter 8, Theorem 2.10, eq. (2.8)]{MR1326606}, it is proved that for fixed $x\in \R^d$, $\phi(x,\cdot)$ is continuously differentiable. Thus, by continuity and periodicity, $\phi$ achieves its maximum and minimum in the compact set $[0,1]^d\times B(0,a)$, where $B(0,a)\subset \R^d$ denotes the closed ball centered at the origin with radius $a$. That is, there exist $(x_1, \zeta_1),(x_2, \zeta_2) \in [0,1]^d\times B(0,a)$ such that 
    \begin{equation*}
        \psi(x_1 ;e_1, \lambda_1) = \phi(x_1, \zeta_1) = \min_{(x,\zeta) \in [0,1]^d\times B(0,a)} \phi(x,\zeta),
    \end{equation*}
    and 
    \begin{equation*}
        \psi(x_2 ;e_2, \lambda_2) =  \phi(x_2, \zeta_2) = \max_{(x,\zeta) \in [0,1]^d\times B(0,a)} \phi(x,\zeta) ,
    \end{equation*}
    where, for $i=1,2$, $\lambda_i\geq 0$ and $e_i\in S^{d-1}$ are such that $\zeta_i=\lambda_i e_i$. We know that the functions $\psi(\cdot  ;e_i,\lambda_i)$ are bounded away from zero and bounded from above. Thus, we deduce that 
    \begin{equation*}
        \inf_{e\in S^{d-1}}\inf_{x\in \R^d}\psi(x ;e, \lambda_e) \geq \min_{(x,\zeta) \in [0,1]^d\times B(0,a)} \phi(x,\zeta) > 0,
    \end{equation*}
    and
     \begin{equation*}
     \sup_{e\in S^{d-1}}\sup_{x\in \R^d}\psi(x ;e, \lambda_e) \leq  \max_{(x,\zeta) \in [0,1]^d\times B(0,a)} \phi(x,\zeta) < +\infty.
    \end{equation*}
    This gives the second part of \eqref{eq:BoundsLambdaQuotient}, which finishes the proof.
\end{proof}

\begin{remark}\label{r.holder}In the case when $g$ is merely continuous (and not H\"older continuous), we note that, as shown in \cite[Theorem 4.4.3]{MR1326606}, the principle eigenvalue $g\mapsto \gamma(e,\lambda; g)$ is Lipschitz continuous in $g$, with respect to the $\sup$-norm. In other words, if $g_{n}\xrightarrow[n\to\infty]{}g$ uniformly, then $\sup_{\lambda>0}|\gamma(e, \lambda; g_{n})-\gamma(e, \lambda; g)|\xrightarrow[n\to\infty]{} 0$, in which case by \eqref{eq:prelimCriticalSpeed}, we may set $c^{*}(e;g):=\lim_{n\to\infty}c^{*}(e; g_{n})$. This is how one can define $c^{*}(e; g)$ in the case when $g$ is merely continuous, but not necessarily H\"older continuous. Equipped with a definition of $c^{*}(e; g)$ for every direction $e\in \mathbb{S}^{d}$, one can then define $\mathcal{W}$ for such $g$ as in \eqref{wulffintro}. \end{remark}

\subsection{Many-to-one lemma}\label{section:manytoone}

We introduce the many-to-one lemma associated with the $g$-BBM. This name reflects the property of equation \eqref{eq:manyTO}, where the probability of a functional depending on the paths of all living particles is expressed in terms of the path of a single particle.  Recall that we denoted by $\mathbb{P}_x$ the underlying abstract measure for the Brownian motion $(B_s)_{s\geq 0}$ started at $x\in \R^d$.
 
\begin{lemma}\cite[Lemma 1 and Section 4.1]{MR3606740}\label{lemma:ManyToOneLemma}
    Let $\cF^B_t=\sigma(B_s; s\in [0,t])$ and $F:C([0,t])\rightarrow [0,\infty)$ be a given function. If $F((B_s)_{s \in [0,t]})$ is $\cF^B_t$-measurable, then
    \begin{equation}\label{eq:manyTO}
        \mathbf{E}_{x}\left[ \sum_{v\in N_t} F\big((X_s(v))_{s \in [0,t]}\big)   \right]= \mathbb{E}_{x}{}\left[{e^{\int_0^t g(B_s) ds}F((B_s)_{s \in [0,t]})  }\right].
    \end{equation}
\end{lemma}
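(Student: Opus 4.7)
My plan is to decompose the sum over $v\in\mathcal{N}_t$ according to the generation $|v|$ of each particle, compute the generation-$n$ contribution from the conditional law of the ancestral branching times given the ancestral Brownian path, and then collapse the resulting series using the exponential power series. Nonnegativity of $F$ lets me interchange sum and expectation freely via Tonelli.

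For a fixed word $v\in\{0,1\}^n$, the recursive construction of the $g$-BBM shows that the ancestral trajectory $(X_s(v))_{s\in[0,t]}$ is a concatenation of $n+1$ independent Brownian segments, so under $\mathbf{P}_x$ it is distributed as a single standard Brownian motion $B$ started at $x$. Along this backbone, the $n+1$ independent rate-$1$ exponential clocks $\sigma_0,\ldots,\sigma_n$ defining the branching produce candidate branching times $\tau_1<\tau_2<\cdots$, and $\{v\in\mathcal{N}_t\}$ is exactly $\{\tau_n\leq t<\tau_{n+1}\}$. The key step I would verify is that, conditionally on $B$, the joint density of $(\tau_1,\ldots,\tau_n)$ restricted to this event is
\[
\prod_{i=1}^{n} g(B_{\tau_i})\cdot e^{-\int_0^t g(B_s)\,ds}\quad\text{on}\quad \{0<\tau_1<\cdots<\tau_n<t\},
\]
where the exponential factor collects all $n+1$ survival probabilities into a single integral against $g$. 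Applying the symmetrization $\int_{0<\tau_1<\cdots<\tau_n<t}\prod_i g(B_{\tau_i})\,d\tau_1\cdots d\tau_n = \tfrac{1}{n!}\bigl(\int_0^t g(B_s)\,ds\bigr)^n$ and then summing over the $2^n$ words of length $n$ yields
\[
\mathbf{E}_x\!\left[\sum_{\substack{v\in\mathcal{N}_t\\|v|=n}} F((X_s(v))_{s\in[0,t]})\right]=\mathbb{E}_x\!\left[\frac{\bigl(2\int_0^t g(B_s)\,ds\bigr)^n}{n!}\,e^{-\int_0^t g(B_s)\,ds}\,F((B_s)_{s\in[0,t]})\right].
\]

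Summing over $n\geq 0$ collapses the Poisson-type series into $e^{-\int_0^t g(B_s)ds}\cdot e^{2\int_0^t g(B_s)ds}=e^{\int_0^t g(B_s)ds}$, which gives exactly \eqref{eq:manyTO}. The main obstacle is the conditional-density claim: concretely, one must show that given the Brownian skeleton $B$, the branching times $(\tau_i)_{i\geq 1}$ along a fixed line of descent form a time-inhomogeneous Poisson process with intensity $s\mapsto g(B_s)$. This follows from the construction of each $\tau_i$ as the first time the additive functional $s\mapsto\int_{\tau_{i-1}}^s g(B_r)\,dr$ exceeds an independent rate-$1$ exponential, together with the time-change theorem for Poisson processes; once this is formalized on the product space underlying the recursive construction of the $g$-BBM, the rest of the argument is essentially bookkeeping.
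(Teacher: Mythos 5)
Your argument is correct. Note, however, that the paper itself offers no proof of this lemma: it is quoted verbatim from Harris and Roberts \cite[Lemma 1 and Section 4.1]{MR3606740}, where it is established for general branching Markov processes (random offspring numbers, general motion) by a one-particle/spine identity proved once and for all. Your route is a direct, self-contained computation tailored to binary branching with spatially varying rate: decompose over generations, observe that along a fixed line of descent the concatenated trajectory is a Brownian motion and that, given this trajectory, the successive branching times form a Cox (inhomogeneous Poisson) process with intensity $s\mapsto g(B_s)$, so that the defective density on $\{\tau_n\le t<\tau_{n+1}\}$ telescopes to $\prod_i g(B_{\tau_i})e^{-\int_0^t g(B_s)ds}$; the symmetrization identity and the factor $2^n$ then collapse the series to $e^{\int_0^t g(B_s)ds}$, recovering the mean-offspring-minus-one exponent of the general formula. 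The only point needing care is the one you flag: the concatenated path is assembled using the branching times themselves, so "conditioning on the path" is slightly circular as stated; the cleanest formalization is an induction on the generation $n$, computing $\mathbf{E}_x[\mathbf{1}_{\{v\in\mathcal{N}_t\}}F]$ by integrating out the first branching time (whose conditional density given the initial particle's motion is $g(B_{s_1})e^{-\int_0^{s_1}g(B_u)du}$) and invoking the branching property and the Markov property of Brownian motion at that time, which yields the iterated integral $\int_{0<s_1<\cdots<s_n<t}\prod_i g(B_{s_i})\,ds\, e^{-\int_0^t g(B_s)ds}$ without ever conditioning on the full assembled path. With nonnegative $F$, Tonelli justifies all interchanges, and no integrability hypothesis is needed. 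In short: your proof is valid and more elementary but special to this model, while the citation buys generality at no local cost.
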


The term $ \exp(\int_0^tg(B_s)ds)$ on the right-hand side of \eqref{eq:manyTO} poses a problem when studying functionals of the form above. To eliminate it, we consider a directional change of measure. Fix $e\in S^{d-1}$ and $\lambda>0$. Let $\phi:\R^d\rightarrow \R^d$ be defined by 
\begin{equation}\label{e.phidef}
   \phi(x)=\phi(x;e,\lambda)=(\phi^1(x),...,\phi^d(x)):= \frac{\nabla \psi(x;e,\lambda)}{\psi(x;e,\lambda)} + \lambda e,
\end{equation}
where $\psi(\cdot \ ;e,\lambda)$ is the strictly positive principal eigenfunction introduced in \eqref{eq:eigenPrelim}.

Let $(\hat{\Omega}, \hat{\cF}, \{\hat{\cF_t}\}_{t\geq 0}, \bP^{e,\lambda})$ denote a filtered space, with $ \{\hat{\cF_t}\}_{t\geq 0}$ right-continuous and complete, and $(\hat{W}_t)_{t\geq 0}$ a $d$-dimensional Brownian motion adapted to $ \{\hat{\cF}_{t}\}_{t\geq 0}$. For $x\in \R^d$, let $(Y_t)_{t\geq 0}$ be a $\hat{\cF}_t$-adapted continuous process which is a strong solution of the SDE 
\begin{equation}\label{eq:SDETiling2.0}
    dY_t = \phi(Y_t;e,\lambda)dt + d\hat{W}_t,\, \, Y_0=x.
\end{equation}
Observe that since $\psi(\cdot; e, \lambda)\in C^{2,\alpha}_{p}(\R^{d})$ is strictly positive, $\phi$ is bounded and Lipschitz continuous. Therefore, the existence and uniqueness of such a process $(Y_{t})_{t\geq 0}$ is a consequence of the well-posedness for SDEs with Lipschitz continuous coefficients (see for example \cite[Theorem 8.3]{MR3497465}). For $x\in \R^d$, we denote by $\bP_x^{e,\lambda}$ the measure on $(\hat{\Omega},\hat{\cF})$ under which $\prob{x}{e,\lambda}{Y_0=x}=1$, and let $\bE_x^{e,\lambda}$ denote the expectation associated to $\bP_x^{e,\lambda}$.

The following lemma is the main result of this section. The change of measure appearing in the lemma is used in \cite{MR4162842} to obtain exact asymptotics of the transition kernel of branching diffusions in periodic environments. Our exposition closely follows that of a similar, one-dimensional version, appearing in \cite[Lemma 2.3]{MR4492971}.

\begin{lemma}\label{Lemma:DirectionalChangeOfMeasure}
Let $\cF^B_t=\sigma(B_s; s\in [0,t])$ and $F:C([0,t])\rightarrow [0,\infty)$ be a given function. If $F((B_s)_{s \in [0,t]})$ is $\cF^B_t$-measurable, then
    \begin{equation}\label{eq:changemeasure}
        \expecddd{x}{e,\lambda}{\frac{\psi(x;e,\lambda)}{\psi(Y_t;e,\lambda)}e^{-\lambda e\cdot (Y_t-Y_0)+t\gs(e,\lambda) }F((Y_s)_{s\in [0,t]})} = \expecddd{x}{}{e^{\int_0^t g(B_s)ds}F((B_s)_{s\in [0,t]})},
    \end{equation}
where $\gs(e,\lambda)$ is the principal eigenvalue introduced in Proposition~\ref{prop:PrincipalEigenEx}.
\end{lemma}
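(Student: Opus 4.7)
The plan is to prove the identity by combining Girsanov's theorem with an application of Itô's formula to $\log \psi$, using the eigenvalue equation \eqref{eq:eigenPrelim} to reconcile the resulting expressions.

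First, I would observe that $\phi(\cdot;e,\lambda)$ is bounded by Proposition~\ref{prop:PrincipalEigenEx} and the fact that $\psi(\cdot;e,\lambda)$ is strictly positive and $C^{2,\alpha}_p(\R^d)$; in particular, Novikov's condition is satisfied, so Girsanov's theorem applies on $[0,t]$. Since $\phi$ is Lipschitz, weak uniqueness holds for the SDE \eqref{eq:SDETiling2.0}, so Girsanov gives that $(Y_s)_{s \in [0,t]}$ under $\mathbf{P}_x^{e,\lambda}$ has the same law as $(B_s)_{s\in[0,t]}$ under $d\mathbf{Q} = L_t\, d\mathbb{P}_x$, where
\begin{equation*}
L_t := \exp\!\left(\int_0^t \phi(B_s;e,\lambda)\cdot dB_s - \frac{1}{2}\int_0^t |\phi(B_s;e,\lambda)|^2\, ds\right).
\end{equation*}
Consequently, the left-hand side of \eqref{eq:changemeasure} equals
\begin{equation*}
\mathbb{E}_x\!\left[\frac{\psi(x;e,\lambda)}{\psi(B_t;e,\lambda)}\,e^{-\lambda e\cdot(B_t-B_0)+t\gamma(e,\lambda)}\,F((B_s)_{s\in[0,t]})\,L_t\right],
\end{equation*}
so it suffices to verify the \emph{pathwise} identity
\begin{equation}\label{eq:pathwiseID}
\frac{\psi(x;e,\lambda)}{\psi(B_t;e,\lambda)}\,e^{-\lambda e\cdot(B_t-B_0)+t\gamma(e,\lambda)}\,L_t = e^{\int_0^t g(B_s)\,ds}\qquad \mathbb{P}_x\text{-a.s.}
\end{equation}

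For this, I would apply Itô's formula to $u(B_s):=\log\psi(B_s;e,\lambda)$ (valid since $\psi$ is $C^{2,\alpha}_p$ and bounded away from zero). Using $\nabla u = \nabla \psi/\psi$ and $\Delta u = \Delta\psi/\psi - |\nabla u|^2$, I obtain
\begin{equation*}
\int_0^t \nabla u(B_s)\cdot dB_s = \log\frac{\psi(B_t)}{\psi(B_0)} - \frac{1}{2}\int_0^t\!\left(\frac{\Delta\psi(B_s)}{\psi(B_s)} - |\nabla u(B_s)|^2\right)ds.
\end{equation*}
Since $\phi = \nabla u + \lambda e$, expanding $\int_0^t \phi\cdot dB_s - \tfrac{1}{2}\int_0^t|\phi|^2\,ds$ and simplifying, the $|\nabla u|^2$ terms cancel and I am left with
\begin{equation*}
\log L_t = \log\frac{\psi(B_t)}{\psi(B_0)} - \frac{1}{2}\int_0^t \frac{\Delta\psi(B_s)}{\psi(B_s)}\,ds + \lambda e\cdot(B_t-B_0) - \lambda\int_0^t e\cdot\nabla u(B_s)\,ds - \frac{t\lambda^2}{2}.
\end{equation*}
Finally, I would invoke the eigenvalue equation \eqref{eq:eigenPrelim}: dividing through by $\psi$ gives
\begin{equation*}
\frac{1}{2}\frac{\Delta\psi}{\psi} = \gamma(e,\lambda) - g - \lambda e\cdot\nabla u - \frac{\lambda^2}{2},
\end{equation*}
and substituting this into the expression for $\log L_t$ causes the $\lambda e\cdot \nabla u$ and $\lambda^2/2$ terms to cancel, leaving
\begin{equation*}
\log L_t = \log\frac{\psi(B_t)}{\psi(B_0)} - t\gamma(e,\lambda) + \int_0^t g(B_s)\,ds + \lambda e\cdot(B_t-B_0).
\end{equation*}
Exponentiating and multiplying by $\tfrac{\psi(B_0)}{\psi(B_t)}e^{-\lambda e\cdot(B_t-B_0)+t\gamma(e,\lambda)}$ yields \eqref{eq:pathwiseID} with $B_0=x$ under $\mathbb{P}_x$, which together with the Girsanov step completes the proof.

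The main (albeit mild) obstacle is purely bookkeeping: making sure every cross-term produced by expanding $|\nabla u + \lambda e|^2$ is matched against the corresponding term coming from the eigenvalue equation, so that both the $|\nabla u|^2$ term (cancelled against the $\Delta u$ in Itô's formula) and the $\lambda e\cdot \nabla u$ term (cancelled against the one produced by dividing \eqref{eq:eigenPrelim} by $\psi$) disappear cleanly. Regularity and integrability are not serious issues because $\psi,\psi^{-1},\nabla\psi,\Delta\psi$ are all bounded by periodicity and positivity of $\psi$, so the stochastic integral is a genuine martingale and all expectations under consideration are finite.
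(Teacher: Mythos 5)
Your proposal is correct and follows essentially the same route as the paper: Girsanov's theorem with Novikov's condition plus weak uniqueness for the Lipschitz SDE to identify the law of $(Y_s)_{s\le t}$ with that of $(B_s)_{s\le t}$ under the tilted measure, and then It\^o's formula applied to $\log\psi(B_s)$ combined with the eigenvalue equation \eqref{eq:eigenPrelim} to reduce the density $L_t$ (the paper's $Z_t$) to $\tfrac{\psi(B_t)}{\psi(B_0)}e^{\lambda e\cdot(B_t-B_0)-t\gamma(e,\lambda)+\int_0^t g(B_s)\,ds}$, which is exactly the paper's claim \eqref{eq:claim}. Your pathwise identity and the cancellation bookkeeping match the paper's computation, so no further comparison is needed.
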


Before proceeding with the proof, we use Girsanov's theorem to construct an auxiliary solution of \eqref{eq:SDETiling2.0} on the space $(\Omega,\cF)$, which will be used in the proof of Lemma \ref{Lemma:DirectionalChangeOfMeasure}. For each $\omega\in \Omega$, let
\begin{equation}\label{WTilting}
    W_t=W_t(\omega):= B_t(\omega)- \int_0^t \phi(B_s(\omega)) ds.
\end{equation}
It is clear that $(W_t)_{t\geq 0}$ is $\cF_t$-adapted and has continuous trajectories. Let
\begin{equation}\label{eq:Zt1}
    Z_t := \exp\left(\sum_{k=1}^d\int_0^t \phi^k(B_s)\ dB^k_s - \frac{1}{2}\int_0^t |\phi(B_s)|^2 ds\right).
\end{equation}
Since $\phi$ is bounded, we have that for every $t>0$, 
\begin{equation*}
    \begin{split}
        \expecddd{x}{}{\exp\Big(\frac{1}{2}\int_0^t |\phi(B_s)|^2 ds\Big)}<\infty.
    \end{split}
\end{equation*}
Thus, by Novikov's theorem (see \cite[Corollary 3.5.13]{MR1121940}), $(Z_t)_{t\geq 0}$ is a martingale under $\bP_x$.  

Consider the completion of the natural filtration of $(B_t)_{t\geq 0}$. Let us denote this filtration by $(\cF^B_{t})_{t\geq 0}$, so $\cF^B_t$ is the completion of $\sigma(B_s;s\in [0,t])$. Since $\phi$ is continuous, one can show that there exists a measure $\bQ_x$ on $\cF_\infty^B:=\sigma(B_s;s\geq 0)$ such that, for every $A\in \cF^B_t$,
\begin{equation}\label{eq:measuretilting}
    \bQ_{x}[A]=\expec{x}{}{\mathbbm{1}_A Z_t}
\end{equation}
(see for example \cite[Corollary 3.5.2]{MR1121940}). It follows that if $G:C([0,t])\rightarrow [0,\infty)$ is such that $G((B_s)_{s\in [0,t]})$ is $\cF^B_t$-measurable, then
\begin{equation}\label{eq:functionalchange}
    \bQ_x[G((B_s)_{s\in [0,t]})] = \expec{x}{}{Z_tG((B_s)_{s\in [0,t]})}.
\end{equation}
Finally, by Girsanov's theorem, the process $(W_t)_{t\geq 0}$ defined by \eqref{WTilting} is a Brownian motion under the measure $\bQ_x$, with $\bQ_x[W_0=x]=1$. In particular, by rearranging \eqref{WTilting}, we observe that $(B_t)_{t\geq 0}$ is a weak solution of \eqref{eq:SDETiling2.0} under the measure $\bQ_x$. 

Now return to the abstract probability measure $\mathbb{P}^{e,\lambda}_x$ introduced right after \eqref{eq:SDETiling2.0}. Since $(B_t)_{t\geq 0}$ is a weak solution of \eqref{eq:SDETiling2.0} under the measure $\bQ_x$, and by the uniqueness of weak solutions for SDEs with Lipschitz coefficients (see \cite[Theorem 8.5]{MR3497465}), if $G:C([0,t])\rightarrow [0,\infty)$ is such that $G((B_s)_{s\in [0,t]})$ is $\cF^B_t$-measurable and lies in $L^1(\mathbb{P}_{x}^{e,\lambda})$, then $G((B_s)_{s\in [0,t]}) \in L^1(\bQ_x)$ and 
\begin{equation}\label{eq:WeakSolUniq}
    \expecdd{x}{e,\lambda}{G((Y_s)_{s\in [0,t]})} = \bQ_x[G((B_s)_{s\in [0,t]})].
\end{equation}

With this in mind, we proceed with the proof of Lemma \ref{Lemma:DirectionalChangeOfMeasure}. We will prove the identity \eqref{eq:changemeasure} for $(B_t)_{t\geq 0}$ under the auxiliary measure $\bQ$, and then use \eqref{eq:WeakSolUniq} to obtain the result for the process $(Y_t)_{t\geq 0}$ in \eqref{eq:SDETiling2.0} under the abstract measure $\mathbb{P}^{e,\lambda}$.

\begin{proof}[\textbf{Proof of Lemma \ref{Lemma:DirectionalChangeOfMeasure}}]
    Recall that $\psi(\cdot\ ;e, \lambda)$ denotes the eigenfunction associated with the principal eigenvalue of the eigenvalue problem \eqref{eq:eigenPrelim} introduced in Proposition~\ref{prop:PrincipalEigenEx}. Since $\lambda >0$ and $e\in S^{d-1}$ are fixed, let us write $\psi(x):= \psi(x;e,\lambda)$, $\gs(\lambda):=\gs(e,\lambda)$, and let $\phi_{[s]}=\phi(B_s;e,\lambda)$. We claim that under the measure $\bP_x$,
    \begin{equation}\label{eq:claim}
        \sum_{k=1}^d\int_0^t \phi^k_{[s]}\ dB^k_s - \frac{1}{2}\int_0^t |\phi_{[s]}|^2 ds = \log\frac{\psi(B_t)}{\psi(B_0)} + \lambda e\cdot(B_t-B_0) - t \gs(\lambda) + \int_0^t g(B_s) \ ds .
    \end{equation}
    Let us write $\psi_{[s]}=\psi(B_s)$, $\nabla\psi_{[s]}=\nabla\psi(B_s)$, $\partial_k \psi_{[s]}=(\tfrac{\partial \psi}{\partial x_k})(B_s)$ and $\Delta\psi_{[s]}=\Delta \psi(B_s)$. Since $(B_t)_{t\geq 0}$ is a Brownian motion under the measure $\bP_x$, we apply It\^o's formula to get 
    \begin{equation}\label{eq:ItoFormulaLog}
        \begin{split}
            \log\frac{\psi(B_t)}{\psi(B_0)}=\log\psi(B_t)- \log\psi(B_0) = \sum_{k=1}^d\int_0^t \frac{\partial_k \psi_{[s]}}{\psi_{[s]}}  \ dB_s^k + \frac{1}{2}\int_0^t \bigg(\frac{\Delta \psi_{[s]}}{\psi_{[s]}}-\frac{|\nabla \psi_{[s]} |^2}{\psi^2_{[s]}}\bigg)\ ds.
        \end{split}
    \end{equation}
    On the other hand, from the definition of $\phi_{[s]}$, observe that
    \begin{equation*}
        \begin{split}
            \sum_{k=1}^d\int_0^t \phi^k_{[s]}\ dB^k_s - \frac{1}{2}\int_0^t |\phi_{[s]}|^2 ds & = \sum_{k=1}^d\int_0^t \frac{\partial_k \psi_{[s]}}{\psi_{[s]}}dB_s^k + \lambda e\cdot (B_t-B_0)\\ 
            & \ \ \ \ \ \ \ \ \ \ \ \ -\frac{1}{2}\int_0^t \frac{|\nabla \psi_{[s]} |^2}{\psi^2_{[s]}} \ ds  -\frac{1}{2}\int_0^t \bigg({2\lambda \frac{e\cdot \nabla \psi_{[s]}}{\psi_{[s]}}+\lambda^2 }\bigg) ds.
        \end{split}
    \end{equation*}
    Combining this with \eqref{eq:ItoFormulaLog}, we get
    \begin{equation*}
        \begin{split}
             \sum_{k=1}^d\int_0^t \phi^k_{[s]}\ dB^k_s - \frac{1}{2}\int_0^t |\phi_{[s]}|^2 ds & = \log\frac{\psi(B_t)}{\psi(B_0)} + \lambda e\cdot(B_t-B_0)\\
             & \ \ \ \ \ \ \ \ \ \ \ \ \ \ \ \ -\frac{1}{2}\int_0^t \bigg(2\lambda \frac{e\cdot \nabla \psi_{[s]}}{\psi_{[s]}}+\lambda^2 + \frac{\Delta \psi_{[s]}}{\psi_{[s]}} \bigg) ds. 
        \end{split}
    \end{equation*}
    Recall that $\psi$ solves the eigenvalue problem \eqref{eq:eigenPrelim} with $\mu=\lambda$. By using the expression of the eigenvalue problem to rewrite the final integral in the expression above, we obtain \eqref{eq:claim}; this finishes the proof of the claim.

    Now we plug \eqref{eq:claim} into \eqref{eq:Zt1}, the definition of $Z_t$, to obtain
    \begin{equation*}
        Z_t = \frac{\psi(B_t)}{\psi(x)}\exp\Big(\lambda e\cdot(B_t-B_0) - t\gs(\lambda) +\int_0^t g(B_s) ds \Big).
    \end{equation*}
    Let $G$ be the non-negative $\cF^B_t$-measurable function given by
    \begin{equation*}
        G((B_s)_{s\in [0,t]}):= \frac{\psi(x)}{\psi(B_t)}\exp\bigl(-\lambda e\cdot (B_t-B_0)+t\gs(\lambda) \bigl)F((B_s)_{s\in [0,t]}).
    \end{equation*}  
    By plugging this into \eqref{eq:functionalchange}, we obtain
    \begin{equation}\label{eq:aux}
        \begin{split}
            \bQ_x\Big[\frac{\psi(x)}{\psi(B_t)}e^{-\lambda e\cdot (B_t-B_0)+t\gs(e,\lambda) }F((B_s)_{s\in [0,t]})\Big] &= \expecdd{x}{}{Z_tG((B_s)_{s\in [0,t]})}\\
            &= \mathbb{E}_{x}\left[\exp\Big(\int_0^t g(B_s) ds \Big)F((B_s)_{s\in [0,t]})\right],
        \end{split}
    \end{equation}
    which shows \eqref{eq:changemeasure} for the solution of \eqref{eq:SDETiling2.0} under the measure $\bQ$. Finally, by \eqref{eq:WeakSolUniq}, we conclude that \eqref{eq:changemeasure} holds for the solution $(Y_t)_{t\geq 0}$ of \eqref{eq:SDETiling2.0} under the abstract measure $\mathbb{P}^{e,\lambda}_x$.
\end{proof}

When applying Lemma \ref{Lemma:DirectionalChangeOfMeasure}, we will specify the choice of $e\in S^{d-1}$ by saying that we apply Lemma \ref{Lemma:DirectionalChangeOfMeasure} {\em in the direction} $e$. 

\subsection{Interpolation Results}\label{sec:interpolation}
Throughout this work, we perform several arguments which rely on the $g$-BBM evaluated at a set of discrete times. In this subsection, we present a general ``interpolation lemma'' which allows us to pass information from discrete times to continuous times. 

For $E\subset \R^d$ and $\epsilon>0$, let $E_\epsilon:=\cup_{x\in E}B(x,\epsilon)$. Given two sets $E,D\subset \R^d$, the {\em Hausdorff distance} between $E$ and $D$ is given by
\begin{equation*}
    d_\mathrm{H}(E,D):=\inf\{ \epsilon>0 :\ E\subset D_\epsilon \text{ and } D\subset E_\epsilon \}.
\end{equation*}
We now prove the following result, which we later refer to as the interpolation lemma.

\begin{lemma}\label{lemma:interpolation}
    For $t \geq 0$, let $\mathcal{X}_t:=\{X_t(v):v\in \cN_t\}\subset \R^d$. Fix $T>0$ and $\kappa>0$. For every $x\in \R^d$, 
    \begin{equation}\label{eq:interpolation2}
        \probaaa{x}{}{\exists N\in \N \text{ s.t. } \forall t\geq NT, \sup_{\ell \in [t,t+T] } d_\mathrm{H}(\mathcal{X}_t,\mathcal{X}_\ell)\leq \kappa t }=1
    \end{equation}
\end{lemma}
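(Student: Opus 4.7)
The plan is to reduce the statement to a Borel--Cantelli argument at the discrete times $nT$. For each $n \in \N$, define
\begin{equation*}
R_n := \max_{w \in \cN_{nT}}\, \sup_{s \in [nT,(n+2)T]}\, \max_{u \in \cN_s,\ \mathrm{anc}_{nT}(u)=w} |X_s(u) - X_{nT}(w)|,
\end{equation*}
the largest displacement over the window $[nT,(n+2)T]$ of any descendant from its time-$nT$ ancestor. First I would record a geometric reduction: on the event $\{R_n \le \kappa nT/2\}$, every $u \in \cN_s$ with $s \in [nT,(n+2)T]$ lies within $R_n$ of $X_{nT}(\mathrm{anc}_{nT}(u)) \in \cX_{nT}$, and every $w \in \cN_{nT}$ has at least one descendant in $\cN_s$ (particles only split, never die) within $R_n$ of $X_{nT}(w)$; hence $d_{\mathrm{H}}(\cX_s,\cX_{nT}) \le R_n$. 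The triangle inequality then gives $d_{\mathrm H}(\cX_t,\cX_\ell) \le 2 R_n \le \kappa nT \le \kappa t$ whenever $t \in [nT,(n+1)T]$ and $\ell \in [t,t+T]$, since both $t$ and $\ell$ lie in $[nT,(n+2)T]$. Thus it suffices to prove $\sum_{n} \proba{x}{}{R_n > \kappa nT/2} < \infty$.

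Next I would obtain a uniform tail bound for the maximum displacement over a fixed window. For a $g$-BBM started at $y\in\R^d$, set $R := \sup_{s\in[0,2T]} \max_{u\in\cN_s} |X_s(u)-y|$. Since particles only split, any $u' \in \cN_s$ with $s \le 2T$ has a descendant $u \in \cN_{2T}$ whose ancestral trajectory agrees with $u'$ up to time $s$; therefore $\{R > A\}$ implies that some $u \in \cN_{2T}$ has ancestral path satisfying $\sup_{r \in [0,2T]} |X_r(u)-y| > A$. Applying Lemma~\ref{lemma:ManyToOneLemma} with $F((B_r)_{r\in[0,2T]}) = \indc\{\sup_{r\in[0,2T]}|B_r - y| > A\}$, together with the trivial bound $\int_0^{2T} g(B_s)\,ds \le 2T\|g\|_\infty$, gives
\begin{equation*}
\sup_{y\in\R^d} \proba{y}{}{R > A} \le e^{2T\|g\|_\infty}\, \bP_0\bigl[\sup_{s\in[0,2T]} |B_s| > A\bigr] \le C_1\, e^{-A^2/(C_2 T)}
\end{equation*}
for constants $C_1,C_2$ depending only on $d$ and $\|g\|_\infty$, using the standard Gaussian tail bound for the running maximum of Brownian motion.

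Finally I would close the argument via the branching property at time $nT$: conditionally on $\cF_{nT}$, the contributions to $R_n$ from distinct $w \in \cN_{nT}$ are independent, each distributed as $R$ under $\mathbf P_{X_{nT}(w)}$. A union bound, together with Lemma~\ref{lemma:ManyToOneLemma} applied with $F \equiv 1$ to bound $\expected{x}{}{|\cN_{nT}|} \le e^{nT\|g\|_\infty}$, yields
\begin{equation*}
\proba{x}{}{R_n > A} \le \expected{x}{}{|\cN_{nT}|}\, \sup_{y\in\R^d} \proba{y}{}{R > A} \le C_1\, e^{nT\|g\|_\infty - A^2/(C_2 T)}.
\end{equation*}
Setting $A = \kappa nT/2$ makes the exponent $nT\|g\|_\infty - \kappa^2 n^2 T/(4C_2)$, which is summable in $n$; Borel--Cantelli then supplies the required (random) threshold $N$, and the reduction from the first paragraph converts this into the almost-sure statement \eqref{eq:interpolation2}. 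The one subtlety to handle carefully is the time-doubling to $[nT,(n+2)T]$, which ensures every window $[t,t+T]$ with $t \in [nT,(n+1)T]$ is fully contained in the interval controlled by $R_n$; apart from this, the argument is a textbook combination of the many-to-one lemma, Gaussian tail bounds, and Borel--Cantelli.
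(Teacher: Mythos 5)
Your proof is correct and follows essentially the same route as the paper: discretize at times $nT$, bound the probability of a large displacement over a window via the many-to-one lemma (Lemma~\ref{lemma:ManyToOneLemma}) and Gaussian tail estimates for the running maximum of Brownian motion, sum and apply Borel--Cantelli, then pass to continuous times by the triangle inequality for $d_{\mathrm{H}}$. The only difference is organizational: you condition at time $nT$ and combine a per-subtree tail bound with $\expected{x}{}{\#\cN_{nT}}\le e^{nT\|g\|_\infty}$ over a $2T$-window, whereas the paper applies the many-to-one lemma once over $[0,(n+1)T]$ and chains consecutive $T$-windows; both yield the same summable bound.
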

\begin{proof}
    Let $b=\|g\|_{\infty}$ and $\kappa':=\kappa/8$. For $n \in \N$, let $A_{n}\subset \Omega$ be defined by
    \begin{equation*}
        A_n:=\Bigl\{\sup_{\ell \in [nT,(n+1)T]} d_\mathrm{H}(\mathcal{X}_{nT},\mathcal{X}_\ell)> \kappa'nT \Bigl \}.
    \end{equation*}
Recall that for $v\in \mathcal{N}_t$ and $s\leq t$, we denote $X_s(v)$ as the position of the living ancestor of $v$ at time $s$. If $A_n$ occurs, there is a particle $v$ in $\cN_{(n+1)T}$ such that $\sup_{\ell \in [nT,(n+1)T]} |X_\ell(v)-X_{nT}(v)|> \kappa'nT $. Hence, by the Markov inequality, the many-to-one lemma (Lemma \ref{lemma:ManyToOneLemma}), and the Markov property for Brownian motion,
    \begin{equation}\label{eq:interpolatio1}
        \begin{split}
            \proba{x}{}{A_n} &\leq \probaaa{x}{}{\exists v\in \cN_{(n+1)T}: \sup_{\ell \in [nT,(n+1)T]} |X_\ell(v)-X_{nT}(v)|> \kappa'nT }\\
            & = \expecddd{x}{}{e^{\int_0^{(n+1)T}g(B_s)ds}\mathbbm{1}_{\{\sup_{\ell \in [nT,(n+1)T]} |B_\ell-B_{nT}|> \kappa'nT\}}  }\\
            & \leq e^{(n+1)Tb}\mathbb{P}_{0}\left[{\sup_{\ell \in [0,T]} |B_\ell|> \kappa'nT }\right]=  e^{(n+1)Tb}\mathbb{P}_{0}\left[{\sup_{\ell \in [0,1]} |B_\ell|> \kappa'nT^{1/2} }\right].
        \end{split}
    \end{equation}
     For $i=1, \ldots, d$, let $B^i_{t}$ be the $i$'th component of the Brownian motion $B_{t}$. Note that since $\sum_{i=1}^d \sup_{\ell\in [0,1]}|B_\ell^i| \geq \sup_{\ell\in [0,1]}|B_\ell|$, we have
     \begin{equation*}
         \Bigl\{ \sup_{\ell \in [0,1]}|B_\ell|> \kappa'n T^{1/2}\Bigl\}\subset  \Bigl\{ \sum_{i=1}^d \sup_{\ell\in [0,1]}|B_\ell^i|> \kappa'n T^{1/2}\Bigl\}\subset \bigcup_{i=1}^d\Bigl\{\sup_{\ell\in [0,1]} |B^i_\ell |>d^{-1}\kappa'n T^{1/2}\Bigl \}.
     \end{equation*}
     Combining this with the reflection principle and a standard tail estimate of the normal distribution, we obtain
     \begin{equation}\label{e.gaussbd}
         \begin{split}
             \probbb{0}{}{\sup_{\ell \in [0,1]} |B_\ell|> \kappa'n T^{1/2} } &\leq 2d \probbb{0}{}{ \sup_{s\in [0,1]}B_s^1> d^{-1}\kappa'n T^{1/2} }\\
             & =4d\prob{0}{}{ B_1^1> d^{-1}\kappa'n T^{1/2} }\\
             & \leq 4d^2 \frac{\exp\parentesiss{\frac{-\kappa'^2n^{2}}{2d^2}T  }}{\kappa'n T^{1/2}\sqrt{2\pi} }.
         \end{split}
     \end{equation}
     Plugging this back into \eqref{eq:interpolatio1}, we observe that $\sum_{n\in \N}\proba{x}{}{A_n}<\infty$, and hence, by the Borell-Cantelli lemma,
     \begin{equation}\label{eq:Inter1}
        \probaaa{x}{}{\exists N\in \N \text{ s.t. } \forall n\geq N, \sup_{\ell \in [nT,(n+1)T] } d_\mathrm{H}(\mathcal{X}_\ell,\mathcal{X}_{nT})\leq \kappa' nT}=1.
    \end{equation}
    To prove \eqref{eq:interpolation2}, observe by the triangle inequality that, on the event in \eqref{eq:Inter1}, for $n\geq N$ and $\ell \in [0,T]$,
    \begin{equation*}
        \begin{split}
            d_\mathrm{H}(\mathcal{X}_{nT+\ell},\mathcal{X}_{(n+1)T+\ell}) 
            & \leq d_\mathrm{H}(\mathcal{X}_{nT+\ell},\mathcal{X}_{nT}) + d_\mathrm{H}(\mathcal{X}_{nT},\mathcal{X}_{(n+1)T}) + d_\mathrm{H}(\mathcal{X}_{(n+1)T},\mathcal{X}_{(n+1)T+\ell})\\
            & \leq 2\kappa'nT+ \kappa'(n+1) T\\
            & \leq \kappa'n T (2+\tfrac{n+1}{n})\\
            & < (\kappa/2) n T,
        \end{split}
    \end{equation*}
    which yields \eqref{eq:interpolation2}.
\end{proof} 

\section{Half-Space Estimates}\label{sec:ShapeThm}

In this section, we present the proofs of Proposition \ref{propo:introUpper} and Proposition \ref{propo:introLower}. We recall the half-space notation introduced in the introduction. For $e\in S^{d-1}$ and $c\in \R$, let
\begin{equation*}
    \cH_{e,c}:=\{x\in \R^d: x\cdot e = c\},\
    \cH_{e,c}^+:=\{x\in \R^d: x\cdot e> c\}  \text{ and } \cH_{e,c}^-:=\{x\in \R^d: x\cdot e \leq c\}.
\end{equation*}

Throughout this section, we will use the notation $\gamma(e,\lambda)$, first introduced in Section \ref{section:FKPPEigenvalue}, to denote the principal eigenvalue of the eigenvalue problem \eqref{eq:eigenPrelim}.

\subsection{Proof of Proposition \ref{propo:introUpper}: the Upper Estimate}\label{sec:rightestimate}

Proposition \ref{propo:introUpper} is immediate from the following result, which gives more detailed information about the constants in the estimate. 
\begin{proposition}\label{lemma:UpperBound}
    Fix $e\in S^{d-1}$. 
    Let $\lambda_e>0$  be such that $\cs(e)=\frac{\gs(e,\lambda_e)}{\lambda_e}$ as in 
     \eqref{eq:prelimCriticalSpeed} and let $\psi(\cdot\ ;e,\lambda_e)$ the eigenfunction associated to the principal eigenvalue of the eigenvalue problem \eqref{eq:eigenPrelim},  normalized so that $\int_{\TT^d} \psi(x;e, \lambda) dx=1$. Then, for every $t>0$ and $f:[0,\infty)\to \R$,
    \begin{equation}\label{eq:upper-half-estimate}
        \sup_{x\in \R^{d}}\proba{x}{}{\exists v\in \mathcal{N}_t : (X_t(v)-X_{0}(v))\cdot e> \cs(e)t + f(t)} \leq \frac{\max_{y\in \R^d}\psi(y;e,\lambda_{e}) }{ \min_{y\in \R^d}\psi(y;e,\lambda_{e}) } e^{-\lambda_e f(t)}.
    \end{equation}
\end{proposition}

\begin{proof}

    Let $\psi(\cdot\ ;e,\lambda_e)$ be the positive eigenfunction associated to $\gs(e,\lambda_e)$ in \eqref{eq:eigenPrelim}. Since $e$ is fixed, let us write $\psi(x)=\psi(x;e,\lambda_e)$ and $\gs(\lambda_e)=\gs(e,\lambda_e)$. Upon setting $\phi(x)=\phi(x,e; \lambda_{e})$ as in \eqref{e.phidef}, let $\bP^{e,\lambda_e}_x$ be the measure defined in \eqref{eq:measuretilting} and \eqref{eq:WeakSolUniq}, and $\bE^{e,\lambda_e}_x$ the corresponding expected value.  Let $(Y_s)_{s\geq 0}$ be the solution of \eqref{eq:SDETiling2.0} under $\bP^{e,\lambda_e}_x$.
   
   We apply \eqref{eq:changemeasure} in the direction $e$, taking $\lambda=\lambda_e$ and 
    \begin{equation*}
        F((B_s)_{s\in [0,t]}) = \mathbbm{1}_{\{ (B_t-B_0)\cdot e > t\cs(e)+f(t)\}},
    \end{equation*}
    which is $\cF^B_t$-measurable and bounded. We obtain
    \begin{equation*}
        \mathbb{E}_{x}\Big[e^{\int_0^t g(B_s)ds}\mathbbm{1}_{\{(B_t-B_0)\cdot e > t\cs(e)+f(t)\}}\Big] =  \mathbb{E}_{x}^{e,\lambda_e}\Big[\mathbbm{1}_{\{ (Y_t-Y_0)\cdot e > t\cs(e)+f(t)\}}\frac{\psi(x)}{\psi(Y_t)}e^{-\lambda_e e\cdot(Y_t-Y_0)+t\gs(\lambda_e)} \Big].
    \end{equation*}
    We will estimate the right hand side of the above expression. Since $ \lambda_e = \gs(\lambda_e)/\cs(e)$, and $\gamma(\lambda_{e})$ and $c^*(e)$ are positive by Proposition \ref{Prop:PropertiesGamma} and Proposition \ref{p.c*pos}, on the event $\{ (Y_t-Y_0)\cdot e > t\cs(e)+f(t)\}$,  
    \begin{equation*}
        -\lambda_e e\cdot(Y_t-Y_0)+t\gs(\lambda_e) = \gs(\lambda_e)\left(-\frac{e\cdot (Y_t-Y_0)}{\cs(e)} +t \right)< \gamma(\lambda_e)\left(-\frac{t\cs(e)+f(t)}{\cs(e)}+t\right) = - \lambda_e f(t).
    \end{equation*}
    Plugging this back into the prior display, and using that $\psi$ bounded away from zero and bounded from above,
    \begin{equation}\label{e.upper1}
    \begin{aligned}
        \begin{split}
             \mathbb{E}_{x}\Big[e^{\int_0^t g(B_s)ds}\mathbbm{1}_{\{ (B_t-B_0)\cdot e > t(1+\epsilon)\cs(e)\}}\Big] & \leq \mathbb{E}_{x}^{e,\lambda_e}\Big[\mathbbm{1}_{\{ (Y_t-Y_0)\cdot e > t\cs(e)+f(t)\}}\frac{\psi(x)}{\psi(Y_t)}e^{- \lambda_e f(t)}\Big]\\
            & \leq \frac{\max_{y\in \R^d}\psi(y;e,\lambda_{e}) }{ \min_{y\in \R^d}\psi(y;e,\lambda_{e}) }e^{- \lambda_e f(t)}\expecdd{x}{e,\lambda_e}{\mathbbm{1}_{\{ (Y_t-Y_0)\cdot e > t\cs(e)+f(t)\}}}\\
            & \leq \frac{\max_{y\in \R^d}\psi(y;e,\lambda_{e}) }{ \min_{y\in \R^d}\psi(y;e,\lambda_{e}) }e^{- \lambda_e f(t)}.
        \end{split}
        \end{aligned}
    \end{equation}
    On the other hand, by the Markov inequality and the many-to-one lemma (Lemma \ref{lemma:ManyToOneLemma}),
    \begin{equation*}
        \begin{split}
            \proba{x}{}{\exists v\in \mathcal{N}_t: (X_t(v)-x)\cdot e> t\cs(e)+f(t)} &= \probaa{x}{}{\sum_{v\in \mathcal{N}_t}\mathbbm{1}_{\{ (X_t(v)-x)\cdot e > t\cs(e)+f(t)\}} \geq 1}\\
            & \leq \expectedd{x}{}{\sum_{v\in \mathcal{N}_t}\mathbbm{1}_{\{ (X_t(v)-x)\cdot e> t\cs(e)+f(t)\}} }\\
            & = \mathbb{E}_{x}\Big[e^{\int_0^t g(B_s)ds}\mathbbm{1}_{\{ (B_t-B_0)\cdot e > t\cs(e)+f(t)\}}\Big] .
        \end{split}
    \end{equation*} 
    Combining this with \eqref{e.upper1}, we deduce \eqref{eq:upper-half-estimate}.
\end{proof}

\begin{proof}[Proof of Proposition \ref{propo:introUpper}]
     This follows directly from the previous proposition by taking $f(t)=\epsilon \cs(e) t$ and observing that, by Lemma \ref{lemma:UniformBoundsLambda} and Proposition \ref{p.c*pos},
    \begin{equation*}
        \Gamma := \min_{e\in S^{d-1}} \lambda_e \cs(e) >0 \ \ \mathrm{ and }\ \ C:=\sup_{e\in S^{d-1}} \frac{\max_{y\in \R^d}\psi(y;e,\lambda_{e}) }{ \min_{y\in \R^d}\psi(y;e,\lambda_{e}) }<+\infty. \qedhere
    \end{equation*}
\end{proof}
    
\subsection{Proof of Proposition \ref{propo:introLower}: the Lower Estimate}\label{sec:lowerestimate}

In this section, we prove that the probability that at time $t$, every living particle belongs to the half-space $t(1-\epsilon)\mathcal{H}^{-}_{e,c^*(e)}=\{z \in \R^d: z\cdot e < t(1-\epsilon)c^*(e)\}$, decays exponentially on $t$.

\begin{proposition}\label{lemma:DirectionalLowerBound}
    Fix $\epsilon\in (0,1)$ and $e\in S^{d-1}$. There exist $C=C(e)>0$, $\Gamma=\Gamma(\epsilon,e)\in (0,1)$, and $T_{0}=T_{0}(\epsilon, d, \|g\|_{\infty})>0$ such that for all $t\geq T_{0}$,
    \begin{equation}\label{eq:LowerHalfspaceEstimate}
        \sup_{x\in \R^d}\proba{x}{}{\forall v\in \mathcal{N}_t, (X_t(v)-X_0(v))\cdot e\leq (1-\epsilon)\cs(e) t} \leq Ce^{-\Gamma(\epsilon,e)t}.
    \end{equation}
\end{proposition}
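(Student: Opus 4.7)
My plan is to construct a supercritical embedded multitype Galton-Watson process along the $g$-BBM, sampled at integer multiples of some large $T > 0$, tracking particles whose ancestors have displaced in direction $e$ at speed approximately $c_\lambda := \partial_\lambda \gamma(e, \lambda)$ at each sampling time. Here $\lambda \in (0, \lambda_e)$ is chosen, using strict convexity of $\gamma(e,\cdot)$ (Proposition \ref{Prop:PropertiesGamma}), so that $c_\lambda$ lies strictly between $(1 - \epsilon) c^*(e)$ and $c^*(e)$. Once this embedded process has a uniformly positive survival probability from any starting type, a cut-off at an early time $t_0 = \eta' t$---using that the $g$-BBM has exponentially many particles at time $t_0$ with overwhelming probability---will amplify that positive probability into the exponentially small failure bound \eqref{eq:LowerHalfspaceEstimate}.

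For the mean-growth estimate, I combine the many-to-one lemma (Lemma \ref{lemma:ManyToOneLemma}) with the directional change of measure (Lemma \ref{Lemma:DirectionalChangeOfMeasure}). Setting $I_T := T[c_\lambda - \eta, c_\lambda + \eta]$ for small $\eta > 0$, the expected count of particles $v \in \cN_T$ with $(X_T(v) - x)\cdot e \in I_T$ equals
\[
\mathbb{E}_x^{e,\lambda}\Bigl[\tfrac{\psi(x)}{\psi(Y_T)}\, e^{-\lambda e\cdot (Y_T - x) + T\gamma(e,\lambda)}\, \mathbf{1}_{(Y_T - x)\cdot e \in I_T}\Bigr].
\]
On the event in the indicator, the exponent is at least $T[\gamma(e,\lambda) - \lambda (c_\lambda + \eta)] > 0$ by strict convexity, while a uniform Gärtner-Ellis theorem (Theorem \ref{thm:GEgenralization}) applied to $(Y_T - x)\cdot e / T$ under $\mathbb{P}_x^{e,\lambda}$ shows the probability of the indicator is bounded away from zero uniformly in $x$. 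Combining these yields, for $T$ sufficiently large and some $m > 1$,
\[
\inf_{x \in \R^d}\, \mathbf{E}_x\bigl[\#\{v \in \cN_T : (X_T(v) - x)\cdot e \in I_T\}\bigr]\, \geq\, m.
\]

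Iterating via the branching property at times $nT$ defines a multitype branching process whose types live in the torus $\TT^d$ and whose uniform mean offspring count is at least $m > 1$. A Poissonian thinning of the offspring produces a bounded-variance process comparable to a supercritical homogeneous BBM; combined with Lemma \ref{lemma:SurvivalProbability}, this gives a survival probability $p_0 > 0$ uniform in the starting type. For the cut-off, pick $\eta' > 0$ small enough that $(c_\lambda - \eta)(1 - \eta') - C\eta' \geq (1 - \epsilon)c^*(e)$, and set $t_0 := \eta' t$. Lemma \ref{lemma:tailBranchingTime} together with a standard Galton-Watson lower-deviation estimate guarantees that with probability $\geq 1 - C e^{-\Gamma t}$ the $g$-BBM has at least $N_0 \geq e^{\Gamma_0 t_0}$ particles at time $t_0$. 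Conditional on $\cF_{t_0}$, the embedded process started from each of these particles survives independently with probability $\geq p_0$, so the chance that none does is at most $(1 - p_0)^{N_0}$, which is doubly exponentially small in $t$. On the complementary event, some descendant $v \in \cN_t$ satisfies $(X_t(v) - X_{t_0}(v))\cdot e \geq (c_\lambda - \eta)(t - t_0)$; combined with Proposition \ref{lemma:UpperBound} applied in direction $-e$ to bound $(X_{t_0}(v) - x)\cdot e \geq -C t_0$ with high probability, this delivers \eqref{eq:LowerHalfspaceEstimate}.

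The main obstacle is making all estimates uniform in the initial position $x$. The embedded process has an uncountable type space (the torus $\TT^d$), so both the mean-growth bound and the survival probability must be controlled uniformly across types. This is precisely where the uniform Gärtner-Ellis theorem (Theorem \ref{thm:GEgenralization}) and the multitype survival lemma (Lemma \ref{lemma:SurvivalProbability}) are crucial, and where the Poissonian thinning plays its role by producing a tractable, independent offspring distribution that a comparison with a homogeneous BBM can exploit.
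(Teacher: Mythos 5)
Your proposal follows essentially the same route as the paper: many-to-one plus the directional change of measure and the uniform G\"artner--Ellis theorem to show that the expected number of particles whose displacement lies in a moving window is (exponentially) larger than one uniformly in $x$; an embedded discrete-time multitype branching process with types in $\TT^d$ surviving with probability bounded below uniformly in the starting point (Lemma \ref{lemma:SurvivalProbability}); and a cutoff argument in which, at a time proportional to $t$, there are many particles with overwhelming probability, each independently seeding a surviving embedded process, with Proposition \ref{lemma:UpperBound} applied in direction $-e$ controlling the backward displacement accrued before the cutoff. The differences are cosmetic rather than structural: you tilt at a $\lambda\in(0,\lambda_e)$ so that the target speed $c_\lambda=\partial_\lambda\gamma(e,\lambda)$ is the law-of-large-numbers speed under the tilt (the paper tilts at $\lambda_e$ and pays a small positive rate $I_e((1-\alpha\epsilon)c^*(e))$, compensated by taking $\epsilon$ small), and you ask for exponentially many particles at the cutoff time rather than the paper's $\#\cN_L>t$; both variants are available from the same Bienaym\'e comparison. (Two small imprecisions: G\"artner--Ellis only gives the window probability $\ge e^{-o(T)}$, not ``bounded away from zero,'' which is still enough since the exponential prefactor is positive; and Lemma \ref{lemma:logmoment} is stated at $\lambda_e$, so you would need its verbatim analogue at your $\lambda$.)

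The one step you elide is the passage from the embedded process, which only carries information at the sampling times $t_0+nT$, to the statement at the exact time $t$. Survival does not by itself yield ``some $v\in\cN_t$ with $(X_t(v)-X_{t_0}(v))\cdot e\ge(c_\lambda-\eta)(t-t_0)$'': the guaranteed particle lives at time $t_0+\lfloor (t-t_0)/T\rfloor T$, and its descendants may regress during the final partial window of length $<T$, by an amount that is unbounded with small probability. The paper handles exactly this in Lemma \ref{cor:WeakInterpolation}, intersecting the survival event with the interpolation events $A_n^c$ (Gaussian control of $d_{\mathrm H}(\mathcal{X}_{nT},\mathcal{X}_\ell)$ for $\ell\in[nT,(n+1)T]$, as in Lemma \ref{lemma:interpolation}) to produce a ``for all $t>KT_0$'' statement with probability bounded below uniformly in the starting point; it is this uniform-in-time, uniform-in-$x$ event whose per-particle probability you may then raise to the power of the particle count at the cutoff. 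Adding this interpolation step (and noting that the $O(T)$ deficit from rounding $(t-t_0)/T$ down is absorbed by the slack between $c_\lambda-\eta$ and $(1-\epsilon)c^*(e)$) completes your argument along the paper's lines.
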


The proof of Proposition \ref{lemma:DirectionalLowerBound} will require several ingredients, which we develop throughout this section.

\subsubsection{A Large Deviation Principle}\label{section:largedeviationprinciple}

Let 
\begin{equation}
    \hat{Y}_t:= \frac{1}{t}e\cdot (Y_t-Y_0), \label{eq:hatytdef}
\end{equation}
where $(Y_t)_{t\geq 0}$ is the solution of \eqref{eq:SDETiling2.0}. Observe from \eqref{eq:SDETiling2.0} that we are inherently defining $\hat{Y}_t$ in terms of the eigenvalue problem \eqref{eq:eigenPrelim} that involves the branching rate function $g$. Unless otherwise stated, from now on we assume that $g$ satisfies the hypothesis of Theorem \ref{thm:shapeIntroHaussdorf}, namely, that $g \in C^{0,\alpha}_p(\R^d)$, $g\geq 0$ and $g\not \equiv 0$.

For $x\in \R^d$, let $\mu_t^x$ be the measure on $(\R,\mathcal{B}(\R))$ defined by 
\begin{equation}\label{eq:measuresShape}
    \mu^x_t(A):=\prob{x}{e,\lambda}{\hat{Y}_t\in A}.
\end{equation}
In this section, we prove that the family of measures $(\mu^x_t,t\geq 0)_{x\in \R^d}$ satisfies a large deviation principle. This will be a consequence of a generalization of the G\"artner-Ellis theorem which we state below.

We recall that for a function $I:\R\rightarrow [-\infty,\infty]$, $\zeta\in \R$ is an \textit{exposed point} of $I$ if there exists $\eta\in \R$ such that
\begin{equation}\label{eq:exposinghyperplaneS}
     I(\lambda)   >  I(\zeta) + \eta (\lambda-\zeta)\quad\text{for all $\lambda\neq \zeta$} .
\end{equation}
The number $\eta\in \R$ that satisfies \eqref{eq:exposinghyperplaneS} is called a \textit{strictly supporting hyperplane} of $I$ at $\zeta$. Equipped with this vocabulary, we can now state a uniform version of the G\"artner-Ellis theorem. 

\begin{theorem}[G\"artner-Ellis]\label{thm:GEgenralization}
    Consider a family of probability measures $\{\mu_t^x,t\geq 0\}_{x\in \mathcal{X}}$ on $(\R, \mathcal{B}(\R) )$ indexed by a set $\mathcal{X}$. For $\eta\in \R$, let
    \begin{equation}\label{eq:expmoment}
        \Lambda^x_t(\eta):=\log\bigg[ \int_\R e^{\eta \zeta}\,  d\mu_t^x(\zeta) \bigg].
    \end{equation}
    Assume there exists a function $\Lambda:\R\rightarrow (-\infty,+\infty] $ such that, for every $x\in \mathcal{X}$ and $\eta \in \R$,
    \begin{equation}\label{eq:limlogaritmicS}
        \lim_{t\rightarrow \infty}\frac{1}{t}\Lambda_t^x(t\eta)= \Lambda(\eta).
    \end{equation}
    Let $D_\Lambda := \{\eta\in \R: \Lambda(\eta)<\infty\}$ and suppose that $0 \in \mathrm{int}(D_\Lambda)$, and that the convergence above is uniform in $\mathcal{X}$ on $D_\Lambda$, meaning that for every $\eta\in D_{\Lambda}$,
    \begin{equation}\label{eq:GEuniform}
        \lim_{t\rightarrow \infty}\sup_{x\in \mathcal{X}}\ \bigg| \frac{1}{t}\Lambda_t^x(t\eta) -  \Lambda(\eta) \bigg| =0.
    \end{equation}
    
    Let $\Lambda^*:\R\rightarrow [-\infty,\infty]$ be the Legendre transform of $\Lambda$, given by
    \begin{equation*}\label{eq:legendreS}
        \Lambda^*(\zeta) = \sup_{\eta\in \R} \sqbracket{\eta \zeta-\Lambda(\eta)}.
    \end{equation*}
    Let $\mathcal{L}\subset \R$ be the set of exposed points of $\Lambda^*$ for which there exists a strictly supporting hyperplane $\eta \in\mathrm{int}(D_\Lambda)$. Then,
    \begin{enumerate}[(a)]
        \item for every closed set $\mathcal{C}\subset \R$, $\limsup_{t\rightarrow \infty} \sup_{x\in \mathcal{X}}\frac{1}{t}\log \mu_t^x(\mathcal{C})\leq -\inf_{\zeta\in \mathcal{C}}\Lambda^*(\zeta)$,
        \item for every open set $\mathcal{O}\subset \R$, $\liminf_{t\rightarrow \infty}\inf_{x\in \mathcal{X}}\frac{1}{t}\log \mu_t^x(\mathcal{O})\geq -\inf_{\zeta\in \mathcal{O}\cap \mathcal{L}}\Lambda^*(\zeta)$.
    \end{enumerate}
    Moreover, if:
    \begin{enumerate}
        \item $\Lambda$ is lower semicontinuous on $\R$,
        \item $\Lambda$ is differentiable on $\mathrm{int}(D_\Lambda)$,
        \item $\lim_{\eta\rightarrow \partial D_{\Lambda}}| \Lambda^\prime(\eta)|=\infty $,
    \end{enumerate}
    then we can replace the set $\mathcal{O}\cap \mathcal{L}$ by $\mathcal{O}$ in the lower bound in (b) above.
\end{theorem}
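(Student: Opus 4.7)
The plan is to adapt the classical proof of the G\"artner--Ellis theorem (e.g., Dembo--Zeitouni, Theorems~2.3.6 and~4.5.20), carrying the supremum/infimum over $x \in \mathcal{X}$ through every step. The key observation is that the pointwise proof is built entirely out of exponential Chebyshev bounds applied at individual parameters $\eta$; hence the hypothesis \eqref{eq:GEuniform} of uniform-in-$x$ convergence of $t^{-1}\Lambda_t^x(t\eta)$ to $\Lambda(\eta)$ at each $\eta \in D_\Lambda$ is exactly what is needed to upgrade every pointwise bound to a uniform one.

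For part~(a), I would first establish uniform exponential tightness: for $\eta \in D_\Lambda \cap (0,\infty)$, Markov's inequality combined with \eqref{eq:GEuniform} gives
\begin{equation*}
    \limsup_{t \to \infty}\ \sup_{x \in \mathcal{X}}\, \frac{1}{t} \log \mu_t^x([M,\infty)) \,\leq\, -\eta M + \Lambda(\eta),
\end{equation*}
which tends to $-\infty$ as $M\to\infty$; the symmetric bound on $(-\infty, -M]$ uses a negative $\eta \in D_\Lambda$, available because $0 \in \mathrm{int}(D_\Lambda)$. The same Chebyshev bound, optimized over $\eta$, delivers the uniform upper bound on half-lines at rate $-\Lambda^*$ of the endpoint. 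For a general closed $\mathcal{C}$, tightness reduces matters to the compact set $\mathcal{C} \cap [-M, M]$, which can be covered by finitely many intervals each contained in such a half-line; a union bound then preserves uniformity.

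For part~(b), fix $\zeta_0 \in \mathcal{O} \cap \mathcal{L}$ with strict supporting hyperplane $\eta_0 \in \mathrm{int}(D_\Lambda)$, and choose $\delta > 0$ with $(\zeta_0 - \delta, \zeta_0 + \delta) \subset \mathcal{O}$. Introduce the tilted probability measures $d\tilde{\mu}_t^x(\zeta) := \exp(t\eta_0 \zeta - \Lambda_t^x(t\eta_0))\, d\mu_t^x(\zeta)$, which satisfy
\begin{equation*}
    \mu_t^x(\mathcal{O}) \,\geq\, \exp\bigl(-t\eta_0 \zeta_0 - t|\eta_0|\delta + \Lambda_t^x(t\eta_0)\bigr)\cdot \tilde{\mu}_t^x((\zeta_0 - \delta, \zeta_0 + \delta)).
\end{equation*}
Fenchel--Young equality at the supporting hyperplane yields $\Lambda(\eta_0) - \eta_0 \zeta_0 = -\Lambda^*(\zeta_0)$, so (b) follows provided $\tilde{\mu}_t^x((\zeta_0 - \delta, \zeta_0 + \delta)) \to 1$ uniformly in $x$. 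The log-MGF of $\tilde\mu_t^x$ at $\alpha$ is $\Lambda_t^x(t(\eta_0 + \alpha)) - \Lambda_t^x(t\eta_0)$, and by \eqref{eq:GEuniform} this converges uniformly in $x$ to $\tilde\Lambda(\alpha) := \Lambda(\eta_0 + \alpha) - \Lambda(\eta_0)$ for every $\alpha$ with $\eta_0 + \alpha \in D_\Lambda$ (a neighborhood of $0$, since $\eta_0 \in \mathrm{int}(D_\Lambda)$); thus $\tilde\mu_t^x$ inherits the hypotheses of the theorem. Applying the part~(a) upper bound (already proven) to $\tilde\mu_t^x$ on $[\zeta_0 + \delta, \infty)$ and $(-\infty, \zeta_0 - \delta]$, and computing $\tilde\Lambda^*(\zeta_0 \pm \delta) = \Lambda^*(\zeta_0 \pm \delta) - \Lambda^*(\zeta_0) \mp \eta_0 \delta > 0$ by \emph{strictness} of the supporting hyperplane, yields the desired uniform concentration. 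Letting $\delta \to 0$ and optimizing over $\zeta_0 \in \mathcal{O} \cap \mathcal{L}$ finishes~(b). Under conditions~1--3, a standard convex-analytic argument (smoothness plus steepness force $\Lambda'$ to surject onto $\mathrm{int}(D_{\Lambda^*})$, and each image point is automatically a strict supporting hyperplane) gives $\mathcal{L} \supseteq \mathrm{int}(D_{\Lambda^*})$; lower semicontinuity of $\Lambda^*$ extends $\inf_{\mathcal{O} \cap \mathcal{L}}$ to $\inf_{\mathcal{O}}$.

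The main obstacle is the bootstrap in~(b): one must verify that \eqref{eq:GEuniform} for $\mu_t^x$ transfers to the tilted family $\tilde\mu_t^x$ (with limit $\tilde\Lambda$), and that $\tilde\Lambda$ itself satisfies $0 \in \mathrm{int}(D_{\tilde\Lambda})$. Once these inheritance properties are checked, part~(a) applies directly to $\tilde\mu_t^x$ and the whole argument closes. This is where uniformity in $x$ must be threaded most carefully, and also where \emph{strict} (rather than merely weak) supporting hyperplanes become essential, since they are what produces the positive exponential gap that drives $\tilde\mu_t^x((\zeta_0-\delta,\zeta_0+\delta)) \to 1$ uniformly.
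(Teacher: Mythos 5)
Your proposal follows essentially the same route as the paper's Appendix~\ref{section:gartnerellis}: for (a), uniform exponential Chebyshev bounds plus uniform exponential tightness and a finite cover/union bound; for (b), exponential tilting at the strictly supporting hyperplane $\eta_0\in\mathrm{int}(D_\Lambda)$, verifying that the tilted family inherits the uniform hypotheses with limit $\tilde\Lambda(\cdot)=\Lambda(\eta_0+\cdot)-\Lambda(\eta_0)$, bootstrapping the already-proved uniform upper bound to get uniform concentration of $\tilde\mu_t^x$ on $B(\zeta_0,\delta)$ via the strict-exposure gap, and transferring back through the tilt factor; the final claim under conditions 1--3 is handled, as in the paper, by the standard Dembo--Zeitouni convex-analysis argument. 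The argument is correct and the points you flag as needing care (inheritance of \eqref{eq:GEuniform} by the tilted measures and $0\in\mathrm{int}(D_{\tilde\Lambda})$) are exactly what the paper checks in its Lemma~\ref{lemma:ApplicationUpperLarge}.
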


The theorem above is a consequence of the fact that, with the extra assumption \eqref{eq:GEuniform}, every bound in the proof of the G\"artner-Ellis theorem can be taken uniformly in $ \mathcal{X}$. This is shown in Appendix \ref{section:gartnerellis}, following the lines of the proof of the G\"artner-Ellis Theorem given in \cite[Chapter 2.3]{MR2571413}. 

For the following lemma, we proceed as in \cite[Lemma 2.6]{MR4492971}. 

\begin{lemma}\label{lemma:logmoment}
    Suppose that the branching rate function $g$ satisfies that $g \in C^{0,\alpha}_p(\R^d)$, $g\geq 0$ and $g\not \equiv 0$. Then, for every $\eta \in \R$ and $e\in S^{d-1}$, there exist $\theta_1,\theta_2\in \R$ such that, for every $x\in \R^d$, 
    \begin{equation}\label{eq:bound0}
        \frac{\theta_1}{t}\leq \frac{1}{t} \log \expec{x}{e,\lambda_e}{e^{\eta t \hat{Y}_t}} -\gamma(e,\lambda_e + \eta)+\gamma(e,\lambda_e) \leq \frac{\theta_2}{t},
    \end{equation}
    where $\hat{Y}_{t}$ is defined in \eqref{eq:hatytdef} and $\lambda_{e}$ and $\gamma$ are as in \eqref{eq:prelimCriticalSpeed}.
\end{lemma}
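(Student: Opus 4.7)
The plan is to obtain an exact identity expressing $\bE_x^{e,\lambda_e}[e^{\eta t\hat{Y}_t}]$ as $e^{t[\gamma(e,\lambda_e+\eta)-\gamma(e,\lambda_e)]}$ times a prefactor bounded uniformly in $x$ and $t$. Once such an identity is in hand, taking logarithms immediately gives the two-sided $O(1/t)$ bound. The mechanism should be to apply the change-of-measure identity \eqref{eq:changemeasure} twice: first to convert the $\bP_x^{e,\lambda_e}$-expectation into a Feynman-Kac expectation under the underlying Brownian measure, and then a second time, in the opposite direction but with tilt parameter $\lambda_e+\eta$, so that the eigenvalue $\gamma(e,\lambda_e+\eta)$ appears naturally.

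Concretely, the first step is to apply Lemma \ref{Lemma:DirectionalChangeOfMeasure} in direction $e$ with parameter $\lambda_e$, choosing the functional
\[
F((B_s)_{s\in[0,t]}) = \frac{\psi(B_t;e,\lambda_e)}{\psi(x;e,\lambda_e)}\,e^{(\lambda_e+\eta)\,e\cdot(B_t-B_0)-t\gamma(e,\lambda_e)}
\]
so that the Radon-Nikodym prefactor on the left-hand side of \eqref{eq:changemeasure} cancels and the left-hand side collapses to $\bE_x^{e,\lambda_e}[e^{\eta e\cdot(Y_t-Y_0)}]=\bE_x^{e,\lambda_e}[e^{\eta t\hat{Y}_t}]$. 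The second step is to apply \eqref{eq:changemeasure} again in direction $e$ but now with parameter $\lambda_e+\eta$, taking $G((B_s)_{s\in[0,t]})=\psi(B_t;e,\lambda_e)\,e^{(\lambda_e+\eta)\,e\cdot(B_t-B_0)}$; this rewrites the Brownian expectation from the first step as $e^{t\gamma(e,\lambda_e+\eta)}\,\psi(x;e,\lambda_e+\eta)\,\bE_x^{e,\lambda_e+\eta}\!\left[\psi(Y_t;e,\lambda_e)/\psi(Y_t;e,\lambda_e+\eta)\right]$. Combining the two identities produces the clean formula
\[
\bE_x^{e,\lambda_e}[e^{\eta t\hat{Y}_t}] \;=\; e^{t[\gamma(e,\lambda_e+\eta)-\gamma(e,\lambda_e)]}\,\frac{\psi(x;e,\lambda_e+\eta)}{\psi(x;e,\lambda_e)}\,\bE_x^{e,\lambda_e+\eta}\!\left[\frac{\psi(Y_t;e,\lambda_e)}{\psi(Y_t;e,\lambda_e+\eta)}\right].
\]

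Taking the logarithm, dividing by $t$, and rearranging reduces \eqref{eq:bound0} to showing that the $\psi$-prefactor on the right is bounded between two positive constants, independent of $x$ and $t$. This follows from Proposition \ref{prop:PrincipalEigenEx}: for each fixed $(e,\lambda)$, the eigenfunction $\psi(\cdot;e,\lambda)$ is continuous, strictly positive, and $\Z^d$-periodic, hence descends to a continuous positive function on the compact torus $\TT^d$ and is therefore bounded above and bounded away from $0$. Consequently, the ratio $\psi(x;e,\lambda_e+\eta)/\psi(x;e,\lambda_e)$ and the integrand $\psi(Y_t;e,\lambda_e)/\psi(Y_t;e,\lambda_e+\eta)$ both lie in a fixed compact subinterval of $(0,\infty)$ uniformly in the spatial argument, and one reads off $\theta_1,\theta_2$ from the endpoints of the logarithm of this interval. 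The only real obstacle is the bookkeeping required to ensure that the two applications of \eqref{eq:changemeasure}, with their distinct tilt parameters and associated eigenfunctions, telescope as cleanly as claimed; beyond this routine (if delicate) algebraic verification, the argument is an immediate consequence of Lemma \ref{Lemma:DirectionalChangeOfMeasure} together with compactness of the torus.
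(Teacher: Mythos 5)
Your proposal is correct and takes essentially the same route as the paper: two applications of Lemma \ref{Lemma:DirectionalChangeOfMeasure} in direction $e$, first with tilt $\lambda_e$ (with the functional chosen to cancel the Radon--Nikodym prefactor) and then with tilt $\lambda_e+\eta$, followed by uniform two-sided bounds coming from the strictly positive, bounded periodic eigenfunctions $\psi(\cdot;e,\lambda_e)$ and $\psi(\cdot;e,\lambda_e+\eta)$. The only difference is presentational: you assemble a single exact identity (written in terms of the increment $B_t-B_0$, so the $e\cdot x$ boundary terms never appear) before bounding, whereas the paper bounds the $\psi$-ratios at intermediate stages; the content is identical.
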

\begin{remark}
    A non-quantitative version of Lemma \ref{lemma:logmoment} appears in \cite[Lemma 2.1]{Freidlin+1985}. The proof differs substantially from ours. While we do not require quantitative bounds on the rate of convergence for our arguments, they may be useful for other arguments in the future. Also, the proof we present here is self-contained and only relies on estimates already established in this paper. 
\end{remark}
\begin{proof}
    Through this proof, $e\in S^{d-1}$ is fixed, so let us write $\psi(x;\lambda)=\psi(x;e,\lambda)$ and $\gamma(\lambda)=\gamma(e,\lambda)$ for simplicity. Fix $x\in \R^d$. We apply Lemma \ref{Lemma:DirectionalChangeOfMeasure} in the direction $e$ with $\lambda=\lambda_e$ and taking the nonnegative $\cF_t^B$-measurable function
    \begin{equation*}
        F((B_s)_{s\in [0,t]})=\frac{\psi(B_t;\lambda)}{\psi(x;\lambda)}e^{-t\gamma(\lambda)+\lambda e \cdot (B_t-B_0)}e^{\eta e\cdot (B_t-B_{0})},
    \end{equation*}
    where $(B_t)_{t\geq 0}$ is a Brownian motion that starts at $x$ under a measure $\bP_x$ (as in Lemma \ref{Lemma:DirectionalChangeOfMeasure}). We choose this $F$ so there is a cancellation in the left-hand side of \eqref{eq:changemeasure}. Thus, by applying Lemma \ref{Lemma:DirectionalChangeOfMeasure}, we get
    \begin{equation*}
        \begin{split}
            \frac{1}{t}\log \expec{x}{e,\lambda_e}{e^{ \eta t \hat{Y}_t}}&= \frac{1}{t}\log \expec{x}{e,\lambda_e}{e^{ \eta e\cdot(Y_t-Y_0) }}\\
            & = \frac{1}{t}\log \mathbb{E}_{x}\bigg[\frac{\psi(B_t;\lambda_e)}{\psi(x;\lambda_e)}e^{\lambda_e e\cdot(B_t-x)+\int_0^t g(B_s)\ ds -\gamma(\lambda_e)t } e^{\eta e\cdot (B_t-x)}\bigg]\\
            & = -\gamma(\lambda_e)-\frac{(\lambda_e+\eta)e\cdot x}{t} + \frac{1}{t}\log \mathbb{E}_{x}\bigg[\frac{\psi(B_t;\lambda_e)}{\psi(x;\lambda_e)}e^{(\lambda_e +\eta)e\cdot B_t+\int_0^t g(B_s)\ ds }\bigg].
        \end{split}
    \end{equation*}
    Since $\psi(\cdot\ ; \lambda_e)$ is bounded away from zero and bounded from above, there exist $C_1,C_2\in \R$, depending on $e$, such that
    \begin{equation}\label{eq:bound1}
        \frac{C_1}{t}\leq \frac{1}{t}\log \expec{x}{e,\lambda_e}{e^{\eta t \hat{Y_t}}} + \gamma(\lambda_e) +\frac{(\lambda_e+\eta)e\cdot x}{t} - \frac{1}{t}\log \expecdd{x}{}{e^{e\cdot B_t(\lambda_e +\eta)+\int_0^t g(B_s) ds }}\leq \frac{C_2}{t}.
    \end{equation}
    Let us estimate the term involving $\mathbb{E}_x$ in the expression above. We again apply Lemma \ref{Lemma:DirectionalChangeOfMeasure} in the direction $e$, but now taking $\lambda=\lambda_e + \eta$ and 
    \begin{equation*}
        F((B_s)_{s\in [0,t]}) = e^{(\lambda_e+\eta)e\cdot B_t}.
    \end{equation*}
    With this choice of $e$ and $\lambda$, let $(Y_t^\prime)_{t\geq 0}$ be the solution of \eqref{eq:SDETiling2.0} under the measure $\mathbb{P}_x^{e,\lambda_e+\eta}$ introduced in \eqref{eq:SDETiling2.0}. We obtain
    \begin{equation*}
        \begin{split}
            \log \mathbb{E}_{x}\bigg[e^{\int_0^t g(B_s) ds+(\lambda_e +\eta)e\cdot B_t }\bigg]&=\log \mathbb{E}_{x}^{e,\lambda_e+\eta}\bigg[\frac{\psi(x;\lambda_e+\eta)}{\psi(Y_t^\prime;\lambda_e+\eta)} e^{-(\lambda_e+\eta)e\cdot (Y_t^\prime-x)+\gamma (\lambda_{e}+\eta)t}e^{(\lambda_e+\eta)e\cdot Y^\prime_t} \bigg]\\
            & = \log \mathbb{E}_{x}^{e,\lambda_e+\eta}\bigg[\frac{\psi(x;\lambda_e+\eta)}{\psi(Y_t^\prime;\lambda_e+\eta)} e^{(\lambda_e+\eta)e\cdot x+\gamma(\lambda_e + \eta)t} \bigg]\\
            &= (\lambda_e +\eta)e\cdot x + \gamma(\lambda_e+\eta)t+\log \expecddd{x}{e,\lambda_e+\eta}{\frac{\psi(x;\lambda_e+\eta)}{\psi(Y_t^\prime;\lambda_e+\eta)}}.
        \end{split}
    \end{equation*}
    Since $\psi(\cdot\ , \lambda_e + \eta)$ is bounded away from zero and bounded from above, there exist $C_1^\prime,C_2^\prime\in \R,$ depending on $e$ and $\eta$, such that
    \begin{equation*}
        \frac{C_1^\prime}{t}\leq \frac{1}{t}\log \expecdd{x}{}{e^{(\lambda_e +\eta)e\cdot B_t+\int_0^t g(B_s) ds }} - \frac{(\lambda_e +\eta)e\cdot x}{t} - \gamma(\lambda_e+\eta) \leq \frac{C_2^\prime}{t}.
    \end{equation*}
    Combining this with \eqref{eq:bound1}, we obtain \eqref{eq:bound0}
\end{proof}
With this in mind, we obtain the corresponding large deviation principle as a direct consequence of the G\"artner-Ellis theorem (Theorem \ref{thm:GEgenralization}). This is an analogous result to the one-dimensional large deviation principle \cite[Lemma 2.6]{MR4492971}.

\begin{proposition}\label{proposition:largeDeviation}
    Fix $x\in \R^d$ and $e\in S^{d-1}$. Let $\lambda_e>0$  be such that $\cs(e)=\frac{\gs(e,\lambda_e)}{\lambda_e}$ as in 
     \eqref{eq:prelimCriticalSpeed}. Let $(Y_t)_{t\geq 0}$ be the solution of \eqref{eq:SDETiling2.0} under the measure $\bP_x^{e,\lambda_e}$. Let $\hat{Y}_t:=(1/t)e\cdot(Y_t-Y_0)$. Let $I_e:\R\rightarrow (-\infty,+\infty]$ be the function defined by
    \begin{equation}\label{eq:goodratefunction}
        I_e(\zeta)=\sup_{\eta \in \R} \{ \eta \zeta - [\gamma(e,\lambda_e+\eta)-\gamma(e,\lambda_e)]\}=[\gamma(e, \lambda_{e}+\cdot)-\gamma(e, \lambda_{e})]^{*}(\zeta).
    \end{equation}
    For a set $A\subset \R$, let $I_e(A):=\inf_{\zeta\in A}I_e(\zeta)$. The following holds:
    \begin{enumerate}[(a)]
        \item For every closed set $\mathcal{C}\subset \R$, $\limsup_{t\rightarrow \infty}\sup_{x\in \R^d}\frac{1}{t}\log \bP_x^{e,\lambda_e}[\hat{Y}_t \in \mathcal{C}]\leq -I_e(\mathcal{C})$.
        \item For every open set $\mathcal{O}\subset \R$, $\liminf_{t\rightarrow \infty}\inf_{x\in \R^d}\frac{1}{t}\log  \bP_x^{e,\lambda_e}[\hat{Y}_t \in \mathcal{O}]\geq -I_e(\mathcal{O})$.
    \end{enumerate}

\end{proposition}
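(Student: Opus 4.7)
The plan is to apply the uniform G\"artner-Ellis theorem (Theorem \ref{thm:GEgenralization}) to the family of measures $\{\mu_t^x : t>0, x \in \R^d\}$ defined in \eqref{eq:measuresShape}, with $\mathcal{X} := \R^d$ and candidate limiting log-moment generating function
\[
\Lambda(\eta) := \gamma(e, \lambda_e+\eta) - \gamma(e, \lambda_e).
\]
Under this identification, the Legendre transform $\Lambda^*$ coincides with $I_e$ from \eqref{eq:goodratefunction}, so producing the LDP for $(\mu_t^x)$ with rate function $\Lambda^*$ is equivalent to the claimed statement.

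First I will verify the uniform convergence hypothesis \eqref{eq:GEuniform}. Setting $\Lambda_t^x(\eta) := \log \expec{x}{e,\lambda_e}{e^{\eta \hat{Y}_t}}$, Lemma \ref{lemma:logmoment} provides the two-sided bound
\[
\frac{\theta_1}{t} \;\leq\; \frac{1}{t} \Lambda_t^x(t\eta) - \Lambda(\eta) \;\leq\; \frac{\theta_2}{t},
\]
valid for every $\eta \in \R$, every $x \in \R^d$, and every $t>0$, with constants $\theta_1,\theta_2$ depending only on $\eta$ and $e$ (and not on $x$). Taking $t \to \infty$ therefore yields both the pointwise convergence \eqref{eq:limlogaritmicS} to $\Lambda$ and the uniform convergence \eqref{eq:GEuniform} on all of $D_\Lambda$ in a single step.

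Next I will check the three additional hypotheses in Theorem \ref{thm:GEgenralization} required to extend the lower bound from $\mathcal{O} \cap \mathcal{L}$ to arbitrary open sets $\mathcal{O}$. By Proposition \ref{Prop:PropertiesGamma}(i), $\gamma(e, \cdot)$ is real-valued and of class $C^2$ on all of $\R$, so $D_\Lambda = \R$ (hence $0 \in \mathrm{int}(D_\Lambda)$) and $\Lambda$ is automatically continuous (lower semicontinuous) and differentiable on $\mathrm{int}(D_\Lambda) = \R$. For the steepness condition $\lim_{\eta \to \partial D_\Lambda} |\Lambda'(\eta)| = \infty$, I use Proposition \ref{Prop:PropertiesGamma}(ii): the ratios $\gamma(e,\lambda)/\lambda$ tend to $+\infty$ as $\lambda \to +\infty$ and to $-\infty$ as $\lambda \to -\infty$. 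Combined with the convexity of $\gamma(e, \cdot)$ (Proposition \ref{Prop:PropertiesGamma}(i)), this forces $\gamma'(e, \lambda) \to \pm\infty$ as $\lambda \to \pm\infty$, since for a convex $C^1$ function the slope from any fixed point is bounded above by the derivative at any further point. Translating by $\lambda_e$ gives $|\Lambda'(\eta)| \to \infty$ as $\eta \to \pm\infty$, which is exactly the steepness condition at $\partial D_\Lambda$.

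With the uniform convergence and the regularity/steepness properties of $\Lambda$ both in hand, Theorem \ref{thm:GEgenralization} delivers conclusions (a) and (b) immediately. I do not expect a serious obstacle here: all work has effectively been done in Lemma \ref{lemma:logmoment} and in the structural properties of $\gamma$ collected in Proposition \ref{Prop:PropertiesGamma}. The only mildly subtle step is the steepness verification, and even this reduces to a standard fact about convex functions with super-linear behaviour of $\gamma(e,\lambda)/\lambda$ at infinity.
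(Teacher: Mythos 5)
Your proposal is correct and follows essentially the same route as the paper: verify the uniform convergence hypothesis via Lemma \ref{lemma:logmoment}, note $D_\Lambda=\R$, and check the regularity and steepness conditions of Theorem \ref{thm:GEgenralization} using Proposition \ref{Prop:PropertiesGamma}, with $\Lambda^*=I_e$. The only (immaterial) difference is in the steepness step, where you deduce $|\Lambda'(\eta)|\to\infty$ from the superlinear growth of $\gamma(e,\lambda)/\lambda$ in Proposition \ref{Prop:PropertiesGamma}(ii) plus convexity, while the paper argues by contradiction from the quadratic lower bound in \eqref{eq:BoundGamma}; both are valid.
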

\begin{proof}
     The conclusion follows from the G\"artner-Ellis Theorem (Theorem \ref{thm:GEgenralization}) if the family of measures $\{\mu^x_t,t\geq 0\}_{x\in \R^d}$ defined in \eqref{eq:measuresShape} satisfies the hypotheses. Consider the function $\Lambda:\R\rightarrow\R$ given by $\Lambda(\cdot) =\gamma(e,\lambda_e+\cdot)-\gamma(e,\lambda_e)$. By Lemma \ref{lemma:logmoment}, we know that for any $\eta \in \R$,
    \begin{equation*}
        \lim_{t\rightarrow \infty} \sup_{x\in \R^d}\ \bigg |\log\Big[ \int_\R e^{\eta \zeta} d\mu_t^x(\zeta) \Big] - \Lambda(\eta) \bigg| =\lim_{t\rightarrow \infty} \sup_{x\in \R^d}\ \bigg| \frac{1}{t} \log \expec{x}{e,\lambda_e}{e^{\eta t \hat{Y}_t}} - \Lambda(\eta) \bigg|=0.
    \end{equation*}
    This implies that, in the notation of Theorem \ref{thm:GEgenralization}, $D_\Lambda=\R$, and in particular, $0\in \mathrm{int}(D_\Lambda)$. It also implies that $|t^{-1}\Lambda_t^x(t\eta)-\Lambda(\eta)|\to 0$ as $t \to \infty$ uniformly over $x\in \R$, so \eqref{eq:GEuniform} holds. This already implies that (a) and (b) in Theorem~\ref{thm:GEgenralization} hold.
    
    All that remains is to show that the function $\Lambda$ satisfies properties 1., 2.\ and 3.\ of Theorem \ref{thm:GEgenralization}. From Proposition~\ref{prop:PrincipalEigenEx}(i), we know that $\gamma(e,\cdot)$ is differentiable on $\R$, and hence $\Lambda$ is differentiable and so also lower semicontinous on $\mathrm{int}(D_\Lambda)= \R$. For the purposes of contradiction, suppose that $|\partial_{\eta}\gamma(e, \eta)|\not\rightarrow +\infty$ as $|\eta|\rightarrow +\infty$. Then there exists $C>0$, and a sequence $(\eta_n)_{n\geq 0}$ such that $|\eta_n|\rightarrow +\infty$ and $|\partial_{\eta}\gamma(e, \eta_n)|\leq C$. Without loss of generality, we can further assume that $\eta_n \uparrow +\infty$ (the case $\eta_n \downarrow -\infty$ follows by a symmetric argument). By the convexity and differentiability of $\gamma(e;\cdot)$, $\partial_\eta \gamma(e,\cdot)$ is increasing. It follows that $\gamma(e;\cdot)$ grows at most linearly, with slope $C$, which contradicts the quadratic lower bound in Proposition~\ref{prop:PrincipalEigenEx}(i). Hence, $|\partial_{\eta}\gamma(e, \eta)|\rightarrow +\infty$ as $|\eta|\rightarrow +\infty$, and thus, $|\Lambda^\prime(\eta)|\rightarrow +\infty$ as $|\eta|\rightarrow +\infty$. This verifies that $\Lambda$ satisfies the hypothesis of the G\"artner-Ellis theorem, and we conclude that (a) and (b) hold.
\end{proof}

In the following lemma, we list a few properties of the function $I_e$ introduced in \eqref{eq:goodratefunction}. The proof is given in Appendix \ref{section:gartnerellis}, as it is purely a convex analysis argument.

\begin{lemma}\label{lemma:propI} 
The following holds:
\begin{enumerate}[(i)]
    \item The function $I_e(\zeta)$ is finite for every $\zeta\in \R$, and $I_e$ is strictly convex and differentiable. Moreover, for every $\zeta\in \R$, there exists $\zeta^*\in \R$ such that
    \begin{equation*}
        I_e(\zeta)=\sup_{\eta \in \R} \{ \eta \zeta - [\gamma(e,\lambda_e+\eta)-\gamma(e,\lambda_e)]\}= \eta \zeta^* - [\gamma(e,\lambda_e+\zeta^*)-\gamma(e,\lambda_e)]. 
    \end{equation*}
    \item For any $\zeta\in \R$, $I_e(\zeta)\geq 0$ and $I_e(\zeta)=0$ if and only if $\zeta=c^*(e)$, where $c^{*}(e)$ is the front speed defined in \eqref{eq:prelimCriticalSpeed}. Moreover, $I_e$ is strictly decreasing on the interval $(-\infty,c^*(e))$ and strictly increasing on the interval $(c^*(e),+\infty)$.
    \item For any constant $\beta>0$, there exists $\delta=\delta(\beta)\in (0,1)$ such that for any $\kappa\in (0,\delta)$, 
    \begin{equation*}
        I_e((1-\kappa)c^*(e)) < \kappa \beta\quad\text{and}\quad I_e((1+\kappa)c^*(e)) < \kappa \beta. 
    \end{equation*}
\end{enumerate} 
\end{lemma}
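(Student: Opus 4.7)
Set $\Lambda(\eta):=\gamma(e,\lambda_e+\eta)-\gamma(e,\lambda_e)$, so that $I_e=\Lambda^*$ is the Legendre transform of $\Lambda$. The plan is to read off all three parts of the lemma from standard properties of Legendre transforms of smooth strictly convex functions with super-linear growth, combined with the characterization of $\lambda_e$ provided by Remark \ref{r:derivativecs}.

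For (i), I will first verify that $\Lambda$ is $C^2(\R)$ and strictly convex by Proposition \ref{Prop:PropertiesGamma}(i), and that the quadratic lower bound in \eqref{eq:BoundGamma}, namely $\gamma(e,\lambda_e+\eta)\geq \tfrac{1}{2}(\lambda_e+\eta)^2+\min g$, forces $\Lambda(\eta)\to+\infty$ super-linearly as $|\eta|\to\infty$. In particular $\Lambda'$ is a continuous strictly increasing bijection from $\R$ onto $\R$. Hence, for every $\zeta\in\R$, the function $\eta\mapsto \eta\zeta-\Lambda(\eta)$ is strictly concave and tends to $-\infty$ at $\pm\infty$, so the supremum defining $I_e(\zeta)$ is attained at the unique $\zeta^\ast=(\Lambda')^{-1}(\zeta)$; this gives the representation stated in (i) (with $\eta$ in the displayed formula understood as the variable $\zeta$). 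Finiteness of $I_e$ everywhere is immediate, strict convexity follows from the fact that the Legendre transform of a smooth strictly convex function with super-linear growth is again strictly convex, and differentiability of $I_e$, with $I_e'(\zeta)=\zeta^\ast$, follows from the standard envelope identity applied to the smooth family $\eta\mapsto \eta\zeta-\Lambda(\eta)$.

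For (ii), taking $\eta=0$ in the variational formula gives $I_e(\zeta)\geq 0\cdot\zeta-\Lambda(0)=0$, so $I_e\geq 0$. The unique minimizer of the strictly convex differentiable function $I_e$ is the point $\zeta_0$ where $I_e'(\zeta_0)=0$, equivalently (by (i)) the point $\zeta_0=\Lambda'(0)=\partial_\lambda\gamma(e,\lambda_e)$. By Remark \ref{r:derivativecs}, $\partial_\lambda\gamma(e,\lambda_e)=\gamma(e,\lambda_e)/\lambda_e=c^\ast(e)$, so $\zeta_0=c^\ast(e)$; plugging $\zeta^\ast=0$ into the formula from (i) yields $I_e(c^\ast(e))=0$. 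Strict convexity and uniqueness of the minimum then give that $I_e$ is strictly decreasing on $(-\infty,c^\ast(e))$ and strictly increasing on $(c^\ast(e),+\infty)$.

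For (iii), I combine (ii) with the differentiability of $I_e$ at its minimum. Since $I_e(c^\ast(e))=0$ and $I_e'(c^\ast(e))=0$, a first-order Taylor expansion gives
\begin{equation*}
I_e\bigl((1\pm\kappa)c^\ast(e)\bigr)=o(\kappa)\quad\text{as }\kappa\to 0^+,
\end{equation*}
so for any fixed $\beta>0$ there exists $\delta=\delta(\beta)\in(0,1)$ such that both quantities are strictly less than $\kappa\beta$ for every $\kappa\in(0,\delta)$. I expect no serious obstacle in this proof; it is essentially convex-analytic bookkeeping, and the only non-trivial input is the identification $\Lambda'(0)=c^\ast(e)$, which was already prepared in Remark \ref{r:derivativecs}.
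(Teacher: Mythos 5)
Your proposal is correct; all three parts go through as you describe, and the key input — that $\Lambda(\eta)=\gamma(e,\lambda_e+\eta)-\gamma(e,\lambda_e)$ is $C^2$, strictly convex, with quadratic growth forcing $\Lambda'$ to be an increasing bijection of $\R$, together with $\Lambda'(0)=\partial_\lambda\gamma(e,\lambda_e)=c^*(e)$ from Remark \ref{r:derivativecs} — is exactly the right one. The route differs from the paper's mainly in part (ii). The paper argues geometrically: it rewrites $I_e(\zeta)=\sup_\eta\{q_\zeta(\eta)-\gamma(e,\eta)\}$ with $q_\zeta$ the line of slope $\zeta$ through $(\lambda_e,\gamma(e,\lambda_e))$, characterizes $I_e(\zeta)=0$ as tangency of $q_\zeta$ to the graph of $\gamma(e,\cdot)$ from below, and proves strict monotonicity by an explicit comparison of the maximizers $\zeta_1^*$ for $\zeta_1<\zeta_2$ (showing $\zeta_1^*>\lambda_e$ and estimating), without invoking strict convexity of $I_e$. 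You instead exploit the duality identity $I_e'(\zeta)=(\Lambda')^{-1}(\zeta)$ to locate the unique minimizer at $\Lambda'(0)=c^*(e)$ and then read off both the equality case and the strict monotonicity on $(-\infty,c^*(e))$ and $(c^*(e),\infty)$ from the strict convexity already established in (i); this is shorter and arguably cleaner, at the price of leaning on the envelope/Legendre-duality identity (the paper only cites Rockafellar for strict convexity and differentiability of $I_e$ in (i), which is the same standard input you use). For (iii) you argue directly via $I_e(c^*(e))=I_e'(c^*(e))=0$ and a first-order expansion, whereas the paper phrases the identical difference-quotient argument as a proof by contradiction; these are the same proof in different clothing.
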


As a direct consequence of the large deviation principle (Proposition \ref{proposition:largeDeviation}), we obtain the following estimate on the expected number of particles who have traveled a distance approximately $c^{*}(e)$, in direction $e$.

\begin{lemma}\label{lemma:lowerestimate}
    Let $I_e$ be defined by \eqref{eq:goodratefunction}. Fix $\epsilon\in (0,1)$ and $\alpha \in (0,1)$. There exist constants $C=C(e)>0$ and $T=T(e)>0$ such that for every $t>T$,
    \begin{equation}\label{eq:lowerdirectionalestimate}
        \inf_{x\in \R^d}\mathbb{E}_{x}\Big[\sum_{v\in \mathcal{N}_t} \mathbbm{1}_{\{(1-\alpha \epsilon)tc^*(e)\geq (X_t(v)-x)\cdot e\geq (1-\epsilon)tc^*(e) \}} \Big] \geq Ce^{ t[\alpha\epsilon\gamma(e,\lambda_e)-2I_{e}((1-\alpha\epsilon)c^*(e))] }.
    \end{equation}
    Furthermore, there is $\epsilon_0=\epsilon_0(e)\in (0,1)$ such that for $\epsilon\in (0,\epsilon_0)$ and for any $\alpha \in (0,1)$, 
    $$\alpha\epsilon\gamma(e,\lambda_e)-2I_{e}((1-\alpha\epsilon)c^*(e)) >0.$$ In particular, for any $\epsilon\in (0,\epsilon_0)$, the expected value in \eqref{eq:lowerdirectionalestimate} grows to $+\infty$ as $t\uparrow +\infty$.
\end{lemma}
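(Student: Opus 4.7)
\textbf{Proof proposal for Lemma~\ref{lemma:lowerestimate}.}

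The plan is to combine three ingredients already established: the many-to-one lemma (Lemma~\ref{lemma:ManyToOneLemma}), the directional change of measure (Lemma~\ref{Lemma:DirectionalChangeOfMeasure}) in direction $e$ with parameter $\lambda=\lambda_e$, and the uniform large deviation lower bound (Proposition~\ref{proposition:largeDeviation}(b)). First, I would apply the many-to-one lemma to the indicator $F((B_s)_{s\in[0,t]})=\mathbbm{1}_A$ with $A:=\{(1-\alpha\epsilon)tc^*(e)\ge (B_t-B_0)\cdot e\ge (1-\epsilon)tc^*(e)\}$, rewriting the left-hand side of \eqref{eq:lowerdirectionalestimate} as $\mathbb{E}_x[e^{\int_0^t g(B_s)\,ds}\mathbbm{1}_A]$. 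Then I would apply the change of measure, which produces
\begin{equation*}
\mathbb{E}_x^{e,\lambda_e}\!\left[\frac{\psi(x;e,\lambda_e)}{\psi(Y_t;e,\lambda_e)}\,e^{-\lambda_e e\cdot(Y_t-Y_0)+t\gamma(e,\lambda_e)}\mathbbm{1}_{A'}\right],
\end{equation*}
where $A':=\{(1-\alpha\epsilon)tc^*(e)\ge (Y_t-Y_0)\cdot e\ge (1-\epsilon)tc^*(e)\}$.

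Next I would bound the integrand below on $A'$. Using $\lambda_e c^*(e)=\gamma(e,\lambda_e)$ from \eqref{eq:prelimCriticalSpeed} and the upper endpoint $(Y_t-Y_0)\cdot e\le (1-\alpha\epsilon)tc^*(e)$, the exponential factor satisfies $-\lambda_e e\cdot(Y_t-Y_0)+t\gamma(e,\lambda_e)\ge t\alpha\epsilon\,\gamma(e,\lambda_e)$. Since $\psi(\cdot\,;e,\lambda_e)\in C_p^{2,\alpha}(\R^d)$ is strictly positive and $\Z^d$-periodic, the ratio $\psi(x;e,\lambda_e)/\psi(Y_t;e,\lambda_e)$ is bounded below by some $c_\psi=c_\psi(e)>0$ uniformly in $x,Y_t\in\R^d$. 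This gives, uniformly in $x$,
\begin{equation*}
\mathbb{E}_x\!\left[\sum_{v\in\mathcal{N}_t}\mathbbm{1}_{\{\cdot\}}\right]\ge c_\psi\, e^{t\alpha\epsilon\gamma(e,\lambda_e)}\,\mathbb{P}_x^{e,\lambda_e}\!\left[\hat Y_t\in\mathcal{O}\right],\qquad \mathcal{O}:=((1-\epsilon)c^*(e),(1-\alpha\epsilon)c^*(e)).
\end{equation*}

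I would then invoke the uniform large deviation lower bound of Proposition~\ref{proposition:largeDeviation}(b) on the open set $\mathcal{O}$. Since $\mathcal{O}\subset (-\infty,c^*(e))$ and by Lemma~\ref{lemma:propI}(ii) the rate function $I_e$ is strictly decreasing on that interval and continuous, $\inf_{\zeta\in\mathcal{O}}I_e(\zeta)=I_e((1-\alpha\epsilon)c^*(e))>0$. Hence $\liminf_{t\to\infty}\inf_x t^{-1}\log\mathbb{P}_x^{e,\lambda_e}[\hat Y_t\in\mathcal{O}]\ge -I_e((1-\alpha\epsilon)c^*(e))$, and using the strict positivity of this quantity as our margin, for all $t$ larger than some $T=T(e)$ (allowed to depend on $\epsilon$ and $\alpha$),
\begin{equation*}
\inf_{x\in\R^d}\mathbb{P}_x^{e,\lambda_e}[\hat Y_t\in\mathcal{O}]\ge e^{-2t\,I_e((1-\alpha\epsilon)c^*(e))}.
\end{equation*}
Chaining these bounds yields \eqref{eq:lowerdirectionalestimate} with $C=c_\psi$.

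For the second assertion, apply Lemma~\ref{lemma:propI}(iii) with $\beta=\gamma(e,\lambda_e)/4$ to obtain $\delta=\delta(\beta)\in(0,1)$, and set $\epsilon_0:=\delta$. For any $\epsilon\in(0,\epsilon_0)$ and any $\alpha\in(0,1)$, $\alpha\epsilon\in(0,\delta)$, so $I_e((1-\alpha\epsilon)c^*(e))<\alpha\epsilon\,\gamma(e,\lambda_e)/4$, whence $\alpha\epsilon\gamma(e,\lambda_e)-2I_e((1-\alpha\epsilon)c^*(e))>\alpha\epsilon\gamma(e,\lambda_e)/2>0$, and the right-hand side of \eqref{eq:lowerdirectionalestimate} grows to $+\infty$. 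The only step requiring any real care is the \emph{uniformity in $x$} of the large deviation lower bound; this is precisely why the uniform G\"artner--Ellis theorem (Theorem~\ref{thm:GEgenralization}) was developed, and Proposition~\ref{proposition:largeDeviation}(b) delivers exactly this uniform lower bound, so the argument closes without further work.
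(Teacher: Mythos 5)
Your proposal is correct and follows essentially the same route as the paper: many-to-one, the change of measure of Lemma~\ref{Lemma:DirectionalChangeOfMeasure} with $\lambda=\lambda_e$, the bound $-\lambda_e e\cdot(Y_t-Y_0)+t\gamma(e,\lambda_e)\ge t\alpha\epsilon\gamma(e,\lambda_e)$ via $\lambda_e c^*(e)=\gamma(e,\lambda_e)$, the uniform lower bound of Proposition~\ref{proposition:largeDeviation}(b) on $\mathcal{O}=((1-\epsilon)c^*(e),(1-\alpha\epsilon)c^*(e))$ with $I_e(\mathcal{O})=I_e((1-\alpha\epsilon)c^*(e))$, and Lemma~\ref{lemma:propI}(iii) for the positivity claim (the paper takes $\beta=\gamma(e,\lambda_e)/2$ rather than your $\gamma(e,\lambda_e)/4$, an immaterial difference).
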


\begin{proof}
    Fix $e\in S^{d-1}$, $\epsilon>0$, and $\alpha \in (0,1)$. Let us introduce the functional $$F((B_s)_{s\in [0,t]}) = \mathbbm{1}_{\{(1-\alpha \epsilon)tc^*(e)\geq (B_t-B_0)\cdot e\geq (1-\epsilon)tc^*(e) \}},$$ which is $\cF^B_t$-measurable. By the many-to-one lemma (Lemma \ref{lemma:ManyToOneLemma}),
    \begin{equation*}
        \mathbb{E}_{x}\bigg[\sum_{v\in \mathcal{N}_t} \mathbbm{1}_{\{(1-\alpha \epsilon)tc^*(e)\geq (X_t(v)-x)\cdot e\geq (1-\epsilon)tc^*(e) \}} \bigg] = \mathbb{E}_{x}\bigg[{e^{\int_0^t g(B_s) ds}F((B_s)_{s\in [0,t]})}\bigg],
    \end{equation*}
    where $(B_s)_{s\geq 0}$ is a Brownian motion started at $x$ under the measure $\bP_x$. To estimate the right-hand side of the expression above, we apply Lemma \ref{Lemma:DirectionalChangeOfMeasure} in the direction $e$, taking $\lambda=\lambda_e$ and the functional $F$ as above. Since $\psi(\cdot \ ;e,\lambda_e)$ is bounded away from zero and bounded from above, then there exists a positive constant $C=C(e)>0$ such that
    \begin{equation*}
        \begin{split}
             \mathbb{E}_{x}\bigg[e^{\int_0^t g(B_s) ds}F((B_s)_{s\in [0,t]})\bigg] &= \mathbb{E}_{x}^{e,\lambda_e}\bigg[\frac{\psi(x;e,\lambda_e)}{\psi(Y_t;e,\lambda_e)}  e^{-\lambda_e e \cdot (Y_t-x) +t\gamma(e,\lambda_e) } F((Y_s)_{s\in [0,t]}) \bigg]\\
             & \geq C \ \expecddd{x}{e,\lambda_e}{ e^{-\lambda_e e \cdot (Y_t-x) +t\gamma(e,\lambda_e) }F((Y_s)_{s\in [0,t]}) },
        \end{split}
    \end{equation*}
    where $\bE_{x}^{e,\lambda_e}$ is the expected value of the measure introduced in \eqref{eq:measuretilting} and \eqref{eq:WeakSolUniq}, and $(Y_s)_{s\geq 0}$ is the solution of \eqref{eq:SDETiling2.0} under $\bP^{e,\lambda}_x$. Recall that by \eqref{eq:prelimCriticalSpeed}, $\lambda_e c^*(e) = \gamma(e,\lambda_e)$. Hence, on the event $$\{(1-\alpha \epsilon)t c^*(e) \geq (Y_t-x) \cdot e \geq (1-\epsilon)t c^*(e) \},$$ we have $e^{-\lambda_e e \cdot (Y_t-x) +t\gamma(e,\lambda_e)} \geq e^{t\alpha\epsilon \gamma(e,\lambda_e)}$. Combining this with the previous bound, we obtain
    \begin{equation*}
    \begin{split}
             \mathbb{E}_{x}\bigg[e^{\int_0^t g(B_s) ds}F((B_s)_{s\in [0,t]})\bigg] &\geq Ce^{t\alpha\epsilon\gamma(e,\lambda_e)}\bE_{x}^{e,\lambda_e}[ F((Y_s)_{s\in [0,t]}) ]\\
             &=Ce^{t\alpha\epsilon\gamma(e,\lambda_e)}\prob{x}{e,\lambda_e}{(1-\alpha \epsilon)tc^*(e)\geq (Y_t-x)\cdot e\geq (1-\epsilon)tc^*(e)}.
 \end{split}
 \end{equation*}
    Now we estimate the probability that appears on the right-hand side of the expression above, for large values of $t$. Let $\mathcal{O}$ be the open interval $\mathcal{O}:= ((1-\epsilon)c^*(e),(1-\alpha\epsilon)c^*(e))$. Recall the notation $I_e(\mathcal{O})=\inf_{x\in \mathcal{O}} I_e(x)$. From Lemma \ref{lemma:propI}(ii), we know that $I_e$ is strictly decreasing on $(-\infty,c^*(e)]$ {and strictly positive on $(-\infty, c^{*}(e))$}. This implies that $I_e(\mathcal{O}) = I_e((1-\alpha \epsilon)c^*(e))>0$. By Proposition \ref{proposition:largeDeviation}(b), we can find $T=T(e)>0$ such that for every $t>T$, for every $x\in \R^{d}$
    \begin{equation*}
       \frac{1}{t}\log \prob{x}{e,\lambda_e}{(1-\alpha \epsilon)tc^*(e)\geq (Y_t-Y_0)\cdot e\geq (1-\epsilon)tc^*(e)} \geq -2 I_e((1-\alpha \epsilon)c^*(e)),
    \end{equation*}
    and hence
    \begin{equation*}
\inf_{x\in \R^{d}} \prob{x}{e,\lambda_e}{(1-\alpha \epsilon)tc^*(e)\geq (Y_t-Y_0)\cdot e\geq (1-\epsilon)tc^*(e)}\geq e^{-2tI_e((1-\alpha \epsilon)c^*(e))}.
    \end{equation*}
    Combining this with our previous estimate, we prove the first claim of the lemma.

    The second claim follows directly from Lemma \ref{lemma:propI}(iii). Letting  $\beta=\gamma(e,\lambda_e)/2$ in Lemma \ref{lemma:propI}(iii), there is $\delta=\delta(\beta)\in (0,1)$ such that for any $\kappa\in (0,\delta)$, $$I_e((1-\kappa)c^*(e))< \kappa \gamma(e,\lambda_e)/2.$$ Hence, for any $\epsilon\in (0,\delta[$ and any $\alpha\in (0,1)$, by taking $\kappa=\alpha \epsilon$ above, we observe that $$2I_e((1-\alpha\epsilon)c^*(e))< \alpha\epsilon\gamma(e,\lambda_e),$$ which shows the claim.
    
\end{proof}

\subsubsection{The embedded supercritical branching process}\label{section:embeddedsupercriticalbranching}

Recall that for  $0\leq s\leq t$ and $v\in \mathcal{N}_t$, we denote by $X_s(v)$ the position of the living ancestor of $v$ at time $s$. For two particles $u,v$, we write $u\leq v$ if $u$ is an ancestor of $v$, with the convention that any particle is an ancestor of itself. 

We now define a discrete-time spatial branching process, which will be constructed inductively. Fix $\epsilon>0$ and $T_0>0$. Let
\begin{equation}\label{eq:FirstGeneration}
    I_1^{T_0,\epsilon} := \{v\in \mathcal{N}_{T_0}: (X_{T_0}(v)-X_0(v))\cdot e \geq (1-\epsilon)c^*(e) T_0 \}.
\end{equation}
We let $I_1^{T_0,\epsilon}$ be the first generation of the discrete-time spatial branching process. Observe that $I_1^{T_0,\epsilon}\subset \cN_{T_0}$. Suppose that for $n\geq 1$, we have defined $I_n^{T_0,\epsilon}$, the $n$'th generation of the discrete-time spatial branching process, and that $I_n^{T_0,\epsilon}\subset \cN_{nT_0}$. We will define the $(n+1)$-generation $I_{n+1}^{T_0,\epsilon}$ as a subset of $\cN_{(n+1)T_0}$. For $v\in I_n^{T_0,\epsilon}$, the descendants of $v$ in the discrete-time branching process are the particles $u\in \cN_{(n+1)T_0}$ such that $v\leq u$ and
\begin{equation*}
    (X_{(n+1)T_0}(u)-X_{nT_0}(u))\cdot e\geq (1-\epsilon)c^*(e) T_0.
\end{equation*}
In other words, 
\begin{equation}\label{eq:Generation}
    I_{n+1}^{T_0,\epsilon} := \{v \in \mathcal{N}_{(n+1)T_0}: v_{nT_0} \in I_{n}^{T_0,\epsilon},  (X_{(n+1)T_0}(v)-X_{nT_0}(v))\cdot e\geq (1-\epsilon)c^*(e) T_0\}.
\end{equation}
Observe that for $k\in \N$, if $v\in I_{k}^{T_0,\epsilon}$, it follows that
\begin{equation}\label{eq:generationEmbed}
    (X_{k T_0}(v)-X_0(v))\cdot e =\sum_{j=0}^{k-1}(X_{(j+1) T_0}(v)-X_{j T_0}(v))\cdot e \geq (1-\epsilon)c^*(e) k T_0.
\end{equation}

By Lemma \ref{lemma:lowerestimate}, we can take $T_0=T_0(\epsilon)$ sufficiently large such that $\inf_{x\in \R^d}\expected{x}{}{\# I_1^{T_0,\epsilon}}>1$, which suggests that $(\#I_k^{T_0,\epsilon},k\geq 0)$ behaves like the generation size of a supercritical multitype branching process; in particular, we expect that the discrete-time branching process survives with positive probability. The following lemma is an application of results from \cite[Chapter 3]{R-381-PR}; its proof is given in Appendix \ref{append:survival}. 

\begin{lemma}\label{lemma:SurvivalProbability}
    Fix $\epsilon>0$ and suppose there is $T_{0}=T_0(\epsilon)>0$ such that $\inf_{x\in \R^d}\expected{x}{}{\# I_1^{T_0,\epsilon}}>1$. Then,
    \begin{equation*}
        \inf_{x\in \R^d}\proba{x}{}{\# I_n^{T_0,\epsilon} \neq 0 \ \forall n\in \N } >0.
    \end{equation*}
\end{lemma}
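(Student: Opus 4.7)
The plan is to recognise $(\#I_n^{T_0,\epsilon})_{n\geq 0}$ as a supercritical multitype branching process on the compact type space $\TT^d$, then use a Perron-type positive eigenfunction of its mean operator to construct a nonnegative $L^2$-bounded martingale whose strict positivity at infinity yields uniform survival.

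By $\Z^d$-periodicity of $g$, the joint law of $\bigl(\#I_1^{T_0,\epsilon},(X_{T_0}(v)-x)_{v\in I_1^{T_0,\epsilon}}\bigr)$ under $\mathbf{P}_x$ depends on $x$ only through $x\bmod\Z^d$, so $(\#I_n^{T_0,\epsilon})_{n\geq 0}$ is naturally a multitype branching process with type space $\TT^d$. Define the mean operator on $C(\TT^d)$ by
\begin{equation*}
(\cM f)(x):=\mathbf{E}_x\Bigl[\sum_{v\in I_1^{T_0,\epsilon}} f(X_{T_0}(v))\Bigr].
\end{equation*}
Via the many-to-one lemma (Lemma~\ref{lemma:ManyToOneLemma}), $\cM$ admits an integral representation whose kernel involves the Gaussian transition density, the Feynman-Kac factor, and the half-space indicator. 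A standard factorisation (e.g.\ writing $\cM$ as the composition of a small-time smoothing Brownian step with a cutoff operator) shows that $\cM$ is compact and strongly positive on $C(\TT^d)$, so by Krein-Rutman $\cM$ has a principal eigenvalue $\rho>0$ and a continuous, strictly positive eigenfunction $h$ with $h_0:=\min_{\TT^d} h>0$. The hypothesis $\inf_x(\cM 1)(x)=\inf_x\mathbf{E}_x[\#I_1^{T_0,\epsilon}]\geq m>1$ forces $\inf_x(\cM^n 1)(x)\geq m^n$, whence $\rho\geq m>1$.

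Define the nonnegative martingale
\begin{equation*}
W_n:=\rho^{-n}\sum_{v\in I_n^{T_0,\epsilon}} h(X_{nT_0}(v)),
\end{equation*}
which satisfies $\mathbf{E}_x[W_n]=h(x)$ by the branching-Markov property and $\cM h=\rho h$. Conditioning on the sigma-algebra $\mathcal{G}_n$ generated by $I_n^{T_0,\epsilon}$ together with the positions $(X_{nT_0}(v))_{v\in I_n^{T_0,\epsilon}}$, and using the conditional independence of descendant subtrees, yields
\begin{equation*}
\mathbf{E}_x\bigl[W_{n+1}^2\mid\mathcal{G}_n\bigr]=W_n^2+\rho^{-2(n+1)}\sum_{v\in I_n^{T_0,\epsilon}}\mathrm{Var}_{X_{nT_0}(v)}\!\Bigl(\sum_{u\in I_1^{T_0,\epsilon}}h(X_{T_0}(u))\Bigr).
\end{equation*}
Since $\#\cN_{T_0}$ is stochastically dominated by a Yule process of rate $\|g\|_\infty$, $\sup_y\mathbf{E}_y[(\#\cN_{T_0})^2]$ is finite; combined with $h\leq\|h\|_\infty$, this gives a uniform bound $V'$ on the variance term. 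Using $\mathbf{E}_x[\#I_n^{T_0,\epsilon}]\leq h_0^{-1}\rho^n h(x)$ and taking expectations yields $\mathbf{E}_x[W_{n+1}^2]\leq\mathbf{E}_x[W_n^2]+C\rho^{-n}$ with $C$ independent of $x$. Summing, the bound $\rho>1$ gives $\sup_{n,x}\mathbf{E}_x[W_n^2]\leq C_1<\infty$, so $W_n\to W_\infty$ almost surely and in $L^2$ with $\mathbf{E}_x[W_\infty]=h(x)\geq h_0$. Paley-Zygmund then gives
\begin{equation*}
\mathbf{P}_x\!\bigl[W_\infty>h_0/2\bigr]\geq\mathbf{P}_x\!\bigl[W_\infty>h(x)/2\bigr]\geq\frac{h(x)^2/4}{\mathbf{E}_x[W_\infty^2]}\geq\frac{h_0^2}{4C_1}=:c>0
\end{equation*}
uniformly in $x$. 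On $\{W_\infty>h_0/2\}$ we have $W_n>h_0/4$ for all large $n$, so $\#I_n^{T_0,\epsilon}>0$ for all large $n$; and since $\{\#I_{n+1}^{T_0,\epsilon}>0\}\subseteq\{\#I_n^{T_0,\epsilon}>0\}$ (an empty generation has no descendants), this forces $\#I_n^{T_0,\epsilon}>0$ for every $n\in\N$.

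The main technical obstacle is verifying the Krein-Rutman hypotheses for $\cM$ on $C(\TT^d)$ despite the non-smooth half-space indicator in the kernel—specifically, establishing compactness and obtaining a continuous eigenfunction bounded away from zero. This can be handled either by the aforementioned factorisation through a smoothing Brownian step, or by first applying Krein-Rutman on $L^2(\TT^d)$ (where Hilbert-Schmidt compactness is straightforward) and upgrading $h$ to a continuous, strictly positive eigenfunction via elliptic regularity for $\cM h=\rho h$. Once this is in place, the remainder is a standard Perron martingale and $L^2$-boundedness computation.
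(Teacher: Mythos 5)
Your proof is correct in substance, but it takes a genuinely different route from the paper. The paper (Appendix~\ref{append:survival}) treats $(\#I_n^{T_0,\epsilon})_n$ as a multitype branching process with type space $\TT^d$ and reduces the lemma to a cited classical result of Harris (Lemma~\ref{lemma:survivalgeneral}, from \cite[Theorem III.15.1(a)]{R-381-PR}), whose hypotheses it then checks: uniform upper and lower bounds on the mean kernel $M(\dot x,\dot y)$, a uniform second moment bound on $\#I_1^{T_0,\epsilon}$, the mean condition from the choice of $T_0$, and $q_1(\dot x)>0$. You instead build the Perron eigen-pair $(\rho,h)$ of the mean operator yourself and run the standard additive-martingale argument: $W_n=\rho^{-n}\sum_{v\in I_n^{T_0,\epsilon}}h(X_{nT_0}(v))$ is $L^2$-bounded uniformly in the starting point, and Paley--Zygmund gives a uniform lower bound on $\mathbf{P}_x[W_\infty>0]$, hence on survival. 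Each step you write (the comparison $\inf_x(\cM^n 1)\geq m^n$ forcing $\rho\geq m>1$ once $h$ is bounded above and below, the conditional variance decomposition, the uniform variance bound via domination by a rate-$\|g\|_\infty$ homogeneous BBM, and the monotonicity $\{\#I_{n+1}>0\}\subset\{\#I_n>0\}$) is sound. What your route buys is a self-contained argument with a quantitative survival bound $h_0^2/(4C_1)$ and an $L^2$-convergent martingale; what the paper's route buys is that all spectral and extinction-probability analysis is offloaded to the cited theorem, at the cost of also verifying $q_1>0$, which you do not need. Two remarks on the step you flag as the technical obstacle: first, the ingredient you cannot avoid is exactly the pair of uniform bounds $0<\inf M\leq\sup M<\infty$ that the paper proves by many-to-one (Lemma~\ref{lemma:ManyToOneLemma}) plus Gaussian estimates --- you need the upper bound for Hilbert--Schmidt compactness on $L^2(\TT^d)$ and the lower bound for positivity-improvement and for $h_0>0$, so these Gaussian computations should appear in a full write-up; second, continuity of $h$ (your ``elliptic regularity'' upgrade) is not actually needed, since the pointwise identity $h=\rho^{-1}\cM h$ together with the kernel bounds already gives $0<\inf_{\TT^d}h\leq\sup_{\TT^d}h<\infty$, which is all the martingale and Paley--Zygmund computations use.
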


We combine the previous result with the interpolation lemma (Lemma \ref{lemma:interpolation}) to obtain the following.

\begin{lemma}\label{cor:WeakInterpolation}
    Fix $\epsilon\in (0,1)$. There exist $T_0=T_0(\epsilon)>0$, $K=K(\epsilon, d)\in \N$, and $ \eta \in (0,1)$, such that
    \begin{equation}\label{eq:lower1}
        \inf_{x\in \R^d} \mathbf{P}_x[ \forall t>KT_0, \ \sup_{v\in \cN_t} \{(X_t(v)-x)\cdot e\}\geq (1-\epsilon)c^*(e)t]\geq \eta>0.
    \end{equation}
\end{lemma}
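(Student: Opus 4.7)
The plan is to combine the survival of the embedded branching process (Lemma~\ref{lemma:SurvivalProbability}) with the interpolation lemma (Lemma~\ref{lemma:interpolation}). The key idea is to build the embedded process with a slightly smaller parameter $\epsilon' < \epsilon$ so that the resulting slack can absorb both the small continuous-time fluctuations produced by interpolation and the replacement of the discrete time $nT_0$ by the continuous time $t$.

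I would proceed as follows. First, let $\epsilon_0(e)$ be as in Lemma~\ref{lemma:lowerestimate} and choose $\epsilon' \in (0, \min(\epsilon,\epsilon_0(e)))$ and $\kappa>0$ so that $(1-\epsilon')\cs(e)-\kappa > (1-\epsilon)\cs(e)$; for concreteness, $\epsilon'=\tfrac{1}{2}(\epsilon\wedge\epsilon_0(e))$ and $\kappa=\tfrac{\epsilon}{4}\cs(e)$ will do. Since the strip estimated in \eqref{eq:lowerdirectionalestimate} lies inside the half-space defining $I_1^{T_0,\epsilon'}$, Lemma~\ref{lemma:lowerestimate} lets me fix $T_0=T_0(\epsilon,e)$ so that $\inf_{x}\mathbf{E}_x[\#I_1^{T_0,\epsilon'}]>1$. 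Lemma~\ref{lemma:SurvivalProbability} then produces $\eta'=\eta'(\epsilon,e)>0$ such that the survival event $S:=\{\#I_n^{T_0,\epsilon'}\neq 0 \text{ for all } n\in\N\}$ obeys $\inf_{x}\mathbf{P}_x[S]\geq \eta'$. Second, I revisit the proof of Lemma~\ref{lemma:interpolation}: with the bad events $A_n:=\{\sup_{\ell \in [nT_0,(n+1)T_0]} d_{\mathrm{H}}(\mathcal{X}_{nT_0},\mathcal{X}_\ell)> (\kappa/4)nT_0\}$, the estimates~\eqref{eq:interpolatio1} and \eqref{e.gaussbd} give $\sup_{x}\mathbf{P}_x[A_n] \leq C e^{(n+1)T_0 \|g\|_\infty - c n^2 T_0}$, which decays super-exponentially in $n$. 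I choose $K=K(\epsilon,d)\in\N$ large enough that both $\sup_{x}\mathbf{P}_x\bigl[\bigcup_{n\geq K}A_n\bigr]\leq \eta'/2$ and $K\geq 8/\epsilon$. Setting $E:=S\cap\bigcap_{n\geq K}A_n^c$, I then have $\inf_{x}\mathbf{P}_x[E]\geq \eta'/2$.

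Third, I verify the desired inequality on $E$. For $t\geq KT_0$, take $n\geq K$ with $nT_0\leq t<(n+1)T_0$. Survival provides some $v\in I_n^{T_0,\epsilon'}$, which by iterating \eqref{eq:generationEmbed} satisfies $(X_{nT_0}(v)-x)\cdot e\geq (1-\epsilon')\cs(e)\,nT_0$. On $A_n^c$, the definition of Hausdorff distance produces some $u\in\cN_t$ with $|X_t(u)-X_{nT_0}(v)|\leq (\kappa/4)nT_0\leq \kappa nT_0$, hence
\[
(X_t(u)-x)\cdot e \geq \bigl[(1-\epsilon')\cs(e) - \kappa\bigr] nT_0 \geq \bigl[(1-\epsilon')\cs(e) - \kappa\bigr](t-T_0).
\]
Substituting $\epsilon'=\tfrac{1}{2}(\epsilon\wedge\epsilon_0(e))$ and $\kappa=\tfrac{\epsilon}{4}\cs(e)$, and using $t\geq KT_0\geq (8/\epsilon)T_0$, a short algebraic check shows the right-hand side is at least $(1-\epsilon)\cs(e)t$. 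Setting $\eta:=\eta'/2$ completes the argument.

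The main obstacle is the simultaneous parameter chase: $\epsilon'$ and $T_0$ must be chosen to make the embedded process supercritical with uniform positive survival probability, $\kappa$ and $K$ must ensure that $\sup_x \mathbf{P}_x\bigl[\bigcup_{n\geq K}A_n\bigr]\leq \eta'/2$ while $K$ depends only on $\epsilon$ and $d$, and the single strict inequality $(1-\epsilon')\cs(e)-\kappa > (1-\epsilon)\cs(e)$ must carry through after accounting for both the Hausdorff slack and the passage from $nT_0$ to $t$. Once these constants are locked in, the rest is just invoking the two lemmas and applying the triangle inequality.
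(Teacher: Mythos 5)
Your proposal is correct and takes essentially the same route as the paper: make the embedded process of Section~\ref{section:embeddedsupercriticalbranching} supercritical at a slightly smaller parameter via Lemma~\ref{lemma:lowerestimate}, invoke Lemma~\ref{lemma:SurvivalProbability} for uniform survival, and intersect the survival event with $\bigcap_{n\geq K}A_n^c$, where $K$ is chosen using the super-exponentially decaying bound from \eqref{eq:interpolatio1} and \eqref{e.gaussbd}. Your explicit bookkeeping with $\epsilon'$, $\kappa$ and $K\geq 8/\epsilon$ in fact handles the passage from $nT_0$ to a general $t\in[nT_0,(n+1)T_0)$ a bit more carefully than the paper's own display, which stops at the bound $(1-\epsilon)c^*(e)\,nT_0$.
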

\begin{proof}
    By Lemma \ref{lemma:lowerestimate} and Lemma \ref{lemma:SurvivalProbability}, there exists $T_0=T_{0}(\epsilon)>0$ such that 
    \begin{equation}\label{e.intermed}
        \inf_{x\in \R^d}\probaaa{x}{}{\# I_n^{T_0,\epsilon/2} \neq 0 \ \forall n\in \N } >0.
    \end{equation}
    For $n\in \N$, let $A_{n}\subset \Omega$ be the event defined by 
    \begin{equation*}
        A_n:=\Bigl\{\sup_{\ell \in [nT_0,(n+1)T_0]} d_\mathrm{H}(\mathcal{X}_{nT_0},\mathcal{X}_\ell)> (\epsilon/2)\cs(e) nT_0 \Bigl \}, 
    \end{equation*}
    where we recall that $\mathcal{X}_{\ell}=\left\{X_{\ell}(v): v\in \mathcal{N}_{\ell}\right\}\subseteq \R^{d}$. By \eqref{eq:generationEmbed}, on the event $A_n^c \cap \{\# I_n^{T_0,\epsilon/2} \neq 0\}$, there exists $v\in \cN_{(n+1)T_0}$ such that, for every $\ell\in [nT_0,(n+1)T_0]$, 
    \begin{equation}\label{eq:lower2}
        \begin{split}
            (X_{\ell}(v)-X_0(v))\cdot e &= ((X_{nT_0} -X_0(v)) + (X_{\ell}(v) - X_{nT_0}))\cdot e \\
            &\geq (1-\epsilon/2)c^*(e) n T_0 - (\epsilon/2)c^*(e)n T_0 \\
            &= (1-\epsilon)c^*(e) n T_0.
        \end{split}
    \end{equation}
    Thus, we have
    \begin{equation}\label{eq:pos1}
        \begin{split}
            \mathbf{P}_x[ \forall t>KT_0, \ \sup_{v\in \cN_t} \{(X_t(v)-x)\cdot e\} \ge (1-\epsilon) c^*(e)t]\geq\probaaa{x}{}{\bigcap_{n\in \N}\{ I_n^{T_0,\epsilon/2} \neq \emptyset\}, \bigcap_{n\ge K} A_{n}^c  }.
        \end{split}
    \end{equation}
    We now claim that there exists $K>0$ sufficiently large such that 
    \begin{equation}\label{eq:pos}
        \probaaa{x}{}{\bigcap_{n\in \N}\{ I_n^{T_0,\epsilon/2} \neq \emptyset\}, \bigcap_{n=K}^{\infty} A_{n}^c  }\geq \frac{1}{4}\inf_{x\in \R^d}\probaaa{x}{}{\bigcap_{n\in \N}\{ I_n^{T_0,\epsilon/2} \neq \emptyset\} }.
    \end{equation}
    To do so, for $k\in \N$, by the same reasoning as for \eqref{eq:interpolatio1} and \eqref{e.gaussbd}, and a union bound, we observe that there exist constants $C_1,C_2>0$, depending on $d$ and $\epsilon$, such that
    \begin{equation*}
        \probaaa{x}{}{\bigcap_{n=k}^{\infty} A_{n}^c } = 1- \probaaa{x}{}{\bigcup_{n=k}^{\infty} A_{n} } \geq 1-\sum_{n=k}^{\infty}  C_1 e^{(n+1)T_0 \|g\|_{\infty}} e^{-C_2 ((\epsilon/2) \cs(e))^2n^2 T_0}.
    \end{equation*}
   By the integral test, the series on the right hand side with $k=0$ converges. This implies that by \eqref{e.intermed}, there exists $K=K(\epsilon, d)\in \N$, independent of $x\in \R^d$, such that 
    \begin{equation*}
        \probaaa{x}{}{\bigcap_{n=K}^{\infty} A_{n}^c } \geq 1-\frac{1}{4}\inf_{x\in \R^d}\probaaa{x}{}{\bigcap_{n\in \N}\{ I_n^{T_0,\epsilon/2} \neq \emptyset\} }.
    \end{equation*}
    To obtain \eqref{eq:pos} we now argue by contradiction. Indeed, let $E:=\bigcap_{n\in \N}\{ I_n^{T_0,\epsilon/2} \neq \emptyset\}$, and $F:=\bigcap_{n=K}^\infty A_{n}^c$. By the prior display, we know that $\mathbf{P}_{x}[F]\geq 1-\tfrac{1}{4}\inf_{x\in \R^{d}} \mathbf{P}_{x}[E]$, and by \eqref{e.intermed}, $\inf_{x\in \R^{d}} \mathbf{P}_{x}[E]>0$. If \eqref{eq:pos} did not hold, this would imply
    \begin{equation*}
        \mathbf{P}_{x}[E\cup F]=\mathbf{P}_{x}[E]+\mathbf{P}_{x}[F]-\mathbf{P}_{x}[E\cap F]>1,  
    \end{equation*}
    and we obtain a contradiction, which proves \eqref{eq:pos}. Plugging \eqref{eq:pos} back into \eqref{eq:pos1}, and since $K=K(\epsilon, d)\in \N$ does not depend on the choice of $x\in \R^d$, the above implies \eqref{eq:lower1}.
\end{proof}

\subsubsection{The cutoff argument}\label{section:cutoffargument}

We now use the branching property of the BBM to improve the bound in Lemma \ref{cor:WeakInterpolation}. This will allow us to conclude Proposition \ref{lemma:DirectionalLowerBound}. We introduce some notation that will be used in this subsection. Fix $e\in S^{d-1}$. Let
\begin{equation}\label{eq:MaxDisplacement}
    M_t:=\max_{v\in \cN_t}\{(X_t(v)-X_0(v))\cdot e\}\ \text{ and } \ M_t^-:=\min_{v\in \cN_t}\{(X_t(v)-X_0(v))\cdot e\}.
\end{equation}
For $L\geq 0$, $v\in \cN_L$, and $s\geq 0$, let $R_s^{v,L}\in \R^d$ be defined by
\begin{equation}\label{eq:MaxDisplacement1}
    R_s^{v,L}:=\text{argmax}_{\{X_{L+s}(u)-X_0(u) \in \R^d:\ u\in \cN_{L+s}, v\leq u\}}\{(X_{L+s}(u)-X_0(u))\cdot e  \}.
\end{equation}
Observe that $R^{v,L}_s\in \R^{d}$ is the recentered position of the child of $v\in \mathcal{N}_{L}$ who travels the maximal displacement in direction $e$ in time $s$. In particular, for the initial particle $\emptyset$, observe that
\begin{equation*}
    R_t^{\emptyset,0}\cdot e = M_t.
\end{equation*}

We are finally ready to prove Proposition \ref{lemma:DirectionalLowerBound}. The proof is motivated by the last step of the proof of \cite[Theorem 1]{MR3127890}. 
\begin{proof}[Proof of Proposition \ref{lemma:DirectionalLowerBound}]
    Let $\delta := (1+\tfrac{1}{2})c^*(-e)$. Let $\kappa>0$ be defined by
    \begin{equation*}
        \kappa := \frac{c^*(e)\epsilon}{4\parentesiss{\delta+\parentesiss{1 - \frac{\epsilon }{2}}c^*(e)}}.
    \end{equation*}
    Let $L := \kappa t$. Observe from the definition of $L$ and $\kappa$ that $$[\delta+(1-\tfrac{\epsilon}{2})c^*(e)]L\leq \tfrac{\epsilon}{2}c^*(e)t,$$
    and that this inequality is equivalent to
    \begin{equation}\label{eq:ineq1}
        \delta L + (1-\epsilon)c^*(e) t \leq (1-\tfrac{\epsilon}{2})c^*(e)(t-L).
    \end{equation}
   Let $M_{t}$ and $M_{L}^{-}$ be as in \eqref{eq:MaxDisplacement}. We have that 
    \begin{equation}\label{eq:5.2.90}
        \begin{split}
            \mathbf{P}_{x}[M_t \leq (1-\epsilon)c^*(e)t] &\leq \proba{x}{}{M_t\leq (1-\epsilon)c^*(e)t,M_L^- \leq - \delta L }\\
            &\  + \proba{x}{}{M_t \leq (1-\epsilon)c^*(e)t,M_L^- > - \delta L }\\
            & \leq \proba{x}{}{M_L^{-}\leq -\delta L} + \proba{x}{}{M_t \leq (1-\epsilon)c^*(e)t,M_L^- > - \delta L }.
        \end{split}
    \end{equation}
    We apply Proposition \ref{lemma:UpperBound} in the direction $-e$ to control
    \begin{equation*}
         \begin{split}
             \proba{x}{}{M_L^{-}\leq -\delta L} &= \proba{x}{}{M_L^{-}\leq -(1+\tfrac{1}{2})c^*(-e)\kappa t}\\
             &= \proba{x}{}{-\max_{v\in \cN_L}\{(X_L(v)-X_0(v))\cdot (-e)\}\leq -(1+\tfrac{1}{2})c^*(-e)\kappa t}\\
             &= \proba{x}{}{\max_{v\in \cN_L}\{(X_L(v)-X_0(v))\cdot (-e)\}\geq (1+\tfrac{1}{2})c^*(-e)\kappa t}\\
             & \leq C e^{-t \frac{\kappa}{2} \gamma(-e,\lambda_{-e})}.
         \end{split}
    \end{equation*}  
    For the second term on the right-hand side of \eqref{eq:5.2.90}, we decompose based on the event $\{\#\cN_L < t\}$, to get
    \begin{equation}\label{eq:5.2.9}
        \begin{aligned}
            \mathbf{P}_{x}&[M_t \leq (1-\epsilon)c^*(e)t,\, M_L^- > - \delta L] \\
            &\quad\leq \proba{x}{}{\#\cN_L \leq  t} + \proba{x}{}{M_t \leq (1-\epsilon)c^*(e)t,M_L^- > - \delta L, \#\cN_L>t }\\
            & \quad \leq \proba{x}{}{\#\cN_L \leq  t} + \cproba{x}{}{M_t \leq (1-\epsilon)c^*(e)t,M_L^- > - \delta L }{\#\cN_L>t}.
        \end{aligned}
    \end{equation}
   Recall that by the definitions introduced in \eqref{eq:MaxDisplacement} and \eqref{eq:MaxDisplacement1}, 
    \begin{equation*}
        \begin{split}
            \mathbf{P}_x[M_t \leq (1-&\epsilon)c^*(e)t,M_L^- > - \delta L  \ | \ \#\cN_L>t]  \\
            &   = \cprobaa{x}{}{\max_{v\in \cN_L}\{ R^{v,L}_{t-L}\cdot e \}\leq (1-\epsilon)c^*(e) t,M_L^- > - \delta L}{\#\cN_L>t}\\
            &   \leq \cprobaa{x}{}{\max_{v\in \cN_L}\{ R^{v,L}_{t-L}\cdot e -M_L^- \}\leq \delta L + (1-\epsilon)c^*(e) t}{\#\cN_L>t}\\
            &   \leq \cprobaa{x}{}{\max_{v\in \cN_L}\{ (R^{v,L}_{t-L}-X_L(v))\cdot e  \}\leq \delta L + (1-\epsilon)c^*(e) t  }{\#\cN_L>t}\\
            & \leq \cprobaa{x}{}{\max_{v\in \cN_L}\{ (R^{v,L}_{t-L}-X_L(v))\cdot e  \}\leq (1-\epsilon/2)c^*(e)(t-L)}{\#\cN_L>t}\\
            & = \cprobaa{x}{}{\forall v\in \cN_L, (R^{v,L}_{t-L}-X_L(v))\cdot e  \leq (1-\epsilon/2)c^*(e)(t-L)}{\#\cN_L>t}.
        \end{split}
    \end{equation*}
    Observe that conditioned on $\cF_L$, by the Markov property and the branching property, the processes $((R^{v,L}_{t-L}-X_L(v))_{t\geq L})_{v\in \cN_L}$ are independent. Hence, by the Markov property and Lemma \ref{cor:WeakInterpolation}, we can find $T_0(\epsilon, d)>0$ and $\eta\in (0,1)$ such that for every $t\geq T_0$, 
    \begin{equation*}
        \begin{split}
            \mathbf{P}_{x}\bigl[\forall v\in \cN_L, (R^{v,L}_{t-L}&-X_L(v))\cdot e  \leq (1-\epsilon/2)c^*(e)(t-L)  \bigl | \#\cN_L>t\big ]\\
            & = \cexpectedd{x}{}{ \prod_{v\in \cN_L}\proba{X_L(v)}{}{(R^{v,L}_{t-L}-X_L(v))\cdot e  \leq (1-\epsilon/2)c^*(e)(t-L)}   }{\#\cN_L>t}\\
            & \leq  \cexpectedd{x}{}{ \parentesiss{\sup_{y\in \R^d}\proba{y}{}{ M_{t-L} \leq (1-\epsilon/2)c^*(e)(t-L)}}^{\#\cN_L}   }{\#\cN_L>t}\\
            &\leq (1-\eta)^t.
        \end{split} 
    \end{equation*}
   To bound the first term of the right-hand side of \eqref{eq:5.2.9}, we compare $\#\cN_{t}$ with the generation size of a discrete-time supercritical Bienaym\'e tree which, by construction, will survive almost surely. To do so, in view of Lemma \ref{lemma:tailBranchingTime}, let us fix $T>0$ sufficiently large, depending on $\|g\|_{\infty}$ and $d$, such that $\sup_{x\in \R^d} \proba{x}{}{\tau_\emptyset >T}\leq 1/2$. It follows that $(\#\mathcal{N}_{nT})_{n\in \N}$ stochastically dominates the generation size process of a Bienaym\'e tree with offspring distribution $p_1=1/2=p_2$. The offspring has mean size $m=3/2$ and the probability of extinction is $q=0$. Thus, by \cite[Lemma 1]{10.1214/aoap/1177004832}, we find $\zeta>1$ and $a\in \N$ such that,
    \begin{equation*}
        \sup_{x\in \R^d}\proba{x}{}{\#\cN_{anT}< \zeta^n }=o(\zeta^{-n}).   
    \end{equation*}
    Let us denote by $\lfloor \cdot \rfloor$ the floor function. Using the fact that $\#\cN_t$ is non-decreasing in $t$, we find a constant $C>0$ such that for $T_0$ sufficiently large, for all $t\geq T_{0}$,
    \begin{equation}\label{ThinnigNumber}
        \sup_{x\in \R^d}\proba{x}{}{\#\cN_{t}< \zeta^{\lfloor t/aT \rfloor } } \leq \sup_{x\in \R^d}\proba{x}{}{\#\cN_{\lfloor t/aT \rfloor aT}< \zeta^{\lfloor t/aT \rfloor } } \leq C\zeta^{-\lfloor t/aT \rfloor } \leq C_1e^{-\log(\zeta)\tfrac{t}{aT}}.
    \end{equation}
    Now, since $\zeta^{\lfloor t/aT \rfloor }$ grows exponentially in $t$, we can further take $T_0$ larger in such a way that, for every $t\geq T_0$, $t < \zeta^{\lfloor t\kappa/aT \rfloor}$. Combining this with the previous bound, we obtain that for $T_0$ sufficiently large, for all $t\geq T_0$,
    \begin{equation*}
         \sup_{x\in \R^d}\proba{x}{}{\#\cN_{\kappa t}< t } \leq \sup_{x\in \R^d}\proba{x}{}{\#\cN_{\kappa t}< \zeta^{\lfloor \kappa t/aT \rfloor } } \leq C_1e^{-\log(\zeta)\tfrac{\kappa t}{aT}}.
    \end{equation*}
    Plugging the previous bounds into \eqref{eq:5.2.9} and then into \eqref{eq:5.2.90}, we conclude that for all $t\geq T_{0}$,
    \begin{equation*}
        \begin{split}
            \mathbf{P}_{x}[M_t \leq (1-\epsilon)c^*(e)t] &\leq C e^{-t\frac{\kappa }{2} \gamma(-e,\lambda_{-e})} +  Ce^{-t\log(\zeta)\tfrac{\kappa}{aT}} + (1-\eta)^t\\
            &\leq Ce^{-t\min\{\frac{\kappa}{2}\gamma(-e,\lambda_{-e}),\ \log(\zeta)\tfrac{\kappa}{aT},\ -\log(1-\eta) \}}\\
            &=Ce^{-t\Gamma(\epsilon,e)},
        \end{split}
    \end{equation*}
    where $\Gamma(\epsilon,e)=\min\left\{\frac{\kappa}{2}\gamma(-e,\lambda_{-e}),\ \log(\zeta)\tfrac{\kappa}{Ta},\ -\log(1-\eta) \right\}$.
\end{proof}

\section{The proof of Theorem \ref{thm:shapeIntroHaussdorf} and Proposition \ref{eq:quantShapeThm}}\label{s.shape}

In what follows, the function $d_{\mathrm{H}}(\cdot, \cdot)$ will denote the Hausdorff distance between subsets of $\R^d$. Equipped with the half-space estimates, we now use Proposition \ref{prop:wulffapprox} to prove a shape theorem for the convex hull of the $g$-BBM. This result will serve as the main tool to prove Theorem \ref{thm:shapeIntroHaussdorf}. Recall that Proposition \ref{prop:wulffapprox} guarantees that for $\mathfrak{c}^{*}$ a bounded function on $S^{d-1}$, for every $\epsilon\in (0,1)$, there are finite sets $\mathcal{R}, \mathcal{Q}\subset S^{d-1}$ such that 
 \begin{enumerate}[(i)]
        \item $\bigcap_{r\in \cR} \cH_{r,\mathfrak{c}^*(r)}^{-}\subset(1+\epsilon)\cW(\mathfrak{c}^*)$, and
        \item for every compact convex set $K\subset B(0,2\sup_{e\in S^{d-1}}\mathfrak{c}^{*}(e))$, if for every $q\in \cQ$, $K\cap \cH^+_{q,\mathfrak{c}^*(q)} \neq \emptyset$, then 
        \begin{equation*}
            (1-\epsilon)\cW(\mathfrak{c}^*)\subset K.
        \end{equation*}
    \end{enumerate}

\begin{proof}[Proof of Proposition \ref{prop:QuantShapeTheorem}]
    Let us denote by $\mathcal{A}_t$ the event in \eqref{eq:quantShapeThm}. We will bound from above the probability of $\mathcal{A}_t^c$. First, let $\epsilon_1\in (0,1)$ be such that $(1+\epsilon_1)^2< (1+\epsilon)$. We take $\mathfrak{c}^*(\cdot)=(1+\epsilon_1)\cs(\cdot)$, in which case it is easy to see that $\cW(\mathfrak{c}^*)=(1+\epsilon_1)\cW(\cs)$. By Proposition \ref{p.c*pos},  $\mathfrak{c}^{*}$ is bounded. Therefore, by Proposition \ref{prop:wulffapprox}(i), there exists a finite set $\cR\subset S^{d-1}$ such that
    \begin{equation*}
        \bigcap_{r\in \cR} \cH_{r,(1+\epsilon_1)\cs(r)}^-=\bigcap_{r\in \cR} \cH_{r,\mathfrak{c}^*(r)}^-\subset(1+\epsilon_1)\cW(\mathfrak{c}^*)= (1+\epsilon_1)^2\cW(\cs)\subset (1+\epsilon)\cW.
    \end{equation*}
    By a union bound and Proposition \ref{lemma:UpperBound}, for every $x\in \R^d$,
    \begin{equation*}
        \begin{split}
            \mathbf{P}_x[ t^{-1}(H_t-x) \not\subset(1+\epsilon)\cW(c^*)]& \leq \mathbf{P}_x[ t^{-1}(H_t-x) \not\subset \cap_{r\in \cR} \cH_{r,(1+\epsilon_1)c^*(r)}^-]\\
            & \leq \sum_{r\in \cR} \mathbf{P}_x[ t^{-1}(\mathcal{X}_t-x)\cap \cH_{r,(1+\epsilon_1)c^*(r)}^+ \neq \emptyset ]\\
            & \leq (\#\cR)\big(\max_{r\in \cR}C(r)\big) e^{-\epsilon_1 t\min_{r\in \cR}\gamma(r,\lambda_r) }.
        \end{split}
    \end{equation*}
    We now fix a new parameter $\epsilon_2>0$ such that $(1-\epsilon)\leq (1-\epsilon_2)^2$. Let $\mathfrak{c}^*(\cdot)=(1-\epsilon_2)\cs(\cdot)$ which has Wulff shape $\cW(\mathfrak{c}^*)=(1-\epsilon_2)\cW(\cs)$. Again, $\mathfrak{c}^{*}$ is bounded and strictly positive by Proposition \ref{p.c*pos}. Let $a^{-1}:=\sup_{e\in S^{d-1}}\mathfrak{c}^{*}\vee(\inf_{e\in S^{d-1}}\mathfrak{c}^{*})^{-1}$. By Proposition \ref{prop:wulffapprox}(ii), there is a finite set $\cQ\subset S^{d-1}$ such that, for every compact convex set $K\subset B(0,2a^{-1})$, if for every $q\in \cQ$, $\cH^+_{q,\mathfrak{c}^*(q)}\cap K\neq \emptyset$, then
    \begin{equation*}
        (1-\epsilon)\cW(\cs)\subset (1-\epsilon_2)^2\cW(\cs)= (1-\epsilon_2)\cW(\mathfrak{c}^*)\subset K.
    \end{equation*}
        
    By Proposition \ref{lemma:DirectionalLowerBound}, for every $x\in \R^d$ and $t>0$ sufficiently large depending on $\epsilon_2, d$ and $\|g\|_{\infty}$,
    \begin{equation*}
        \begin{split}
            \mathbf{P}_x[(1-\epsilon)&\cW(c^*)\not\subset t^{-1}(H_t-x), \ t^{-1}(H_t-x) \subset(1+\epsilon)\cW(c^*)]\\
            &\leq  \mathbf{P}_x[(1-\epsilon)\cW(c^*)\not\subset t^{-1}(H_t-x), \ t^{-1}(H_t-x) \subset B(0, 2a^{-1}) ] \\
            &\leq  \mathbf{P}_x[\exists q\in \cQ \text{ s.t. } t^{-1}(\mathcal{X}_t-x)\cap \cH^+_{q,(1-\epsilon_2)c^*(q)} = \emptyset, \ t^{-1}(H_t-x) \subset B(0, 2a^{-1}) ] \\
            &\leq \sum_{q\in \cQ} \mathbf{P}_x[ t^{-1}(\mathcal{X}_t-x)\cap \cH^+_{q,(1-\epsilon_2)c^*(q)} = \emptyset] \\
            & \leq (\# \cQ) \big(\max_{q\in \cQ} C(q)\big) e^{-t\min_{q\in \cQ}\Gamma(\epsilon_2,q)}.
        \end{split}
    \end{equation*}
    Since these bounds do not depend on the choice of $x\in \R^d$, for all $t>0$ sufficiently large, depending on  $\epsilon, d$ and $\|g\|_{\infty}$, 
    \begin{equation*}
        \sup_{x\in \R^d}\mathbf{P}_x(\mathcal{A}_t^c)\leq (\# \cQ) \big(\max_{q\in \cQ} C(q)\big)e^{-t\min_{q\in \cQ}\Gamma(\epsilon_2,q) } + (\#\cR)\big(\max_{r\in \cR}C(r)\big) e^{-\epsilon_1 t \min_{r\in \cR}\gamma(r,\lambda_r)}.
    \end{equation*}
    This implies the result. 
\end{proof}

\begin{remark}\label{remark:ShapeConvexHull}
    The previous proposition, combined with the Borel-Cantelli lemma and the interpolation lemma (Lemma \ref{lemma:interpolation}), imply that, for all $\epsilon\in (0,1)$ and $x\in \R^d$, 
    \begin{equation*}
        \proba{x}{}{\text{$\exists T>0$ s.t. $\forall t\geq T, t(1-\epsilon)\cW \subset H_t-x\subset t(1+\epsilon)\cW$}} = 1.
    \end{equation*}
\end{remark}

With the shape theorem for the convex hull of the $g$-BBM at hand, Theorem \ref{thm:shapeIntroHaussdorf} will be a consequence of the following fact about convex sets. For a convex set $W$, $x\in W$ is called an \textit{extreme point} of $W$ if it cannot be expressed in the form $x=tz+(1-t)y$ for some $y,z \in W$ and $t\in (0,1)$.

\begin{lemma}\label{lemma:ApproxExtShape}
    Let \( W\subset \R^{d} \) be compact and convex. Fix \( \epsilon \in (0,1) \). There exists \( \delta \in (0,\epsilon) \) such that, for any set \( E\subset \R^{d} \) with 
    \[
    (1 - \delta)W \subset \mathrm{conv}(E) \subset (1 + \delta)W,
    \]
    if $\xi\in W$ is an extreme point, then $B(\xi, \epsilon)\cap E\neq \emptyset$.
\end{lemma}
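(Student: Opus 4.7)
My plan is to argue by contradiction via a compactness / Hausdorff-limit argument. Suppose the conclusion fails; then along a sequence $\delta_n \downarrow 0$ one can choose sets $E_n$ satisfying $(1-\delta_n)W \subset \mathrm{conv}(E_n) \subset (1+\delta_n)W$ together with extreme points $\xi_n \in W$ for which $B(\xi_n,\epsilon) \cap E_n = \emptyset$. Replacing $E_n$ by its closure preserves both the sandwich of inclusions (since $W$ is closed) and disjointness from the open ball $B(\xi_n,\epsilon)$, so we may assume each $E_n$ is compact and contained in a common bounded set (e.g.\ $2W$ for $n$ large). Blaschke's selection theorem, together with compactness of $W$, then yields a subsequence along which $E_n \to F$ in the Hausdorff metric, with $F$ compact, and $\xi_n \to \xi^{*} \in W$. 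The sandwich forces $\mathrm{conv}(E_n)\to W$ in the Hausdorff metric, while continuity of the convex hull on compact subsets of $\R^d$ (the map $E \mapsto \mathrm{conv}(E)$ is $1$-Lipschitz in the Hausdorff distance) gives $\mathrm{conv}(E_n) \to \mathrm{conv}(F)$; hence $\mathrm{conv}(F)=W$.

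The contradiction now comes in two pieces. First, because $F$ is closed and $\mathrm{conv}(F)=W$, every extreme point of $W$ must lie in $F$: if some extreme $\xi \in W$ failed to lie in $F$, then $F \subset W\setminus\{\xi\}$, so $\mathrm{conv}(F) \subset \mathrm{conv}(W\setminus\{\xi\})$; but extremality forces $\xi \notin \mathrm{conv}(W\setminus\{\xi\})$, contradicting $\xi \in W = \mathrm{conv}(F)$. In particular $\xi_n \in F$ for every $n$, and closedness of $F$ gives $\xi^{*} \in F$. Second, the exclusion $E_n \cap B(\xi_n,\epsilon) = \emptyset$ persists in the Hausdorff limit: any $y \in F$ is a limit of some $y_n \in E_n$, and the bound $|y_n-\xi_n|\ge\epsilon$ passes to $|y-\xi^{*}|\ge \epsilon$, so $F \cap B(\xi^{*},\epsilon)=\emptyset$. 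But $\xi^{*} \in B(\xi^{*},\epsilon) \cap F$, a contradiction.

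The main obstacle I foresee is resisting the temptation to pursue a direct quantitative route, namely to establish a uniform positive lower bound on the separation margin $\mathrm{dist}(\xi,\mathrm{conv}(W\setminus B(\xi,\epsilon/2)))$ over all extreme points $\xi \in W$, and then separate $(1-\delta)\xi$ from $\mathrm{conv}((1+\delta)W\setminus B(\xi,\epsilon))$ by Hahn--Banach. This strategy can fail in general because the set of extreme points of a compact convex body in $\R^d$ need not be closed: extreme points may accumulate at non-extreme limit points where the separation margin degenerates to $0$ (e.g.\ extreme points on a circle accumulating at the midpoint of a chord that has been adjoined in a higher dimension). The compactness/Hausdorff-limit argument above sidesteps any such uniform margin estimate; its only non-trivial input is the classical fact that $\mathrm{ext}(\mathrm{conv}(F)) \subset F$ for closed $F$.
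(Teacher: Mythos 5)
Your proof is correct, and it shares the paper's overall skeleton for Lemma \ref{lemma:ApproxExtShape} (negate the statement, extract a sequence of counterexamples $E_n$, $\xi_n$ with $\mathrm{conv}(E_n)\to W$ in Hausdorff distance and $\xi_n\to\xi^*$), but your endgame is genuinely different and in fact tighter than the paper's. The paper finishes by asserting that the limit $\xi^*$ of the extreme points ``is necessarily an extreme point'' and that $\mathrm{dist}(\xi^*,E_n)>\epsilon/4$ then contradicts $\mathrm{conv}(E_n)\to W$; the first assertion is not justified as stated (for $d\ge 3$ the extreme points of a compact convex body need not form a closed set, precisely the phenomenon you flag in your last paragraph), and the second implication itself needs the extremality of $\xi^*$ plus a small separation argument, since a non-extreme boundary point can perfectly well be approximated by hulls of sets avoiding a ball around it. Your route avoids both issues: you take a Hausdorff limit $F$ of the (closed) sets $E_n$ via Blaschke selection, use the $1$-Lipschitz continuity of $E\mapsto\mathrm{conv}(E)$ to conclude $\mathrm{conv}(F)=W$, and then invoke the Milman-type fact $\mathrm{ext}(\mathrm{conv}(F))\subset F$ for compact $F\subset\R^d$ to place every $\xi_n$, hence the limit $\xi^*$, inside $F$; this contradicts $F\cap B(\xi^*,\epsilon)=\emptyset$ without ever requiring $\xi^*$ to be extreme. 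Two cosmetic points: your inline justification ``if $\xi\notin F$ then $F\subset W\setminus\{\xi\}$'' tacitly uses $F\subset W$ (immediate, since any $y\in F$ is a limit of points $y_n\in(1+\delta_n)W$ and $W$ is closed, but worth a line; the Milman fact you cite at the end makes the detour unnecessary anyway), and the containment $E_n\subset 2W$ presupposes $0\in W$, though all you need is that the $E_n$ lie in a common bounded set, which holds regardless since $\sup_{y\in E_n}|y|\le 2\sup_{w\in W}|w|$.
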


Let us first prove the following auxiliary lemma.

\begin{lemma}\label{lemma:AuxExtreme}
    Let $W\subset \R^{d}$ be a convex set and let $x$ be an extreme point of $W$. Then for any $\epsilon\in (0,1)$, $x\not \in \mathrm{conv}(W\setminus B(x,\epsilon))$.
\end{lemma}
\begin{proof}
For the purposes of contradiction, assume that $x\in \mathrm{conv}(W\setminus B(x,\epsilon))$. Then, there exist $\alpha_1,...,\alpha_n>0$ with $\sum_{i=1}^n \alpha_i =1$ and distinct $y_1,...,y_n\in W\setminus B(x,\epsilon)$ such that $x=\sum_{i=1}^n \alpha_i y_i$. Since $x\not \in W\setminus B(x,\epsilon)$, it must be the case that $n\geq 2$ and that $\alpha_1,...,\alpha_n < 1$. Notice that $\sum_{i=2}^n \tfrac{\alpha_i}{1-\alpha_1} = 1$, and that $y_2,...,y_n$ all belong to $W$. Since $W$ is convex, $z:=\sum_{i=2}^n \tfrac{\alpha_i}{1-\alpha_1}y_i \in W$. But then 
\begin{equation*}
    x = \sum_{i=1}^n \alpha_i y_i = \alpha_1 y_1 + (1-\alpha_1) z,
\end{equation*}
which contradicts the fact that $x$ is an extreme point of $W$.
\end{proof}

\begin{proof}[Proof of Lemma \ref{lemma:ApproxExtShape}]
    Suppose for the purposes of contradiction that for $\epsilon>0$, there exists a sequence of sets $\left\{E_{n}\right\}_{n\in \mathbb{N}}\subset \R^{d}$ and a sequence of extreme points $\left\{x_{n}\right\}_{n\in \mathbb{N}}$  of $W$ such that for all $n \in \N$, $(1 - \tfrac{1}{n})W \subset \text{conv}(E_n) \subset (1 + \tfrac{1}{n})W$ and $\text{dist}(x_n, E_n) > \tfrac{\epsilon}{2}$.  Note that this implies that \( \text{conv}(E_n) \to W \) in the Hausdorff distance as \( n \to \infty \). Since \( W \) is compact, up to extracting a subsequence, we can assume \( x_n \to x' \in W \), and then, for all \( n \) sufficiently large, $\text{dist}(x', E_n) > \epsilon/4$. Now, let us take $K>0$ sufficiently large such that $|x_K-x'|< \epsilon/8$, then, $\mathrm{dist}(x_K,E_n)>\epsilon/8$ for all $n$ sufficiently large. Let us set $x:=x_K$, which is an extreme point of $W$ satisfying $\mathrm{dist}(x,E_n)>\epsilon/8$ for all $n$ sufficiently large. We will use $x$ to obtain a contradiction.

    First, for $n\in \N$ consider the set $A_n:=\mathrm{conv}((1+\tfrac{1}{n})W\setminus B(x,\epsilon/8))$. We claim that $A_n$ converges to $\mathrm{conv}(W\setminus B(x,\epsilon/8))$ in the Hausdorff distance. Fix $\delta>0$ and observe that $(1+\tfrac{1}{n})W\setminus B(x,\epsilon/8)$ converges to $ W\setminus B(x,\epsilon/8)$ in the Hausdorff distance as $n\to \infty$. Thus, there exists $N>0$ sufficiently large such that, for $n>N$, 
    \begin{equation*}
        d_H\big((1+\tfrac{1}{n})W\setminus B(x,\epsilon/8), W\setminus B(x,\epsilon/8)\big)<\delta.
    \end{equation*}
    Now fix $n>N$, and $y\in A_n$. We can choose $\alpha_1,...,\alpha_m>0$ adding up to 1, and $y_1,...,y_m\in (1+\tfrac{1}{n})W\setminus B(x,\epsilon/8)$ such that $y=\sum_{i=1}^m \alpha_i y_i$. By the previous display, we can find $y'_1,...,y'_m\in W\setminus B(x,\epsilon/8)$ such that $|y_i-y_i'|<\delta$ for every $i\in [m]$. Thus, writing $y':=\sum_{i=1}^m \alpha_i y_i' \in \mathrm{conv}(W\setminus B(x,\epsilon/8))$, we have 
    \begin{equation*}
        |y-y'|=\Big|y - \sum_{i=1}^m \alpha_i y_i' \Big| \leq \sum_{i=1}^m \alpha_i |y_i-y_i'| < \delta\, .
    \end{equation*}
    Since $\delta>0$ and $y\in A_n$ were arbitrary, it follows that for all $\delta>0$ there is $N>0$ such that for all $n>N$,
    \begin{equation*}
     A_n=\mathrm{conv}((1+\tfrac{1}{n})W\setminus B(x,\epsilon/8)) \subset \mathrm{conv}(W\setminus B(x,\epsilon/8)) + B(0,\delta).
    \end{equation*}
    Since $\mathrm{conv}(W\setminus B(x,\epsilon/8))\subset \mathrm{conv}((1+\tfrac{1}{n})W\setminus B(x,\epsilon/8))$, the above implies that $A_n$ converges to $\mathrm{conv}(W\setminus B(x,\epsilon/8))$ in the Hausdorff distance, which proves the claim.

    Finally, to obtain a contradiction, by Lemma \ref{lemma:AuxExtreme}, $x\not\in \mathrm{conv}(W\setminus B(x,\epsilon/8))$, and thus,
    \begin{equation*}
        \kappa:=\mathrm{dist}\big(x,\mathrm{conv}(W\setminus B(x,\epsilon/8)))>0.
    \end{equation*}
    Using the claim on the Hausdorff convergence proved above, we choose $N>0$ sufficiently large such that, for every $n>N$, $d_H(A_n, \mathrm{conv}(W\setminus B(x,\epsilon/8)))< \kappa/2$. Recall that $\mathrm{conv}(E_n) \subset \mathrm{conv}((1+\tfrac{1}{n})W\setminus B(x,\epsilon/8))$. Thus, for $n>N$,  
    \begin{equation*}
        \begin{split}
            \mathrm{dist}(x,\mathrm{conv}(E_n)) & \geq \mathrm{dist}(x,\mathrm{conv}((1+\tfrac{1}{n})W\setminus B(x,\epsilon/8)))\\
            & = \mathrm{dist}(x, A_n)\\
            & \geq \mathrm{dist}(x, \mathrm{conv}(W\setminus B(x,\epsilon/8))) - d_H(A_n, \mathrm{conv}(W\setminus B(x,\epsilon/8))) > \kappa/2,
        \end{split}
    \end{equation*}
    contradicting that $\mathrm{conv}(E_n)$ converges to $W$ in the Hausdorff distance.

\end{proof}

Proposition \ref{prop:QuantShapeTheorem} and the previous lemma tell us that the $g$-BBM approximates the extreme points of $\cW(\cs)$. Now we show that it approximates any point of $\cW(\cs)$.

\begin{proposition}
    For all $\xi\in \cW(c^*)$ and $\epsilon \in (0,1)$, there exist  $C=C(\xi,\epsilon,d), \Gamma=\Gamma(\xi,\epsilon,d)>0$ and $T_0=T_0(\epsilon, \xi, d, \|g\|_{\infty})$ such that, for every $t>T_0$,
    \begin{equation}\label{eq:ApproxIntW}
        \sup_{x\in \R^d} \proba{x}{}{(\mathcal{X}_t-x)\cap tB(\xi,\epsilon)=\emptyset } \leq Ce^{-\Gamma t}.
    \end{equation}
\end{proposition}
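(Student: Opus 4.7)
The plan is to follow the Carathéodory-based strategy sketched at the end of Section~1.3: decompose $\xi\in\cW$ into a convex combination of at most $d+1$ extreme points of $\cW$, then inductively produce a lineage in the $g$-BBM whose successive increments approximate those extreme points, using Proposition~\ref{prop:QuantShapeTheorem} as the per-stage input and the branching Markov property to chain the stages together.

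First I would apply Carathéodory to write $\xi=\sum_{i=1}^{d'}\alpha_i\xi_i$ with $d'\le d+1$, extreme points $\xi_i$ of $\cW$, and weights $\alpha_i>0$ summing to $1$; set $t_i:=t\sum_{j\le i}\alpha_j$ (so $t_0=0$, $t_{d'}=t$), $s_i:=\alpha_i t$, and $\alpha_{\min}:=\min_i\alpha_i$, which depends only on $\xi$ and $d$. The central building block I would then establish is an extreme-point refinement of the quantitative shape theorem: choosing $\delta(\epsilon)\in(0,\epsilon)$ from Lemma~\ref{lemma:ApproxExtShape} and invoking Proposition~\ref{prop:QuantShapeTheorem} with parameter $\delta(\epsilon)$, one obtains constants $C_1(\epsilon),\Gamma_1(\epsilon)>0$ and $T_1=T_1(\epsilon,d,\|g\|_{C^{0,\alpha}})$ such that, for every $s\ge T_1$,
\begin{equation*}
    \inf_{y\in\R^d}\mathbf{P}_y\big[\,\forall\text{ extreme }\zeta\in\cW,\ \exists v\in\cN_s:\ |X_s(v)-y-s\zeta|\le\epsilon s\,\big]\ge 1-C_1 e^{-\Gamma_1 s}.
\end{equation*}

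Next I would iterate this block. Setting $v_0:=\emptyset$, define events
\begin{equation*}
    A_i:=\{\exists v\in\cN_{t_i}\text{ with }v_{t_{i-1}}=v_{i-1}\text{ and }|X_{t_i}(v)-X_{t_{i-1}}(v_{i-1})-s_i\xi_i|\le\epsilon s_i\},
\end{equation*}
and on $A_i$ fix any witness $v_i$ measurably. Conditionally on $\cF_{t_{i-1}}\cap A_1\cap\cdots\cap A_{i-1}$, the branching Markov property says that the descendants of $v_{i-1}$ form a $g$-BBM started at $X_{t_{i-1}}(v_{i-1})$; applying the building block with $s=s_i\ge\alpha_{\min}t$ then gives $\mathbf{P}_x[A_i^c\mid\cF_{t_{i-1}}\cap A_1\cap\cdots\cap A_{i-1}]\le C_1 e^{-\Gamma_1\alpha_{\min}t}$ whenever $\alpha_{\min}t\ge T_1$. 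The uniformity in the starting position in Proposition~\ref{prop:QuantShapeTheorem} is crucial here because $X_{t_{i-1}}(v_{i-1})$ is random. A union bound over $i=1,\dots,d'$ yields $\mathbf{P}_x[\bigcap_i A_i]\ge 1-(d+1)C_1 e^{-\Gamma_1\alpha_{\min}(\xi)t}$. On $\bigcap_i A_i$, since $v_i$ is an ancestor of $v_{d'}$ at time $t_i$, a telescoping sum and the triangle inequality give
\begin{equation*}
    |X_t(v_{d'})-x-t\xi|\le\sum_{i=1}^{d'}|X_{t_i}(v_i)-X_{t_{i-1}}(v_{i-1})-s_i\xi_i|\le\epsilon\sum_{i=1}^{d'}s_i=\epsilon t,
\end{equation*}
so $X_t(v_{d'})\in x+tB(\xi,\epsilon)$. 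This establishes \eqref{eq:ApproxIntW} with $\Gamma(\xi,\epsilon,d)=\Gamma_1(\epsilon)\alpha_{\min}(\xi)$, $C(\xi,\epsilon,d)=(d+1)C_1(\epsilon)$, and $T_0=T_1/\alpha_{\min}(\xi)$.

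The genuine analytic content is already packaged inside Proposition~\ref{prop:QuantShapeTheorem} and Lemma~\ref{lemma:ApproxExtShape}; what remains is essentially bookkeeping. The only place to be careful is the error budget: allocating a per-stage tolerance proportional to $s_i$ rather than a fixed additive $\epsilon$ is what makes the cumulative error telescope to exactly $\epsilon t$, and insisting on the uniform-in-$x$ form of the quantitative shape theorem is what allows the conditional per-stage estimate to survive the iteration without deteriorating at the random positions $X_{t_{i-1}}(v_{i-1})$.
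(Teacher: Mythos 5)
Your proposal is correct and follows essentially the same route as the paper: Carath\'eodory decomposition of $\xi$ into at most $d+1$ extreme points, the combination of Proposition~\ref{prop:QuantShapeTheorem} with Lemma~\ref{lemma:ApproxExtShape} as the per-stage input, chaining the stages via the Markov and branching properties with the uniform-in-$x$ bound, and a telescoping sum with tolerance $\epsilon\alpha_i t$ per stage. The only difference is cosmetic bookkeeping (you take a union bound over the complements of the stage events with measurably chosen witnesses, whereas the paper bounds the probability of the single event $\mathcal{E}_t$ from below by a product of infima over starting points), which yields the same constants up to trivial adjustments.
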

\begin{proof}
     Let $\text{Ext}(\cW(c^*))$ denote the set of extreme points of $\cW(c^*)$. Carath\'eodory's theorem states that every point of $\mathcal{W}(c^{*})$ can be written as a convex combination of at most $d+1$ extreme points (see \cite[Theorem 1.3]{MR1242986}), i.e. there exist $d'\in[d+1]$ and  $\alpha_1,\ldots, \alpha_{d'} > 0$ with $\sum_{i=1}^{d'} \alpha_{i}=1$ and $\{\xi_1,...,\xi_{d'}\}\subset \text{Ext}(\cW(c^*))$ such that $\xi=\sum_{i=1}^{d'}\alpha_{i}\xi_{i}$. For $i\in [d']$, let $\beta^{(i)}:= \alpha_1  + ... + \alpha_{i} $ and set $\beta^{(0)}=0$. For each $t>0$, we consider the event
    \begin{equation*}
        \mathcal{E}_t:=\{\exists v\in \mathcal{N}_t: \forall i\in [d'], X_{\beta^{(i)}t}(v)-X_{\beta^{(i-1)}t}(v) \in  t\alpha_i B(\xi_i,\epsilon) \}.
    \end{equation*}
   On the event $\mathcal{E}_{t}$, writing $X_t(v)-X_0(v)$ as a telescoping sum and applying the triangle inequality, we have 
    \begin{equation}
        \begin{split}
            |X_t(v)-X_0(v) - t\xi| \leq  \sum_{i=1}^{d'} \bigl | X_{\beta^{(i)}t}(v)-X_{\beta^{(i-1)}t}(v) -t\alpha_i \xi_i \bigl |\leq \sum_{i=1}^{d'} t\alpha_i \epsilon = t\epsilon.
        \end{split} 
    \end{equation}
    This implies that $\mathcal{E}_t\subset\left\{\exists v\in \mathcal{N}_{t}:  (X_t(v)-x)\in tB(\xi, \epsilon)\right\}=\{(\mathcal{X}_t-x)\cap tB(\xi,\epsilon)\neq \emptyset \}$. Thus, inductively applying the Markov property and the branching property at times $\alpha_i t$, we obtain, for every $x\in \R^d$,
    \begin{equation*}
        \proba{x}{}{(\mathcal{X}_t-x)\cap tB(\xi,\epsilon)\neq \emptyset } \geq \proba{x}{}{\mathcal{E}_t} \geq \prod_{i=1}^{d'} \inf_{y\in \R^d} \proba{y}{}{(\mathcal{X}_{\alpha_i t}-y) \cap t\alpha_i B(\xi_i, \epsilon) \neq \emptyset }.
    \end{equation*}
    By Proposition \ref{prop:QuantShapeTheorem} and Lemma \ref{lemma:ApproxExtShape} applied with $E=\mathcal{X}_{\alpha_{i}t}-y$, there exist $\delta\in (0,\epsilon)$ and positive constants $C=C(\delta)$, $ \Gamma=\Gamma(\delta)$, and $T_1=T_1(\delta, d, \|g\|_{\infty})$ with $Ce^{-T_1\Gamma \min_{i \in [d']} \alpha_i} < 1$ such that, if $\min_{i\in [d']}(t \alpha_i)>T_1$, then
    \begin{equation*}
        \proba{x}{}{(\mathcal{X}_t-x)\cap tB(\xi,\epsilon)\neq \emptyset } \geq \prod_{i=1}^{d'}(1-Ce^{-t\alpha_i\Gamma})\geq 1 - C\sum_{i=1}^{d'} e^{-t\alpha_i\Gamma}\geq 1-d'C e^{-t\Gamma\min_{i\in [d']}\alpha_i},
    \end{equation*}
    where in the second inequality we use that $\prod_{i=1}^{d'} (1-z_i)\geq 1-\sum_{i=1}^{d'} z_i$ for $z_1,...,z_{d'}\in [0,1)$. 
    Taking the complement of the event above, since this bound is independent of the choice of $x\in \R^d$, we conclude \eqref{eq:ApproxIntW}.
    \end{proof}

    \begin{remark}\label{remark:approxEvery}
        By a direct application of the Borel-Cantelli lemma and the interpolation lemma (Lemma \ref{lemma:interpolation}), the previous result gives, for every $\xi \in \cW(\cs)$, $\epsilon \in (0,1)$ and $x\in \R^d$,
    \begin{equation*}
        \proba{x}{}{\exists T_0>0 \text{ s.t } \forall t>T_0, \ (\mathcal{X}_t-x)\cap tB(\xi,\epsilon)\neq \emptyset } =1.
    \end{equation*}
    \end{remark}
    
    \begin{proof}[Proof of Theorem \ref{thm:shapeIntroHaussdorf}]
    By compactness of $\cW(\cs)$, there exists a finite set $\{\xi_1,..., \xi_\ell\}\subset \cW(\cs)$ such that $\cW(\cs)\subset\cup_{i=1}^\ell B(\xi_i,\epsilon/4)$. For $i\in [\ell]$, by Remark \ref{remark:approxEvery}, for every $x\in \R^d$,
    \begin{equation*}
        \mathbf{P}_x[\exists T_i>0 \text{ s.t } \forall t>T_i, \ (\mathcal{X}_t-x)\cap tB(\xi_i,\epsilon/4)\neq \emptyset] =1.
    \end{equation*}
     Intersecting these events over all $i\in [\ell]$, we obtain that $\mathbf{P}_x$-almost surely, for $t>\max_{i\in [\ell]}T_i$, 
    \begin{equation}\label{eq:Hauss1}
        \cW(\cs)\subset\bigcup_{i=1}^\ell B(\xi_i,\epsilon/4) \subset t^{-1}(\mathcal{X}_t-x)+B(0,\epsilon).
    \end{equation}
    On the other hand, since $\cs$ is bounded, and by Remark \ref{remark:ShapeConvexHull}, we can find $\epsilon'>0$ such that
    \begin{equation}\label{eq:Hauss2}
        \begin{split}
            \mathbf{P}_x[\exists T_0>0 \text{ s.t } \forall  t&>T_0,\ t^{-1}(\mathcal{X}_t-x)\subset  \cW(\cs)+B(0,\epsilon)]\\
            &\geq \mathbf{P}_x[\exists T_0>0 \text{ s.t } \forall t>T_0,\ t^{-1}(\mathcal{X}_t-x)\subset (1+\epsilon')\cW(\cs)] = 1
        \end{split}
    \end{equation}
    Finally, intersecting the events in \eqref{eq:Hauss1} and \eqref{eq:Hauss2}, $\mathbf{P}_x$-almost surely, for $t>\max_{i\in \{0\}\cup [\ell]}T_i$,
    \begin{equation*}
         \cW(\cs)\subset t^{-1}(\mathcal{X}_t-x)+B(0,\epsilon)\text{ and } t^{-1}(\mathcal{X}_t-x)\subset  \cW(\cs)+B(0,\epsilon),
    \end{equation*}
    which finishes the proof.
    \end{proof}

\appendix

\section{Appendix A: Survival of the embedded branching process}\label{append:survival}

For the proof of the survival result, Lemma \ref{lemma:SurvivalProbability}, we apply a criterion for the survival of general branching processes (see \cite[Theorem 2.4]{maillard2025generalisedprincipaleigenvaluesglobal}). Applied to our setting, the criterion for survival reads as follows.

\begin{lemma}\cite[Theorem 2.4.2]{maillard2025generalisedprincipaleigenvaluesglobal}\label{lemma:criterion_survival}
    Suppose there exists a bounded measurable function $q:\R^d \to \R$, $\delta \in (0,1)$, and $\rho\geq 1$ such that, for every $x\in \R^d$,
    \begin{equation}\label{eq:criterion_survival}
        \mathbb{E}_x\Bigg[\sum_{v\in I_1^{T_0,\epsilon}}q(X_t(v))\exp\Big( -\delta \sum_{v\in I_1^{T_0,\epsilon}}q(X_t(v)) \Big)\Bigg] \geq \rho q(x).
    \end{equation}
   If $q(x)>0$, then
    \begin{equation*}
         \proba{x}{}{\# I_n^{T_0,\epsilon} \neq 0 \ \forall n\in \N } \geq  \proba{x}{}{\liminf_{n\to \infty}\rho^{-n}\# I_n^{T_0,\epsilon}>0  }\geq 1-e^{-\delta q(x)}>0.
    \end{equation*}
\end{lemma}
\begin{remark}
    The lower bound with the exponential term above is not explicitly given in the statement of \cite[Theorem 2.4.2]{maillard2025generalisedprincipaleigenvaluesglobal}, but it can be deduced from the very last step of the proof.
\end{remark}

\begin{proof}[Proof of Lemma \ref{lemma:SurvivalProbability}]
    We apply Lemma \ref{lemma:criterion_survival} to obtain a uniform lower bound on the probability of survival of the discrete branching process. Let us take $q\equiv 1$, any $\rho>1$, and, by Lemma \ref{lemma:lowerestimate}, $T_0>0$ sufficiently large such that
    \begin{equation}\label{eq:T0}
        \inf_{x\in \R^d}\expected{x}{}{\# I_1^{T_0,\epsilon}}>2e \rho.
    \end{equation}
    To choose $\delta\in (0,1)$, let $\bar{N}_{T_0}$ denote the total number of particles at time $T_0$ of a branching diffusion with constant branching rate $\|g\|_\infty$. Observe that the random variable $\bar{N}_{T_0}$ stochastically dominates $\# I_1^{T_0,\epsilon}$. Since $\bar{N}_{T_0}$ is integrable, and its distribution does not depend on the initial position $x\in \R^d$, let us fix $\delta\in (0,1)$ sufficiently small such that 
    \begin{equation}\label{eq:HomBranching}
        \sup_{x\in \R^d} \mathbb{E}_x\big[  \bar{N}_{T_0}\indc_{\{ \bar{N}_{T_0} > \delta^{-1} \}}\big] \leq e\rho.
    \end{equation}
    With this at hand, observe that
    \begin{equation*}
        \begin{split}
            \mathbb{E}_x\Big[\sum_{v\in I_1^{T_0,\epsilon}}q(X_t(v))\exp\Big( -\delta \sum_{v\in I_1^{T_0,\epsilon}}q(X_t(v)) \Big)\Big]  &=  \mathbb{E}_x\big[ (\# I_1^{T_0,\epsilon})\exp (-\delta (\# I_1^{T_0,\epsilon}) ) \big]\\
            &\geq \mathbb{E}_x\big[ (\# I_1^{T_0,\epsilon})\indc_{\{ \delta (\# I_1^{T_0,\epsilon}) < 1 \}}\exp (-\delta (\# I_1^{T_0,\epsilon}) ) \big]\\
            & \geq e^{-1} \mathbb{E}_x\big[ (\# I_1^{T_0,\epsilon})\indc_{\{ \# I_1^{T_0,\epsilon} < \delta^{-1} \}}\big]\\
            & = e^{-1} \big( \mathbb{E}_x\big[ \# I_1^{T_0,\epsilon}\big]-\mathbb{E}_x\big[ (\# I_1^{T_0,\epsilon})\indc_{\{ \# I_1^{T_0,\epsilon} \geq  \delta^{-1} \}}\big] \big).
        \end{split}
    \end{equation*}
    Finally, $\# I_1^{T_0,\epsilon}$ is stochastically dominated by $\bar{N}_{T_0}$, and thus, by \eqref{eq:T0} and \eqref{eq:HomBranching}, 
    \begin{equation*}
        e^{-1} \big( \mathbb{E}_x\big[ \# I_1^{T_0,\epsilon}\big]-\mathbb{E}_x\big[ (\# I_1^{T_0,\epsilon})\indc_{\{ \# I_1^{T_0,\epsilon} \geq  \delta^{-1} \}}\big] \big) \geq e^{-1}\big( 2e\rho -\mathbb{E}_x\big[  \bar{N}_{T_0}\indc_{\{ \bar{N}_{T_0} > \delta^{-1} \}}\big] \big) \geq \rho = \rho q(x).
    \end{equation*}
    Plugging this back into our previous inequality shows that \eqref{eq:criterion_survival} holds with our choice of $q$, $\delta$, and $\rho$. Thus,
    \begin{equation*}
        \proba{x}{}{\# I_n^{T_0,\epsilon} \neq 0 \ \forall n\in \N } \geq 1-e^{-\delta }>0.
    \end{equation*}
    Since $\delta\in (0,1)$ was chosen independent of the initial position $x\in \R^d$, the conclusion follows.
\end{proof}

\section{Appendix B: The G\"artner-Ellis Theorem}\label{section:gartnerellis}

For a function $I:\R\rightarrow [-\infty,\infty]$, we say $\zeta\in \R$ is an \textit{exposed point} of $I$ if for some $\eta\in \R$ and for all $\lambda \in \R$ with $\lambda\neq \zeta$,
\begin{equation}\label{eq:exposinghyperplane}
     I(\lambda)   >  I(\zeta) + \eta (\lambda-\zeta) .
\end{equation}
The number $\eta\in \R$ that satisfies \eqref{eq:exposinghyperplane} is called a \textit{strictly supporting hyperplane} of $I$. We begin by stating a ``uniform version'' of the G\"artner-Ellis Theorem. 

\begin{theorem}[G\"artner-Ellis]\label{thm:GEgenralizationA}
    Consider a family of probability measures $\{\mu_t^x,t\geq 0\}_{x\in \mathcal{X}}$ on $(\R, \mathcal{B}(\R) )$ indexed by a set $\mathcal{X}$. For $\eta\in \R$, let
    \begin{equation*}
        \Lambda^x_t(\eta):=\log\bigg[ \int_\R e^{\eta\zeta} d\mu_t^x(\zeta) \bigg].
    \end{equation*}
    Assume there exists a function $\Lambda:\R\rightarrow (-\infty,+\infty] $ such that, for every $x\in \mathcal{X}$ and $\eta \in \R$,
    \begin{equation}\label{eq:limlogaritmic}
        \lim_{t\rightarrow \infty}\frac{1}{t}\Lambda_t^x(t\eta)= \Lambda(\eta).
    \end{equation}
    Let $D_\Lambda := \{\eta\in \R: \Lambda(\eta)<\infty\}$ and suppose that $0 \in \mathrm{int}(D_\Lambda)$, and on $D_\Lambda$, the convergence above is uniform in $\mathcal{X}$, meaning that for every $\eta\in D_{\Lambda}$,
    \begin{equation}\label{eq:GEuniformA}
        \lim_{t\rightarrow \infty}\sup_{x\in \mathcal{X}}\ \Big| \frac{1}{t}\Lambda_t^x(t\eta) -  \Lambda(\eta) \Big| =0.
    \end{equation}
    Let $\Lambda^*:\R\rightarrow [-\infty,\infty]$ denote the Legendre transform of $\Lambda$, given by
    \begin{equation}\label{eq:legendre}
        \Lambda^*(\zeta) = \sup_{\eta\in \R} \sqbracket{\eta \zeta-\Lambda(\eta)}.
    \end{equation}
    Let $\mathcal{L}\subset \R$ be the set of exposed points of $\Lambda^*$ for which there exists a strictly supporting hyperplane $\eta \in\mathrm{int}(D_\Lambda)$. Then,
    \begin{enumerate}[(a)]
        \item for every closed set $\mathcal{C}\subset \R$, $\limsup_{t\rightarrow \infty} \sup_{x\in \mathcal{X}}\frac{1}{t}\log \mu_t^x(\mathcal{C})\leq -\inf_{\zeta\in \mathcal{C}}\Lambda^*(\zeta)$,
        \item for every open set $\mathcal{O}\subset \R$, $\liminf_{t\rightarrow \infty}\inf_{x\in \mathcal{X}}\frac{1}{t}\log \mu_t^x(\mathcal{O})\geq -\inf_{\zeta\in \mathcal{O}\cap \mathcal{L}}\Lambda^*(\zeta)$,
    \end{enumerate}
    Moreover, if:
    \begin{enumerate}
        \item $\Lambda$ is lower semicontinuous on $\R$,
        \item $\Lambda$ is differentiable on $\mathrm{int}(D_\Lambda)$,
        \item $\lim_{\eta\rightarrow \partial D_{\Lambda}}| \Lambda^\prime(\eta)|=\infty $,
    \end{enumerate}
    then we can replace the set $O\cap \mathcal{L}$ by $O$ in the lower bound in (b) above.
\end{theorem}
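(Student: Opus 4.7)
The plan is to follow the classical proof of the G\"artner-Ellis theorem as in \cite[Chapter 2.3]{MR2571413}, inserting a $\sup_{x\in \mathcal{X}}$ or $\inf_{x\in \mathcal{X}}$ before every passage to the limit in $t$. The only dependence on $x$ in any of the estimates enters through $\tfrac{1}{t}\Lambda_t^x(t\eta)$, and this quantity converges uniformly to the deterministic $\Lambda(\eta)$ by hypothesis \eqref{eq:GEuniformA}, so uniformity flows through the entire argument at essentially no extra cost.

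For the upper bound~(a), I would first establish the half-line estimate: for $\zeta\in\R$ and $\eta\geq 0$, Markov's inequality yields $\mu_t^x([\zeta,\infty))\leq \exp(-\eta t\zeta + \Lambda_t^x(t\eta))$. Taking $\tfrac{1}{t}\log$, then $\sup_{x\in\mathcal{X}}$, then $\limsup_{t\to\infty}$, and invoking \eqref{eq:GEuniformA}, gives $\limsup_t\sup_x\tfrac{1}{t}\log\mu_t^x([\zeta,\infty))\leq -\eta\zeta + \Lambda(\eta)$, and optimizing over $\eta\geq 0$ yields $-\Lambda^*(\zeta)$ for $\zeta$ to the right of the minimizer of $\Lambda^*$. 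A symmetric bound using $\eta\leq 0$ governs $(-\infty,\zeta]$ for $\zeta$ to the left of that minimizer. A general closed set $\mathcal{C}$ is then split at the minimizer of $\Lambda^*$ and handled piece by piece, yielding~(a).

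For the lower bound~(b), fix $\zeta\in\mathcal{O}\cap\mathcal{L}$ with strictly supporting hyperplane $\eta_\zeta\in\mathrm{int}(D_\Lambda)$, and introduce the tilted measures defined by $d\tilde\mu_t^x/d\mu_t^x(s):=\exp(\eta_\zeta t s - \Lambda_t^x(t\eta_\zeta))$. Choose $\delta>0$ small enough that $(\zeta-\delta,\zeta+\delta)\subset\mathcal{O}$. Reversing the tilt gives
\[
\mu_t^x(\mathcal{O}) \;\geq\; \exp\bigl(-\eta_\zeta t\zeta - |\eta_\zeta|t\delta + \Lambda_t^x(t\eta_\zeta)\bigr)\,\tilde\mu_t^x((\zeta-\delta,\zeta+\delta)),
\]
so by \eqref{eq:GEuniformA} the proof reduces to showing the uniform concentration
\[
\liminf_{t\to\infty}\inf_{x\in\mathcal{X}}\tfrac{1}{t}\log\tilde\mu_t^x((\zeta-\delta,\zeta+\delta)) \;\geq\; 0.
\]
This is the main technical step. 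It is established by a further exponential tilt: for $\rho>0$ small enough that $\eta_\zeta+\rho\in D_\Lambda$,
\[
\tilde\mu_t^x([\zeta+\delta,\infty)) \;\leq\; \exp\bigl(-\rho t(\zeta+\delta) - \Lambda_t^x(t\eta_\zeta) + \Lambda_t^x(t(\eta_\zeta+\rho))\bigr),
\]
and the exponent converges, uniformly in $x$, to $-\rho(\zeta+\delta) - \Lambda(\eta_\zeta) + \Lambda(\eta_\zeta+\rho)$. Strict exposedness of $\zeta$ together with convexity of $\Lambda$ (which is automatic since $\Lambda$ is the uniform limit of the convex maps $\eta\mapsto\tfrac{1}{t}\Lambda_t^x(t\eta)$) force this limit to be strictly negative for all sufficiently small $\rho>0$; a symmetric argument controls the left tail. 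The identity $\Lambda(\eta_\zeta)+\Lambda^*(\zeta)=\eta_\zeta\zeta$, which follows from strict exposedness via Legendre duality, combined with letting $\delta\downarrow 0$ and taking infimum over $\zeta\in\mathcal{O}\cap\mathcal{L}$, concludes~(b).

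The ``moreover'' part is a purely convex-analytic statement: under conditions~1--3, $\Lambda$ is essentially smooth on $\mathrm{int}(D_\Lambda)$, so $\nabla\Lambda$ maps $\mathrm{int}(D_\Lambda)$ onto the interior of the effective domain of $\Lambda^*$, every such image point lies in $\mathcal{L}$, and $\Lambda^*$ is continuous there. A density argument then replaces $\inf_{\zeta\in\mathcal{O}\cap\mathcal{L}}\Lambda^*(\zeta)$ by $\inf_{\zeta\in\mathcal{O}}\Lambda^*(\zeta)$ for any open $\mathcal{O}$. The main obstacle in the uniform setting, and the only genuinely new ingredient compared with the classical proof, is the uniform concentration of the tilted measures $\tilde\mu_t^x$ sketched above; once that is in place, the classical bookkeeping over signs of $\eta$ and positions of $\zeta$ relative to the minimizer of $\Lambda^*$ applies verbatim.
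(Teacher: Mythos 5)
Your proposal is correct in outline, but it takes a genuinely different route from the paper in both halves, so a comparison is worthwhile. For the upper bound (a), you exploit that the measures live on $\R$: a half-line Chernoff bound plus monotonicity of $\Lambda^*$ on either side of its zero set (which is nonempty because $0\in \mathrm{int}(D_\Lambda)$ forces $\inf_\R \Lambda^*=0$, attained by goodness). The paper instead follows the Dembo--Zeitouni template: a covering argument for compact sets (Lemma \ref{lemma3.0}), a separate uniform exponential tightness lemma (Lemma \ref{lemma3.1}), and then the upgrade to closed sets (Lemma \ref{lemma2GE}). Your version is shorter and avoids tightness altogether, but it is tied to dimension one, whereas the paper's argument would carry over to $\R^d$. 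For the lower bound (b), both proofs tilt at the exposing hyperplane $\eta_\zeta$ and reduce matters to uniform concentration of the tilted measures $\tilde\mu^x_t$ near $\zeta$; the paper gets this by applying its uniform closed-set upper bound to the tilted family and using that the tilted rate function $\tilde\Lambda^*$ is a good rate function, strictly positive off $\zeta$ by exposedness, with its infimum over $B(\zeta,\delta)^c$ attained (Lemmas \ref{lemma:ApplicationUpperLarge}--\ref{lemma:5}); you instead prove the concentration directly with a second small tilt $\rho$. The one place where your sketch is compressed is the claim that the limiting exponent $-\rho(\zeta+\delta)+\Lambda(\eta_\zeta+\rho)-\Lambda(\eta_\zeta)$ is strictly negative for small $\rho$: dividing by $\rho$, this is exactly the statement that $\Lambda'_+(\eta_\zeta)\leq \zeta$, i.e.\ that $\Lambda$ is differentiable at $\eta_\zeta$ with $\Lambda'(\eta_\zeta)=\zeta$. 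This is true, but it requires the short duality argument you only gesture at: since $\eta_\zeta\in\mathrm{int}(D_\Lambda)$ and $\Lambda$ is convex, $\Lambda(\eta_\zeta)=\Lambda^{**}(\eta_\zeta)$, so exposedness gives the Fenchel--Young equality $\Lambda(\eta_\zeta)+\Lambda^*(\zeta)=\eta_\zeta\zeta$, hence $\zeta\in\partial\Lambda(\eta_\zeta)$, and any second subgradient $\lambda'\neq\zeta$ would make $\Lambda^*$ affine with slope $\eta_\zeta$ through $\zeta$ and $\lambda'$, contradicting strict exposedness; this pins down $\partial\Lambda(\eta_\zeta)=\{\zeta\}$. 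With that step written out, your argument is complete and bypasses the goodness/attainment considerations of the paper's Lemma \ref{lemma:ApplicationUpperLarge}(ii). Both proofs handle the ``moreover'' statement identically, by deferring to the classical convex-analytic argument of Dembo--Zeitouni/Rockafellar.
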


We introduce two useful lemmas from \cite{MR2571413}. We say a function $R:\R \to [0,\infty]$ is a {\em rate function} if it is lower semicontinuous; we say it is a {\em good} rate function if additionally $\{x \in \R: R(x) \le \alpha\}$ is compact for all $\alpha \ge 0$.

\begin{lemma}\cite[Lemma 2.3.9]{MR2571413}.\label{lemma00}
    Any function $\Lambda$ as in \eqref{eq:limlogaritmic} is convex and satisfies $ \Lambda >-\infty$. Moreover, its Legendre transform $\Lambda^*$ is a convex good rate function.
\end{lemma}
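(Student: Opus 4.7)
First I would prove that $\Lambda$ is convex. For each fixed $t>0$ and $x\in\mathcal{X}$, the map $\eta\mapsto \Lambda_t^x(\eta)=\log\int_\R e^{\eta\zeta}\,d\mu_t^x(\zeta)$ is convex in $\eta$, by the classical fact that cumulant generating functions are convex (H\"older's inequality applied to $e^{(\alpha\eta_1+(1-\alpha)\eta_2)\zeta}=(e^{\eta_1\zeta})^\alpha (e^{\eta_2\zeta})^{1-\alpha}$). Rescaling, $\eta\mapsto t^{-1}\Lambda_t^x(t\eta)$ is convex, and pointwise limits of convex functions are convex, so $\Lambda$ is convex. For $\Lambda(\eta)>-\infty$, first note that $\Lambda_t^x(0)=\log 1=0$, hence $\Lambda(0)=0$. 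If $\Lambda(\eta_0)=-\infty$ for some $\eta_0\ne 0$, convexity gives $\Lambda(t\eta_0)\le (1-t)\Lambda(0)+t\Lambda(\eta_0)=-\infty$ for every $t\in(0,1)$; under the standing hypothesis $0\in\mathrm{int}(D_\Lambda)$, one can choose $t>0$ small enough that $t\eta_0\in D_\Lambda$, forcing $\Lambda(t\eta_0)<\infty$, a contradiction.

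Next, I would read off the properties of $\Lambda^*(\zeta)=\sup_{\eta\in\R}[\eta\zeta-\Lambda(\eta)]$ directly from its definition as a supremum of affine functions of $\zeta$. Each map $\zeta\mapsto\eta\zeta-\Lambda(\eta)$ is affine (hence continuous and convex) in $\zeta$, and so the pointwise supremum $\Lambda^*$ is automatically convex and lower semicontinuous, which already shows $\Lambda^*$ is a (convex) rate function. Non-negativity is immediate by testing at $\eta=0$: $\Lambda^*(\zeta)\ge 0\cdot\zeta-\Lambda(0)=0$.

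The only substantive step is showing that $\Lambda^*$ is \emph{good}, i.e., that its sublevel sets $\{\zeta:\Lambda^*(\zeta)\le\alpha\}$ are compact, which I expect to be the main obstacle. Closedness follows from lower semicontinuity, so only boundedness must be established. Using $0\in\mathrm{int}(D_\Lambda)$ once more, I would pick $\eta_-<0<\eta_+$ with $\Lambda(\eta_\pm)<\infty$, yielding the lower bound
\begin{equation*}
\Lambda^*(\zeta)\ge \max\bigl(\eta_+\zeta-\Lambda(\eta_+),\ \eta_-\zeta-\Lambda(\eta_-)\bigr),
\end{equation*}
whose right-hand side tends to $+\infty$ as $|\zeta|\to\infty$. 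This forces each sublevel set to be bounded and hence compact, completing the proof.
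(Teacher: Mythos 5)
The paper gives no proof of this lemma at all (it is quoted from \cite[Lemma 2.3.9]{MR2571413}), so you are really reconstructing the Dembo--Zeitouni argument, and most of your reconstruction matches the standard one: convexity of $\Lambda$ as a pointwise limit of the convex functions $\eta\mapsto t^{-1}\Lambda^x_t(t\eta)$, lower semicontinuity and convexity of $\Lambda^*$ as a supremum of affine maps, nonnegativity from testing at $\eta=0$ using $\Lambda(0)=0$, and compactness of sublevel sets from the linear lower bounds $\Lambda^*(\zeta)\geq \max(\eta_+\zeta-\Lambda(\eta_+),\,\eta_-\zeta-\Lambda(\eta_-))$ with $\eta_-<0<\eta_+$ in $D_\Lambda$. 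All of these steps are correct.

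The one genuine flaw is your argument that $\Lambda>-\infty$. From $\Lambda(\eta_0)=-\infty$ and convexity you correctly get $\Lambda(t\eta_0)=-\infty$ for all $t\in(0,1)$, but then you claim a contradiction with ``$t\eta_0\in D_\Lambda$, forcing $\Lambda(t\eta_0)<\infty$.'' There is no contradiction: $D_\Lambda=\{\eta:\Lambda(\eta)<\infty\}$ only excludes the value $+\infty$, and $\Lambda(t\eta_0)=-\infty$ is perfectly consistent with $t\eta_0\in D_\Lambda$. (Indeed, if membership in $D_\Lambda$ ruled out the value $-\infty$, the whole claim would be vacuous.) The correct use of $0\in\mathrm{int}(D_\Lambda)$ is to pass \emph{through} the origin: choose $\delta>0$ so small that $\eta_1:=-\delta\eta_0/|\eta_0|\in D_\Lambda$, and write $0=\alpha\eta_0+(1-\alpha)\eta_1$ with $\alpha=\delta/(\delta+|\eta_0|)\in(0,1)$; convexity then gives
\begin{equation*}
0=\Lambda(0)\;\leq\;\alpha\Lambda(\eta_0)+(1-\alpha)\Lambda(\eta_1)=-\infty,
\end{equation*}
since $\Lambda(\eta_1)<\infty$, which is the desired contradiction. (In the paper's own formulation of Theorem \ref{thm:GEgenralizationA} the codomain of $\Lambda$ is taken to be $(-\infty,+\infty]$, so there the property is simply assumed; but if you set out to prove it, as in \cite{MR2571413}, the argument must be the one above.) With this repair your proof is complete.
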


We will frequently use the following inequality. Let $N>0$ be a fixed integer. For $i\in [N]:=\{1,...,N\}$, let $ a^i\geq 0$, such that $\sum_{i=1}^{N}a^i>0$. Then,
\begin{equation}\label{eq:lemma1GE}
    \log\bigg(\sum_{i=1}^Na^i \bigg)\leq  \log N + \max_{i\in [N]} \log a^i.
\end{equation}

First, we prove the upper bound (a) for compact sets. The following is proved via a ``uniform version'' of the argument given in 
\cite[Theorem 4.5.3 (b)]{MR2571413}.
\begin{lemma}\label{lemma3.0}
    Under the hypotheses of Theorem \ref{thm:GEgenralizationA}, for any compact set $\mathcal{C}\subset \R$,
    \begin{equation}\label{eq:lemma3.00}
        \limsup_{t\rightarrow \infty } \sup_{x\in \mathcal{X}} \frac{1}{t}\log \mu^x_t(\mathcal{C})\leq - \inf_{\mathcal{C}} \Lambda^*(\zeta),
    \end{equation}
    where $\Lambda^*$ is the Legendre transform of $\Lambda$. 
\end{lemma}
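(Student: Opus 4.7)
The plan is to follow the classical proof of the compact-set upper bound in the G\"artner-Ellis theorem (see, e.g., \cite[Lemma 2.2.5]{MR2571413}), while keeping careful track of the uniformity in $x \in \mathcal{X}$ provided by \eqref{eq:GEuniformA}. The essential observation is that every inequality in the classical argument is derived pointwise in $x$, and the only place where a limit is taken in $t$ involves the quantities $t^{-1}\Lambda_t^x(t\eta)$, which by hypothesis converge uniformly on $\mathcal{X}$ for each $\eta \in D_\Lambda$.

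Concretely, I would fix an arbitrary $\rho$ with $\rho < \inf_{\zeta\in \mathcal{C}} \Lambda^*(\zeta)$. By the definition \eqref{eq:legendre} of the Legendre transform, for each $\zeta \in \mathcal{C}$ there exists $\eta_\zeta \in \R$, necessarily in $D_\Lambda$, such that $\eta_\zeta \zeta - \Lambda(\eta_\zeta) > \rho$. By the exponential Chebyshev inequality, applied with $\eta_\zeta \geq 0$ to bound $\mu_t^x([\zeta-\delta,\infty))$ and with $\eta_\zeta < 0$ to bound $\mu_t^x((-\infty, \zeta+\delta])$, one obtains, for any $\delta > 0$ and every $x \in \mathcal{X}$,
\begin{equation*}
\mu_t^x([\zeta-\delta, \zeta+\delta]) \leq \exp\bigl(-t[\eta_\zeta \zeta - t^{-1}\Lambda_t^x(t\eta_\zeta) - |\eta_\zeta|\delta]\bigr).
\end{equation*}

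Now fix $\epsilon > 0$. Applying \eqref{eq:GEuniformA} at the single point $\eta_\zeta \in D_\Lambda$ supplies some $T_\zeta$ such that $t^{-1}\Lambda_t^x(t\eta_\zeta) \leq \Lambda(\eta_\zeta) + \epsilon$ for all $t \geq T_\zeta$ and all $x \in \mathcal{X}$. Choosing $\delta_\zeta > 0$ with $|\eta_\zeta|\delta_\zeta \leq \epsilon$, and using $\eta_\zeta\zeta - \Lambda(\eta_\zeta) > \rho$, we obtain
\begin{equation*}
\sup_{x \in \mathcal{X}}\mu_t^x([\zeta-\delta_\zeta, \zeta+\delta_\zeta]) \leq e^{-t(\rho - 2\epsilon)} \quad \text{for all } t \geq T_\zeta.
\end{equation*}

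By compactness, finitely many of the open intervals $(\zeta-\delta_\zeta,\zeta+\delta_\zeta)$, say those centred at $\zeta_1,\ldots,\zeta_N$, cover $\mathcal{C}$. A union bound combined with \eqref{eq:lemma1GE} yields, for $t \geq \max_i T_{\zeta_i}$,
\begin{equation*}
\sup_{x\in\mathcal{X}} \frac{1}{t}\log \mu_t^x(\mathcal{C}) \leq \frac{\log N}{t} - (\rho - 2\epsilon),
\end{equation*}
so that $\limsup_{t\to\infty} \sup_x t^{-1}\log\mu_t^x(\mathcal{C}) \leq -\rho + 2\epsilon$. Sending $\epsilon \downarrow 0$ and then $\rho \uparrow \inf_\mathcal{C}\Lambda^*$ (with the case $\inf_\mathcal{C}\Lambda^* = +\infty$ handled by letting $\rho \to +\infty$, which is legitimate because the above derivation imposes no upper bound on $\rho$) gives \eqref{eq:lemma3.00}. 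There is no serious obstacle beyond bookkeeping: the only ingredient not already in the classical proof is the uniform convergence hypothesis, which is needed only at the finitely many values $\eta_{\zeta_1}, \ldots, \eta_{\zeta_N}$, so it can be invoked exactly as the pointwise limit is in the classical argument.
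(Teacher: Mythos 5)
Your proposal is correct and follows essentially the same route as the paper: exponential Chebyshev bounds at carefully chosen $\eta_\zeta\in D_\Lambda$, a finite cover of the compact set $\mathcal{C}$, the elementary bound \eqref{eq:lemma1GE}, and the uniform convergence \eqref{eq:GEuniformA} invoked only at the finitely many points $\eta_{\zeta_1},\ldots,\eta_{\zeta_N}$. The only difference is cosmetic: the paper organizes the argument via the truncation $I^\delta(\zeta)=\min\{\Lambda^*(\zeta)-\delta,1/\delta\}$ to treat the case $\inf_{\mathcal{C}}\Lambda^*=+\infty$, while you parametrize by an arbitrary level $\rho<\inf_{\mathcal{C}}\Lambda^*$, which handles that case equally well.
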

\begin{proof}
    Fix a compact set $\mathcal{C} \subset \R$ and $\delta>0$. Let us introduce the function $I^\delta: \R\rightarrow \R$ defined by 
    \begin{equation*}
        I^\delta(\zeta):=\min\{\Lambda^*(\zeta)-\delta,1/\delta\}.
    \end{equation*}
    We know from Lemma \ref{lemma00} that $\Lambda^*$ is a convex good rate function, and in particular it takes values in $[0,+\infty]$, and the set $\{\Lambda^* = I^\delta +\delta \}=\{ \Lambda^* \leq (1/\delta) + \delta \}$ is compact. It follows that the set $S=\{ \Lambda^* \leq (1/\delta) + \delta \}\cap \mathcal{C}$ is compact, and since $\Lambda^*$ is lower semicontinuous,  $\Lambda^*$ therefore attains the value $\inf_{x \in S}\Lambda^*(x)$ within $S$, provided $S \ne \emptyset$. In particular, if $\inf_\mathcal{C}\Lambda^*(\zeta)<+\infty$, then there exists $\delta\in (0,1)$ such that 
    \begin{equation*}
        \inf _{\zeta\in\mathcal{C}}\Lambda^*(\zeta) = \inf_{\zeta\in\mathcal{C}\cap \{ \Lambda^* \leq (1/\delta) + \delta \}}\Lambda^*(\zeta)=\inf_{\zeta\in\mathcal{C}} I^\delta(\zeta) +\delta.
    \end{equation*}
    On the other hand, if  $\inf_{\zeta\in\mathcal{C}}\Lambda^*(\zeta)=+\infty$, then $\inf_\mathcal{C}I^\delta(\zeta) = 1/\delta$. It follows that 
    \begin{equation}\label{eq:limI}
        \lim_{\delta \rightarrow 0^+}\inf_{\zeta\in\mathcal{C}}I^\delta (\zeta)=\inf_{ \zeta\in\mathcal{C}}\Lambda^*(\zeta).
    \end{equation}
    
    Next, by the definitions of $I^\delta$ and of $\Lambda^*$, for any $\zeta\in \mathcal{C}$ there exists $\eta_\zeta\in \R$ such that 
    \begin{equation}\label{eq:lemma3.01}
        \eta_\zeta  \zeta - \Lambda(\eta_\zeta)\geq I^\delta(\zeta).
    \end{equation}
    Since $I^\delta(\zeta) \ge -\delta$, 
    it follows that $\eta_\zeta \in D_\Lambda$. 
    Fix $x\in \mathcal{X}$ and let $Z$ be a random variable with law $\mu_t^x$, so that $\mu_t^x(B)=\mathbb{P}(Z \in B)$. For any $\eta\in \R$ and $r > 0$, by Markov's inequality,
    \begin{equation*}\label{MI1}
        \begin{split}
            \prob{}{}{Z \in B(\zeta,r)}&\leq \prob{}{}{Z \geq \zeta-r} \leq \probbb{}{}{e^{Z|\eta| }\geq e^{(\zeta-r)|\eta|}} \leq \mathbb{E}{e^{Z|\eta|}}e^{(r-\zeta)|\eta|}=\mathbb{E}{e^{|\eta|(Z-\zeta)}}e^{r|\eta|}.
        \end{split}
    \end{equation*}
    Similarly,
    \begin{equation*}\label{MI2}
        \begin{split}
            \prob{}{}{Z \in B(\zeta,r)}&\leq \prob{}{}{Z \leq \zeta+r} \leq \probbb{}{}{e^{-Z|\eta| }\geq e^{-(\zeta+r)|\eta|}} \leq \mathbb{E}{e^{-Z|\eta|}}e^{(r+\zeta)|\eta|}=\mathbb{E}{e^{-|\eta|(Z-\zeta)}}e^{r|\eta|}.
        \end{split}
    \end{equation*}
  By the above two estimates, we conclude that for any $\eta\in \R$,
    \begin{equation*}
        \prob{}{}{Z \in B(\zeta,r)}\leq \expec{}{}{e^{\eta(Z-\zeta)}}e^{r|\eta|}.
    \end{equation*}
    Letting $\eta=\eta_\zeta t$ and $r=r(\delta,\zeta)=\delta/|\eta_\zeta|$, it follows that 
    \begin{equation}\label{eq:lemma3.02}
        \frac{1}{t} \log \prob{}{}{Z \in B(\zeta,r)} \leq \frac{1}{t}\log \expec{}{}{e^{tZ\eta_\zeta}}  -\eta_\zeta\zeta + r|\eta_\zeta |=  -\bigl(\eta_\zeta\zeta-\frac{1}{t}\Lambda_t^x(t\eta_{\zeta}) \bigl) +\delta.
    \end{equation}
Since $\mathcal{C}$ is compact, we can choose a finite cover $\{B(\zeta_i,r_i),i \in [N]\}$ of $\mathcal{C}$ where each $r_{i}=r_i(\delta,\zeta_i)=\delta/|\eta_{\zeta_i}|$. By \eqref{eq:lemma1GE} and the upper bound \eqref{eq:lemma3.02},
    \begin{equation*}
        \frac{1}{t} \log\mu_t^x(\mathcal{C})=\frac{1}{t} \log \prob{}{}{Z \in \mathcal{C}}\leq \frac{1}{t} \log \bigg(\sum_{i=1}^{N} \prob{}{}{Z \in B(\zeta_i,r_i)}\bigg)  \leq \frac{\log N}{t} + \delta - \min_{i\in [N]} \bigl\{ \eta_{\zeta_i}\zeta_i -\frac{1}{t}\Lambda_t^x(t\eta_{\zeta_i}) \bigl\}.
    \end{equation*}
    Fix $\epsilon>0$. Recall that by \eqref{eq:lemma3.01},  $\Lambda(\eta_{\zeta_i})< +\infty$ for all $i \in [N]$. Hence, by \eqref{eq:GEuniformA}, for every $t>0$ sufficiently large and for all $x\in \mathcal{X}$,  
    \begin{equation*}
        -\min_{i\in [N]} \bigl\{ \eta_{\zeta_i}\zeta_i -\frac{1}{t}\Lambda_t^x(t\eta_{\zeta_i}) \bigl\} \leq  -\min_{i\in [N]}\{ \eta_{\zeta_i}\zeta_i- \Lambda(\eta_{\zeta_i})-\epsilon \}. 
    \end{equation*}
    Hence, since $\eta_{\zeta_i}\zeta_i-\Lambda(\eta_{\zeta_i})\ge I^\delta(\zeta_i)$ by \eqref{eq:lemma3.01}, and since every $\zeta_i\in \mathcal{C}$, it follows from \eqref{eq:limI} that,
    \begin{equation*}
        \limsup_{t\rightarrow \infty} \sup_{x\in \mathcal{X}} \frac{1}{t} \log \mu_t^x( \mathcal{C})\leq \delta + \epsilon - \min_{i\in [N]} I^\delta(\zeta_i)  \leq \delta + \epsilon - \inf_{\zeta\in \mathcal{C}} I^\delta(\zeta)\xrightarrow[]{\delta \rightarrow 0^+} \epsilon - \inf_{\zeta\in \mathcal{C}}\Lambda^*(\zeta).
    \end{equation*}
    Since this inequality holds for every $\epsilon>0$, we conclude \eqref{eq:lemma3.00}.
\end{proof}
 
To show that the bound (a) holds for every closed set $ \mathcal{C}$, we require an auxiliary lemma. We say that a family of measures $\{\mu_t^x\}_{t\geq 0, x\in \mathcal{X}}$ on $(\R,\mathcal{B}(\R))$ is {\em exponentially tight, uniformly in $\mathcal{X}$} to mean that for every $\alpha\in \R$, there exists a compact set $\mathcal{K}_\alpha\subset \R$ such that
\begin{equation}\label{eq:exponentialtight}
    \limsup_{t\rightarrow\infty} \sup_{x\in \mathcal{X}}\frac{1}{t} \log \mu^x_t(\mathcal{K}_\alpha^c)\leq -\alpha.
\end{equation}

\begin{lemma}\label{lemma3.1}
    Under the hypotheses of Theorem \ref{thm:GEgenralizationA}, the family $\{ \mu^x_t,t\geq 0\}_{ x\in \mathcal{X}}$ is exponentially tight, uniformly in $\mathcal{X}$.
\end{lemma}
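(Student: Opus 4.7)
The plan is to use the hypothesis $0 \in \mathrm{int}(D_\Lambda)$ together with the uniform convergence \eqref{eq:GEuniformA} to run a Chernoff bound whose error terms are uniform in $\mathcal{X}$. Since $0 \in \mathrm{int}(D_\Lambda)$, I can choose $\eta_0 > 0$ small enough that both $\eta_0 \in D_\Lambda$ and $-\eta_0 \in D_\Lambda$, so that $\Lambda(\eta_0)$ and $\Lambda(-\eta_0)$ are finite. By \eqref{eq:GEuniformA} applied at $\eta = \pm \eta_0$, there exists $t_0 > 0$ such that for every $t \geq t_0$ and every $x \in \mathcal{X}$,
\begin{equation*}
    \frac{1}{t}\Lambda_t^x(t\eta_0) \leq \Lambda(\eta_0)+1 \quad \text{and} \quad \frac{1}{t}\Lambda_t^x(-t\eta_0)\leq \Lambda(-\eta_0)+1.
\end{equation*}

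Now I would bound the tails of $\mu_t^x$ via Markov's inequality. Let $Z$ be a random variable with law $\mu_t^x$. For $M > 0$, using $\eta_0 > 0$,
\begin{equation*}
    \mu_t^x([M,\infty)) = \mathbb{P}(Z \geq M) \leq e^{-tM\eta_0}\,\mathbb{E}\bigl[e^{t\eta_0 Z}\bigr] = \exp\bigl(-tM\eta_0 + \Lambda_t^x(t\eta_0)\bigr),
\end{equation*}
and analogously $\mu_t^x((-\infty,-M]) \leq \exp(-tM\eta_0 + \Lambda_t^x(-t\eta_0))$. Combining these with the previous display, for $t \geq t_0$ and every $x \in \mathcal{X}$,
\begin{equation*}
    \frac{1}{t}\log \mu_t^x\bigl(\R \setminus [-M,M]\bigr) \leq \log 2 \cdot t^{-1} - M\eta_0 + \max\{\Lambda(\eta_0),\Lambda(-\eta_0)\}+1.
\end{equation*}

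Given $\alpha \in \R$, I then choose
\begin{equation*}
    M_\alpha := \eta_0^{-1}\bigl(\alpha + \max\{\Lambda(\eta_0),\Lambda(-\eta_0)\}+2\bigr),
\end{equation*}
and set $\mathcal{K}_\alpha := [-M_\alpha, M_\alpha]$, which is compact. Taking $\limsup_{t \to \infty}$ of the previous display (and the supremum over $x \in \mathcal{X}$) yields
\begin{equation*}
    \limsup_{t \to \infty}\sup_{x \in \mathcal{X}}\frac{1}{t}\log \mu_t^x(\mathcal{K}_\alpha^c) \leq -M_\alpha\eta_0 + \max\{\Lambda(\eta_0),\Lambda(-\eta_0)\}+1 \leq -\alpha,
\end{equation*}
which is exactly \eqref{eq:exponentialtight}. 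There is no real obstacle here: the whole proof is the standard exponential Chebyshev bound, and the uniform hypothesis \eqref{eq:GEuniformA} furnishes the uniformity in $\mathcal{X}$ essentially for free. The only point requiring a modicum of care is ensuring we use $\eta_0$ of \emph{both} signs to control both tails, which is available precisely because $0$ lies in the \emph{interior} of $D_\Lambda$.
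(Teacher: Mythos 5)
Your proof is correct and follows essentially the same route as the paper: a Chernoff/Markov bound at $\pm\eta_0\in D_\Lambda$ (available since $0\in\mathrm{int}(D_\Lambda)$), the elementary bound $\log(a+b)\le\log 2+\max(\log a,\log b)$, and the uniform convergence \eqref{eq:GEuniformA} to make the estimate uniform in $x\in\mathcal{X}$, then choosing the compact interval large enough in terms of $\alpha$. The only differences are cosmetic (explicit constant $M_\alpha$ versus the paper's choice of $\rho$ after fixing $\epsilon$).
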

\begin{proof}
Since $0\in \mathrm{int}(D_\Lambda)$, we can choose $\eta>0$ sufficiently small such that $[\eta,-\eta]\subset \mathrm{int}(D_\Lambda)$. Fix $\rho > 0$ and $x \in \mathcal{X}$, and let $Z$ be a random variable with law $\mu_t^x$. By the Markov inequality,
\begin{align*}
\mathbb{P}(|Z| \ge \rho) &
\le e^{-t\eta \rho} \mathbb{E}e^{t\eta|Z|} \\
& \le e^{-t\eta \rho}(\mathbb{E}e^{t\eta Z}+\mathbb{E}e^{-t\eta Z}) \\
& = 
\exp\left(
-t\eta \rho + \log(\mathbb{E}e^{t\eta Z}+\mathbb{E}e^{-t\eta Z})\right)\\
& \le 
\exp\left(-t\eta \rho
+\log 2 + \max(\Lambda_t^x(t\eta),\Lambda_t^x(-t\eta)
\right)\, ,
\end{align*}
where in the last step we have used \eqref{eq:lemma1GE}. Now fix $\epsilon > 0$; then by \eqref{eq:GEuniformA}, for all $t$ sufficiently large, and for all $x \in \mathcal{X}$, we have $|t^{-1}\Lambda_t^x(t\eta)-\Lambda(\eta)|<\epsilon$ and $|t^{-1}\Lambda_t^x(-t\eta)-\Lambda(-\eta)|<\epsilon$. It follows that for all $t$ sufficiently large, for all $x \in \mathcal{X}$ and $\rho>0$, 
\begin{equation*}
        \frac{1}{t} \log \mu^x_t([-\rho,\rho]^c)  \leq \frac{\log 2}{t}- \eta \rho +\max\{\Lambda(\eta),\Lambda(-\eta) \} +\epsilon.
    \end{equation*}
Provided we choose $\rho$ sufficiently large, the right-hand side is less than $-\alpha$ for all large $t$, so this implies \eqref{eq:exponentialtight}.
\end{proof}

The following corresponds to a uniform version of \cite[Lemma 1.2.18]{MR2571413}.
\begin{lemma}\label{lemma2GE}
    Under the hypotheses of Theorem \ref{thm:GEgenralizationA}, for any closed set $\mathcal{C}\subset \R$,
    \begin{equation}\label{eq:lemma3.0}
        \limsup_{t\rightarrow \infty } \sup_{x\in \mathcal{X}} \frac{1}{t}\log \mu^x_t(\mathcal{C})\leq - \inf_{\mathcal{C}} \Lambda^*(\zeta),
    \end{equation}
    where $\Lambda^*$ is the Legendre transform of $\Lambda$.
\end{lemma}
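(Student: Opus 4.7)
The plan is to extend the compact-set estimate of Lemma~\ref{lemma3.0} to closed sets via the standard exponential tightness approximation, now carried out uniformly in $x\in\mathcal{X}$ using Lemma~\ref{lemma3.1}. Fix a closed set $\mathcal{C}\subset\R$; if $\inf_{\mathcal{C}}\Lambda^*=0$ there is nothing to prove, so assume $\inf_{\mathcal{C}}\Lambda^*>0$ and fix any $\alpha\in(0,\inf_{\mathcal{C}}\Lambda^*)$. By Lemma~\ref{lemma3.1}, there exists a compact set $\mathcal{K}_\alpha\subset\R$ such that
\begin{equation*}
\limsup_{t\to\infty}\sup_{x\in\mathcal{X}}\frac{1}{t}\log \mu_t^x(\mathcal{K}_\alpha^c)\leq -\alpha.
\end{equation*}

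Next, I would decompose
\begin{equation*}
\mu_t^x(\mathcal{C})\leq \mu_t^x(\mathcal{C}\cap\mathcal{K}_\alpha)+\mu_t^x(\mathcal{K}_\alpha^c),
\end{equation*}
and apply \eqref{eq:lemma1GE} (with $N=2$) to obtain
\begin{equation*}
\frac{1}{t}\log \mu_t^x(\mathcal{C})\leq \frac{\log 2}{t}+\max\Bigl\{\frac{1}{t}\log \mu_t^x(\mathcal{C}\cap\mathcal{K}_\alpha),\,\frac{1}{t}\log \mu_t^x(\mathcal{K}_\alpha^c)\Bigr\}.
\end{equation*}
The set $\mathcal{C}\cap\mathcal{K}_\alpha$ is compact (being a closed subset of $\mathcal{K}_\alpha$), so Lemma~\ref{lemma3.0} applies and gives
\begin{equation*}
\limsup_{t\to\infty}\sup_{x\in\mathcal{X}}\frac{1}{t}\log \mu_t^x(\mathcal{C}\cap\mathcal{K}_\alpha)\leq -\inf_{\zeta\in\mathcal{C}\cap\mathcal{K}_\alpha}\Lambda^*(\zeta)\leq -\inf_{\zeta\in\mathcal{C}}\Lambda^*(\zeta).
\end{equation*}
Taking the supremum over $x\in\mathcal{X}$ and $\limsup$ in $t$ in the max-inequality, and using that the supremum of the $\limsup$ is bounded by the $\limsup$ of the supremum (which holds for $\max$ by monotonicity), I get
\begin{equation*}
\limsup_{t\to\infty}\sup_{x\in\mathcal{X}}\frac{1}{t}\log\mu_t^x(\mathcal{C})\leq \max\Bigl\{-\inf_{\zeta\in\mathcal{C}}\Lambda^*(\zeta),\,-\alpha\Bigr\}.
\end{equation*}

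Finally, letting $\alpha\to\infty$ eliminates the second term on the right and yields \eqref{eq:lemma3.0}. There is no serious obstacle here: the only subtlety is ensuring that each supremum in $x\in\mathcal{X}$ is controlled in a way that commutes with the max, which is immediate because $\sup_x\max(a_t^x,b_t^x)=\max(\sup_x a_t^x,\sup_x b_t^x)$, and both resulting families admit the uniform bounds from Lemma~\ref{lemma3.0} and Lemma~\ref{lemma3.1} respectively. In particular, no additional regularity of $\Lambda$ or $\Lambda^*$ beyond what Lemma~\ref{lemma00} already provides is needed for this extension step.
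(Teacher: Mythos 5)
Your proof is correct and follows essentially the same route as the paper: decompose $\mathcal{C}$ over a compact set supplied by the uniform exponential tightness of Lemma~\ref{lemma3.1}, apply the compact-set bound of Lemma~\ref{lemma3.0} together with the inequality \eqref{eq:lemma1GE}, and conclude. The only slip is the closing phrase ``letting $\alpha\to\infty$'': since you fixed $\alpha\in(0,\inf_{\mathcal{C}}\Lambda^*)$, you should instead note that the bound $-\alpha$ holds for every such $\alpha$ and let $\alpha\uparrow\inf_{\mathcal{C}}\Lambda^*$ (with $\alpha\to\infty$ only in the case $\inf_{\mathcal{C}}\Lambda^*=+\infty$), which is exactly how the paper treats the two cases.
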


\begin{proof}
    Fix a closed set $\mathcal{C}\subset \R$. Let $\alpha = \inf_{\zeta \in \mathcal{C}}\Lambda^*(\zeta)$. Recall from Lemma \ref{lemma00} that $\Lambda^*$ takes values in $[0,+\infty]$, and thus $\alpha \in [0,+\infty]$. First, let us assume that $\alpha<+\infty$. By Lemma \ref{lemma3.1}, the family of measures is exponentially tight, uniformly in $\mathcal{X}$. Thus, we can find a compact set $\mathcal{K}_\alpha\subset \R$ such that \eqref{eq:exponentialtight} holds. For $x\in \mathcal{X}$, 
    \begin{equation}\label{eq:lemma2GE1}
        \begin{split}
            \frac{1}{t}\log \mu^x_t(\mathcal{C}) &\leq \frac{1}{t} \log (\mu^x_t(\mathcal{C}\cap \mathcal{K}_\alpha) + \mu^x_t(\mathcal{K}_\alpha^c))\\
            &\leq \frac{\log 2}{t} + \max\left\{\frac{\log \mu^x_t(\mathcal{C}\cap \mathcal{K}_\alpha)}{t},\frac{\log \mu^x_t( \mathcal{K}_\alpha^c) }{t} \right\},
        \end{split}
    \end{equation}
    where for the second inequality we used \eqref{eq:lemma1GE}. Since
     $\mathcal{C}\cap \mathcal{K}_\alpha$ is compact, by Lemma \ref{lemma3.0} we have,
    \begin{equation*}
        \limsup_{t\rightarrow \infty}\sup_{x\in \mathcal{X}}\frac{1}{t}\log\mu_t^x(\mathcal{C}\cap \mathcal{K}_\alpha) \leq -\inf_{ \mathcal{C}\cap \mathcal{K}_\alpha}\Lambda^*(\zeta) \leq -\alpha,
    \end{equation*}
    and from \eqref{eq:lemma2GE1}, the prior inequality, and \eqref{eq:exponentialtight}, we deduce that
    \begin{equation*}
        \limsup_{t\rightarrow\infty} \sup_{x\in \mathcal{X}}\frac{1}{t} \log \mu^x_t(\mathcal{C})\leq -\alpha.
    \end{equation*}

    Finally, if $\inf_\mathcal{C}\Lambda^*(\zeta)=+\infty$, we can replicate the previous argument for every non-negative $\alpha<\inf_{\zeta \in \mathcal{C}}\Lambda^*(\zeta)$, from which the conclusion follows. 
\end{proof}

Before proving Theorem \ref{thm:GEgenralizationA}(b), fix $x\in \mathcal{X}$ and $\eta \in \mathrm{int}(D_\Lambda)$. By \eqref{eq:GEuniformA}, for any $t>0$ sufficiently large, $ \Lambda^x_t(t\eta) < t\Lambda(\eta)+\epsilon t<+\infty.$ Hence, for any fixed $t$ sufficiently large, we can define a probability measure $\Tilde{\mu}_t^x=\Tilde{\mu}_t^x(\cdot;\eta)$ on $(\R,\mathcal{B}(\R))$ by setting
\begin{equation*}\label{eq:lemma5.1}
    \frac{d\Tilde{\mu}^x_t}{d\mu^x_t}(\zeta;\eta) := \exp\bigl(  t\eta \zeta- \Lambda^x_t(t\eta)  \bigr).
\end{equation*}
The logarithmic Laplace transform of this measure is given by
\begin{equation*}\label{eq:lemma5.2}
    \Tilde{\Lambda}_t^x(\lambda;\eta):=\log\bigg[ \int_\R e^{\lambda \zeta} d\Tilde{\mu}_t^x(\zeta;\eta) \bigg].
\end{equation*}
The following holds for $\Tilde{\mu}^x_t$ and $\Tilde{\Lambda}^x_t$. 
\begin{lemma}\label{lemma:ApplicationUpperLarge}
    Assume the hypotheses of Theorem \ref{thm:GEgenralizationA} hold. Fix $\eta \in \mathrm{int}(D_\Lambda)$.
    \begin{enumerate}[(i)]
        \item For any $\lambda\in \R$ such that $\eta+\lambda\in D_\Lambda$,
    \begin{equation}\label{eq:lemma5.3}
        \lim_{t\rightarrow \infty} \sup_{x\in \mathcal{X}} \Big|  \frac{1}{t}\Tilde{\Lambda}^x_t(t\lambda;\eta)- \Tilde{\Lambda}(\lambda;\eta)  \Big| = 0,
    \end{equation}
    where $\Tilde{\Lambda}(\lambda;\eta):=\Lambda(\lambda+\eta)-\Lambda(\eta)\in \R$. Furthermore, the Legendre transform $ \Tilde{\Lambda}^*(\cdot\ ;\eta)$ of $\Tilde{\Lambda}(\cdot;\eta)$ satisfies 
    \begin{equation}\label{eq:lemma5.4}
        \Tilde{\Lambda}^*(\zeta;\eta)=\sup_{\lambda\in \R}\{\lambda \zeta-\Tilde{\Lambda}(\lambda;\eta)\} = \Lambda^*(\zeta)-\eta \zeta + \Lambda(\eta).
    \end{equation}
        \item Fix $\lambda\in \R$. For any $\delta>0$ sufficiently small,
        \begin{equation}\label{eq:lemma5.5}
            \limsup_{t\rightarrow \infty}\sup_{x\in \mathcal{X}} \frac{1}{t} \log \Tilde{\mu}^x_t(B(\lambda,\delta)^c;\eta) \leq -\inf_{ B(\lambda,\delta)^c} \Tilde{\Lambda}^*(\zeta;\eta).
        \end{equation}
        Furthermore, if $\inf_{ B(\lambda,\delta)^c} \Tilde{\Lambda}^*(\zeta)<+\infty$, then there exists $\zeta_0\in B(\lambda, \delta)^{c}$ such that
        \begin{equation}\label{eq:lemma5.56}
            \Tilde{\Lambda}^*(\zeta_0;\eta)=\inf_{ B(\lambda,\delta)^c} \Tilde{\Lambda}^*(\zeta;\eta).
        \end{equation}
    \end{enumerate}
\end{lemma}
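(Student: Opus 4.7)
The plan is to reduce both parts to direct calculations using the definition of the tilted measure $\tilde\mu_t^x$, together with the already-established uniform Gärtner--Ellis upper bound (Lemma \ref{lemma2GE}).

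For part (i), the key algebraic identity is
\begin{equation*}
    \tilde{\Lambda}_t^x(t\lambda;\eta) = \log\int_\R e^{t\lambda\zeta}\,\frac{d\tilde\mu_t^x}{d\mu_t^x}(\zeta;\eta)\,d\mu_t^x(\zeta) = \log\int_\R e^{t(\lambda+\eta)\zeta}\,d\mu_t^x(\zeta) - \Lambda_t^x(t\eta) = \Lambda_t^x(t(\lambda+\eta)) - \Lambda_t^x(t\eta).
\end{equation*}
Dividing by $t$, the hypothesis \eqref{eq:GEuniformA} applied at the two points $\eta,\lambda+\eta \in D_\Lambda$ yields \eqref{eq:lemma5.3}. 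The formula \eqref{eq:lemma5.4} for $\tilde\Lambda^*(\cdot;\eta)$ follows by the change of variables $\mu = \lambda+\eta$:
\begin{equation*}
    \tilde\Lambda^*(\zeta;\eta) = \sup_{\lambda\in\R}\{\lambda\zeta - \Lambda(\lambda+\eta) + \Lambda(\eta)\} = \sup_{\mu\in\R}\{(\mu-\eta)\zeta - \Lambda(\mu)\} + \Lambda(\eta) = \Lambda^*(\zeta) - \eta\zeta + \Lambda(\eta).
\end{equation*}

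For part (ii), the idea is to apply Lemma \ref{lemma2GE} to the family $\{\tilde\mu_t^x, t\ge 0\}_{x\in\mathcal X}$, whose logarithmic moment generating functions are $\tilde\Lambda_t^x(\cdot;\eta)$. By part (i) these converge uniformly in $x\in\mathcal X$ to $\tilde\Lambda(\cdot;\eta)$ on $D_{\tilde\Lambda(\cdot;\eta)} = D_\Lambda - \eta$; and since $\eta \in \mathrm{int}(D_\Lambda)$, we have $0\in\mathrm{int}(D_{\tilde\Lambda(\cdot;\eta)})$. Hence the tilted family satisfies the hypotheses of Theorem \ref{thm:GEgenralizationA} with $\Lambda$ replaced by $\tilde\Lambda(\cdot;\eta)$, and Lemma \ref{lemma2GE} applied to the closed set $B(\lambda,\delta)^c$ yields \eqref{eq:lemma5.5}.

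For the existence of a minimizer \eqref{eq:lemma5.56}, I will invoke Lemma \ref{lemma00} applied to $\tilde\Lambda(\cdot;\eta)$: its Legendre transform $\tilde\Lambda^*(\cdot;\eta)$ is a good rate function, hence lower semicontinuous with compact sublevel sets. If the infimum on $B(\lambda,\delta)^c$ equals some finite $\alpha$, then for any $\alpha'>\alpha$ the intersection $\{\tilde\Lambda^*(\cdot;\eta) \le \alpha'\} \cap B(\lambda,\delta)^c$ is a non-empty closed subset of a compact set, on which the lower semicontinuous function $\tilde\Lambda^*(\cdot;\eta)$ attains its infimum. There is no serious obstacle here: once the defining identity $\tilde\Lambda_t^x(t\lambda;\eta)=\Lambda_t^x(t(\lambda+\eta))-\Lambda_t^x(t\eta)$ is in hand, both parts are bookkeeping, the only care being to verify that the tilted family inherits the uniform-Gärtner--Ellis hypotheses, which it does essentially by inspection.
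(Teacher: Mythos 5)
Your proposal is correct and follows essentially the same route as the paper: the identity $\Tilde{\Lambda}^x_t(t\lambda;\eta)=\Lambda^x_t(t(\lambda+\eta))-\Lambda^x_t(t\eta)$ plus the uniform convergence hypothesis for part (i), and applying the uniform upper bound (Lemma \ref{lemma2GE}) to the tilted family together with the goodness of the rate function from Lemma \ref{lemma00} to get both \eqref{eq:lemma5.5} and the attainment of the infimum. No gaps worth noting.
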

\begin{proof}[Proof of Lemma \ref{lemma:ApplicationUpperLarge}(i)]
    Since $\eta\in \mathrm{int}(D_\Lambda)$ is fixed, we suppress the dependence of $\tilde{\mu}$, $\tilde{\Lambda}^{x}_{t}$ and $\tilde{\Lambda}$ on $\eta$, for ease of notation.
    By definition,
        \begin{equation*}
            \begin{split}
                \frac{1}{t}\Tilde{\Lambda}^x_t(t\lambda)- \Tilde{\Lambda}(\lambda) &= \frac{1}{t}\log\bigg[ \int_\R e^{t\lambda \zeta} d\Tilde{\mu}_t^x(\zeta) \bigg]- \Lambda(\lambda + \eta) + \Lambda(\eta)\\
                &= \frac{1}{t}\log\bigg[ \int_\R e^{t\lambda \zeta + t\eta \zeta- \Lambda^x_t(t\eta)} d\mu_t^x(\zeta) \bigg]- \Lambda(\lambda + \eta) + \Lambda(\eta)\\
                &= \frac{1}{t}\Big[\Lambda^x_t(t(\lambda+\eta))-\Lambda^x_t(t\eta) \Big] - \Lambda(\lambda + \eta) + \Lambda(\eta).
            \end{split}
        \end{equation*}
        Since $\lim_{t\rightarrow \infty}\sup_{x\in \mathcal{X}}\ \Big| t^{-1}\Lambda_t^x(t\eta) -  \Lambda(\eta) \Big| =0$ and $\lim_{t\rightarrow \infty}\sup_{x\in \mathcal{X}}\ \Big| t^{-1}\Lambda_t^x(t(\lambda+\eta)) -  \Lambda(\lambda+\eta) \Big| =0$ by \eqref{eq:GEuniformA}, we conclude \eqref{eq:lemma5.3}. To prove \eqref{eq:lemma5.4}, observe that
        \begin{align*}
            \Tilde{\Lambda}^*(\zeta)&=\sup_{\lambda\in \R}\{\lambda \zeta-\Tilde{\Lambda}(\lambda)\}\\
            &=  \sup_{\lambda\in \R}\{(\lambda+
                \eta) \zeta-\Lambda(\lambda+\eta)\}- \eta \zeta + \Lambda(\eta)\\
            &= \Lambda^*(\zeta)-\eta \zeta + \Lambda(\eta). \qedhere
    \end{align*}
    \end{proof}
        
        \begin{proof}[Proof of Lemma \ref{lemma:ApplicationUpperLarge}(ii)]
         Recall that $\eta\in \mathrm{int}(D_\Lambda)$, thus from \eqref{eq:lemma5.3}, it is immediate that the family of measures $\{\Tilde{\mu}_t^x,t\geq 0\}_{x\in \mathcal{X}}$ satisfy the hypotheses of Theorem \ref{thm:GEgenralizationA} $\Lambda$ and $D_{\Lambda}$ replaced by $\Tilde{\Lambda}$ and $D_{\Tilde{\Lambda}}=D_{\Lambda}-\eta$, respectively. By Lemma \ref{lemma2GE}, the uniform upper bound in Theorem \ref{thm:GEgenralizationA}(a) holds for $\tilde{\mu}^{x}_{t}$, and hence 
        \begin{equation*}
            \limsup_{t\rightarrow \infty}\sup_{x\in \mathcal{X}} \frac{1}{t} \log \Tilde{\mu}^x_t(B(\lambda,\delta)^c) \leq -\inf_{ B(\lambda,\delta)^c} \Tilde{\Lambda}^*(\zeta).
        \end{equation*}
        To prove the second claim of (ii), by hypothesis, we suppose that $\inf_{ B(\lambda,\delta)^c} \Tilde{\Lambda}^*(\zeta)<+\infty$. We will prove that this infimum is achieved in $ B(\lambda,\delta)^c$. To do so, let $\alpha>0$ be such that $\inf_{ B(\lambda,\delta)^c} \Tilde{\Lambda}^*(\zeta)<\alpha$. By Lemma \ref{lemma00}, $\tilde{\Lambda}^{*}$ is a convex good rate function, from which it follows that $\{\Tilde{\Lambda}^*\leq \alpha\}$ is compact. Thus, by the lower semicontinuity of $\Tilde{\Lambda}^*$, 
        \begin{equation*}
            \text{argmin}_{\zeta \in  B(\lambda,\delta)^c\cap \{\Tilde{\Lambda}^*\leq \alpha\}}\Tilde{\Lambda}^*(\zeta) \in B(\lambda,\delta)^c\cap\{\Tilde{\Lambda}^*\leq \alpha\}.
        \end{equation*}
        However, since $\inf_{B(\lambda,\delta)^c} \Tilde{\Lambda}^*(\zeta) \leq \alpha$, this implies
        \begin{equation*}
            \min_{ B(\lambda,\delta)^c}\Tilde{\Lambda}^*(\zeta) = \min_{ B(\lambda,\delta)^c\cap \{\Tilde{\Lambda}^*\leq \alpha\}}\Tilde{\Lambda}^*(\zeta).
        \end{equation*}
        This means that the infimum $\inf_{ B(\lambda,\delta)^c} \Tilde{\Lambda}^*(\zeta)$ is achieved in $B(\lambda,\delta)^c$, from which \eqref{eq:lemma5.56}  follows.
\end{proof}

\begin{lemma}\label{lemma:5}
    Suppose the hypotheses of Theorem \ref{thm:GEgenralizationA} hold. Let $\mathcal{L}\subset \R$ be the set formed by the exposed points
    of $\Lambda^*$ for which there exists a strictly supporting hyperplane of $\Lambda^*$ belonging to $\mathrm{int}(D_\Lambda)$. For every $\lambda\in \mathcal{L}$, 
    \begin{equation}\label{eq:lemma5.11}
         \lim_{\delta \rightarrow 0^+}\liminf_{t\rightarrow \infty}\inf_{x\in \mathcal{X}}\frac{1}{t}\log \mu^x_t(B(\lambda,\delta)) \geq - \Lambda^*(\lambda).
    \end{equation}
\end{lemma}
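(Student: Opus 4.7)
The plan is to carry out a uniform tilting argument. Fix $\lambda \in \mathcal{L}$ and choose $\eta \in \mathrm{int}(D_\Lambda)$ so that $\eta$ is a strictly supporting hyperplane of $\Lambda^*$ at $\lambda$; fix $\delta > 0$ as well. First I would reduce the desired lower bound on $\mu_t^x(B(\lambda,\delta))$ to an upper bound on the complement under the tilted measure $\tilde{\mu}_t^x(\cdot; \eta)$. Unwinding the definition of $\tilde{\mu}_t^x$, and using that $-t\eta\zeta \geq -t\eta\lambda - t|\eta|\delta$ for $\zeta \in B(\lambda, \delta)$,
\[ \mu_t^x(B(\lambda, \delta)) \geq \exp\bigl(-t\eta\lambda - t|\eta|\delta + \Lambda_t^x(t\eta)\bigr)\, \tilde{\mu}_t^x(B(\lambda, \delta); \eta). \]
Taking $t^{-1}\log$, applying the infimum over $x \in \mathcal{X}$ and then the $\liminf$ in $t$, and using \eqref{eq:GEuniformA} to replace $t^{-1}\Lambda_t^x(t\eta)$ by $\Lambda(\eta)$ uniformly in $x$, the claim reduces to the Fenchel--Young identity $-\eta\lambda + \Lambda(\eta) = -\Lambda^*(\lambda)$ together with the uniform concentration $\inf_{x \in \mathcal{X}} \tilde{\mu}_t^x(B(\lambda, \delta); \eta) \to 1$ as $t \to \infty$.

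Second I would verify the identity $\Lambda^*(\lambda) + \Lambda(\eta) = \eta\lambda$. The strict exposed-point inequality rearranges to $\eta\zeta - \Lambda^*(\zeta) < \eta\lambda - \Lambda^*(\lambda)$ for every $\zeta \neq \lambda$, so $\lambda$ realizes the supremum and $(\Lambda^*)^*(\eta) = \eta\lambda - \Lambda^*(\lambda)$. Since $\Lambda$ is convex (Lemma \ref{lemma00}) and $\eta$ lies in the interior of its effective domain, $\Lambda$ is continuous at $\eta$ and therefore coincides with its biconjugate there, yielding the identity. In view of \eqref{eq:lemma5.4} this also says $\tilde{\Lambda}^*(\lambda; \eta) = 0$, and the strict exposed-point property then upgrades to $\tilde{\Lambda}^*(\zeta; \eta) > 0$ for every $\zeta \neq \lambda$.

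Third I would deduce the uniform concentration from Lemma \ref{lemma:ApplicationUpperLarge}(ii), which gives
\[ \limsup_{t \to \infty} \sup_{x \in \mathcal{X}} \frac{1}{t}\log \tilde{\mu}_t^x(B(\lambda, \delta)^c; \eta) \leq -\inf_{\zeta \in B(\lambda, \delta)^c} \tilde{\Lambda}^*(\zeta; \eta). \]
If this infimum is $+\infty$ we are immediately done. Otherwise the second part of Lemma \ref{lemma:ApplicationUpperLarge}(ii) guarantees it is attained at some $\zeta_0 \in B(\lambda, \delta)^c$, and since $\zeta_0 \neq \lambda$ the previous step yields $\tilde{\Lambda}^*(\zeta_0; \eta) > 0$. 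Either way, $\sup_{x \in \mathcal{X}} \tilde{\mu}_t^x(B(\lambda, \delta)^c; \eta) \to 0$ exponentially in $t$, so $\inf_{x \in \mathcal{X}} \tilde{\mu}_t^x(B(\lambda, \delta); \eta) \to 1$ and consequently $\inf_{x \in \mathcal{X}} t^{-1}\log \tilde{\mu}_t^x(B(\lambda, \delta); \eta) \to 0$. Plugging into the first step yields
\[ \liminf_{t \to \infty} \inf_{x \in \mathcal{X}} \frac{1}{t}\log \mu_t^x(B(\lambda, \delta)) \geq -\Lambda^*(\lambda) - |\eta|\delta, \]
and letting $\delta \to 0^+$ produces \eqref{eq:lemma5.11}.

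The main obstacle is the convex-analytic identification $\Lambda^*(\lambda) + \Lambda(\eta) = \eta\lambda$ in the second step: though brief, it is precisely where the hypothesis $\eta \in \mathrm{int}(D_\Lambda)$ (rather than merely $\eta \in D_\Lambda$) genuinely enters, via continuity of the convex function $\Lambda$ on the interior of its effective domain. The remainder is bookkeeping that converts the desired lower bound for $\mu_t^x$ into the uniform upper bound on $\tilde{\mu}_t^x(B(\lambda, \delta)^c; \eta)$ already furnished by Lemma \ref{lemma:ApplicationUpperLarge}(ii).
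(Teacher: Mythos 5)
Your proof is correct and follows essentially the same route as the paper's: the same exponential tilting of $\mu_t^x$, the same appeal to Lemma \ref{lemma:ApplicationUpperLarge}(ii) (with attainment at some $\zeta_0\in B(\lambda,\delta)^c$ and positivity of $\tilde{\Lambda}^*(\zeta_0;\eta)$ from the exposed-point inequality) to get uniform exponential decay of $\tilde{\mu}_t^x(B(\lambda,\delta)^c;\eta)$, and the same use of \eqref{eq:GEuniformA} before letting $\delta\to 0^+$. The only difference is that you derive the exact identity $\Lambda^*(\lambda)+\Lambda(\eta)=\eta\lambda$ via biconjugation and continuity of $\Lambda$ on $\mathrm{int}(D_\Lambda)$, whereas the paper gets by with the one-sided Fenchel--Young inequality $\eta\lambda-\Lambda(\eta)\leq \Lambda^*(\lambda)$ both for the positivity of $\tilde{\Lambda}^*(\zeta_0;\eta)$ and in the final bound, so your extra convex-analysis step is correct but not needed.
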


\begin{proof}
    Fix $\lambda\in \mathcal{L}$, and let $\eta_\lambda\in \mathrm{int}(D_\Lambda)$ be a strictly supporting hyperplane of $\Lambda^*$ at $\lambda$. 
    We write $\Tilde{\mu}_t^x(\cdot)=\Tilde{\mu}_t^x(\cdot,\eta_\lambda)$ and $\Tilde{\Lambda}_t^x(\cdot)=\Tilde{\Lambda}_t^x(\cdot,\eta_\lambda)$ for readability.
    Observe that for $\delta >0$,
    \begin{equation*}
        \begin{split}
            \frac{1}{t}\log \mu^x_t(B(\lambda,\delta)) &= \frac{1}{t}\log \bigg[\int_{B(\lambda,\delta)} \exp\bigl( -t\eta_\lambda \zeta + \Lambda_t^x(t\eta_\lambda) \bigr) d\Tilde{\mu}^x_t(\zeta)\bigg]\\
            & = \frac{1}{t}\Lambda_t^x(t\eta_\lambda)+ \frac{1}{t}\log \bigg[\int_{B(\lambda,\delta)} \exp\bigl( -t\eta_\lambda \zeta  \bigr) d\Tilde{\mu}^x_t(\zeta)\bigg]\\
            & = \frac{1}{t}\Lambda_t^x(t\eta_\lambda) - \eta_\lambda \lambda + \frac{1}{t}\log \bigg[\int_{B(\lambda,\delta)} \exp\bigl( t\eta_\lambda(\lambda- \zeta)  \bigr) d\Tilde{\mu}^x_t(\zeta)\bigg]\\
            & \geq \frac{1}{t}\Lambda_t^x(t\eta_\lambda) - \eta_\lambda \lambda - |\eta_\lambda| \delta + \frac{1}{t}\log \Tilde{\mu}^x_t(B(\lambda,\delta)).
        \end{split}
    \end{equation*}
    This implies that
    \begin{equation}\label{eq:lemma5.66}
        \begin{split}
            \inf_{x\in \mathcal{X}}\frac{1}{t}\log \mu^x_t(B(\lambda,\delta)) &\geq \inf_{x\in \mathcal{X}}\bigg( \frac{1}{t}\Lambda_t^x(t\eta_\lambda) - \eta_\lambda \lambda - |\eta_\lambda| \delta + \frac{1}{t}\log \Tilde{\mu}^x_t(B(\lambda,\delta)) \bigg)\\
            & \geq  \inf_{x\in \mathcal{X}} \frac{1}{t}\Lambda_t^x(t\eta_\lambda) - \eta_\lambda \lambda -|\eta_\lambda| \delta + \inf_{x\in \mathcal{X}}\frac{1}{t}\log \Tilde{\mu}^x_t(B(\lambda,\delta)).
        \end{split}
    \end{equation}
    To control the right-hand side, first suppose $\inf_{ B(\lambda,\delta)^c} \Tilde{\Lambda}^*(\zeta)<+\infty$. By \eqref{eq:lemma5.5} and \eqref{eq:lemma5.56}, there exists $\zeta_0\in B(\lambda, \delta)^{c}$  such that
    \begin{equation}\label{eq:lemma5.7}
        \limsup_{t\rightarrow\infty} \sup_{x\in \mathcal{X}} \frac{1}{t} \log \Tilde{\mu}^x_t(B(\lambda,\delta)^c) \leq -\Tilde{\Lambda}^*(\zeta_0).
    \end{equation}
    Next, note that 
    \begin{align*}
    \Tilde{\Lambda}^*(\zeta_0) 
    & = \Lambda^*(\zeta_0) -\eta_\lambda \zeta_0 + \Lambda(\eta_\lambda) \\
    & \geq \parentesiss{\Lambda^*(\zeta_0) -\eta_\lambda \zeta_0}-\parentesiss{\Lambda^*(\lambda) -\eta_\lambda \lambda} > 0\, ,
    \end{align*}
    where the equality holds due to \eqref{eq:lemma5.4}, the first inequality holds since $\Lambda^*(\lambda)=\sup_{\eta \in \R}\{\eta\lambda - \Lambda(\eta)\} \geq  \eta_\lambda\lambda - \Lambda(\eta_\lambda)$, and the second inequality holds since $\lambda$ is an exposed point of $\Lambda^*$ with supporting hyperplane $\eta_\lambda$. 
    By \eqref{eq:lemma5.7}, this yields that
    \begin{equation*}
        \limsup_{t\rightarrow\infty} \sup_{x\in \mathcal{X}} \frac{1}{t} \log \Tilde{\mu}^x_t(B(\lambda,\delta)^c)<0.
    \end{equation*}
    By \eqref{eq:lemma5.5}, the same holds if $\inf_{ B(\lambda,\delta)^c} \Tilde{\Lambda}^*(\zeta)=+\infty$. This implies that there exists $\kappa \in (0,1)$ such that for all $t>0$ sufficiently large and all $x\in \mathcal{X}$, $t^{-1}\log \Tilde{\mu}_t^x(B(\lambda,\delta)^c)\leq -\kappa$. 
    This inequality is equivalent to the inequality $1-e^{-\kappa t}\leq \Tilde{\mu}_t^x(B(\lambda,\delta))$, and hence, for $t$ sufficiently large, and every $x\in \mathcal{X}$, $\tilde{\mu}_x^t(B(\lambda,\delta)) \ge 1/2$, so 
    \begin{equation}\label{eq:asdfg}
        \liminf_{t\rightarrow \infty} \inf_{x\in \mathcal{X}}\frac{1}{t}\log \Tilde{\mu}^x_t(B(\lambda,\delta)) =0. 
    \end{equation}
    Finally, we take a $\liminf$ as $t\rightarrow \infty$ in both sides of \eqref{eq:lemma5.66}. Since $\eta_\lambda \in \mathrm{int}(D_\Lambda)$, by \eqref{eq:GEuniformA} and \eqref{eq:asdfg}, we obtain
    \begin{equation}\label{eq:qwer}
        \liminf_{t\rightarrow \infty} \inf_{x\in \mathcal{X}} \frac{1}{t}\log\mu_t^x(B(\lambda,\delta)) \geq \Lambda(\eta_\lambda) - \eta_\lambda \lambda - |\eta_\lambda|\delta.
    \end{equation}
    By the definition of $\Lambda^*$, we have that  $\eta_\lambda \lambda -\Lambda(\eta_\lambda) \leq \Lambda^*(\lambda)$. Hence, \eqref{eq:qwer} yields that 
    \begin{equation*}
        \liminf_{t\rightarrow \infty} \inf_{x\in \mathcal{X}} \frac{1}{t}\log\mu_t^x(B(\lambda,\delta)) \geq -\Lambda^*(\lambda) - |\eta_\lambda|\delta.
    \end{equation*}
    Taking a limit as $\delta \to 0^+$, this completes the proof. 
\end{proof}

\begin{proof}[\textbf{Proof of Theorem \ref{thm:GEgenralizationA}}(b)]
    This will follow as a consequence of \eqref{eq:lemma5.11}. Fix an open set $\mathcal{O}\subset \R$ and $\lambda\in \mathcal{O}\cap \mathcal{L}$, where $\mathcal{L}$ is the set of exposed points of $\Lambda^{*}$ for which there exists a strictly supporting hyperplane of $\Lambda^*$ belonging to $\mathrm{int}(D_\Lambda)$. Let $\delta\in (0,1)$ sufficiently small such that $B(\lambda,\delta)\subset \mathcal{O}$. This implies that for every $t>0$, 
    \begin{equation*}
        \inf_{x\in \mathcal{X}}\frac{1}{t}\log \mu_t^x(\mathcal{O}) \geq \inf_{x\in \mathcal{X}}\frac{1}{t}\log \mu_t^x(B(\lambda,\delta)).
    \end{equation*}
    By \eqref{eq:lemma5.11}, taking $\liminf$ as $t\rightarrow \infty$ and then letting $\delta\downarrow 0$, we obtain
    \begin{equation*}
        \liminf_{t\rightarrow \infty}\inf_{x\in \mathcal{X}}\frac{1}{t}\log \mu_t^x(\mathcal{O}) \geq -\Lambda^*(\lambda).
    \end{equation*}
    Since $\lambda\in \mathcal{O}\cap \mathcal{L}$ was arbitrary, this proves Theorem \ref{thm:GEgenralizationA}(b). 

    Finally, the last claim of the theorem follows exactly as in the proof of \cite[Theorem 2.3.6(c)]{MR2571413}, because this argument only depends on geometric properties of the function $\Lambda$.
\end{proof}

Finally, equipped with the G\"artner-Ellis framework, we give the proof of Lemma \ref{lemma:propI}, which yields desirable properties of $I_{e}(\zeta)=[\gamma(e, \lambda_{e}+\cdot)-\gamma(e, \lambda_{e})]^{*}(\zeta)$.

\begin{proof}[\textbf{Proof of Lemma \ref{lemma:propI}(i)}]
    Since $e\in S^{d-1}$ is fixed, let us write $\gamma(\lambda)=\gamma(e,\lambda)$. Let $\Lambda:\R\rightarrow\R$ be defined by $$\Lambda(\eta)=\gamma(\lambda_e+\eta)-\gamma(\lambda_e).$$
        By Proposition \ref{Prop:PropertiesGamma}(i),
      $\Lambda$ is finite for each $\eta$, strictly convex, and differentiable. Moreover, by the quadratic growth \eqref{eq:BoundGamma}, it follows that the Legendre transform $\Lambda^{*}(\zeta)=I_{e}(\zeta)$ is finite for every $\zeta\in \R$ (and in particular, the supremum in the definition of the Legendre transform is achieved). 
      
       Next, as in the proof of Proposition \ref{proposition:largeDeviation}, we observe that for any sequence $(\eta_n)_{n\geq 1}$ such that $|\eta_n|\rightarrow +\infty$,
        \begin{equation*}
            \lim_{n\rightarrow \infty}|\Lambda^\prime(\eta_n)|=+\infty.
        \end{equation*}

        Since $I_e$ is the Legrendre transform of $\Lambda$, by \cite[Theorem 26.6]{Rockafellar+1970} and \cite[Lemma 26.7]{Rockafellar+1970}, this implies that $I_e$ is strictly convex and differentiable.
       \end{proof}
       
    \begin{proof}[\textbf{Proof of Lemma \ref{lemma:propI}(ii)}]
        
     For $\zeta\in \R$, let $q_\zeta:\R\rightarrow \R$ be the line with slope $\zeta$ passing through $\gamma(\lambda_{e})$, given by 
        \begin{equation*}
            q_\zeta(\lambda) = (\lambda-\lambda_e)\zeta+\gamma(\lambda_e).
        \end{equation*}
        Then 
        \begin{equation*}
           \begin{split}
                I_e(\zeta)=\sup_{\eta\in \R}\{\eta \zeta-\Lambda(\eta)\}=\sup_{\eta\in \R}\{q_\zeta(\lambda_e+\eta) -\gamma(\lambda_e+\eta)\}=\sup_{\eta\in \R}\{q_\zeta(\eta) -\gamma(\eta)\}.
           \end{split}
        \end{equation*}
        From the identity above, we observe that $I_e(\zeta)\geq q_\zeta(\lambda_e)-\gamma(\lambda_e)=0$. Furthermore, $I_e(\zeta)>0$ if there is at least one $\eta\in \R$ for which $\gamma(\eta)<q_\zeta(\eta)$, and since $\gamma$ is differentiable, that $I_e(\zeta)=0$ if and only if the line $q_\zeta$ is tangent to the graph of $\gamma$ from below.
        
        From Proposition~\ref{prop:PrincipalEigenEx}(i), we know that $\gamma$ is strictly convex and differentiable. Thus, there is only one line passing through the point $(\lambda_e,\gamma(\lambda_e))$ that is tangent to the graph of the function $\gamma$ from below. Proposition~\ref{prop:PrincipalEigenEx}(iii) implies that such a line has slope $c^*(e)$, and hence $I_e(c^*(e))=0$. 
        
        On the other hand,  consider $\zeta\in \R$ such that $\zeta\neq c^*(e)$. From our previous observations, the line $q_\zeta$ is not tangent to the graph of $\gamma$ at $(\lambda_e,\gamma(\lambda_e))$, but $q_\zeta(\lambda_e)=\gamma(\lambda_e)$. By the strict convexity and differentiability of $\gamma$, there exists $\eta_\zeta\in \R$ such that $q_\zeta(\eta_\zeta)>\gamma(\eta_\zeta)$. We deduce that $I_e(\zeta)>0$ for $\zeta\neq c^*(e)$.

To show that $I_e$ is strictly increasing on $(\cs(e),+\infty)$, fix $\zeta_1,\zeta_2\in  (\cs(e),\infty)$ with $\zeta_1<\zeta_2$. By (i), we know that there exists $\zeta_1^* \in \R$ such that $I_e(\zeta_1) =  q_{\zeta_1}(\zeta_1^*) -\gamma(\zeta_1^*)>0$. We claim that $\zeta_1^*>\lambda_e$. To see this, recall from Remark \ref{r:derivativecs} that $\cs(e)$ is the derivative of $\gamma$ at $\lambda_e$. By convexity of $\gamma$, implies that for every $\eta\in \R$, $q_{\cs(e)}(\eta)\leq \gamma(\eta)$. Since $q_{\zeta_1}$ is the line with slope $\zeta_1>c^{*}(e)$ passing through $\gamma(\lambda_e)$, for every $\eta\leq \lambda_e$, $q_{\zeta_1}(\eta)-\gamma(\eta)\leq q_{\cs(e)}(\eta)- \gamma(\eta)\leq 0$. Since $I_e(\zeta_1)>0$, it follows that $\zeta_1^*>\lambda_e$.

Finally, since $\zeta_1^*>\lambda_e$ and $\zeta_2>\zeta_1$, we have
        \begin{equation*}
            \begin{split}
                I_e(\zeta_1) &= q_{\zeta_1}(\zeta_1^*) -\gamma(\zeta_1^*)\\ 
                & = (\zeta_1^*-\lambda_e)\zeta_1+\gamma(\lambda_e) -\gamma(\zeta_1^*)\\
                & = (\zeta_1^*-\lambda_e)(\zeta_1-\zeta_2)+ q_{\zeta_2}(\zeta_1^*) - \gamma(\zeta_1^*)\\
                & < q_{\zeta_2}(\zeta_1^*) - \gamma(\zeta_1^*) \\
                & \leq I_e(\zeta_2).
            \end{split}
        \end{equation*}
       This argument holds for arbitrary $c^{*}(e)<\zeta_1<\zeta_2$, and thus we deduce that $I_e$ is strictly increasing on $(\cs(e),\infty)$. A symmetric argument shows that $I_e$ is strictly decreasing on $(-\infty, \cs(e))$.
    \end{proof}
    \begin{proof}[\textbf{Proof of Lemma \ref{lemma:propI}(iii)}]

       We proceed by contradiction. Suppose there exists $\beta>0$ and a sequence $\{\kappa_n\}_{n\geq 0}$ decreasing to zero such that 
        \begin{equation*}
            I_e(c^*(e)(1-\kappa_n)) \geq \kappa_n \beta.
        \end{equation*}
        Then
        \begin{equation*}
            \frac{I_e(c^*(e)(1-\kappa_n))}{\kappa_nc^*(e)} \geq \frac{\kappa_n \beta}{\kappa_n c^*(e)}= \frac{\beta}{c^*(e)}.
        \end{equation*}
        Since $I_{e}$ is differentiable by (i), we deduce that $-I_e^\prime(c^*(e))\geq \beta/c^*(e)$. This gives a contradiction since we know that~$c^*(e)$ is a minimum of $I_e$, and thus by differentiability of $I_e$, $I_e^\prime(c^*(e))=0$. A symmetric argument proves the second claim.
    
\end{proof}

\section{Appendix C: Approximation of Wulff shapes}\label{append:ApproxWulff}

\begin{proof}[\textbf{Proof of Proposition \ref{prop:wulffapprox}}(i)]
Since $0<a<\min_{e\in S^{d-1}}\mathfrak{c}^*(e)$,
        \begin{equation}
            \bigcup_{e\in S^{d-1}}\cH^+_{e,\mathfrak{c}^*(e) } = \Bigl(\bigcap_{e\in S^{d-1}}\cH^-_{e,\mathfrak{c}^*(e) }\Bigl )^c = \mathcal{W}(\mathfrak{c}^*)^c \supset  \partial(1+\epsilon)\cW(\mathfrak{c}^*).
        \end{equation}
        Thus, $\bigcup_{e\in S^{d-1}}(\cH^+_{e,\mathfrak{c}^*(e) }\cap \partial (1+\epsilon)\cW(\mathfrak{c}^*))$ is an open cover of $\partial (1+\epsilon)\cW(\mathfrak{c}^*)$. Since $\sup_{e\in S^{d-1}} \mathfrak{c}^{*}(e)=b<\infty$, $\partial(1+\epsilon)\mathcal{W}(\mathfrak{c}^{*})$ is compact, so there exists a finite set of directions $\cR\subset S^{d-1}$ such that 
        \begin{equation*}
            \bigcup_{r\in \cR}(\cH^+_{r,\mathfrak{c}^*(r) }\cap \partial (1+\epsilon)\cW(\mathfrak{c}^*))\supset \partial (1+\epsilon)\cW(\mathfrak{c}^*).
        \end{equation*}
        Since $0\in \mathrm{int}(\cW(\mathfrak{c}^*))$, the above implies that
        \begin{equation}\label{eq:ApC1}
            \bigcup_{r\in \cR}\cH^+_{r,\mathfrak{c}^*(r) } \supset  ((1+\epsilon)\cW(\mathfrak{c}^*))^c.
        \end{equation}
        By taking complements, it follows that $\bigcap_{r\in \cR} \cH_{r,\mathfrak{c}^*(r)}^-\subset(1+\epsilon)\cW$, which finishes the proof.
        \end{proof}

We next recall an auxiliary result from convex analysis which we will use in the proof of Proposition \ref{prop:wulffapprox}(ii). This result, proven in \cite[Lemma 4]{MR0400054}, originally applies to convex sets $W$ with smooth boundaries. However, the same argument holds for less regular convex sets, and yields the following version.
\begin{lemma}\cite[Lemma 4]{MR0400054}\label{l:wulff(ii)}
    Let $W\subset \R^d$ be a compact, convex set with $0\in \mathrm{int}(W)$ and $K\subset \R^d$ another compact, convex set. Suppose that for every supporting hyperplane $\cH_{e,\ell}$ of $W$, $\cH^+_{e, \ell}\cap K\neq 0$. Then $W\subset K$.
\end{lemma}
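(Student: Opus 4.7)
The plan is to prove the contrapositive by strict separation. If $W \not\subset K$, I will produce a single unit vector $e \in S^{d-1}$ such that the open halfspace on the far side of the supporting hyperplane $\cH_{e,h_W(e)}$ of $W$ contains no point of $K$, contradicting the hypothesis.

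To carry this out, I would first pick some $x \in W \setminus K$ (nonempty by assumption). Since $\{x\}$ is compact, $K$ is closed and convex, and the two are disjoint, the strict separating hyperplane theorem yields $e \in S^{d-1}$ and $c \in \R$ with $x \cdot e > c$ and $y \cdot e \leq c$ for every $y \in K$. I would then let $\ell := h_W(e) = \max_{z \in W} z \cdot e$; this supremum is attained because $W$ is compact, and by definition $\cH_{e,\ell}$ is a supporting hyperplane of $W$ with $W \subset \cH^-_{e,\ell}$. The chain of inequalities $\ell \geq x \cdot e > c \geq \sup_{y \in K} y \cdot e$ forces every $y \in K$ to satisfy $y \cdot e < \ell$, so $K \cap \cH^+_{e,\ell} = \emptyset$. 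This contradicts the standing hypothesis applied to the supporting hyperplane $\cH_{e,\ell}$, and hence $W \subset K$.

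I do not anticipate any substantial obstacle. The whole argument rests on strict separation, whose required hypotheses (closedness and convexity of $K$ together with disjointness from a compact set) are exactly the setup of the lemma, and on the elementary fact that for compact convex $W$ the support function $h_W(e)$ is always attained, so that $\cH_{e,h_W(e)}$ genuinely qualifies as a supporting hyperplane. The hypothesis $0 \in \mathrm{int}(W)$ plays no essential role in the contradiction itself; it is needed only in applications of the lemma (such as Proposition~\ref{prop:wulffapprox}(ii)) to ensure $W$ is full-dimensional so that every $e \in S^{d-1}$ yields a genuine supporting hyperplane. The only bookkeeping is to verify the orientation of the separating inequalities, so that $W$ sits in $\cH^-_{e,\ell}$ and $K$ misses $\cH^+_{e,\ell}$ in the strict sense demanded by the hypothesis.
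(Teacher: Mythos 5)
Your proof is correct. A remark on how it relates to the paper: the paper does not actually prove this lemma — it cites \cite[Lemma 4]{MR0400054}, which treats convex bodies with smooth boundary, and simply asserts that the same argument carries over to general compact convex $W$. Your argument is the standard self-contained justification of exactly that assertion: if $W\not\subset K$, pick $x\in W\setminus K$, strictly separate the compact set $\{x\}$ from the closed convex set $K$ to get $e\in S^{d-1}$ and $c$ with $x\cdot e>c\geq\sup_{y\in K}y\cdot e$, and then take $\ell=\max_{z\in W}z\cdot e$ (attained by compactness of $W$), so that $\cH_{e,\ell}$ is a genuine supporting hyperplane of $W$ with $\ell\geq x\cdot e>c$, forcing $K\cap\cH^+_{e,\ell}=\emptyset$ and contradicting the hypothesis. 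No smoothness of $\partial W$ is used anywhere, which is precisely what the paper needs; and your observation that $0\in\mathrm{int}(W)$ is not needed for the implication itself (only in the application in Proposition~\ref{prop:wulffapprox}(ii)) is also accurate. The only cosmetic point is that the statement's ``$\cH^+_{e,\ell}\cap K\neq 0$'' should read ``$\neq\emptyset$'', which you have interpreted correctly as the open half-space $\{y: y\cdot e>\ell\}$ per the paper's conventions.
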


\begin{proof}[\textbf{Proof of Proposition \ref{prop:wulffapprox}}(ii)]
For $e$ in $S^{d-1}$, let
    \begin{equation}
        \mathfrak{c}(e):=\inf\{\ell>0:\cW(\mathfrak{c}^*)\subset \cH^-_{e,\ell}\}.
    \end{equation}
From this definition, it follows that $\cW(\mathfrak{c})=\cW(\mathfrak{c}^*)$, and $\mathfrak{c}^*(e)\geq \mathfrak{c}(e)$ for every $e\in S^{d-1}$. Observe that $\mathfrak{c}:S^{d-1}\to (0,\infty)$ is a continuous function since $\cW(\mathfrak{c}^*)$ is a convex compact set with $0\in \mathrm{int}(\cW(\mathfrak{c}^*))$. Let 
    \begin{equation}
        \mathcal{L}^e:=\{e'\in S^{d-1}: \cH^+_{e,\mathfrak{c}^*(e) }\cap B(0,2a^{-1})\subset\cH^+_{e',(1-\epsilon)\mathfrak{c}(e')}\cap B(0,2a^{-1})\}.
    \end{equation}
    Since $\mathfrak{c}^*(e)\geq \mathfrak{c}(e)$, it follows that for every $\epsilon\in (0,1/2)$, $\mathfrak{c}^*(e)>(1-\epsilon)\mathfrak{c}(e)$, and hence $e\in \mathcal{L}^e$. We claim that in fact,
    \begin{equation}\label{e.lastclaim}
    e\in \mathrm{int}(\mathcal{L}^e).
    \end{equation}
Assuming \eqref{e.lastclaim}, we continue the proof, and return to verify this claim at the end of the argument. 

As a consequence of \eqref{e.lastclaim}, $S^{d-1}\subset \cup_{e\in S^{d-1}}\mathrm{int}(\mathcal{L}^e)$, so by compactness of $S^{d-1}$, there exists a finite set $\cQ\subset S^{d-1}$such that $S^{d-1}\subset \cup_{q\in \cQ}\mathrm{int}(\mathcal{L}^q)$.
 
Let $K$ be a compact, convex set such that $K\subset B(0,2a^{-1})$ and for every $q\in \cQ$, $K\cap \cH^+_{q,\mathfrak{c}^*(q)}\neq \emptyset$. In view of Lemma \ref{l:wulff(ii)}, to prove that $(1-\epsilon)\cW(\mathfrak{c}^*)\subset K$, it is sufficient to show that for every $e\in S^{d-1}$, $K\cap \cH^+_{e,(1-\epsilon)\mathfrak{c}(e)}\neq \emptyset$.

To do so, let us fix an arbitrary $e\in S^{d-1}$. By the choice of $\mathcal{Q}$, there exists $q\in \cQ$ such that $e\in \mathrm{int}(\mathcal{L}^q)$. Thus, $\cH^+_{q,\mathfrak{c}^*(q) }\cap B(0,2a^{-1})\subset\cH^+_{e,(1-\epsilon)\mathfrak{c}(e)}\cap B(0,2a^{-1})$. By hypothesis on $K$, we observe that 
    \begin{equation*}
        \begin{split}
            K\cap \cH^+_{e,(1-\epsilon)\mathfrak{c}(e)} &= K\cap \cH^+_{e,(1-\epsilon)\mathfrak{c}(e)}\cap B(0,2a^{-1})\\
            & \supset K\cap \cH^+_{q,\mathfrak{c}^*(q)}\cap B(0,2a^{-1})\\
            & = K\cap \cH^+_{q,\mathfrak{c}^*(q)}\neq \emptyset.
        \end{split}
    \end{equation*}
    Since $e\in S^{d-1}$ was chosen arbitrarily, we conclude that $K\cap \cH^+_{e,(1-\epsilon)\mathfrak{c}(e)}\neq \emptyset$ for every $e\in S^{d-1}$, which implies that $K\supset (1-\epsilon)\cW(\mathfrak{c}^*)$.
    
    Finally, let us prove \eqref{e.lastclaim}. Fix $e\in S^{d-1}$; it is enough to show that there exists $\theta\in (0,1)$ such that, for every $e'\in S^{d-1}$ with $e\cdot e'\geq 1-\theta$,
    \begin{equation*}
        \cH^+_{e,\mathfrak{c}^*(e) }\cap B(0,2a^{-1})\subset\cH^+_{e',(1-\epsilon)\mathfrak{c}(e')}\cap B(0,2a^{-1}).
    \end{equation*}
    Let $\kappa=\kappa(\epsilon)\in (0,1)$ be sufficiently small such that $(1-\epsilon/4)(1-\epsilon/2)^{-1}>1+\kappa$. Since $\mathfrak{c}$ is continuous, there exists $\theta \in (0,1)$ such that $\mathfrak{c}(e')\in (\mathfrak{c}(e)(1-\kappa),\mathfrak{c}(e)(1+\kappa))$ whenever $e'\cdot e > 1-\theta$. Since $\mathfrak{c}^*\geq \mathfrak{c}$, it follows that for every $e'\in S^{d-1}$ with $e'\cdot e>1-\theta$,
    \begin{equation*}
        \mathfrak{c}^*(e)\geq \mathfrak{c}(e)>(1-\epsilon/4)\mathfrak{c}(e) > \frac{(1-\epsilon/4)}{(1+\kappa)}\mathfrak{c}(e')>(1-\epsilon/2)\mathfrak{c}(e').
    \end{equation*}
    Thus, for $x\in \cH^+_{e,\mathfrak{c}^*(e) }\cap B(0,2a^{-1})$ and $e'\cdot e>1-\theta$,
    \begin{equation*}
        \begin{split}
            x\cdot e' &= x\cdot (e'-e) + x\cdot e\\
            & \geq -\sup_{x\in \cH^+_{e,\mathfrak{c}^*(e)}\cap \overline{B(0,2a^{-1})} }|x\cdot (e'-e)| + \mathfrak{c}^*(e)\\
            & \geq -\sup_{x\in \cH^+_{e,\mathfrak{c}^*(e)}\cap \overline{B(0,2a^{-1})} }|x\cdot (e'-e)| + (1-\epsilon/2)\mathfrak{c}(e').
        \end{split}
    \end{equation*}
    By making $\theta$ even smaller, we can further impose that for all $e'\in S^{d-1}$ with $e'\cdot e > 1-\theta$,
    \begin{equation*}
        \sup_{x\in \cH^+_{e,\mathfrak{c}^*(e)}\cap \overline{B(0,2a^{-1})} }|x\cdot (e'-e)| < (\epsilon/4)\inf_{\nu\in S^{d-1}}\mathfrak{c}(\nu).
    \end{equation*}
    Combining this with the prior display, we obtain that for any $x\in \cH^+_{e,\mathfrak{c}^*(e) }\cap B(0,2a^{-1})$ and for any $e'$ such that $e'\cdot e>1-\theta,$
    \begin{equation*}
        x\cdot e' \geq -(\epsilon/4) \inf_{\nu\in S^{d-1}}\mathfrak{c}(\nu) +  (1-\epsilon/2)\mathfrak{c}(e') > (1-\epsilon)\mathfrak{c}(e').
    \end{equation*}
   This implies that $\cH^+_{e,\mathfrak{c}^*(e) }\cap B(0,2a^{-1})\subset \cH^+_{e,(1-\epsilon)\mathfrak{c}(e) }\cap B(0,2a^{-1})$ for every $e'$ with $e'\cdot e>1-\theta$. We conclude that $e\in \mathrm{int}(\mathcal{L}^e)$.
    
\end{proof}

\section*{Acknowledgements}
LAB acknowledges support from the NSERC Discovery Grants program and from the Canada Research Chairs program. This material is based in part upon work supported by the National Science Foundation under Grant No. DMS-1928930, while LAB was in residence at the Simons Laufer Mathematical Sciences Institute in Berkeley, California, during the semester of Spring 2025. 
AAA gratefully acknowledges the financial support provided by the Fonds de recherche du Qu\'ebec Doctoral research scholarship (DOI: \hyperlink{}{https://doi.org/10.69777/336282}). This work was partially funded by this scholarship, which provided the crucial resources and dedicated time necessary to conduct this study. JL acknowledges support from the NSERC Discovery Grants program and from the Canada Research Chairs program.

\footnotesize
\bibliographystyle{plain}
\bibliography{refs}

\end{document}